\newtheorem{thm}{Theorem}[section]
 \newtheorem{cor}[thm]{Corollary}
 \newtheorem{lem}[thm]{Lemma}
 \newtheorem{prop}[thm]{Proposition}
 \newtheorem{conj}[thm]{Conjecture}
\numberwithin{equation}{section}
 \theoremstyle{definition}
  \newtheorem{defn}[thm]{Definition}
 \theoremstyle{remark}
 \newtheorem{rem}[thm]{Remark}
  \newtheorem{ex}[thm]{Example}
\newtheorem*{claim*}{Claim}
\def\NN{\mathbb{N}}
\def\RR{\mathbb{R}}
\def\CC{\mathbb{C}}
\def\AA{\mathbb{A}}
\def\U{\mathcal{U}}
\def\H{\mathcal{H}}
\def\L{\mathcal{L}}
\def\Nd{\mathcal{N}}
\def\S{\mathcal{S}}
\def\Ac{\mathcal{A}}
\def\C{\mathcal{C}}
\def\D{\mathcal{D}}
\def\O{\mathcal{O}}
\def\HH{\mathscr{H}}
\def\SS{\mathscr{S}}
\def\B{\mathfrak{B}}
\def\K{\mathfrak{K}}
\def\A{\mathfrak{A}}
\def\supp{\mathrm{supp}}
\def\ppg{\mathrm{prop}}
\def\Id{\mathrm{Id}}
\def\Cliff{\mathrm{Cliff}}
\def\Ind{\mathrm{Ind}}
\def\Ad{\mathrm{Ad}}
\def\diam{\mathrm{diam}}
\def\ev{\mathrm{ev}}
\begin{document}

\title[The equivariant coarse Baum-Connes conjecture and a-T-menability]{The equivariant coarse Baum-Connes conjecture for actions by a-T-menable groups}

\author{Benyin Fu and Jiawen Zhang}

\address[B. Fu]{College of Statistics and Mathematics, Shanghai Lixin University of Accounting and Finance, Shanghai, 201209, China.}
\email{fuby@lixin.edu.cn}

\address[J. Zhang]{School of Mathematical Sciences, Fudan University, 220 Handan Road, Shanghai, 200433, China.}
\email{jiawenzhang@fudan.edu.cn}

\date{}

\thanks{Both of the authors are supported by NSFC11871342.}

\begin{abstract}
The equivariant coarse Baum-Connes conjecture was firstly introduced by Roe \cite{Roe96} as a unified way to approach both the Baum-Connes conjecture and its coarse counterpart. In this paper, we prove that if an a-T-menable group $\Gamma$ acts properly and isometrically on a bounded geometry metric space $X$ with controlled distortion such that the quotient space $X/\Gamma$ is coarsely embeddable, then the equivariant coarse Baum-Connes conjecture holds for this action. This answers a question posed in \cite{DFW21} affirmatively.
\end{abstract}

\date{\today}
\maketitle

\parskip 4pt


\section{Introduction}

Over the last century, one of the most significant achievements in mathematics is the Atiyah-Singer index theorem \cite{AS68}. It provides a topological formula for Fredholm indices of elliptic pseudodifferential operators on closed manifolds, and has profound applications in geometry and topology (see, \emph{e.g.}, \cite{LM89}).

To obtain finer topological and geometric information of the underlying manifold, people consider lifted operators on the universal cover. These operators are by definition equivariant under deck transformations, and hence can be cooked up to provide an element (called the \emph{higher index} \cite{CM90}) in the K-theory of the reduced group $C^*$-algebra of the fundamental group \cite{GL83, Ros83}. Higher indices reveals more information of the underlying manifolds, and hence it is important to compute $K$-theories of reduced group $C^*$-algebras. The famous Baum-Connes conjecture (\cite{BC00}, see also \cite{BCH94}) provides a practical and systematic approach, which has fruitful applications in analysis, geometry and topology (see, \emph{e.g.}, \cite{BC88, Val02}). It has been verified for a large classes of groups, including a-T-menable groups (\emph{i.e.}, groups with the Haagerup property) \cite{HK01} and hyperbolic groups \cite{Laf12}.

On the other hand, Roe introduced a coarse geometric approach in his pioneering work on higher index theory for elliptic differential operators on open manifolds \cite{Roe88}, and formulated a coarse version to the Baum-Connes conjecture \cite{roe1993coarse} (see also \cite{HR95}). Using Higson's descent principle \cite{Hig00}, the coarse Baum-Connes conjecture also found significant applications in geometry and topology (see, \emph{e.g.}, \cite{Roe96, willett2020higher}), for example the Novikov conjecture concerning homotopy invariance of higher signatures and the Gromov-Lawson conjecture concerning positive scalar curvature on manifolds. Over the last three decades, a lot of results on the coarse Baum-Connes conjecture have been achieved \cite{CWY13, GWY08, KY06, KY12, WY12} after Yu's groundbreaking work for coarsely embeddable metric spaces \cite{Yu00}.

As indicated in \cite[Section 5]{Roe96} (see also \cite[Section 7]{willett2020higher}), the Baum-Connes conjecture and its coarse counterpart can be formulated in a unified way using the language of group actions. To be more precise, let us assume that $\Gamma$ is a countable discrete group acting on a proper metric space $X$ properly (\emph{not necessarily} cocompactly) by isometries. There is an \emph{equivariant index map} (see Section \ref{ssec: equiv CBC} for precise definitions)
\[
\Ind^\Gamma: \lim_{r\rightarrow\infty} K_*^\Gamma(P_r(X))\longrightarrow  K_*(C^*(X)^\Gamma)
\]
for $\ast=0,1$ where $K_*^\Gamma(P_r(X))$ is the $\Gamma$-equivariant $K$-homology group of the Rips complex $P_r(X)$ of $X$ with scalar $r$, and $K_*(C^*(X)^\Gamma)$ is the $K$-theory of the $\Gamma$-equivariant Roe algebra of $X$. The equivariant coarse Baum-Connes conjecture asserts that the equivariant index map $\Ind^\Gamma$ is an isomorphism.

Noted by Roe \cite{Roe96}, the equivariant Roe algebra $C^*(X)^\Gamma$ is Morita equivalent to the reduced group $C^*$-algebra $C^*_r(\Gamma)$ when the action is cocompact, and hence the equivariant index map $\Ind^\Gamma$ being an isomorphism is just a reformulation of the classic Baum-Connes conjecture. A typical example is the fundamental group $\Gamma$ of a closed manifold $M$ acting on its universal cover $\widetilde{M}$ via deck transformations. As mentioned above, a lifted differential operator (from $M$ to $\widetilde{M}$) is $\Gamma$-equivariant and hence its higher index sits naturally in the $K$-theory of the equivariant Roe algebra $C^*(\widetilde{M})^\Gamma$, which coincides with the $K$-theory of $C^*_r(\Gamma)$.
On the other hand when $\Gamma$ is trivial, $\Ind^\Gamma$ being an isomorphism is nothing but the coarse Baum-Connes conjecture. Therefore, the equivariant coarse Baum-Connes conjecture can be regarded as an ``interpolation'' between the Baum-Connes conjecture for groups and the coarse Baum-Connes conjecture for metric spaces.

The equivariant coarse Baum-Connes conjecture was studied by couples of mathematicians, and recently has attracted more and more attention. In \cite{shan2008equivariant}, Shan proved the equivariant index map is injective for torsion free groups acting on simply connected complete Riemannian manifolds with non-positive scalar curvature. Later in \cite{FW16}, the first author together with Wang showed that the equivariant coarse Baum-Connes conjecture holds for group actions on metric spaces with bounded geometry which admit equivariant coarse embeddings into Hilbert space, serving as an equivariant analogue of Yu's celebrated work \cite{Yu00}.

However, a recent ingenious example due to Arzhantseva and Tessera \cite{AT19} illuminates that a metric space with a group action might not admit a coarse embedding into Hilbert space even if both the group and the quotient space are coarsely embeddable. Inspired by their example, the first author together with Wang and Yu \cite{FWY20} managed to show that the equivariant index map remains injective for group actions with bounded distortion when both the group and the quotient space admit coarse embeddings into Hilbert space.

Along this way, very recently the first author together with Deng and Wang \cite{DFW21} proved that the equivariant coarse Baum-Connes conjecture holds for amenable groups acting on metric spaces such that all orbits are equivariantly uniformly coarsely equivalent and quotient spaces are coarsely embeddable. They also asked a question whether the result remains true when the involved group is a-T-menable, inspired by Higson and Kasparov's significant result that the Baum-Connes conjecture holds for a-T-menable groups \cite{HK01}. However, the techniques in \cite{DFW21} are no longer applicable for a-T-menable groups due to an obstruction in coarse $K$-amenability (see \cite[Theorem 1.2]{DFW21}).

In this paper, we use a different approach to bypass the issue of coarse $K$-amenability and answer the question posed in \cite{DFW21} affirmatively. More precisely, we show that the equivariant coarse Baum-Connes conjecture holds for a-T-menable groups acting on metric spaces with controlled distortion such that quotient spaces are coarsely embeddable. To state our main result, let us introduce some notions first (see Section \ref{sec:pre} for more details).

For a metric space $(X,d)$ with a $\Gamma$-action, a subset $\D \subseteq X$ is called a \emph{fundamental domain} if $X$ can be decomposed into the disjoint union of $\Gamma$-orbits for points in $\D$. An action is said to have \emph{controlled distortion} if there exists a fundamental domain $\D \subseteq X$ such that the family of orbit maps $\{\O_x: \Gamma \to \Gamma x, \gamma \mapsto \gamma x\}_{x\in \D}$ are uniformly coarsely equivalent.

The following is the main result of this paper:

\begin{thm}\label{thm:main result equiv. CBC}
Let $\Gamma$ be a countable discrete group acting properly and isometrically on a discrete metric space $X$ with bounded geometry. If the action has controlled distortion, the quotient space $X /\Gamma$ admits a coarse embedding into Hilbert space and $\Gamma$ is a-T-menable, then the equivariant index map
\[
\Ind^\Gamma: \lim_{r\rightarrow\infty} K_*^\Gamma(P_r(X))\longrightarrow  K_*(C^*(X)^\Gamma)
\]
is an isomorphism for $\ast=0,1$.
\end{thm}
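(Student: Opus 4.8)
The plan is to prove the theorem by an equivariant Dirac--dual-Dirac argument modelled on Yu's proof of the coarse Baum--Connes conjecture \cite{Yu00} and its equivariant refinement \cite{FW16}, in which the role played there by an equivariant coarse embedding of $X$ into Hilbert space --- something that need not exist, by the Arzhantseva--Tessera example \cite{AT19} --- is taken over by the combination of a \emph{proper affine} isometric action of $\Gamma$ supplied by a-T-menability and a coarse embedding of the quotient $X/\Gamma$, the two being glued in the orbit directions by means of the controlled-distortion hypothesis. The decisive conceptual point is that the $\Gamma$-action on the target Hilbert space is then affine rather than linear, so the infinite-dimensional Bott periodicity we must invoke is the equivariant one of Higson--Kasparov \cite{HK01}; this is precisely what lets us avoid the coarse $K$-amenability obstruction that blocked the method of \cite{DFW21}.

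\emph{Coefficient data and twisted algebras.} Fix a proper affine isometric action $\alpha$ of $\Gamma$ on a real Hilbert space $H_0$, a coarse embedding $f\colon X/\Gamma\to H_1$, a fundamental domain $\D$ witnessing controlled distortion, and a base point $\xi_0\in H_0$. Combining $f$ composed with the quotient map with the orbit cocycle $\gamma x\mapsto\alpha(\gamma)\xi_0$ (well defined up to the bounded ambiguity introduced by the finite stabilizers, which does no harm) and extending affinely over simplices, one obtains for each $r>0$ a $\Gamma$-equivariant map $W_r\colon P_r(X)\to H:=H_1\oplus H_0$, equivariant for the affine action $\mathrm{id}_{H_1}\oplus\alpha$, which --- by bounded geometry together with controlled distortion --- carries the uniform coarse-geometric control needed to serve as localization data. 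Following \cite{Yu00, HK01, FW16}, attach to each finite-dimensional affine subspace $E\subseteq H$ the graded $C^*$-algebra $\Ac(E)=\mathcal{S}\,\hat\otimes\,\Cliff(E)\,\hat\otimes\,\mathcal{K}$, set $\Ac=\varinjlim_E\Ac(E)$ (a $\Gamma$-$C^*$-algebra via the affine action), and use $W_\bullet$ to build the twisted equivariant Roe algebra $C^*(P_r(X),\Ac)^\Gamma$, the twisted equivariant localization algebra $C^*_L(P_r(X),\Ac)^\Gamma$, and the evaluation homomorphism $e$ between them; here it is exactly the uniformity coming from controlled distortion that makes the defining propagation conditions meaningful simultaneously over all of $\D$.

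\emph{Reduction via the comparison diagram.} Equivariant Bott and Dirac homomorphisms assemble into a commuting square
\[
\begin{CD}
\lim_{r\rightarrow\infty}K_*(C^*_L(P_r(X))^\Gamma) @>{e_*}>> \lim_{r\rightarrow\infty}K_*(C^*(P_r(X))^\Gamma)\\
@V{\beta}VV @V{\beta}VV\\
\lim_{r\rightarrow\infty}K_*(C^*_L(P_r(X),\Ac)^\Gamma) @>{e_*}>> \lim_{r\rightarrow\infty}K_*(C^*(P_r(X),\Ac)^\Gamma)
\end{CD}
\]
in which the upper right group is $K_*(C^*(X)^\Gamma)$, the upper left group is identified with $\lim_{r\rightarrow\infty}K_*^\Gamma(P_r(X))$ through the equivariant localization-algebra isomorphism (as in \cite{FW16}), and under these identifications the top horizontal arrow becomes the equivariant index map $\Ind^\Gamma$. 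The Dirac--Bott computation --- elementary Clifford Bott periodicity over each finite-dimensional $E$, and in the limit Higson--Kasparov's theorem for the affine action on $H$ --- shows that $\beta$ is an isomorphism on the localization row and a split injection on the Roe row, so a routine diagram chase reduces the theorem to the assertion that the bottom horizontal arrow $e_*$ is an isomorphism.

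\emph{The twisted evaluation isomorphism, and the main obstacle.} It remains to prove that $e_*\colon\lim_{r\rightarrow\infty}K_*(C^*_L(P_r(X),\Ac)^\Gamma)\to\lim_{r\rightarrow\infty}K_*(C^*(P_r(X),\Ac)^\Gamma)$ is an isomorphism. I would establish this by an equivariant coarse Mayer--Vietoris argument: using bounded geometry, decompose $X$ equivariantly and compatibly with the fundamental domain $\D$ into finitely many families of uniformly bounded, pairwise $R$-separated pieces; run the Mayer--Vietoris sequences for the twisted Roe and twisted localization algebras; and reduce to uniformly bounded pieces, where $e_*$ is checked directly to be an isomorphism because the coefficient algebra $\Ac$, built from the genuinely proper combined data, makes the twisted algebras uniformly contractible there, the equivariance being absorbed fibrewise into the affine Bott periodicity over each orbit. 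This is exactly the step at which \cite{DFW21} appeals to coarse $K$-amenability, which is unavailable for general a-T-menable $\Gamma$; in the present approach the coefficients are never untwisted globally over $\Gamma$ --- affine Bott periodicity is used only locally over finite-dimensional subspaces --- so no coarse $K$-amenability enters. I expect the genuine difficulties to be (a) arranging the gluing of the coefficient data so that \emph{all} propagation and coarseness estimates throughout the argument hold uniformly over $\D$ --- precisely the purpose of the controlled-distortion hypothesis --- and (b) carrying the equivariant localization-algebra formalism, together with its identification with equivariant $K$-homology and the twisted coarse Mayer--Vietoris sequences, through the non-cocompact equivariant setting with coefficients.
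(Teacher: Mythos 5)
Your proposal takes a genuinely different route from the paper's, and that difference is exactly where I see the unresolved gap.

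\textbf{Where your route diverges.} You combine the coarse embedding of $X/\Gamma$ and the proper affine action of $\Gamma$ into a single Hilbert space $H=H_1\oplus H_0$, build one twisted algebra over the proper $\Gamma$-$C^*$-algebra $\Ac=\varinjlim_E\Ac(E)$ on $H$, and run equivariant Bott/Dirac asymptotic morphisms directly on $C^*(P_r(X))^\Gamma$ and $C^*_L(P_r(X))^\Gamma$, reducing the theorem to a twisted evaluation isomorphism proved by Mayer--Vietoris. The paper reverses the order and keeps the two pieces of data separate. It first runs an equivariant Mayer--Vietoris on the \emph{untwisted} Roe and localisation algebras (Section \ref{sec:reduction}), reducing to $X=\bigsqcup_n X_n$ with cocompact actions and block-diagonal operators (Corollary \ref{cor:final reduction}). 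Only then does it construct twisted algebras, and those algebras are twisted over the \emph{finite-dimensional} Euclidean spaces $E_n$ coming from the coarse embedding of $X_n/\Gamma$ only --- the affine a-T-menability data never enters the twisted algebra at this stage. The Bott--Dirac operator $F$ on $\bigoplus_n \H_{r,n,E_n}$ is then a genuine multiplier, and the index class $\Ind_F[(p_n)]$ is defined directly on $K_0$ of the whole Roe algebra (Definition \ref{defn:index definition}, Proposition \ref{prop:index maps defn}); this relies on the ``space for speed'' device, i.e.\ the shrinking parameter $s+2d_n$ growing with the dimension $d_n$ of $E_n$, which is available precisely because of the prior decomposition into pieces marching off to infinity. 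The affine action and the Higson--Kasparov algebra $\Ac(V)$ appear only in the local isomorphism step (Theorem \ref{thm:iso. of twisted algebras in $K$-theory} and Appendix \ref{sec:App A}), where the needed input is a \emph{family} version of the Baum--Connes conjecture with coefficients for a-T-menable groups.

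\textbf{The gap.} Your proposal constructs the Bott and Dirac maps as asymptotic morphisms on the Roe and localisation algebras over the full space, with coefficients in the infinite-dimensional $\Ac$ built from $H_1\oplus H_0$. This is structurally the route taken in \cite{DFW21}, and the paper identifies the obstruction there explicitly: the asymptotic morphisms live on dense subalgebras and their extension to the full Roe algebra requires a coarse $K$-amenability result, which is available for amenable $\Gamma$ but not for general a-T-menable $\Gamma$ \cite[Theorem 1.2]{DFW21}. You assert that this is avoided because ``affine Bott periodicity is used only locally over finite-dimensional subspaces,'' but you give no argument that the resulting asymptotic morphism satisfies the uniform propagation and convergence estimates needed for the split injection $\alpha_*\circ\beta_*=\mathrm{id}$ to make sense on all of $K_*(C^*(P_r(X))^\Gamma)$ rather than on a dense subalgebra; controlled distortion gives uniformity over $\D$, but it does not by itself supply the ``speed'' (dimension growth) that the paper uses to turn the Bott--Dirac operator into an honest multiplier. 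Your own closing paragraph flags this as difficulty (a), but then does not resolve it --- and the paper's Mayer--Vietoris-first reduction is precisely the mechanism by which it is resolved. Separately, the claim that the coefficient algebra ``makes the twisted algebras uniformly contractible'' on bounded pieces is not correct: what is needed there is the Dirac--dual-Dirac duality of Higson--Kasparov over the proper $\Gamma$-$C^*$-algebra $\Ac(V)$, and because the bounded pieces form an unbounded family (one for each $\lambda$), one genuinely needs a \emph{family} version of that theorem, which is exactly what Proposition \ref{prop: local isom for appendix} and Appendix \ref{sec:App A} supply; a contractibility argument will not produce it.
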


Note that when $\Gamma$ is trivial, Theorem \ref{thm:main result equiv. CBC} recovers Yu's celebrated result \cite{Yu00} on the coarse Baum-Connes conjecture for coarsely embeddable metric spaces. On the other hand, note that when the action is cocompact then it automatically has controlled distortion, and hence Theorem \ref{thm:main result equiv. CBC} recovers Higson and Kasparov's famous result \cite{HK01} on the Baum-Connes conjecture for a-T-menable groups. Therefore, Theorem \ref{thm:main result equiv. CBC} can be regarded as a combination of \cite{HK01} and \cite{Yu00} in virtue of the language of group actions.

Readers might already notice that the hypothesis of Theorem \ref{thm:main result equiv. CBC} is slightly different from that of \cite[Theorem 1.1]{DFW21} except for the requirement on the group. More precisely, in Theorem \ref{thm:main result equiv. CBC} we assume that the action has controlled distortion, while \cite[Theorem 1.1]{DFW21} requires that all orbits are equivariantly uniformly coarsely equivalent. As revealed in Section \ref{ssec:comparing}, having controlled distortion can be deduced from equivariantly uniformly coarse equivalence of orbits, which means that our hypothesis is (at least formally) weaker than the one used in \cite[Theorem 1.1]{DFW21}\footnote{In fact, we think that controlled distortion with other assumptions are also sufficient to prove \cite[Theorem 1.1]{DFW21}.}.  In Section \ref{ssec:comparing}, we also provide some analysis on the relations between our hypothesis of Theorem \ref{thm:main result equiv. CBC} and those used in \cite{FW16, FWY20}.

The proof of Theorem \ref{thm:main result equiv. CBC} follows the machinery from \cite[Chapter 12]{willett2020higher}, which was originally invented by Yu in \cite{Yu00}. Roughly speaking, we use an equivariant version of the coarse Mayer-Vietoris argument to chop the space and decompose the associated algebras preserving group actions. This leads to the reduction of the proof for Theorem \ref{thm:main result equiv. CBC} to the case of sequences of cocompact actions with block-diagonal operators thereon (Corollary \ref{cor:final reduction}). Then we construct equivariant twisted Roe and localisation algebras (Definition \ref{defn:twisted Roe} and \ref{defn:twisted localisation}), and index maps in terms of the Bott-Dirac operators (Proposition \ref{prop:index maps}). Finally, we prove an equivariant version of the local isomorphism (Theorem \ref{thm:iso. of twisted algebras in $K$-theory}) thanks to (a family version of) the Baum-Connes conjecture with coefficients verified for a-T-menable groups \cite{HK01}, and conclude the proof via a canonical diagram chasing argument.

We would like to highlight the difference between our approach and the one used in \cite{DFW21}. Although both of them consult Yu's machinery, the constructions of equivariant twisted algebras and index maps are different. More precisely in the approach of \cite{DFW21}, index maps (usually called the \emph{Bott} and \emph{Dirac} maps) are defined on dense subalgebras in the form of asymptotic morphisms, and therefore one needs a coarse $K$-amenability result to extend these maps. While in the current paper, we use a coarse Mayer-Vietoris argument to chop the space and decompose the associated algebras at first, which allows us to construct index maps directly on the $K$-theories of the whole algebras (see Proposition \ref{prop:index maps defn}). An ingenious ``\emph{space for speed}'' argument from \cite{willett2020higher} is crucial to bypass the issue of coarse $K$-amenability and accomplish the job.

The paper is organised as follows. In Section \ref{sec:pre}, we collect necessary preliminaries and provide comparisons for different geometric hypotheses used in \cite{DFW21, FW16, FWY20}. Section \ref{sec:reduction} is devoted to the reduction of the proof for Theorem \ref{thm:main result equiv. CBC} to the case of sequences of cocompact actions. We introduce equivariant twisted algebras in Section \ref{sec:twisted algebras}, and construct index maps in Section \ref{sec:index map}. In Section \ref{sec:local isom}, we prove that the equivariant twisted Roe and localisation algebras have the same $K$-theory using a family version of the Baum-Connes conjecture with coefficients for a-T-menable groups essentially from \cite{HK01}, and finally conclude the proof in Section \ref{sec:pf of main thm}.

We also provide an appendix on a precise statement of the family version of the Baum-Connes conjecture with coefficients for a-T-menable groups which we need to conclude Theorem \ref{thm:iso. of twisted algebras in $K$-theory}. This result is well-known to experts, while we cannot find an explicit statement or proof in literature. For convenience to readers and also for completeness, we provide a detailed proof based on ideas from \cite{HK01} and \cite{Yu00}.


\section{Preliminaries}\label{sec:pre}

In this section, we recall some notions and definitions.

\subsection{Notions from coarse geometry}\label{ssec:Notions from coarse geometry}

Here we collect several basic notions.

\begin{defn}\label{defn: basic notions for metric spaces}
Let $(X,d)$ be a metric space and $R>0$.
\begin{enumerate}
 \item For $x\in X$, denote the closed \emph{$R$-ball} $B(x,R):=\{y\in X: d(x,y) \leq R\}$. For $A \subseteq X$, denote its \emph{$R$-neighbourhood} by $\Nd_R(A):=\{y\in X: d(y,A) \leq R\}$.
 \item A subset $A \subseteq X$ is called \emph{bounded} if its diameter $\sup\{d(x,y): x,y\in A\}$ is finite; $A$ is called a \emph{net} in $X$ if there exists a $c>0$ such that $\Nd_c(A) = X$.
 \item $(X,d)$ is called \emph{proper} if any bounded closed subset in $X$ is compact.
 \item If $(X,d)$ is discrete, we say that $(X,d)$ has \emph{bounded geometry} if $\sup_{x\in X} \sharp B(x,R)$ is finite for any $R>0$. In the general case, we say that $(X,d)$ has \emph{bounded geometry} if it contains a discrete net with bounded geometry.
\end{enumerate}
\end{defn}

\begin{defn}\label{defn: basic notions for maps}
A map $f: (X,d_X) \to (Y,d_Y)$ between metric spaces is called a \emph{coarse embedding} if there exist two proper non-decreasing functions $\rho_+, \rho_-: [0,\infty) \to [0,\infty)$ such that
\[
\rho_-(d_X(x,y)) \leq d_Y(f(x),f(y)) \leq \rho_+(d_X(x,y))
\]
holds for any $x,y\in X$. The map $f$ is called a \emph{coarse equivalence} if it is a coarse embedding and the image of $f$ is a net in $Y$. We say that $X$ and $Y$ is \emph{coarsely equivalent} if there exists a coarse equivalence $f: X \to Y$.

A family of maps $\{f_i: X_i \to Y_i\}_{i\in I}$ between metric spaces are called \emph{uniformly coarsely equivalent} if $f_i$ is a coarse equivalence with the same control functions $\rho_+$ and $\rho_-$, and there exists a $c>0$ such that the $c$-neighbourhood of the image of $f_i$ coincides with $Y_i$ for each $i\in I$.
\end{defn}

For a proper function $\rho: [0,\infty) \to [0,\infty)$ (which is not necessarily monotonely increasing), we denote
\[
\rho^{-1}(R):=\sup \{S \in [0,\infty): \rho(S) \leq R\}, \quad \mbox{for~} R \in [0,\infty).
\]

Now we move to the case of group actions.

\begin{defn}\label{defn: basic notions for group actions}
Let $\Gamma$ be a countable discrete group acting on a proper metric space $(X,d)$. The action is called \emph{proper} if for any compact set $K\subseteq X$, the set $\{\gamma\in \Gamma: \gamma K\cap K\neq \emptyset\}$ is finite. The action is called \emph{isometric} if $d(x,y)=d(\gamma x,\gamma y)$ for all $\gamma\in\Gamma$ and $x,y\in X$.
\end{defn}

To simplify the notation, we say that a proper metric space $(X,d)$ is a \emph{$\Gamma$-space} if the $\Gamma$-action is proper and isometric. For a $\Gamma$-space $(X,d)$ and $x\in X$, the \emph{orbit of $x$} is defined to be $\Gamma x$ and the \emph{orbit map at $x$} is defined by:
\[
\O_x: \Gamma \longrightarrow \Gamma x, \quad \gamma \mapsto \gamma x \quad \mbox{for} \quad \gamma \in \Gamma.
\]

\begin{defn}\label{defn: basic notions for maps btw group spaces}
Let $\Gamma$ be a countable discrete group. A map $f: X \to Y$ between two $\Gamma$-spaces is called \emph{equivariant} if for any $x\in X$ and $\gamma\in \Gamma$, we have $f(\gamma x) = \gamma f(x)$. The map $f$ is called an \emph{equivariantly coarse equivalence} if $f$ is equivariant and coarsely equivalent. 
A family of maps $\{f_i: X_i \to Y_i\}_{i\in I}$ between $\Gamma$-spaces is called an \emph{equivariantly uniformly coarse equivalence} if each $f_i$ is $\Gamma$-equivariant and the family $\{f_i\}_{i\in I}$ is a uniformly coarse equivalence.

A family of $\Gamma$-spaces $X_i$ for $i\in I$ are called \emph{equivariantly uniformly coarsely equivalent} if for any $i,j\in I$ there exists an equivariant map $f_{i,j}: X_i \to X_j$ such that the family $\{f_{i,j}:X_i \to X_j\}_{i,j\in I}$ is a uniformly coarse equivalence.
\end{defn}

Recall that for a $\Gamma$-space $(X,d)$, a subset $\D \subseteq X$ is called a \emph{fundamental domain} if $X$ can be decomposed into the disjoint union of $\Gamma$-orbits of points in $\D$. An action is said to have \emph{controlled distortion} if there exists a fundamental domain $\D \subseteq X$ such that the family of orbit maps $\{\O_x:x\in \D\}$ are uniformly coarsely equivalent. In this case, we also say that the action has controlled distortion with respect to $\D$.

Recall from \cite{FWY20} that for a $\Gamma$-space $(X,d)$, the action is said to have \emph{bounded distortion} if there exists a fundamental domain $\D$ such that for any $\gamma \in \Gamma$, we have
\[
\sup_{y\in \D} d(\gamma y,y) < +\infty.
\]
In this case, we also say that the action has bounded distortion with respect to $\D$.
It follows directly from definitions that having controlled distortion implies bounded distortion.

\subsection{Comparing different geometric hypotheses}\label{ssec:comparing}

In this subsection, we would like to compare the geometric hypothesis of Theorem \ref{thm:main result equiv. CBC} with those used in the main results of \cite{DFW21, FW16, FWY20}. For convenience to readers, we record these hypotheses chronologically as follows. Assume that $(X,d)$ is a $\Gamma$-space.

\begin{enumerate}
 \item \emph{hypothesis of \cite[Theorem 1.2]{FW16}}: $X$ admits a $\Gamma$-equivariant coarse embedding into Hilbert space.
 \item \emph{hypothesis of \cite[Theorem 1.2]{FWY20}}: both $\Gamma$ and $X/\Gamma$ are coarsely embeddable, and the action has bounded distortion.
 \item \emph{hypothesis of \cite[Theorem 1.1]{DFW21}}: $\Gamma$ is amenable and $X/\Gamma$ is coarsely embeddable, and the orbit spaces are equivariantly uniformly equivalent.
 \item \emph{hypothesis of Theorem \ref{thm:main result equiv. CBC} (the current paper)}: $\Gamma$ is a-T-menable and $X/\Gamma$ is coarsely embeddable, and the action has controlled distortion.
\end{enumerate}

Note that the conclusion of \cite[Theorem 1.2]{FWY20} says that the equivariant index map is injective, while the rest conclude isomorphisms.

The following lemma shows that our hypothesis of Theorem \ref{thm:main result equiv. CBC} is (at least formally) weaker than the one of \cite[Theorem 1.1]{DFW21}.



\begin{lem}\label{lem:assump for main thm}
For a $\Gamma$-space $(X,d)$, if all $\Gamma$-orbits are equivariantly uniformly coarsely equivalent, then the action has controlled distortion.
\end{lem}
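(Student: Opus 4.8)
The plan is to unpack both hypotheses at the level of control functions and neighbourhood constants, and produce a fundamental domain whose orbit maps share uniform control data. First I would fix a fundamental domain $\D \subseteq X$ for the action (any one will do, since the statement allows us to choose $\D$) and recall that by hypothesis the family $\{f_{x,x'}: \Gamma x \to \Gamma x'\}_{x,x' \in \D}$ of equivariant maps is a uniformly coarse equivalence: there are proper non-decreasing $\rho_\pm$ and a constant $c>0$, independent of $x,x'$, witnessing this. The orbit map $\O_x : \Gamma \to \Gamma x$ is a bijection (properness of the action is not quite enough for injectivity on the nose, but one can pass to a net or note that $\O_x$ is a surjection with uniformly bounded fibres and that this does not affect being a uniform coarse equivalence); I would treat $\Gamma$ as a metric space via a fixed proper length function (or the word metric for a generating set, if $\Gamma$ is finitely generated — otherwise any proper left-invariant metric), the same metric used in the definition of controlled distortion.

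The key computational step is to estimate $d(\gamma x, \gamma' x)$ in terms of $d_\Gamma(\gamma, \gamma')$, uniformly over $x \in \D$. Since the action is isometric, $d(\gamma x, \gamma' x) = d(x, \gamma^{-1}\gamma' x)$, so it suffices to control $d(x, \eta x)$ as a function of $|\eta| := d_\Gamma(e,\eta)$, uniformly in $x$. Fix a basepoint orbit, say $\Gamma x_0$ with $x_0 \in \D$. For general $x \in \D$ the composite $f_{x_0,x} \circ \O_{x_0} : \Gamma \to \Gamma x$ and $\O_x : \Gamma \to \Gamma x$ are two equivariant surjections onto the same orbit; equivariance forces them to agree up to the stabiliser, hence to be within bounded distance (the stabilisers are finite by properness, and — this is where I would need to be a little careful — their ``sizes'' as subsets of $\Gamma$ are uniformly bounded, which follows from properness applied to a fixed ball containing $\D$, assuming $\D$ is chosen bounded or at least that orbits meet $\D$ in uniformly bounded sets). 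Granting that, $\O_x$ and $f_{x_0,x}\circ \O_{x_0}$ differ by a uniformly bounded amount, so $\O_x$ inherits the control functions of $\O_{x_0}$ composed with those of $f_{x_0,x}$ — and the latter are uniform in $x$ by assumption. Composing proper non-decreasing functions and absorbing additive constants yields a single pair $\tilde\rho_\pm$ and a single $\tilde c$ working for every $\O_x$, $x \in \D$, which is exactly controlled distortion with respect to $\D$.

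The main obstacle I anticipate is the bookkeeping around stabilisers and the choice of $\D$: the definition of fundamental domain here only demands that $X$ be the disjoint union of orbits of points of $\D$, so a priori $\D$ need not be bounded and orbits could meet $\D$ in large or unbounded sets, which would break the ``uniformly bounded fibres'' claim for $\O_x$ and the comparison of $\O_x$ with $\O_{x_0}$. I would resolve this by first replacing $\D$ with a better fundamental domain: using properness and bounded geometry of $X$, one can select, orbit by orbit, a single representative (a genuine transversal), giving a fundamental domain on which every orbit map $\O_x$ is a bijection; one then checks this choice is compatible with the uniform coarse equivalence data, which it is, since passing to a subset of an orbit or choosing a different basepoint in the same orbit changes the relevant maps only by the action of a group element, hence by an isometry. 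Once that reduction is in place the rest is the routine composition-of-controls argument sketched above. A secondary subtlety worth a sentence in the write-up: the metric on $\Gamma$ must be declared (a fixed proper left-invariant metric), and one should note that isometric properness makes the orbit map a coarse embedding automatically, so the only real content of ``controlled distortion'' is the \emph{uniformity} over $\D$, which is precisely what the hypothesis on orbits supplies.
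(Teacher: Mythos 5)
There is a genuine gap, and it is exactly the issue your proposal tries to wave away in the sentence ``choosing a different basepoint in the same orbit changes the relevant maps only by the action of a group element, hence by an isometry.'' That claim is false. If $x' = \eta x$, then $\O_{x'}(\gamma) = \gamma \eta x$, so $\O_{x'} = \O_x \circ R_\eta$ where $R_\eta$ is \emph{right} translation on $\Gamma$. A group element $\eta$ does act on $X$ by isometry, but the effect on the orbit map is precomposition with $R_\eta$ on the \emph{source}, and right translations are not isometries of $(\Gamma, d_\Gamma)$ when $d_\Gamma$ is left-invariant; indeed $d_\Gamma(\gamma_1\eta, \gamma_2\eta)=\ell(\eta^{-1}\gamma_1^{-1}\gamma_2\eta)$ can differ from $d_\Gamma(\gamma_1,\gamma_2)$ by as much as $2\ell(\eta)$. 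This is precisely where the distortion hides. Concretely, your comparison of $\O_x$ with $f_{x_0,x}\circ\O_{x_0}$ does not give ``within bounded distance'': both are equivariant maps $\Gamma\to\Gamma x$ and hence determined by their value at $e$, but those values are $x$ and $\hat{x}:=f_{x_0,x}(x_0)=\eta_x x$, and the element $\eta_x\in\Gamma$ has no a priori uniform bound on its length. So the two maps agree only up to precomposition by $R_{\eta_x}$, not ``up to the stabiliser,'' and the uniformity you want does not transfer. (Your separate worry about $\D$ being unbounded or orbits meeting $\D$ in large sets is moot: by the paper's definition a fundamental domain is automatically a transversal --- the disjointness of $\bigsqcup_{x\in\D}\Gamma x$ forces exactly one representative per orbit --- so $\O_x$ is a well-defined surjection with fibres the cosets of the finite stabiliser, no choice needed.)

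The paper's proof dispatches all of this by making a specific choice of representative, and that choice is the entire content of the lemma. Rather than keeping the given fundamental domain $\D'$ and trying to estimate $\O_x$ against $f_{x_0,x}\circ\O_{x_0}$, it \emph{defines} a new fundamental domain $\D := \{\hat{y}\}_{y\in\D'}$ with $\hat{y}:=\psi_y(x_0)$, where $\psi_y\colon\Gamma x_0\to\Gamma y$ is the given equivariant uniformly coarse equivalence. Then equivariance gives $\psi_y(\gamma x_0)=\gamma\hat{y}$ on the nose, so $\O_{\hat{y}}=\psi_y\circ\O_{x_0}$ \emph{exactly}, with no correction term, no stabiliser bookkeeping, and no right translation; uniformity of the family $\{\O_{\hat{y}}\}$ then falls out of the fixed coarse equivalence $\O_{x_0}$ (Milnor--\v{S}varc) composed with the uniformly coarse equivalences $\{\psi_y\}$. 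Your proposal gestures at changing $\D$ but does not identify this choice of $\hat{y}$, and substitutes for it the incorrect ``change of basepoint is an isometry'' step; as written, the argument does not close.
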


\begin{proof}
Fix a point $x_0 \in X$ and a fundamental domain $\D' \subseteq X$ with $x_0 \in \D'$. Since the action is proper, it follows from the Milnor-\v{S}varc lemma that the orbit map $\O_{x_0}: \Gamma \longrightarrow \Gamma x_0$ is a coarsely equivalent. Now for any $y\in \D'$, by assumption there exists an equivariant map $\psi_{y}: \Gamma x_0 \to \Gamma y$ such that $\{\psi_{y}: y\in \D'\}$ is a uniformly coarse equivalence. Setting $\hat{y}:=\psi_y(x_0)$, the equivariance of $\psi_y$ implies that $\psi_y$ has the following form:
\[
\psi_y: \Gamma x_0 \longrightarrow \Gamma \hat{y}, \quad  \gamma x_0 \mapsto \gamma \hat{y} \quad \mbox{for}~ \gamma \in \Gamma.
\]
Taking $\D:=\{\hat{y}: y\in \D'\}$, it is clear that $\D$ is also a fundamental domain. Moreover, for $\hat{y} \in \D$ the orbit map $\O_{\hat{y}}$ coincides with $\psi_y \circ \O_{x_0}$. Hence we conclude the proof.
\end{proof}

Now we turn to the relation with \cite{FW16} and \cite{FWY20}. The general situation is still unclear while we try to offer some analysis in special cases.

To compare with \cite{FWY20}, note that having controlled distortion implies bounded distortion. We show in the following that for certain special case, these two notions are ``almost'' the same.

\begin{ex}\label{ex:semi-direct product}
Let $N,Q$ be countable discrete groups with an action $\alpha: Q \to \mathrm{Aut}(N)$. Equip the associated semi-direct product $N \rtimes Q$ with a proper length function $\ell$, which derives a left-invariant proper metric $d$. Consider the action of $N$ on $N \rtimes Q$ by left multiplication, which is clearly isometric and proper.

Assume that the action has bounded distortion with respect to the fundamental domain $Q$. Then obviously there exists a proper function $\rho_+: [0,\infty) \to [0,\infty)$ such that $d(hq,q) \leq \rho_+(\ell(h))$ for any $h\in N$ and $q\in Q$. Hence for any $h\in N$ and $q\in Q$, we also have:
\[
\ell(h) = d(q^{-1} h q \cdot q^{-1}, q^{-1}) \leq \rho_+(\ell(q^{-1} h q)) = \rho_+(d(hq,q)),
\]
which implies that the action has controlled distortion. Finally we remark that it is unclear to the authors that whether the same result holds if the action has bounded distortion with respect to an arbitrary fundamental domain.
\end{ex}

To compare with \cite{FW16}, recall that a recent ingenious example due to Arzhantseva and Tessera \cite{AT19} illuminates that a $\Gamma$-space might not admit a coarse embedding into Hilbert space even if both the group and the quotient space are coarsely embeddable. However as we show below, the situation often gets better under the hypothesis of controlled distortion.

\begin{lem}\label{lem:ctrl distortion and ce}
Let $(X,d)$ be a $\Gamma$-space with the quotient map $\pi: X \to X/\Gamma$. Equip $\Gamma$ with a proper left-invariant metric $d_\Gamma$ and $X/\Gamma$ with the quotient metric $d_q$. Assume that there exists a fundamental domain $\D \subseteq X$ such that $\pi|_{\D}: \D \to \pi(\D)$ is a coarse equivalence and the action has controlled distortion with respect to $\D$. Then $X$ is coarsely equivalent to the product metric $\Gamma \times (X/\Gamma)$ equipped with the product metric $d_\Gamma \times d_q$.

Consequently, if additionally both $\Gamma$ and $X/\Gamma$ are coarsely embeddable, then so is $X$.
\end{lem}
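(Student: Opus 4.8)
The plan is to write down an explicit map $\Phi \colon \Gamma \times (X/\Gamma) \to X$ and verify it is a coarse equivalence; the second statement then follows immediately since a product of two coarsely embeddable spaces is coarsely embeddable (coarse embeddability is stable under finite products for the $\ell^1$-type product metric $d_\Gamma \times d_q$), and coarse embeddability is invariant under coarse equivalence. To build $\Phi$, first use the hypothesis that $\pi|_{\D} \colon \D \to \pi(\D)$ is a coarse equivalence: composing a coarse inverse of $\pi|_{\D}$ with the coarse equivalence $\pi(\D) \hookrightarrow X/\Gamma$ gives, for each point $z \in X/\Gamma$, a choice of representative $s(z) \in \D$ with $\pi(s(z))$ close to $z$, and the assignment $z \mapsto s(z)$ is a coarse equivalence $X/\Gamma \to \D$. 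Then set $\Phi(\gamma, z) := \gamma \cdot s(z)$. Since $\D$ is a fundamental domain, $\Phi$ is a bijection onto $X$ (every point of $X$ lies in exactly one orbit $\Gamma s(z)$, and properness of the action plus $\D$ being a set of orbit representatives makes the $\Gamma$-coordinate well-defined up to the stabiliser, which is finite — one handles this either by passing to a net or by absorbing the bounded ambiguity into the coarse estimates).

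Next I would establish the two-sided control estimates for $\Phi$. For the upper bound: $d(\gamma s(z), \gamma' s(z')) \le d(\gamma s(z), \gamma s(z')) + d(\gamma s(z'), \gamma' s(z'))$. The first term equals $d(s(z), s(z'))$ by isometry of the action, which is controlled by $d_q(z,z')$ since $s$ is a coarse embedding. The second term is $d(\gamma s(z'), \gamma' s(z')) = d(\gamma^{-1}\gamma' \cdot \text{(something)}, \dots)$; here is exactly where \emph{controlled distortion with respect to $\D$} enters — the orbit maps $\{\O_y \colon y \in \D\}$ being uniformly coarsely equivalent means there are uniform control functions $\rho_\pm$ with $\rho_-(d_\Gamma(\gamma,\gamma')) \le d(\gamma y, \gamma' y) \le \rho_+(d_\Gamma(\gamma,\gamma'))$ for all $y \in \D$, in particular for $y = s(z')$. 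Combining gives the upper bound on $d(\Phi(\gamma,z), \Phi(\gamma',z'))$ in terms of $d_\Gamma(\gamma,\gamma')$ and $d_q(z,z')$. For the lower bound one argues in the reverse direction: from $d(\gamma s(z), \gamma' s(z'))$ small, first deduce $d_q(z,z')$ is small by applying $\pi$ (which is $1$-Lipschitz) and using that $\pi(s(z)) = \pi(s(z'))$ forces $z, z'$ close via the coarse equivalence $\pi|_\D$; once $z, z'$ are close, $s(z)$ and $s(z')$ are within a bounded distance $K$, so $d(\gamma s(z'), \gamma' s(z')) \le d(\gamma s(z), \gamma' s(z')) + K$ is small, and then uniform controlled distortion gives $d_\Gamma(\gamma, \gamma')$ small via $\rho_-^{-1}$.

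Finally, surjectivity of $\Phi$ onto a net: since $\D$ is a fundamental domain, $\Phi$ is genuinely onto $X$, so its image is all of $X$ and in particular a net. The one point requiring care throughout is the finite-stabiliser ambiguity in the $\Gamma$-coordinate: two group elements $\gamma, \gamma'$ with $\gamma s(z) = \gamma' s(z)$ differ by an element of the (finite, by properness) stabiliser of $s(z)$, so the "inverse" of $\Phi$ is only coarsely well-defined, but since these stabilisers have uniformly bounded size only if we know the action is proper with bounded geometry — which we do not assume uniformly here — the cleanest route is to not invert $\Phi$ pointwise but instead check directly that $\Phi$ is a coarse equivalence using the definition (a coarse embedding with net image), for which the estimates above suffice. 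I expect the main obstacle to be bookkeeping the interplay of the three control functions (from $s$, from $\pi|_\D$, and from controlled distortion) to get genuinely \emph{uniform} bounds independent of which representative $s(z)$ is chosen — this is precisely what the "with respect to $\D$" clause in the controlled distortion hypothesis is designed to supply, so the argument should go through without extra hypotheses.
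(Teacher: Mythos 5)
Your proposal is correct and mirrors the paper's proof: both arguments implicitly or explicitly parameterize $X$ by $\Gamma \times \D$ (with $\D$ identified with $X/\Gamma$ via $\pi|_\D$), bound $d(\gamma_1 x_{\lambda_1},\gamma_2 x_{\lambda_2})$ above by the triangle inequality via an intermediate point $\gamma_1 x_{\lambda_2}$, and bound it below by noting $\pi$ is $1$-Lipschitz and collapses orbits, then feeding this into the control functions for $\pi|_\D$ and for the uniformly coarsely equivalent orbit maps. One small wording glitch: the phrase ``$\pi(s(z)) = \pi(s(z'))$ forces $z,z'$ close'' should read that $\pi(\gamma s(z)) = \pi(s(z))$ (and similarly for $z'$), so $d_q(\pi(s(z)),\pi(s(z')))$ is small, and then the coarse equivalence $\pi|_\D$ gives $d(s(z),s(z'))$ small — but this is exactly the calculation in the paper, so the idea is the same.
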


\begin{proof}
Assume that $\D=\{x_\lambda: \lambda \in \Lambda\}$, and $\pi|_{\D}$ is a coarse equivalence with controlled functions $\rho_+$ and $\rho_-$. Also assume that the orbits maps $\{\O_{x_\lambda}: \lambda \in \Lambda\}$ are uniformly coarsely equivalent with the same controlled functions. For any two points $\gamma_1 x_{\lambda_1}$ and $\gamma_2 x_{\lambda_2}$ in $X$, note that
\begin{align*}
d(\gamma_1 x_{\lambda_1}, \gamma_2 x_{\lambda_2}) &= d(x_{\lambda_1}, \gamma_1^{-1}\gamma_2 x_{\lambda_2}) \leq d(x_{\lambda_1}, x_{\lambda_2}) + d(\gamma_1^{-1}\gamma_2 x_{\lambda_2}, x_{\lambda_2}) \\
&\leq \rho_-^{-1}(d_q(\pi(x_{\lambda_1}), \pi(x_{\lambda_2}))) + \rho_+(d_\Gamma(\gamma_1, \gamma_2)).
\end{align*}

On the other hand, given $R>0$ and assume that $d(\gamma_1 x_{\lambda_1}, \gamma_2 x_{\lambda_2}) \leq R$. Then
\[
d_q(\pi(x_{\lambda_1}), \pi(x_{\lambda_2})) = d_q(\pi(\gamma_1 x_{\lambda_1}), \pi(\gamma_2 x_{\lambda_2})) \leq R,
\]
which implies that $d(x_{\lambda_1}, x_{\lambda_2}) \leq \rho_+(R)$. Moreover, we have
\[
d(x_{\lambda_2}, \gamma_1^{-1} \gamma_2 x_{\lambda_2}) \leq d(x_{\lambda_1}, x_{\lambda_2}) + d(x_{\lambda_1}, \gamma_1^{-1} \gamma_2 x_{\lambda_2}) \leq \rho_+(R) + R,
\]
which implies that $d_{\Gamma}(\gamma_1, \gamma_2) \leq \rho_-^{-1}(\rho_+(R) + R)$.

Combining the above two paragraphs, we conclude the proof.
\end{proof}

\begin{rem}\label{rem:semi-direct}
It is unclear to the authors whether Lemma \ref{lem:ctrl distortion and ce} holds or not without the assumption that $\pi|_{\D}$ is a coarse equivalence. On the other hand, note that this condition holds for many examples, including the semi-direct product case studied in Example \ref{ex:semi-direct product}.

More precisely, it is obvious that the quotient map $N\rtimes Q \to Q$ restricted to the fundamental domain $Q$ is a coarse equivalence. As a consequence to Lemma \ref{lem:ctrl distortion and ce}, we obtain that $N\rtimes Q$ is coarsely embeddable if both $N$ and $Q$ are coarsely embeddable and the action of $N$ on $N \rtimes Q$ by left multiplication has bounded distortion with respect to $Q$. Finally, note that the example constructed in \cite{AT19} has the form of semi-direct products. It follows by straightforward calculations that their example does not have bounded distortion, alternatively by combining the fact that it is not coarsely embeddable together with the analysis above.
\end{rem}

\subsection{(Equivariant) Roe algebras}

Now we introduce the notion of Roe algebras and their equivariant counterparts.

For a proper metric space $(Z,d)$, recall that a \emph{$Z$-module} is a non-degenerate $\ast$-representation $\phi: C_0(Z) \to \B(\H_Z)$ where $\H_Z$ is some infinite-dimensional separable Hilbert space. We also say that $\H_Z$ is a $Z$-module if the representation is clear from the context, and simply write $f$ as a bounded linear operator on $\H_Z$ instead of $\phi(f)$ for $f\in C_0(Z)$. A $Z$-module is called \emph{ample} if no non-zero element of $C_0(Z)$ acts as a compact operator on $\H_Z$.

\begin{defn}\label{defn: Roe algebra}
Let $\H_Z$ be an ample module of a proper metric space $(Z,d)$.
\begin{enumerate}
\item[(1)] For $T\in B(\H_Z)$, its \emph{support} $\supp(T)$ is defined to be the complement of the set of points $(x,y)\in Z \times Z$ for which there exist $f,g\in C_0(Z)$ satisfying:
\[
gTf=0,\quad f(x)\neq 0 \quad \mbox{and} \quad g(y) \neq 0.
\]
\item[(2)] The {\em propagation} of $T\in B(\H_Z)$ is defined to be
\[
\ppg(T):=\sup\{d(x,y):(x,y)\in \supp(T)\}.
\]
We say that $T$ has {\em finite propagation} if the number $\ppg(T)$ is finite.
\item[(3)] An operator $T\in B(\H_Z)$ is called \emph{locally compact} if $fT$ and $Tf$ are compact for any $f\in C_0(Z)$.
\end{enumerate}
\end{defn}

\begin{defn}\label{defn:Roe alg}
	For a proper metric space $Z$ and an ample $Z$-module $\H_Z$, the \emph{algebraic Roe algebra} $\CC[\H_Z]$ of $\H_Z$ is defined to be the $*$-algebra of locally compact finite propagation operators on $\H_Z$, and the \emph{Roe algebra $C^*(\H_Z)$} of $\H_Z$ is defined to be the norm-closure of $\CC[\H_Z]$ in $\B(\H_Z)$.
\end{defn}

It is a standard result that the Roe algebra $C^*(\H_Z)$ does not depend on the chosen ample module $\H_Z$ up to $*$-isomorphisms, hence denoted by $C^*(Z)$ and called the \emph{Roe algebra of $Z$}. Furthermore, $C^*(Z)$ is a coarse invariant of the metric space $Z$ (up to non-canonical $*$-isomorphisms), and their $K$-theories are coarse invariants up to canonical isomorphisms (see, \emph{e.g.}, \cite{roe1993coarse}).

Now we move to the equivariant case. Here we follow the setting of \cite[Section 4.5 and 5.2]{willett2020higher}, which is in fact equivalent to those in \cite{DFW21, FW16, Roe96}.

For a proper $\Gamma$-space $(Z,d)$, we define a $\Gamma$-action on $C_0(Z)$ by
\[
(\gamma\cdot f)(x)=f(\gamma^{-1}x)
\]
for all $\gamma \in \Gamma$ and $f\in C_0(Z)$. For a unitary representation $U:\Gamma\rightarrow \mathcal {U}(\H)$ on some Hilbert space $\H$, we denote the adjoint
\[
\Ad_{U_\gamma} (T) = U_\gamma T U_\gamma^* =:\gamma \cdot T
\]
for $\gamma \in \Gamma$ and $T \in \B(\H)$. An operator $T \in \B(\H)$ is called ($\Gamma$-)\emph{invariant} if $\gamma \cdot T = T$ for any $\gamma \in \Gamma$. Denote the set of all $\Gamma$-invariant operators by $\B(\H)^\Gamma$.

The following notion essentially comes from \cite[Section 4.5]{willett2020higher}.

\begin{defn}\label{defn: covariant system}
Let $(Z,d)$ be a proper $\Gamma$-space, and $(\H_Z,\phi)$ be an ample $Z$-module.
\begin{enumerate}
 \item $\H_Z$ is called \emph{covariant} if it is equipped with a unitary representation $U:\Gamma\rightarrow \mathcal {U}(\H_Z)$ such that for any $\gamma\in \Gamma$, we have:
\[
\phi(\gamma \cdot f) = \gamma \cdot \phi(f) \left(= U_\gamma \phi(f)  U_\gamma^*\right).
\]
 \item $\H_Z$ is called \emph{locally free} if for any finite subgroup $F$ of $\Gamma$ and any $F$-invariant Borel subset $E$ of $Z$, there exists a Hilbert space $\H_E$ equipped with the trivial representation of $F$ such that $\chi_E \H_Z$ and $\ell^2(F) \otimes \H_E$ are isomorphic as $F$-representations.
\end{enumerate}
To simplify the notation, we call an ample covariant locally free $Z$-module an \emph{admissible} $Z$-module.
\end{defn}

It follows from \cite[Lemma 4.5.5]{willett2020higher} that admissible modules always exist. Now we introduce the notion of equivariant Roe algebras (see, \emph{e.g.}, \cite[Definition 5.2.1]{willett2020higher}).

%
%
%

\begin{defn}
Let $(Z,d)$ be a proper $\Gamma$-space, and $(\H_Z,\phi)$ be an admissible $Z$-module. The \emph{algebraic equivariant Roe algebra} $\CC[\H_Z]^\Gamma$ of $\H_Z$ is defined to be the $*$-algebra of locally compact finite propagation $\Gamma$-invariant operators on $\H_Z$, and the \emph{equivariant Roe algebra $C^*(\H_Z)^\Gamma$} of $\H_Z$ is defined to be the norm-closure of $\CC[\H_Z]^\Gamma$ in $\B(\H_Z)$.
\end{defn}


Analogous to the case of Roe algebras, it follows from \cite[Theorem 5.2.6]{willett2020higher} that the equivariant Roe algebra $C^*(\H_Z)^\Gamma$ does not depend on the chosen admissible module $\H_Z$ up to $*$-isomorphisms, hence denoted by $C^*(Z)^\Gamma$ and called the \emph{equivariant Roe algebra of $Z$}. Moreover, equivariant Roe algebras are invariant under equivariant coarse equivalences (up to non-canonical $*$-isomorphisms), and their $K$-theories are equivariant coarse invariants up to canonical isomorphisms.

We would also like to point out an important observation from \cite[Lemma 5.14]{Roe96} that the equivariant Roe algebra $C^*(Z)^\Gamma$ is Morita equivalent to the reduced group $C^*$-algebra $C_r^*(\Gamma)$ provided that the action is cocompact in the sense that $Z/\Gamma$ is compact (see also \cite[Theorem 5.3.2]{willett2020higher}). This allows us to include the setting of the classical Baum-Connes conjecture to the scope of equivariant coarse Baum-Connes conjecture which will be introduced below.



\subsection{Equivariant $K$-homology groups and index maps}
Let us briefly recall the notion of equivariant $K$-homology groups introduced by Kasparov \cite{Kas88}.

\begin{defn}\label{defn: K-homology}
Let $(Z,d)$ be a proper $\Gamma$-space. For $i=0$ and $1$, the \emph{equivariant $K$-homology group} $K_i^\Gamma(Z)=KK_i^\Gamma(C_0(Z),\CC)$ is generated by certain cycles modulo a certain equivalent relation:

\begin{enumerate}[(1)]
\item each cycle for $K_0^\Gamma(Z)$ is a triple $(\H,\phi,F)$, where $\phi:C_0(Z)\to B(\H)$ is a covariant $*$-representation and $F\in B(\H)^\Gamma$ such that $\phi(f) F-F\phi(f)$, $\phi(f)(FF^*-I)$ and $\phi(f)(F^*F-I)$ are compact operators for all $f\in C_0(Z)$ and $\gamma\in \Gamma$;
\item each cycle for $K_1^\Gamma(Z)$ is a triple $(\H,\phi,F)$, where $\phi:C_0(Z)\to B(\H)$ is a covariant $*$-representation and $F\in B(\H)^\Gamma$ is self-adjoint such that $\phi(f)(F^2-I)$ and $\phi(f) F-F\phi(f)$ are compact for all $f\in C_0(Z)$ and $\gamma\in \Gamma$.
\end{enumerate}
In both cases, the equivalence relation on cycles is given by homotopy of the operator $F$.
\end{defn}

Now we define the \emph{equivariant index map} for a $\Gamma$-space $Z$:
\[
\Ind^\Gamma: K^\Gamma_*(Z)\longrightarrow K_*(C^*(Z)^\Gamma).
\]
Note that every class in $K^{\Gamma}_*(Z)$ can be represented by a cycle $(\H,\phi,F)$ such that $(\H,\phi)$ is an admissible $Z$-module (see, \emph{e.g.}, \cite[Section 3]{KY12}).

Consider such a cycle $(\H,\phi, F)$. Take a locally finite, $\Gamma$-equivariant and uniformly bounded open cover $\{U_i\}_{i\in I}$ of $Z$, and let $\{\psi_i\}_{i\in I}$ be a $\Gamma$-equivariant partition of unity subordinate to $\{U_i\}_{i\in I}$. Define
\[
F'=\sum\limits_{i\in I}\phi(\sqrt{\psi_i})F\phi(\sqrt{\psi_i}),
\]
where the sum converges in strong topology. It is clear that the cycle $(\H,\phi, F')$ is equivalent to $(\H, \phi, F)$ in $K^\Gamma_0(Z)$ and $F'$ has finite propagation. Hence $F'$ is a multiplier of $C^*(Z)^\Gamma$ and invertible modulo $C^*(Z)^\Gamma$. Applying the boundary map, $F'$ gives rise to an element in $K_0(C^*(Z)^\Gamma)$, denoted by $\partial([F'])$. We define the equivariant index map
\[
\Ind^\Gamma: K^\Gamma_0(Z)\longrightarrow K_0(C^*(Z)^\Gamma)
\]
by
\[
\Ind^\Gamma([(\H,\phi, F)])=\partial([F'])
\]
with the notation as above. Similarly, we can define the equivariant index map for the $K_1$-case:
\[
\Ind^\Gamma: K^\Gamma_1(Z)\longrightarrow K_1(C^*(Z)^\Gamma).
\]

\subsection{The equivariant coarse Baum--Connes conjecture}\label{ssec: equiv CBC}
In this subsection, we shall recall the equivariant coarse Baum--Connes conjecture for a discrete metric space with bounded geometry.

Let $(X,d)$ be a discrete metric space with bounded geometry. For $r>0$, the \emph{Rips complex $P_r(X)$ with scale $r$} is the simplicial complex with vertices $X$ such that a finite subset $Y=\{x_0, x_1, \cdots, x_n\}\subseteq X$ spans a simplex if and only if $\diam(Y) \leq r$.

The Rips complex $P_r(X)$ is endowed with the following \emph{spherical metric}. On each connected component of $P_r(X)$, the spherical metric is the maximal metric whose restriction on each simplex $\Delta:=\{\sum_{i=0}^n t_ix_i : t_i\ge0,\sum_{i=0}^n t_i=1\} $ is the metric obtained by identifying $\Delta$ with $ S_+^n $ via the map
\[
\sum_{i=0}^n t_ix_i \mapsto \left(\frac{t_0}{\sqrt{\sum_{i=0}^n t_i^2}},\frac{t_1}{\sqrt{\sum_{i=0}^n t_i^2}},\dots,\frac{t_n}{\sqrt{\sum_{i=0}^n t_i^2}}\right),
\]
where $S_+^n:=\{(s_0,s_1,\dots,s_n)\in\RR^{n+1} : s_i\ge0,\sum_{i=0}^n s_i^2=1\} $ is endowed with the standard Riemannian metric. If $y_0$ and $y_1$ belong to two different connected components $Y_0$ and $Y_1$ of $P_r(X)$, respectively, we define
\[
d(y_0,y_1)=\min\big\{d(y_0,x_0)+d_X(x_0,x_1)+d(x_1,y_1) : x_0\in X\cap Y_0,x_1\in X\cap Y_1\big\}.
\]


Assume further that $X$ is a $\Gamma$-space. Then each Rips complex $P_r(X)$ also admits a $\Gamma$-action defined by
\[
\gamma \cdot \sum\limits_{i=1}^k c_i x_i=\sum\limits_{i=1}^k c_i( \gamma x_i)
\]
for all $\sum\limits_{i=1}^k c_i x_i \in P_r(X)$ and $\gamma\in \Gamma$. It is clear that the action is proper and isometric, and hence $P_r(X)$ is a proper $\Gamma$-space as well.

The following is the main conjecture we study in this paper:

\begin{conj}[The equivariant coarse Baum--Connes conjecture]
Let $X$ be a discrete metric space with bounded geometry and $\Gamma$ be a countable discrete group acting on $X$ properly by isometries. Then the equivariant index map
\[
\Ind^\Gamma: \lim_{r\rightarrow\infty} K_*^\Gamma(P_r(X))\longrightarrow \lim_{r\rightarrow\infty}K_*(C^*(P_r(X))^\Gamma)\cong K_*(C^*(X)^\Gamma)
\]
is an isomorphism for $\ast=0,1$.
\end{conj}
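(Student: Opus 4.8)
As worded, this conjecture cannot be proved in full generality: specialising to the trivial group $\Gamma$ it becomes the coarse Baum--Connes conjecture, which is known to fail for suitable bounded geometry metric spaces (for instance already for expander-type spaces). So the realistic target, and the content of Theorem \ref{thm:main result equiv. CBC}, is this conjecture under geometric hypotheses on the $\Gamma$-space $X$: bounded geometry, controlled distortion, coarse embeddability of the quotient $X/\Gamma$, and a-T-menability of $\Gamma$. The plan below is for that case, and follows the machinery of Yu as organised in \cite{willett2020higher}.

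\emph{Step 1 (Mayer--Vietoris reduction).} First I would set up an equivariant coarse Mayer--Vietoris sequence for the equivariant Roe and localisation algebras, and use it to chop $X$ --- hence each Rips complex $P_r(X)$ --- into pieces on which the geometry is uniformly controlled, decomposing the associated equivariant algebras compatibly with the $\Gamma$-action. Controlled distortion is what guarantees that, up to uniform coarse equivalence, each piece looks like a product of an orbit with a bounded subset of the coarsely embeddable quotient; iterating the decomposition and passing to the appropriate limits reduces the statement to showing that $\Ind^\Gamma$ is an isomorphism for a sequence of cocompact $\Gamma$-actions equipped with block-diagonal operators, as in Corollary \ref{cor:final reduction}.

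\emph{Step 2 (equivariant twisted algebras and index maps).} Using a-T-menability, fix a proper affine isometric action of $\Gamma$ on a Hilbert space (equivalently, a proper conditionally negative definite kernel). Combining this with the coarse embedding of the quotient, I would build the equivariant twisted Roe and localisation algebras (Definitions \ref{defn:twisted Roe} and \ref{defn:twisted localisation}) and, crucially, define the Bott and Dirac maps \emph{directly on the $K$-theory of the whole algebras} rather than as asymptotic morphisms on dense subalgebras, via the Bott--Dirac operator (Proposition \ref{prop:index maps defn}). The device that makes this possible is the ``space for speed'' argument from \cite{willett2020higher}: letting the twisting vary slowly over an auxiliary parameter produces genuine $C^*$-algebra homomorphisms, which is precisely what sidesteps the coarse $K$-amenability obstruction that blocked the approach of \cite{DFW21}.

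\emph{Step 3 (local isomorphism and diagram chase).} The heart of the argument is to prove that the equivariant twisted Roe algebra and the equivariant twisted localisation algebra have the same $K$-theory (Theorem \ref{thm:iso. of twisted algebras in $K$-theory}). This is where I expect the main difficulty: it requires a family/coefficient version of the Baum--Connes conjecture for $\Gamma$, applied uniformly over the cocompact pieces produced in Step 1; this holds because $\Gamma$ is a-T-menable, by the Higson--Kasparov theorem, but extracting the precise uniform statement with coefficients needs care --- this is exactly what the appendix is for. Granting the local isomorphism, one assembles a commuting diagram relating $\Ind^\Gamma$ to its twisted analogue through the Bott and Dirac maps, in which the twisted localisation side is computable, and concludes by a routine diagram chase that $\Ind^\Gamma$ is an isomorphism.
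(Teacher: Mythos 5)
Your reading of the statement is right: as phrased it is a conjecture that fails in general (already for trivial $\Gamma$ via expanders), and the provable content is Theorem \ref{thm:main result equiv. CBC} under the hypotheses of controlled distortion, coarse embeddability of $X/\Gamma$, and a-T-menability of $\Gamma$. Your three-step outline — equivariant Mayer--Vietoris reduction to sequences of cocompact actions with block-diagonal operators, construction of equivariant twisted algebras with index maps defined directly on $K$-theory via the Bott--Dirac operator and the ``space for speed'' device, and the local isomorphism proved through a family version of the Baum--Connes conjecture with coefficients for a-T-menable groups followed by a diagram chase — is exactly the strategy the paper carries out.
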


As noticed in Introduction, the equivariant coarse Baum-Connes conjecture includes both the classical Baum-Connes conjecture for groups and the coarse Baum-Connes conjecture for metric spaces as special cases. More precisely, it coincides with the Baum-Connes conjecture when the action is cocompact, and when the action is trivial it is nothing but the coarse Baum-Conness conjecture (see, \emph{e.g.}, \cite[Section 7]{willett2020higher}).

We reformulate the following significant result due to Higson and Kasparov \cite{HK01}. Recall that a discrete group $\Gamma$ is said to be \emph{a-T-menable} \cite{Gro93} (or has the \emph{Haagerup property}) if it admits a metrically proper action on some Hilbert space by affine isometries.

\begin{prop}[\cite{HK01}]\label{prop:equiv-CBC for cocompact}
Let $\Gamma$ be a countable discrete a-T-menable group acting properly and cocompactly on a proper metric space $X$ with bounded geometry by isometries. Then the equivariant index map
\[
\mathrm{Ind}^\Gamma: \lim_{r\to \infty} K^\Gamma_\ast(P_r(X)) \longrightarrow K_\ast(C^*(X)^\Gamma)
\]
is an isomorphism for $\ast=0,1$.
\end{prop}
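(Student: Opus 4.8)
The plan is to reduce the assertion to the classical (reduced) Baum--Connes conjecture with trivial coefficients, which for a-T-menable groups is exactly the theorem of Higson and Kasparov \cite{HK01}. Concretely, one identifies the source of $\mathrm{Ind}^\Gamma$ with the topological $K$-theory $K^{\mathrm{top}}_\ast(\Gamma)=RK^\Gamma_\ast(\underline{E}\Gamma)$, the target with $K_\ast(C^*_r(\Gamma))$, and the map itself with the Baum--Connes assembly map; then \cite{HK01} finishes the proof.

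For the source, note that since $\Gamma$ acts properly, isometrically and cocompactly on the bounded geometry proper metric space $X$, every Rips complex $P_r(X)$ is a finite-dimensional, locally finite $\Gamma$-simplicial complex on which $\Gamma$ acts properly (the stabiliser of a simplex is finite by properness of the action on $X$) and cocompactly (since $X/\Gamma$ is compact). Thus each $P_r(X)$ is a cocompact proper $\Gamma$-CW-complex; by the universal property of the classifying space $\underline{E}\Gamma$ for proper actions it admits a $\Gamma$-map to $\underline{E}\Gamma$, unique up to $\Gamma$-homotopy, and these maps are compatible with the structure maps $P_r(X)\to P_{r'}(X)$ for $r\le r'$. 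A standard cofinality argument --- using the Milnor--\v{S}varc lemma, so that $X$ is equivariantly coarsely equivalent to $\Gamma$ and the family $\{P_r(X)\}_r$ becomes cofinal among the cocompact $\Gamma$-subcomplexes exhausting $\underline{E}\Gamma$ --- then shows that the induced map
\[
\lim_{r\to\infty} K^\Gamma_\ast(P_r(X)) \longrightarrow RK^\Gamma_\ast(\underline{E}\Gamma) = K^{\mathrm{top}}_\ast(\Gamma)
\]
is an isomorphism; I would cite \cite[Section 5]{Roe96} and \cite[Section 7]{willett2020higher} for this.

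For the target, recall from \cite[Lemma 5.14]{Roe96} and \cite[Theorem 5.3.2]{willett2020higher} that cocompactness of the action makes $C^*(X)^\Gamma$ Morita equivalent to the reduced group $C^*$-algebra $C^*_r(\Gamma)$, so that $K_\ast(C^*(X)^\Gamma)=\lim_{r}K_\ast(C^*(P_r(X))^\Gamma)\cong K_\ast(C^*_r(\Gamma))$ canonically. The crucial point is then the compatibility: under the two identifications just described, $\mathrm{Ind}^\Gamma$ is carried to the Baum--Connes assembly map $\mu_\Gamma\colon K^{\mathrm{top}}_\ast(\Gamma)\to K_\ast(C^*_r(\Gamma))$. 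This requires comparing the ``averaged operator followed by the boundary map'' description of $\mathrm{Ind}^\Gamma$ recalled in Section~\ref{ssec: equiv CBC} with Kasparov's $KK$-theoretic descent-and-pairing construction of $\mu_\Gamma$; this identification is carried out in \cite[Section 5]{Roe96} (see also \cite[Section 7]{willett2020higher}), and I would invoke it rather than reprove it.

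Granting these three points, the Proposition follows at once: Higson and Kasparov \cite{HK01} prove that $\mu_\Gamma$ is an isomorphism for every a-T-menable group $\Gamma$ --- in fact with arbitrary coefficients, hence in particular with coefficients $\CC$ --- so $\mathrm{Ind}^\Gamma$ is an isomorphism for $\ast=0,1$. I expect the main obstacle to be the compatibility step, since matching Roe's coarse index map with the classical assembly map requires keeping careful track of the Morita bimodule, of base points, and of the partitions of unity entering both constructions; however, as this is by now well documented one can legitimately handle it by precise citation. An alternative, self-contained route would bypass classical Baum--Connes entirely and run the infinite-dimensional Bott periodicity / Dirac--dual-Dirac machinery of \cite{HK01} directly at the level of equivariant Roe and localisation algebras --- which is essentially what the appendix of the present paper does in the more general family-with-coefficients setting --- but going through the already-established classical conjecture is the shorter path here.
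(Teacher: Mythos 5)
Your proposal is correct and matches what the paper itself does: the paper states this Proposition as a reformulation of Higson--Kasparov, justified by the observation (attributed to Roe, and recalled in the surrounding text with citations to \cite[Lemma 5.14]{Roe96} and \cite[Theorem 5.3.2, Section 7]{willett2020higher}) that for cocompact actions $C^*(X)^\Gamma$ is Morita equivalent to $C^*_r(\Gamma)$ and $\mathrm{Ind}^\Gamma$ identifies with the classical assembly map, so the statement is literally the a-T-menable case of the Baum--Connes conjecture. Your filling-in of the source identification with $RK^\Gamma_\ast(\underline{E}\Gamma)$ via cofinality of the Rips complexes and of the compatibility of the two index-map descriptions is exactly the content the paper delegates to \cite{Roe96} and \cite{willett2020higher}.
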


\subsection{Equivariant localisation algebras}

Here we introduce another version on the $K$-homology group, which was originally introduced by Yu \cite{Yu97}.

\begin{defn}\label{defn: equiv localisation alg}
Let $(Z,d)$ be a proper $\Gamma$-space. The \emph{equivariant localisation algebra} $C_L^*(Z)^\Gamma$ is the supremum norm closure of the algebra of all bounded and uniformly norm-continuous functions
\[
f:[1,+\infty)\rightarrow C^*(Z)^\Gamma
\]
such that
\[
\ppg(f(t))\rightarrow 0 \quad \mbox{and} \quad t\rightarrow\infty.
\]
\end{defn}

We recall that for a proper $\Gamma$-space $Z$, there exists an equivariant local index map which originally comes from \cite{Yu97} (with an equivariant formulation from \cite{FWY20}):
\[
\Ind^\Gamma_L: K_*^\Gamma(Z)\longrightarrow K_*(C^*_L(Z)^\Gamma).
\]
Moreover, we have the following result which is an equivariant analogue of \cite[Theorem 3.2]{Yu97} and \cite[Theorem 3.4]{QR10}.

\begin{prop}\label{prop: equiv localisation alg = K-homology}
Let $Z$ be a proper $\Gamma$-space. Then the equivariant local index map
\[
\Ind^\Gamma_ L: K_*^{\Gamma}(Z)\rightarrow K_*(C_L^*(Z)^{\Gamma})
\]
is an isomorphism for $\ast=0,1$.
\end{prop}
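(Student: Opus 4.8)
The statement to prove is Proposition~\ref{prop: equiv localisation alg = K-homology}: the equivariant local index map $\Ind^\Gamma_L\colon K^\Gamma_*(Z)\to K_*(C^*_L(Z)^\Gamma)$ is an isomorphism for a proper $\Gamma$-space $Z$.

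\medskip

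\textbf{Overall strategy.} The plan is to follow the standard ``cutting and pasting'' argument of Yu \cite{Yu97} and its refinement in \cite{QR10}, carried out $\Gamma$-equivariantly as in \cite{FWY20}. The point is that both $K^\Gamma_*(-)$ and $K_*(C^*_L(-)^\Gamma)$ are functors on (a suitable category of) proper $\Gamma$-spaces and proper equivariant maps, both satisfy a Mayer--Vietoris sequence for $\Gamma$-invariant closed decompositions $Z = Z_1\cup Z_2$ (with $Z_1\cap Z_2$ again a proper $\Gamma$-space), both are homotopy invariant for equivariant homotopies, both commute with increasing unions / direct limits of $\Gamma$-invariant subspaces (strong Lindel\"of / continuity arguments), and the local index map $\Ind^\Gamma_L$ is a natural transformation between them compatible with all of this structure. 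So the scheme is: verify these axioms on each side; check $\Ind^\Gamma_L$ is natural and compatible with the Mayer--Vietoris boundary maps and with direct limits; establish the isomorphism on a ``building block'' case; then bootstrap by a five-lemma/Mayer--Vietoris induction to all finite-dimensional $\Gamma$-simplicial complexes, and finally pass to the general proper $\Gamma$-space by a limit argument.

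\medskip

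\textbf{Key steps, in order.}
First I would reduce to the case where $Z$ is a locally finite-dimensional $\Gamma$-simplicial complex with a cocompact-on-skeleta action, using that an arbitrary proper $\Gamma$-space is equivariantly homotopy equivalent (or at least: has the same $K^\Gamma_*$ and $C^*_L(-)^\Gamma$-$K$-theory, via an anti-\v{C}ech / nerve approximation) to a directed system of such complexes, together with continuity of both functors under direct limits. Second, I would set up the equivariant Mayer--Vietoris sequences: for $K^\Gamma_*$ this is Kasparov theory (the $KK^\Gamma$ six-term sequence associated to the pullback square of $C^*$-algebras $C_0(Z)\to C_0(Z_1)\oplus C_0(Z_2)\rightrightarrows C_0(Z_1\cap Z_2)$, or equivalently the equivariant version of \cite[Section 3]{QR10}); for the localisation side one uses that $C^*_L(Z_1\cup Z_2)^\Gamma$ is (after a reparametrisation/uniform-excision argument on propagation) an extension built from $C^*_L(Z_i)^\Gamma$ and $C^*_L(Z_1\cap Z_2)^\Gamma$, which gives the six-term $K$-theory sequence. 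Third, naturality: $\Ind^\Gamma_L$ is induced by representing a $K$-homology class via an admissible module and an operator $F$ and ``spreading it out'' with a partition of unity so that propagation tends to $0$; this is manifestly functorial for proper equivariant maps and, with a standard diagram chase, commutes with the Mayer--Vietoris connecting maps. Fourth, the base case: for $Z$ a point (or more generally a space with a cocompact proper action and bounded geometry where the Rips stabilises), $\Ind^\Gamma_L$ is an isomorphism --- here one uses the equivariant ``evaluation at $t=1$''/Paschke-duality type identification $K_*(C^*_L(\mathrm{pt})^\Gamma)\cong K^\Gamma_*(\mathrm{pt})$, essentially an equivariant version of \cite[Lemma]{Yu97} that $C^*_L$ of a point computes representable $K$-homology; locally freeness of admissible modules is what makes the finite-isotropy contributions work. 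Fifth, induction on the dimension of the skeleton via Mayer--Vietoris and the five lemma: decompose the $n$-skeleton into (a neighbourhood of) the $(n-1)$-skeleton and a disjoint union of $n$-cells times their link, with intersection an equivariant mapping-cylinder neighbourhood; homotopy invariance collapses the cells, and the inductive hypothesis plus the five lemma upgrade the isomorphism from dimension $n-1$ to dimension $n$. Sixth, assemble the colimit over dimension and over the anti-\v{C}ech system to conclude for general $Z$.

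\medskip

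\textbf{Main obstacle.} The genuinely delicate point is the equivariant Mayer--Vietoris for the localisation algebra side, specifically the \emph{uniform} excision statement: one must show that for a $\Gamma$-invariant closed cover $Z=Z_1\cup Z_2$, the inclusion-induced maps fit into a short exact sequence of (equivariant) localisation algebras, which requires controlling propagation near the overlap uniformly in the localisation parameter $t$ and arranging the partition of unity used to define $F'$ (in the construction of $\Ind^\Gamma_L$) to be equivariant, finite-propagation, and compatible with the decomposition simultaneously --- the ``$\lim$ over $r$'' / reparametrisation bookkeeping is where the real work lies, and it has to be done while keeping every operator $\Gamma$-invariant. A secondary subtlety is making sure the admissible (ample, covariant, locally free) module hypothesis is preserved under all the restriction, amplification, and limit operations, so that \cite[Theorem 5.2.6]{willett2020higher}-type independence results apply at each stage; this is routine but must be tracked. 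Everything else is a transcription of the non-equivariant arguments of \cite{Yu97,QR10} into the $KK^\Gamma$ framework.
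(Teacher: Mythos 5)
The paper does not in fact prove Proposition~\ref{prop: equiv localisation alg = K-homology}: it states it as an equivariant analogue of \cite[Theorem 3.2]{Yu97} and \cite[Theorem 3.4]{QR10}, with the equivariant formulation of $\Ind^\Gamma_L$ taken from \cite{FWY20}, and offers no argument. So there is no ``paper proof'' to compare against; what I can do is assess your proposal against the arguments in those cited sources, which is clearly the intended content.

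Your outline is the right one and matches that literature: establish that $K^\Gamma_*(-)$ and $K_*(C^*_L(-)^\Gamma)$ both behave like (equivariant, locally finite) homology theories on proper $\Gamma$-spaces, check that $\Ind^\Gamma_L$ is a natural transformation compatible with boundary maps, verify a base case, and propagate by Mayer--Vietoris, homotopy invariance, induction on skeleta, and a colimit argument. Two points deserve to be stated more carefully, since as written they are slightly off. First, your base case should be spelled out as the orbit case $Z=\Gamma/F$ with $F\le\Gamma$ finite (not merely ``a point''): after the equivariant Mayer--Vietoris decompositions the elementary pieces are such orbits, and the identification $K_*^\Gamma(\Gamma/F)\cong K_*(C^*_L(\Gamma/F)^\Gamma)$ is exactly where the locally free hypothesis on admissible modules enters, giving $\chi_E\H\cong\ell^2(F)\otimes\H_E$ and hence the expected contribution from the representation theory of $F$ on both sides; you gesture at this but do not state it. Second, the claim that both functors ``commute with increasing unions / direct limits'' is not literally true: $KK^\Gamma_*(C_0(-),\CC)$ turns a nested union $Z=\bigcup_n Z_n$ into an inverse limit up to a $\lim^1$ term (a Milnor exact sequence), and the same $\lim^1$ phenomenon appears for $K_*(C^*_L(-)^\Gamma)$. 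What saves the argument is that $\Ind^\Gamma_L$ is compatible with both Milnor sequences, so the five lemma still closes the induction; this is exactly the care taken in \cite{QR10}, and your write-up should either invoke it explicitly or replace ``continuity'' by the correct Milnor-sequence comparison. With these two points made precise, the proposal is a faithful equivariant transcription of the Yu and Qiao--Roe arguments, which is precisely what the paper's citation asks a reader to supply.
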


On the other hand, note that for a proper $\Gamma$-space $Z$ there is a natural evaluation-at-one map
\[
\ev:C_L^*(Z)^{\Gamma} \to C^*(Z)^{\Gamma}
\]
given by
\[
\ev(f)=f(1)
\]
for all $f\in C_L^*(Z)^{\Gamma}$. This is clearly a $\ast$-homomorphism, and thus induces a homomorphism
\[
\ev_\ast:K_*(C_L^*(Z)^{\Gamma}) \to K_*(C^*(Z)^{\Gamma})
\]
on $K$-theories. Furthermore, we have the following commutative diagram:
\[
\xymatrix{
                &         K_*(C_L^*(Z)^{\Gamma}) \ar[dr]^{\ev_\ast}   &  \\
  K_*^{\Gamma}(Z) \ar[ur]_{\cong}^{\Ind_L^\Gamma}  \ar[rr]^{\Ind^\Gamma} & & K_*(C^*(Z)^\Gamma).
  }
\]

Consequently, for a discrete $\Gamma$-space $X$ with bounded geometry, in order to prove that equivariant coarse Baum-Connes conjecture holds for $X$ it suffices to show that the map
\[
\ev_\ast:\lim_{r\rightarrow \infty}K_*(C_L^*(P_r(X))^{\Gamma})\longrightarrow \lim_{r\rightarrow \infty}K_*(C^*(P_r(X))^{\Gamma}) \cong K_*(C^*(X)^\Gamma)
\]
induced by the evaluation-at-one map is an isomorphism for $\ast=0,1$.

\section{Reduction}\label{sec:reduction}

The aim of this section is to reduce the proof of Theorem \ref{thm:main result equiv. CBC} to the case of sequences of metric spaces with proper and cocompact group actions. Let us recall the following notion:

\begin{defn}[\cite{HRY93}]\label{defn:omega-excisive}
Let $Z$ be a proper metric space and $A,B$ be closed subsets in $Z$ such that $A \cup B = Z$. We say that $(A,B)$ is  \emph{$\omega$-excisive} if for any $r>0$ there exists $s>0$ such that
\[
\Nd_r(A) \cap \Nd_r(B) \subseteq \Nd_s(A \cap B).
\]
\end{defn}

Throughout the rest of this section, let us assume that $(X,d)$ is a proper $\Gamma$-space with bounded geometry. Denote by $\pi: X \to X/\Gamma$ the quotient map and equip $X /\Gamma$ with the quotient metric $\bar{d}$.

\begin{lem}\label{lem:pre-image}
For any $Z \subseteq X / \Gamma$ and $r>0$, we have $\Nd_r(\pi^{-1}(Z)) = \pi^{-1}(\Nd_r(Z))$.
\end{lem}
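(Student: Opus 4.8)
The statement is that $\Nd_r(\pi^{-1}(Z)) = \pi^{-1}(\Nd_r(Z))$ for any $Z \subseteq X/\Gamma$ and $r > 0$, where $\bar d$ is the quotient metric on $X/\Gamma$. The plan is to prove the two inclusions separately, the key point being the precise form of the quotient metric: for $\bar x, \bar y \in X/\Gamma$ one has $\bar d(\bar x, \bar y) = \inf\{ d(x', y') : x' \in \pi^{-1}(\bar x),\ y' \in \pi^{-1}(\bar y)\}$, and since the $\Gamma$-action is isometric with $\pi^{-1}(\bar x) = \Gamma x$ for any lift $x$, this infimum equals $\inf_{\gamma \in \Gamma} d(x, \gamma y) = \inf_{\gamma \in \Gamma} d(\gamma x, y) = d(x, \Gamma y)$ for any fixed lifts $x, y$.

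For the inclusion $\pi^{-1}(\Nd_r(Z)) \subseteq \Nd_r(\pi^{-1}(Z))$: take $x \in \pi^{-1}(\Nd_r(Z))$, so $\pi(x) \in \Nd_r(Z)$, meaning $\bar d(\pi(x), Z) \leq r$. I would argue that it suffices to produce a point of $\pi^{-1}(Z)$ within distance $r$ (or, if the infimum is not attained, within $r + \varepsilon$ for every $\varepsilon$, then use closedness of $\Nd_r(\cdot)$ — actually $\Nd_r$ is defined with $\leq r$, so I should be careful and instead argue via the infimum directly). Concretely, for any $\varepsilon > 0$ there is $\bar z \in Z$ with $\bar d(\pi(x), \bar z) < r + \varepsilon$; picking a lift $z_0 \in \pi^{-1}(\bar z)$ and using the formula above, there is $\gamma$ with $d(x, \gamma z_0) < r + \varepsilon$, and $\gamma z_0 \in \pi^{-1}(\bar z) \subseteq \pi^{-1}(Z)$. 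Hence $d(x, \pi^{-1}(Z)) \leq r + \varepsilon$ for all $\varepsilon > 0$, so $d(x, \pi^{-1}(Z)) \leq r$ and $x \in \Nd_r(\pi^{-1}(Z))$.

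For the reverse inclusion $\Nd_r(\pi^{-1}(Z)) \subseteq \pi^{-1}(\Nd_r(Z))$: take $x \in \Nd_r(\pi^{-1}(Z))$, so there is $w \in \pi^{-1}(Z)$ with $d(x, w) \leq r$. Since $\pi$ does not increase distance — indeed $\bar d(\pi(x), \pi(w)) = \inf_{\gamma} d(x, \gamma w) \leq d(x, w) \leq r$ — and $\pi(w) \in Z$, we get $\bar d(\pi(x), Z) \leq r$, i.e. $\pi(x) \in \Nd_r(Z)$, so $x \in \pi^{-1}(\Nd_r(Z))$. Combining the two inclusions completes the proof.

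The main (and only real) obstacle is bookkeeping around whether the various infima defining the quotient metric and the neighbourhoods are attained; I expect this to be handled cleanly by the $\varepsilon$-argument in the first inclusion together with the fact that $\Nd_r$ is defined via the closed condition $d(\cdot, A) \leq r$. One should also note at the outset that $\pi^{-1}(\bar x) = \Gamma x$ and that the action being isometric is exactly what makes $d(x, \Gamma y) = d(\Gamma x, y)$, which is the identity underpinning both directions.
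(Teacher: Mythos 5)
Your proof is correct and follows the same basic strategy as the paper: prove the two inclusions separately, using the explicit formula $\bar d(\pi(x),\pi(y)) = \inf_{\gamma}d(x,\gamma y)$ and the fact that $\pi$ is $1$-Lipschitz. One genuine improvement: the paper asserts ``there exists $z\in Z$ such that $\bar d(\pi(x),z)\le r$'' and then invokes bounded geometry to close the argument, which requires (and somewhat obscures) an attainment-of-infimum step that in fact only holds for closed $Z$ (harmless since $\Nd_r(Z)=\Nd_r(\bar Z)$, but unstated). Your $\varepsilon$-argument sidesteps this entirely --- you work directly with $d(x,\pi^{-1}(Z)) = \bar d(\pi(x),Z)$ (an equality of infima, which is all that the $\le r$ condition in the definition of $\Nd_r$ requires) and so need neither bounded geometry nor any attainment. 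This is cleaner and actually more general than what is written in the paper.
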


\begin{proof}
For any $x\in \Nd_r(\pi^{-1}(Z))$, there exists $y\in \pi^{-1}(Z)$ such that $d(x,y) \leq r$. Hence we have $\bar{d}(\pi(x),\pi(y)) \leq d(x,y) \leq r$, which implies that $x\in \pi^{-1}(\Nd_r(Z))$.

On the other hand, given $x \in \pi^{-1}(\Nd_r(Z))$ we have $\pi(x) \in \Nd_r(Z)$. Hence there exists $z\in Z$ such that $\bar{d}(\pi(x),z) \leq r$. Assume that $z=\pi(y)$ for some $y\in X$. Hence,
\[
r \geq \bar{d}(\pi(x),z) = \inf_{\gamma, \gamma' \in \Gamma} d(\gamma x, \gamma' y) = \inf_{\gamma}d(x,\gamma y)
\]
where we use that the action is by isometries in the third equality. Since $X$ has bounded geometry and $\gamma y \in \pi^{-1}(Z)$ for each $\gamma\in \Gamma$, we obtain that $x\in \Nd_r(\pi^{-1}(Z))$.
\end{proof}

\begin{lem}\label{lem:pre-image omega-excisive}
Suppose $(A,B)$ is an $\omega$-excisive closed cover of $X / \Gamma$. Then $(\pi^{-1}(A), \pi^{-1}(B))$ is an $\omega$-excisive closed cover of $X$.
\end{lem}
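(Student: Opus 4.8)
The statement follows by combining Lemma~\ref{lem:pre-image} with a routine unwinding of the definition of $\omega$-excisiveness. First I would note that $(\pi^{-1}(A), \pi^{-1}(B))$ is indeed a closed cover of $X$: since $\pi$ is continuous both preimages are closed, and $\pi^{-1}(A) \cup \pi^{-1}(B) = \pi^{-1}(A \cup B) = \pi^{-1}(X/\Gamma) = X$. So the only real content is the $\omega$-excisiveness condition. Fix $r>0$. Since $(A,B)$ is $\omega$-excisive in $X/\Gamma$, there exists $s>0$ with $\Nd_r(A) \cap \Nd_r(B) \subseteq \Nd_s(A\cap B)$ (taken inside $X/\Gamma$ with the metric $\bar{d}$).

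The key step is to translate neighbourhoods upstairs and downstairs via $\pi$, and here Lemma~\ref{lem:pre-image} does exactly the work needed. Applying it to $Z = A$, $Z = B$ and $Z = A\cap B$ gives
\[
\Nd_r(\pi^{-1}(A)) = \pi^{-1}(\Nd_r(A)), \quad \Nd_r(\pi^{-1}(B)) = \pi^{-1}(\Nd_r(B)), \quad \Nd_s(\pi^{-1}(A\cap B)) = \pi^{-1}(\Nd_s(A\cap B)).
\]
Then I would compute, using that taking preimages commutes with intersections,
\[
\Nd_r(\pi^{-1}(A)) \cap \Nd_r(\pi^{-1}(B)) = \pi^{-1}(\Nd_r(A)) \cap \pi^{-1}(\Nd_r(B)) = \pi^{-1}\big(\Nd_r(A) \cap \Nd_r(B)\big).
\]
Applying $\pi^{-1}$ (which is monotone) to the $\omega$-excisiveness inclusion for $(A,B)$, this is contained in $\pi^{-1}(\Nd_s(A\cap B)) = \Nd_s(\pi^{-1}(A\cap B))$. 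Since the same $s$ works (it depends only on $r$ and on $(A,B)$, not on anything upstairs), this verifies the $\omega$-excisiveness condition for $(\pi^{-1}(A),\pi^{-1}(B))$, completing the proof.

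The one subtlety worth flagging is the identity $\pi^{-1}(A\cap B) = \pi^{-1}(A) \cap \pi^{-1}(B)$, which is elementary set theory, and the observation that $\Nd_s(\pi^{-1}(A\cap B)) = \pi^{-1}(\Nd_s(A\cap B))$, which is again just Lemma~\ref{lem:pre-image}. There is no genuine obstacle here: the bounded geometry hypothesis on $X$ was already absorbed into Lemma~\ref{lem:pre-image} (it is needed there to guarantee the infimum over the $\Gamma$-orbit is attained and the neighbourhood behaves well), so this lemma is a clean formal consequence. The main point to be careful about in writing it up is simply to keep track of whether neighbourhoods are taken in $X$ or in $X/\Gamma$, and to invoke Lemma~\ref{lem:pre-image} at the three places indicated.
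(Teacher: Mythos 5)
Your proposal is correct and follows essentially the same route as the paper's own proof: fix $r$, pull the $\omega$-excisiveness inclusion downstairs through $\pi^{-1}$, and use Lemma~\ref{lem:pre-image} at the three places to swap $\pi^{-1}$ with taking neighbourhoods. The only content you add beyond the paper is the explicit remark that $(\pi^{-1}(A),\pi^{-1}(B))$ is a closed cover, which the paper leaves implicit.
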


\begin{proof}
Given $r>0$ there exists $s>0$ such that $\Nd_r(A) \cap \Nd_r(B) \subseteq \Nd_s(A \cap B)$. Hence by Lemma \ref{lem:pre-image}, we have:
\begin{align*}
\Nd_r(\pi^{-1}(A)) \cap \Nd_r(\pi^{-1}(B)) & = \pi^{-1}(\Nd_r(A)) \cap \pi^{-1}(\Nd_r(B)) = \pi^{-1}(\Nd_r(A) \cap \Nd_r(B))\\
 &\subseteq \pi^{-1}(\Nd_s(A \cap B)) = \Nd_s(\pi^{-1}(A \cap B)) = \Nd_s(\pi^{-1}(A) \cap \pi^{-1}(B)),
\end{align*}
which concludes the proof.
\end{proof}

Now for the quotient space $X/ \Gamma$, we fix a basepoint $w_0 \in X/ \Gamma$. For each $n \in \NN\cup\{0\}$, we set
\[
W_n:=\{w\in X/ \Gamma: n^3-n\le \bar{d}(w,w_0)\le (n+1)^3+(n+1)\}.
\]
Let $A:=\bigsqcup_{n: even} W_n$ and $B:=\bigsqcup_{n: odd} W_n$. It is obvious that $(A,B)$ is an $\omega$-excisive cover of $X/ \Gamma$. Hence Lemma \ref{lem:pre-image omega-excisive} implies that $(\pi^{-1}(A), \pi^{-1}(B))$ is an $\omega$-excisive closed cover of $X$ such that both $\pi^{-1}(A)$ and $\pi^{-1}(B)$ are $\Gamma$-invariant. Denote $A'=\pi^{-1}(A)$ and $B'=\pi^{-1}(B)$.
	
Applying the Mayer-Vietoris sequence arguments to the $\Gamma$-invariant $\omega$-excisive cover $(A',B')$ for the $K$-theory of equivariant Roe algebras and the equivariant coarse $K$-homology (which is similar to \cite{shan2008equivariant} and \cite[Section 7.5]{willett2020higher}), we obtain the following commutative diagram of long-exact sequences:
\begin{footnotesize}
\[
 \begin{CD}
 \cdots @>>>\lim\limits_{r\to \infty}K_*^\Gamma(P_r(A'\cap B')) @>>> \lim\limits_{r\to \infty}K_*^\Gamma(P_r(A'))\oplus K_*^\Gamma(P_r(B')) @>>> \lim\limits_{r\to \infty} K_*^\Gamma(P_r(X)) @>>> \cdots\\
 & &   @VVV            @VVV            @VVV \\
 \cdots @>>>K_*(C^*(A'\cap B')^\Gamma) @>>> K_*(C^*(A')^\Gamma)\oplus K_*(C^*(B')^\Gamma) @>>> K_*(C^*(X)^\Gamma) @>>> \cdots.
 \end{CD}
\]
\end{footnotesize}
Hence it suffices to prove Theorem \ref{thm:main result equiv. CBC} for the spaces $A', B'$ and $A' \cap B'$. Therefore, we obtain the following:

\begin{lem}\label{lem: first reduction}
To prove Theorem \ref{thm:main result equiv. CBC}, it suffices to prove the result for a discrete metric space $(X,d)$ of bounded geometry and having the form $X=\bigsqcup_{n=1}^\infty X_n$ with $d(X_n, X_m) \to \infty$ as $n+m \to \infty~ (n\neq m)$ such that $\Gamma$ acts on each $X_n$ properly and cocompactly by isometries.
\end{lem}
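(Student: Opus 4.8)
The plan is to spell out the Mayer--Vietoris diagram chasing already sketched in the paragraph preceding the statement, and then to analyse the geometry of the pieces $A'$, $B'$, and $A'\cap B'$. First I would recall that for a $\Gamma$-invariant $\omega$-excisive decomposition $Z = Y_1 \cup Y_2$, there are Mayer--Vietoris six-term exact sequences both for $\lim_{r\to\infty} K^\Gamma_*(P_r(-))$ and for $K_*(C^*(-)^\Gamma)$, compatible with the equivariant index map; the non-equivariant versions are in \cite[Section 7.5]{willett2020higher} and \cite{HRY93}, and the equivariant modifications are routine (as in \cite{shan2008equivariant}), using that the Rips complex functor, the equivariant Roe algebra functor, and the index map are all natural with respect to equivariant coarse maps, and that $\omega$-excisiveness passes to preimages under $\pi$ by Lemma \ref{lem:pre-image omega-excisive}. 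Applying this to $(A',B')$ and invoking the five lemma on the displayed ladder, the map $\Ind^\Gamma$ for $X$ is an isomorphism once it is an isomorphism for each of $A'$, $B'$, and $A'\cap B'$.

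The second, geometric, step is to observe that $A'$, $B'$, and $A'\cap B'$ each have the ``infinite disjoint union of cocompact pieces'' form required by the conclusion. Indeed, by construction $A = \bigsqcup_{n \text{ even}} W_n$ with the annuli $W_n$ chosen so that consecutive even (resp.\ odd) indices are separated by a gap in $\bar d$ that grows like $n^3$; more precisely the inner radius $(n+2)^3-(n+2)$ of $W_{n+2}$ exceeds the outer radius $(n+1)^3+(n+1)$ of $W_n$ by an amount tending to infinity, so $\bar d(W_n, W_m)\to\infty$ as $n+m\to\infty$ among even $n\neq m$, and likewise for odd indices and for the intersection annuli $W_n\cap W_{n+1}$. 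Taking preimages under $\pi$ and using Lemma \ref{lem:pre-image} (neighbourhoods commute with $\pi^{-1}$, and $X$ has bounded geometry so $\pi$ is coarsely surjective on each piece) shows that $A' = \bigsqcup_{n \text{ even}} \pi^{-1}(W_n)$ with $d(\pi^{-1}(W_n), \pi^{-1}(W_m)) \to \infty$, and similarly for $B'$ and $A'\cap B'$. Finally, each $\pi^{-1}(W_n)$ is $\Gamma$-invariant, and since $W_n$ is a bounded subset of $X/\Gamma$ its preimage $\pi^{-1}(W_n)/\Gamma = W_n$ is compact, so the $\Gamma$-action on $\pi^{-1}(W_n)$ is cocompact; properness and isometry are inherited from the ambient action. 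Each piece also inherits bounded geometry as a subspace of $X$.

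I would therefore conclude: to prove Theorem \ref{thm:main result equiv. CBC} it suffices to treat spaces of the asserted form $X = \bigsqcup_{n=1}^\infty X_n$ with $d(X_n,X_m)\to\infty$ and $\Gamma$ acting properly, isometrically, and cocompactly on each $X_n$ --- noting that the remaining hypotheses of Theorem \ref{thm:main result equiv. CBC} (a-T-menability of $\Gamma$, coarse embeddability of the quotient, controlled distortion) are properties either of $\Gamma$ alone or are inherited by $\Gamma$-invariant subspaces with the restricted action, hence hold for each of $A'$, $B'$, $A'\cap B'$ and may be re-fed into the reduction. The main obstacle is not any single computation but the careful bookkeeping in the first step: one must check that the equivariant Mayer--Vietoris sequences exist with the index map intertwining them, which requires that the partition-of-unity construction defining $\Ind^\Gamma$ can be done compatibly with the decomposition and $\Gamma$-equivariantly --- this is where the cubic spacing $n^3-n$ in the definition of $W_n$ matters, as it guarantees enough room to build the relevant controlled homotopies and lift the excisiveness constants through the Rips complexes $P_r$ uniformly in $r$. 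Everything else is a direct unwinding of definitions together with Lemmas \ref{lem:pre-image} and \ref{lem:pre-image omega-excisive}.
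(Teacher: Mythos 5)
Your proposal is correct and follows the same route as the paper: the equivariant Mayer--Vietoris ladder applied to the pullback of the annular decomposition $(A,B)$ of $X/\Gamma$, combined with the observations that $A'$, $B'$, $A'\cap B'$ decompose into $\Gamma$-invariant, cocompact, increasingly separated pieces and that the remaining hypotheses of Theorem~\ref{thm:main result equiv. CBC} pass to $\Gamma$-invariant subspaces. The only minor overstatement is the last remark about the cubic spacing being needed to ``lift excisiveness constants through the Rips complexes uniformly in $r$''; its actual role is simpler --- it guarantees $\omega$-excisiveness of $(A,B)$ and forces $\bar d(W_n,W_m)\to\infty$ among annuli of the same parity, which is exactly the separation hypothesis demanded by the subsequent reduction steps.
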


Our next aim is to further reduce the proof of Theorem \ref{thm:main result equiv. CBC} to the case of block-diagonal operators. First note that the operations of taking direct limit and direct sum commute. Hence as a consequence of Proposition \ref{prop:equiv-CBC for cocompact}, we obtain the following:

%

\begin{cor}\label{cor:equiv-CBC for direct sum}
With the same notation as above and assuming that $\Gamma$ is a-T-menable, the following map induced by the equivariant index map
\[
\lim\limits_{r\to \infty} \bigoplus_n K^\Gamma_\ast(P_r(X_n)) \longrightarrow  \bigoplus_n K_\ast(C^*(X_n)^\Gamma)
\]
is an isomorphism for $\ast=0,1$.
\end{cor}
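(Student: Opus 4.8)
The plan is to deduce Corollary~\ref{cor:equiv-CBC for direct sum} directly from Proposition~\ref{prop:equiv-CBC for cocompact} by exploiting the compatibility of the constructions with countable direct sums. First I would observe that, under the hypothesis of Lemma~\ref{lem: first reduction}, each $X_n$ is a proper metric space of bounded geometry on which $\Gamma$ acts properly, cocompactly and isometrically, so Proposition~\ref{prop:equiv-CBC for cocompact} applies to each $X_n$ individually: the equivariant index map
\[
\Ind^\Gamma: \lim_{r\to\infty} K^\Gamma_\ast(P_r(X_n)) \longrightarrow K_\ast(C^*(X_n)^\Gamma)
\]
is an isomorphism for every $n$ and $\ast=0,1$.

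Next I would set up the identifications that turn a direct sum of these isomorphisms into the displayed map. On the $K$-homology side, since direct limits commute with (countable) direct sums of abelian groups, one has $\lim_{r\to\infty}\bigoplus_n K^\Gamma_\ast(P_r(X_n)) \cong \bigoplus_n \lim_{r\to\infty} K^\Gamma_\ast(P_r(X_n))$. On the Roe algebra side, the relevant point is that $C^*\big(\bigsqcup_n X_n\big)^\Gamma$ need not literally be $\bigoplus_n C^*(X_n)^\Gamma$ — finite-propagation operators on the disjoint union whose propagation is bounded will for $n$ large have all their off-diagonal blocks vanishing (by the condition $d(X_n,X_m)\to\infty$), but the diagonal blocks may only tend to zero in norm rather than be eventually zero, so the honest statement is about the block-diagonal subalgebra. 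However, for the purposes of this corollary we only need the statement at the level of $\bigoplus_n K_\ast(C^*(X_n)^\Gamma)$, and $K_\ast$ commutes with countable direct sums of $C^*$-algebras (continuity of $K$-theory under inductive limits applied to the system of finite partial sums). The equivariant index map is natural with respect to the inclusions $X_n \hookrightarrow \bigsqcup X_n$ as clopen $\Gamma$-invariant subsets, hence the map in the statement is precisely $\bigoplus_n \Ind^\Gamma_{X_n}$ under these identifications.

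Having arranged this, the conclusion is immediate: a direct sum of isomorphisms of abelian groups is an isomorphism, so $\bigoplus_n \Ind^\Gamma_{X_n}$ is an isomorphism for $\ast=0,1$, which is exactly the assertion of the corollary.

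I expect the main obstacle to be purely bookkeeping rather than conceptual: one must be careful to phrase everything in terms of the genuine direct-sum decompositions — on the $K$-theory of $C^*(X)^\Gamma$ one really is looking at the image of the block-diagonal part, and the ambient Roe algebra of the disjoint union is \emph{larger} than the $C^*$-direct sum. Since the statement of the corollary only refers to $\bigoplus_n K_\ast(C^*(X_n)^\Gamma)$, it is enough to invoke (i) commutation of $\lim_{r\to\infty}$ with $\bigoplus_n$ on $K$-homology, (ii) continuity of $K$-theory, giving $K_\ast\big(\bigoplus_n C^*(X_n)^\Gamma\big)\cong\bigoplus_n K_\ast(C^*(X_n)^\Gamma)$, and (iii) naturality of $\Ind^\Gamma$ under the clopen inclusions $X_n\hookrightarrow X$; then Proposition~\ref{prop:equiv-CBC for cocompact} applied termwise finishes the argument. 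The finer point that the block-diagonal Roe algebra of $X$ itself has the right $K$-theory is deferred to the subsequent reduction (Corollary~\ref{cor:final reduction}) and is not needed here.
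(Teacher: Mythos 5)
Your proposal is correct and follows the same approach as the paper: apply Proposition~\ref{prop:equiv-CBC for cocompact} to each $X_n$ individually (each being a proper cocompact $\Gamma$-space), then use commutativity of direct limits with countable direct sums, and observe that a direct sum of isomorphisms is an isomorphism. The extra remarks about the block-diagonal Roe algebra and continuity of $K$-theory are accurate context but not strictly needed here, since the target of the map in the statement is already $\bigoplus_n K_\ast(C^*(X_n)^\Gamma)$ as a direct sum of abelian groups.
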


Now for each $r>0$, we consider the following $C^*$-subalgebras in $C^*(P_r(X))^\Gamma$ and $C^*_L(P_r(X))^\Gamma$, respectively:
\[
\A_r:=\lim\limits_{n\to \infty} C^*\big(P_r\big(\bigsqcup_{k \leq n} X_k\big)\big)^\Gamma = \overline{\bigcup_{n=1}^\infty C^*\big(P_r\big(\bigsqcup_{k \leq n} X_k\big)\big)^\Gamma}
\]
and
\[
\A_{L,r}:=\lim\limits_{n\to \infty} C^*_L\big(P_r\big(\bigsqcup_{k \leq n} X_k\big)\big)^\Gamma = \overline{\bigcup_{n=1}^\infty C^*_L\big(P_r\big(\bigsqcup_{k \leq n} X_k\big)\big)^\Gamma}.
\]
The following lemma is straightforward, hence we omit the proof.

\begin{lem}\label{lem:A and Ar}
With the same notation as above and for $r>0$, we have the following:
\begin{eqnarray*}
C^*(P_r(X))^\Gamma &=& \A_r + \big( C^*(P_r(X))^\Gamma \cap \prod_{n=1}^\infty C^*(P_r(X_n))^\Gamma \big),\\
\bigoplus_{n=1}^\infty C^*(P_r(X_n))^\Gamma &=& \A_r \cap \big( C^*(P_r(X))^\Gamma \cap \prod_{n=1}^\infty C^*(P_r(X_n))^\Gamma \big).
\end{eqnarray*}
The same holds for the case of equivariant localisation algebras.
\end{lem}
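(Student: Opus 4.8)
The plan is to run the statement through the block decomposition of operators coming from the coarse disjoint union $X=\bigsqcup_n X_n$. Fix $r>0$. Since $d(X_n,X_m)\to\infty$ as $n+m\to\infty$, only finitely many pairs $n\ne m$ have $d(X_n,X_m)\le r$, so all but finitely many of the subcomplexes $P_r(X_n)$ are unions of connected components of $P_r(X)$; lumping the finitely many remaining pieces into a single $\Gamma$-cocompact block (which is harmless, the general case merely adding bounded bookkeeping), I argue as if $P_r(X)=\bigsqcup_n P_r(X_n)$. Choose an admissible $P_r(X)$-module $\H=\bigoplus_n\H_n$ with each $\H_n$ an admissible $P_r(X_n)$-module, and let $p_n\in\B(\H)$ be the orthogonal projection onto $\H_n$. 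Each $p_n$ is $\Gamma$-invariant (as $\Gamma$ preserves $X_n$) and commutes with $C_0(P_r(X))$ (which acts block-diagonally with respect to $\H=\bigoplus_n\H_n$), so compression by $p_n$ carries $\CC[\H]^\Gamma$ into $\CC[\H_n]^\Gamma$, hence $C^*(P_r(X))^\Gamma$ into $C^*(P_r(X_n))^\Gamma$. Writing $\mathcal{Q}:=C^*(P_r(X))^\Gamma\cap\prod_n C^*(P_r(X_n))^\Gamma$, one then checks that $\mathcal{Q}=\{T\in C^*(P_r(X))^\Gamma: p_nTp_m=0\ \text{for}\ n\ne m\}$ (the ``block-diagonal'' operators) and that $\bigoplus_n C^*(P_r(X_n))^\Gamma$ is the subset of those with $\|p_nTp_n\|\to 0$.

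The second identity is the easy one. For ``$\subseteq$'', a block-diagonal operator with finitely many nonzero blocks lies in some $C^*(P_r(\bigsqcup_{k\le n}X_k))^\Gamma\subseteq\A_r$, and $\A_r\cap\mathcal{Q}$ is closed. For ``$\supseteq$'', given $T\in\A_r\cap\mathcal{Q}$ and $\varepsilon>0$, pick $N$ and $S\in C^*(P_r(\bigsqcup_{k\le N}X_k))^\Gamma$ with $\|T-S\|<\varepsilon$; since $S=P_{\le N}SP_{\le N}$ with $P_{\le N}:=\sum_{k\le N}p_k$, we get $p_nSp_n=0$, hence $\|p_nTp_n\|=\|p_n(T-S)p_n\|<\varepsilon$ for all $n>N$, so, $T$ being block-diagonal with $p_nTp_n\in C^*(P_r(X_n))^\Gamma$, we conclude $T\in\bigoplus_n C^*(P_r(X_n))^\Gamma$.

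For the first identity, ``$\supseteq$'' is clear as $\A_r$ and $\mathcal{Q}$ are $C^*$-subalgebras of $C^*(P_r(X))^\Gamma$. For ``$\subseteq$'' I would first note that $\A_r$ is a closed two-sided ideal of $C^*(P_r(X))^\Gamma$: if $a\in C^*(P_r(X))^\Gamma$ has propagation $R$ and $b\in C^*(P_r(\bigsqcup_{k\le n}X_k))^\Gamma$, then $ab$ is supported in $\Nd_R(\bigsqcup_{k\le n}X_k)\times\bigsqcup_{k\le n}X_k$, and because $d(X_j,X_m)\to\infty$ the set $\Nd_R(\bigsqcup_{k\le n}X_k)$ meets only finitely many $X_m$, whence $ab\in C^*(P_r(\bigsqcup_{k\le n'}X_k))^\Gamma\subseteq\A_r$ for suitable $n'$ (likewise $ba$, and then in the closure). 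By the standard fact that the sum of a closed ideal and a $C^*$-subalgebra is again a $C^*$-subalgebra, $\A_r+\mathcal{Q}$ is norm-closed, so it suffices to put the dense $*$-algebra $\CC[\H]^\Gamma$ inside $\A_r+\mathcal{Q}$. Given $T\in\CC[\H]^\Gamma$ with $\ppg(T)\le R$, choose $N$ so that every pair $n\ne m$ with $d(X_n,X_m)\le R$ has $\max(n,m)\le N$; then $p_nTp_m=0$ whenever $n\ne m$ and $\max(n,m)>N$, so
\[
T = P_{\le N}TP_{\le N} + \sum_{n>N}p_nTp_n ,
\]
with the first term in $C^*(P_r(\bigsqcup_{k\le N}X_k))^\Gamma\subseteq\A_r$ and the second block-diagonal, hence in $\mathcal{Q}$; thus $T\in\A_r+\mathcal{Q}$. (Alternatively, one may avoid the closedness point by decomposing a norm-convergent series $T=\sum_j S_j$ of finite-propagation operators term by term and summing the pieces separately: the $\A_r$-parts sum in the closed algebra $\A_r$, and the block-diagonal parts sum to a block-diagonal element of $\mathcal{Q}$.)

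The localisation-algebra case runs by the same two arguments applied to functions $t\mapsto f(t)$: one compresses by $p_n$ pointwise in $t$ (which preserves uniform norm-continuity and the decay $\ppg(f(t))\to 0$) and runs the block decomposition once $f$ is replaced by an approximant of propagation bounded uniformly in $t$ -- the latter arranged in the usual way by treating a compact $t$-interval and large $t$ separately. I expect the only steps that are not purely formal to be the verification that $\A_r$ is an ideal and the finite-propagation decomposition in the first identity, both resting on the hypothesis $d(X_n,X_m)\to\infty$, together with the minor bookkeeping for the finitely many pairs $X_n,X_m$ lying within distance $r$; everything else is routine.
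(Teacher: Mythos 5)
Your proof is correct and is precisely the ``straightforward'' argument the authors had in mind when they omitted the proof: the block decomposition coming from $X=\bigsqcup_n X_n$, the cut-off of a finite-propagation operator using that $d(X_n,X_m)\to\infty$, the observation that $\A_r$ is a closed ideal (so that $\A_r+\mathcal{Q}$ is automatically closed), and the $c_0$-criterion $\|p_nTp_n\|\to 0$ for membership in $\bigoplus_n C^*(P_r(X_n))^\Gamma$. One small remark: the initial ``lumping'' device is unnecessary (and, taken literally, slightly changes what $\prod_n C^*(P_r(X_n))^\Gamma$ means), since your concrete finite-propagation decomposition $T=P_{\le N}TP_{\le N}+\sum_{n>N}p_nTp_n$ already works with the original indexing once $N$ is chosen large enough.
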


Now we are able to reduce the proof of Theorem \ref{thm:main result equiv. CBC} to the case of block diagonal operators.

\begin{prop}\label{prop:reduction to block diagonal}
Let $(X,d)$ be a discrete metric space of bounded geometry and having the form $X=\bigsqcup_{n=1}^\infty X_n$ with $d(X_n, X_m) \to \infty$ as $n+m \to \infty~ (n\neq m)$ such that $\Gamma$ acts on each $X_n$ properly and cocompactly by isometries. To prove that
\[
\mathrm{Ind}^\Gamma: \lim_{r\to \infty} K_\ast^\Gamma(P_r(X)) \longrightarrow K_\ast(C^*(X)^\Gamma)
\]
is an isomorphism for $\ast=0,1$, it suffices to prove the map
\[
\ev_\ast: \lim_{r\to \infty} K_\ast \big(C^*_L(P_r(X))^\Gamma \cap \prod_{n=1}^\infty C^*_L(P_r(X_n))^\Gamma \big) \longrightarrow K_\ast \big(C^*(P_r(X))^\Gamma \cap \prod_{n=1}^\infty C^*(P_r(X_n))^\Gamma \big)
\]
induced by the evaluation-at-one map is an isomorphism for $\ast=0,1$.
\end{prop}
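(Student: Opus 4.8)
The plan is to assemble a large commutative diagram linking the four algebras appearing in Lemma~\ref{lem:A and Ar} and to run a Mayer--Vietoris / five-lemma argument on it. Write $D_r := C^*(P_r(X))^\Gamma \cap \prod_n C^*(P_r(X_n))^\Gamma$ and $D_{L,r}$ for the localisation analogue. By Lemma~\ref{lem:A and Ar}, for each fixed $r$ we have $C^*(P_r(X))^\Gamma = \A_r + D_r$ and $\A_r \cap D_r = \bigoplus_n C^*(P_r(X_n))^\Gamma$, and likewise in the localised setting. This is precisely the setup for a Mayer--Vietoris sequence in $K$-theory (an "excision" decomposition of a $C^*$-algebra as a sum of two ideals/subalgebras with intersection); applying it and passing to the direct limit over $r$ yields, compatibly with the evaluation-at-one maps, a morphism of six-term exact sequences
\[
\xymatrix@C=1.2em{
\cdots \ar[r] & \lim_r K_*(\bigoplus_n C^*_L(P_r(X_n))^\Gamma) \ar[r]\ar[d]^{\ev_*} & \lim_r K_*(\A_{L,r}) \oplus \lim_r K_*(D_{L,r}) \ar[r]\ar[d]^{\ev_*} & \lim_r K_*(C^*_L(P_r(X))^\Gamma) \ar[r]\ar[d]^{\ev_*} & \cdots \\
\cdots \ar[r] & \lim_r K_*(\bigoplus_n C^*(P_r(X_n))^\Gamma) \ar[r] & \lim_r K_*(\A_r) \oplus \lim_r K_*(D_r) \ar[r] & \lim_r K_*(C^*(P_r(X))^\Gamma) \ar[r] & \cdots
}
\]

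First I would check the two "automatic" vertical isomorphisms. The left-hand map is an isomorphism: $\bigoplus_n C^*_L(P_r(X_n))^\Gamma$ is the localisation algebra of the disjoint union of the cocompact pieces, its $K$-theory is $\bigoplus_n K_*(C^*_L(P_r(X_n))^\Gamma)$ (since $K$-theory commutes with the relevant inductive limits / direct sums), and by Proposition~\ref{prop: equiv localisation alg = K-homology} together with Proposition~\ref{prop:equiv-CBC for cocompact} the evaluation map identifies with the equivariant index map on $\bigoplus_n K^\Gamma_*(P_r(X_n))$, which is an isomorphism in the limit by Corollary~\ref{cor:equiv-CBC for direct sum}. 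Second, the $\A_{L,r} \to \A_r$ column is an isomorphism on $K$-theory in the limit: $\A_r$ (resp.\ $\A_{L,r}$) is the inductive limit of the (localisation) Roe algebras of the finite unions $\bigsqcup_{k\le n} X_k$, each of which carries a proper \emph{cocompact} $\Gamma$-action, so for each $n$ Proposition~\ref{prop:equiv-CBC for cocompact} (via the $\ev_*$-vs-$\Ind^\Gamma$ triangle) gives an isomorphism in the limit over $r$; continuity of $K$-theory under the inductive limit over $n$ then does the rest. With these two columns known to be isomorphisms, the five lemma forces the $D_{L,r} \to D_r$ column — which is exactly the map $\ev_*$ in the statement — to be an isomorphism in the limit, and conversely this is what is needed to run the analogous diagram at the level of the whole space $X$: feeding "$\ev_*$ on $\lim_r K_*(D_{L,r}) \to \lim_r K_*(D_r)$ is an isomorphism" back into the three-column diagram above, together with the two established isomorphisms, yields that $\ev_* : \lim_r K_*(C^*_L(P_r(X))^\Gamma) \to \lim_r K_*(C^*(P_r(X))^\Gamma) \cong K_*(C^*(X)^\Gamma)$ is an isomorphism, which by the $\ev_*$-vs-$\Ind^\Gamma$ triangle (recorded after Proposition~\ref{prop: equiv localisation alg = K-homology}) is equivalent to the equivariant coarse Baum--Connes conjecture for $X$. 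This is the sense in which the block-diagonal statement suffices.

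The main obstacle, and the point that deserves the most care, is the \emph{Mayer--Vietoris machinery itself} in the equivariant coarse setting: one must verify that the decompositions in Lemma~\ref{lem:A and Ar} genuinely give rise to the long exact sequences used above, with maps compatible across the whole diagram and stable under the direct limit over $r$ and over $n$. Concretely this needs (i) that $\A_r$ and $D_r$ sit inside $C^*(P_r(X))^\Gamma$ as a sum with the stated intersection and that this is an "admissible" excision datum for $K$-theory (a relative/excisive pair), (ii) that the evaluation-at-one $\ast$-homomorphism $C^*_L \to C^*$ respects all of this, and (iii) that all the inductive limits involved are genuine $C^*$-algebra inductive limits so that $K$-theory continuity applies. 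These are exactly the equivariant analogues of the arguments in \cite[Chapter~12]{willett2020higher} and \cite{shan2008equivariant}; I would invoke those with the bookkeeping adapted to $\Gamma$-invariant operators, and note that the "space for speed" device is what makes the $D_{L,r} \to D_r$ comparison tractable downstream. Once the diagram is set up correctly, the remaining content is the two isomorphism verifications above plus a routine five-lemma chase.
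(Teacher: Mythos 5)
Your argument is, in substance, the paper's own: the Mayer--Vietoris sequence you set up is exactly what one obtains by chaining the two short exact sequences
$0 \to \bigoplus_n C^*(P_r(X_n))^\Gamma \to C^*(P_r(X))^\Gamma\cap\prod_n C^*(P_r(X_n))^\Gamma \to \big(\cdot\big)/\big(\cdot\big) \to 0$
and
$0 \to \A_r \to C^*(P_r(X))^\Gamma \to C^*(P_r(X))^\Gamma/\A_r \to 0$,
glued via the quotient identification supplied by Lemma~\ref{lem:A and Ar} --- which is the second-isomorphism-theorem content one needs because $\A_r$ is a closed two-sided ideal (the condition $d(X_n,X_m)\to\infty$ forces any finite-propagation operator to carry $\A_r$ back into $\A_r$), while the block-diagonal part is only a subalgebra. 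The two ``automatic'' vertical isomorphisms are handled with the same inputs the paper uses: Corollary~\ref{cor:equiv-CBC for direct sum} for the $\bigoplus_n$ column and Proposition~\ref{prop:equiv-CBC for cocompact} together with an interchange-of-limits (over $r$ and $n$) and continuity of $K$-theory for the $\A_r$ column. So the two proofs differ only in whether the six-term ladder is written out as one Mayer--Vietoris diagram or unpacked into the two ideal exact sequences.

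One sentence should be repaired. You assert that, knowing the $\bigoplus_n$ and $\A_r$ columns are isomorphisms, ``the five lemma forces the $D_{L,r}\to D_r$ column to be an isomorphism.'' That direction does not follow: in the Mayer--Vietoris ladder the term flanked by the $\bigoplus_n$ and whole-space columns is $K_*(\A_{L,r})\oplus K_*(D_{L,r})$, not $K_*(D_{L,r})$ alone, and the five lemma would require the whole-space column to already be known --- which is exactly what is to be proved. Your ``and conversely'' clause does state the intended implication correctly: feeding the hypothesis (that $\ev_*$ on the $D$-column is an isomorphism) together with the two established isomorphisms into the ladder makes the middle column an isomorphism and then the five lemma yields the whole-space column. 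So the substance of the proof is right; only the first clause should be struck.
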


\begin{proof}
First note that for each $r>0$, we have the following commutative diagram:
\begin{scriptsize}
\[
\xymatrix{
			\cdots \ar[r] & \bigoplus\limits_{n=1}^\infty K_\ast(C^*_L(P_r(X_n))^\Gamma)  \ar[r]  \ar[d] & K_\ast\big( C^*_L(P_r(X))^\Gamma \cap \prod\limits_{n=1}^\infty C^*_L(P_r(X_n))^\Gamma \big) \ar[r]  \ar[d] & K_\ast \big( \frac{C^*_L(P_r(X))^\Gamma \cap \prod_{n=1}^\infty C^*_L(P_r(X_n))^\Gamma}{\bigoplus_{n=1}^\infty C^*_L(P_r(X_n))^\Gamma} \big) \ar[r]  \ar[d] & \cdots \\
			\cdots \ar[r] & \bigoplus\limits_{n=1}^\infty K_\ast(C^*(P_r(X_n))^\Gamma)  \ar[r]  & K_\ast\big( C^*(P_r(X))^\Gamma \cap \prod\limits_{n=1}^\infty C^*(P_r(X_n))^\Gamma \big) \ar[r]  & K_\ast \big( \frac{C^*(P_r(X))^\Gamma \cap \prod_{n=1}^\infty C^*(P_r(X_n))^\Gamma}{\bigoplus_{n=1}^\infty C^*(P_r(X_n))^\Gamma} \big)\ar[r]  & \cdots,
	}
\]
\end{scriptsize}
where all vertical lines are induced by evaluation-at-one maps and horizontal lines are exact sequences.
When $r\to \infty$, it follows from Corollary \ref{cor:equiv-CBC for direct sum} and the assumption that the left and middle vertical lines are isomorphisms. Hence we obtain that the right vertical line is also an isomorphism when $r\to \infty$.

From Lemma \ref{lem:A and Ar}, we obtain the following for each $r>0$:
\[
\frac{C^*(P_r(X))^\Gamma \cap \prod_{n=1}^\infty C^*(P_r(X_n))^\Gamma}{\bigoplus_{n=1}^\infty C^*(P_r(X_n))^\Gamma} \cong \frac{C^*(P_r(X))^\Gamma}{\A_r},
\]
and
\[
\frac{C^*_L(P_r(X))^\Gamma \cap \prod_{n=1}^\infty C^*_L(P_r(X_n))^\Gamma}{\bigoplus_{n=1}^\infty C^*_L(P_r(X_n))^\Gamma} \cong \frac{C^*_L(P_r(X))^\Gamma}{\A_{L,r}}.
\]
Hence the above implies that
\[
\lim\limits_{r\to \infty} K_\ast\big( \frac{C^*_L(P_r(X))^\Gamma}{\A_{L,r}} \big) \longrightarrow \lim\limits_{r\to \infty} K_\ast\big( \frac{C^*(P_r(X))^\Gamma}{\A_r} \big)
\]
is an isomorphism for $\ast=0,1$.

On the other hand, we have the following commutative diagram for each $r>0$:
\begin{small}
\[
\xymatrix{
			\cdots \ar[r] & K_\ast(\A_{L,r})  \ar[r]  \ar[d] & K_\ast ( C^*_L(P_r(X))^\Gamma ) \ar[r]  \ar[d] & K_\ast \big( \frac{C^*_L(P_r(X))^\Gamma}{\A_{L,r}} \big) \ar[r]  \ar[d] & \cdots \\
			\cdots \ar[r] & K_\ast(\A_r)  \ar[r]   & K_\ast ( C^*(P_r(X))^\Gamma ) \ar[r]  & K_\ast \big( \frac{C^*(P_r(X))^\Gamma}{\A_r} \big) \ar[r]   & \cdots,
	}
\]
\end{small}
where all vertical lines are induced by evaluation-at-one maps and horizontal lines are exact sequences.
The analysis above implies that the third vertical line is an isomorphism when $r \to \infty$. Hence to finish the proof, it suffices to show that
\[
\lim_{r \to \infty} K_\ast(\A_{L,r}) \longrightarrow \lim_{r \to \infty} K_\ast(\A_r)
\]
is an isomorphism for $\ast=0,1$. Consider the following commutative diagram:
\begin{small}
\[
\xymatrix{
			\lim\limits_{r\to \infty} K_\ast(\A_{L,r})  \ar[r]^<<<<{\displaystyle \cong}  \ar[d] & \lim\limits_{r\to \infty}\lim\limits_{n\to \infty} K_\ast \big( C^*_L\big(P_r\big(\bigsqcup_{k \leq n} X_k\big)\big)^\Gamma \big) \ar[r]^{\displaystyle \cong} \ar[d] & \lim\limits_{n\to \infty}\lim\limits_{r\to \infty} K_\ast \big( C^*_L\big(P_r\big(\bigsqcup_{k \leq n} X_k\big)\big)^\Gamma \big) \ar[d]\\
			\lim\limits_{r\to \infty} K_\ast(\A_r)  \ar[r]^<<<<<{\displaystyle \cong}  & \lim\limits_{r\to \infty}\lim\limits_{n\to \infty} K_\ast \big( C^*\big(P_r\big(\bigsqcup_{k \leq n} X_k\big)\big)^\Gamma \big) \ar[r]^{\displaystyle \cong}  & \lim\limits_{n\to \infty}\lim\limits_{r\to \infty} K_\ast \big( C^*\big(P_r\big(\bigsqcup_{k \leq n} X_k\big)\big)^\Gamma \big),
	}
\]
\end{small}
where all vertical lines are induced by evaluation-at-one maps. Note that the right two horizontal lines are isomorphisms due to the continuity of $K$-theory and the following isomorphisms
\[
\lim\limits_{r\to \infty}\lim\limits_{n\to \infty} C^*_L\big(P_r\big(\bigsqcup_{k \leq n} X_k\big)\big)^\Gamma  \cong  \lim\limits_{n\to \infty}\lim\limits_{r\to \infty} C^*_L\big(P_r\big(\bigsqcup_{k \leq n} X_k\big)\big)^\Gamma
\]
and
\[
\lim\limits_{r\to \infty}\lim\limits_{n\to \infty} C^*\big(P_r\big(\bigsqcup_{k \leq n} X_k\big)\big)^\Gamma  \cong  \lim\limits_{n\to \infty}\lim\limits_{r\to \infty} C^*\big(P_r\big(\bigsqcup_{k \leq n} X_k\big)\big)^\Gamma
\]
(note that both limits are just increasing unions of subalgebras). For each $n\in \NN$, it follows from Proposition \ref{prop:equiv-CBC for cocompact} that
\[
\ev_\ast: \lim\limits_{r\to \infty} K_\ast \big( C^*_L\big(P_r\big(\bigsqcup_{k \leq n} X_k\big)\big)^\Gamma \big) \longrightarrow \lim\limits_{r\to \infty} K_\ast \big( C^*\big(P_r\big(\bigsqcup_{k \leq n} X_k\big)\big)^\Gamma \big)
\]
is an isomorphism. Hence we conclude the proof.
\end{proof}

Consequently, we obtain the following:

\begin{cor}\label{cor:final reduction}
To prove Theorem \ref{thm:main result equiv. CBC}, it suffices to prove the following: Let $\Gamma$ be a countable discrete group and $(X,d)$ be a discrete metric space of bounded geometry and having the form $X=\bigsqcup_{n=1}^\infty X_n$ with $d(X_n, X_m) \to \infty$ as $n+m \to \infty~ (n\neq m)$ such that $\Gamma$ acts on each $X_n$ properly and cocompactly by isometries. Assume that the action of $\Gamma$ on $X$ has controlled distortion, the quotient space $X / \Gamma$ admits a coarse embedding into Hilbert space and $\Gamma$ is a-T-menable, then the following map
\begin{equation}\label{EQ:main thm}
\ev_\ast: \lim_{r\to \infty} K_\ast \big(C^*_L(P_r(X))^\Gamma \cap \prod_{n=1}^\infty C^*_L(P_r(X_n))^\Gamma \big) \longrightarrow K_\ast \big(C^*(P_r(X))^\Gamma \cap \prod_{n=1}^\infty C^*(P_r(X_n))^\Gamma \big)
\end{equation}
induced by the evaluation-at-one map is an isomorphism for $\ast=0,1$.
\end{cor}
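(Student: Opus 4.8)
The plan is to chain together the two reductions already obtained. Lemma~\ref{lem: first reduction} reduces Theorem~\ref{thm:main result equiv. CBC} to the case of a disjoint union $X=\bigsqcup_n X_n$ with $d(X_n,X_m)\to\infty$ as $n+m\to\infty$ ($n\neq m$) and each $X_n$ a cocompact $\Gamma$-space, and Proposition~\ref{prop:reduction to block diagonal} further reduces that case to the assertion that the evaluation-at-one map \eqref{EQ:main thm} is an isomorphism. The only subtlety is that Lemma~\ref{lem: first reduction} is phrased without the three geometric hypotheses of Theorem~\ref{thm:main result equiv. CBC} (controlled distortion, coarse embeddability of the quotient, a-T-menability of $\Gamma$), so one has to check that these hypotheses descend to the pieces $A'=\pi^{-1}(A)$, $B'=\pi^{-1}(B)$, $A'\cap B'$ constructed in its proof, and that they survive Proposition~\ref{prop:reduction to block diagonal}.

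First I would revisit the Mayer--Vietoris decomposition in the proof of Lemma~\ref{lem: first reduction}. The sets $A'$, $B'$, $A'\cap B'$ are $\Gamma$-invariant closed subspaces of $X$, and the annuli $W_n$ are designed (via the cubic gaps $n^3\pm n$) precisely so that each of them already has the form $\bigsqcup_k Y_k$ with each $Y_k$ a cocompact $\Gamma$-space and $d(Y_k,Y_\ell)\to\infty$; together with bounded geometry, which passes to subspaces, this is exactly the structural hypothesis required. It then remains to check three things for a $\Gamma$-invariant subspace $Y\subseteq X$. (i) a-T-menability is a property of $\Gamma$ alone, hence unaffected. (ii) The quotient metric on $Y/\Gamma$ coincides with the restriction of $\bar d$ from $X/\Gamma$: indeed $\bar d(\pi x,\pi y)=\inf_{\gamma\in\Gamma}d(x,\gamma y)$ and $\gamma y\in Y$ whenever $y\in Y$, so no distortion occurs; hence $Y/\Gamma$ sits isometrically inside $X/\Gamma$, and a subspace of a coarsely embeddable metric space is coarsely embeddable. (iii) Controlled distortion passes to $Y$: if $\D$ is a fundamental domain of $X$ witnessing controlled distortion, then $\D\cap Y$ is a fundamental domain of $Y$ and $\{\O_x:x\in\D\cap Y\}$ is a subfamily of $\{\O_x:x\in\D\}$, hence uniformly coarsely equivalent with the same control functions. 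So each of $A'$, $B'$, $A'\cap B'$ is again a $\Gamma$-space satisfying all hypotheses of Theorem~\ref{thm:main result equiv. CBC} and having the required disjoint-union-of-cocompact-pieces shape.

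Next I would apply Proposition~\ref{prop:reduction to block diagonal} to each of these spaces. Although that proposition does not explicitly list a-T-menability, its proof invokes Corollary~\ref{cor:equiv-CBC for direct sum} (and hence Proposition~\ref{prop:equiv-CBC for cocompact}), which requires $\Gamma$ to be a-T-menable; since that hypothesis has been retained, the proposition applies and replaces ``$\mathrm{Ind}^\Gamma$ is an isomorphism'' by ``the map \eqref{EQ:main thm} is an isomorphism''. Combining this with the Mayer--Vietoris long exact sequences of Lemma~\ref{lem: first reduction} and the five lemma then yields the corollary. I would emphasise that the controlled-distortion and quotient-embeddability hypotheses are not actually consumed in this reduction: they appear in the statement of the corollary because they are the inputs used in Sections~\ref{sec:twisted algebras}--\ref{sec:local isom} to prove that \eqref{EQ:main thm} is an isomorphism.

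I expect no genuine obstacle here, since the substantial work lies entirely in Lemma~\ref{lem: first reduction} and Proposition~\ref{prop:reduction to block diagonal}; this corollary is essentially bookkeeping. The only points that need any care are item (ii) above --- verifying that the quotient metric is undistorted on an invariant subspace, so that coarse embeddability of $X/\Gamma$ really is inherited --- and the elementary check that $\pi^{-1}(\cdot)$ preserves bounded geometry and the separation $d(Y_k,Y_\ell)\to\infty$, which follows from the cubic spacing of the $W_n$ together with $d(\pi^{-1}(W_n),\pi^{-1}(W_m))\geq\bar d(W_n,W_m)$.
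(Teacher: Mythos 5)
Your argument is correct and is essentially the route the paper takes (the paper states the corollary with only the word ``Consequently,'' leaving the chaining of Lemma~\ref{lem: first reduction} and Proposition~\ref{prop:reduction to block diagonal} implicit). You add a genuinely useful clarification the paper omits: the explicit checks that controlled distortion, coarse embeddability of the quotient, and the disjoint-union-of-cocompact-pieces structure all descend to the $\Gamma$-invariant subspaces $A'$, $B'$, $A'\cap B'$ --- in particular that the quotient metric on $Y/\Gamma$ is the restriction of $\bar d$ because $Y$ is $\Gamma$-invariant, and that $\D\cap Y$ is a fundamental domain whose orbit maps form a subfamily of the original uniformly controlled family.
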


\section{Equivariant twisted algebras}\label{sec:twisted algebras}

In this section, we define equivariant twisted Roe and localisation algebras, which follows the constructions in \cite[Section 12.6]{willett2020higher} and originally comes from \cite[Section 5]{Yu00}.

\subsection{The Bott-Dirac operators on Euclidean spaces}\label{ssec:The Bott-Dirac operators on Euclidean spaces}
Let us start by recalling the Bott-Dirac operators. Here we only list necessary notions and facts, and guide readers to \cite[Section 12.1]{willett2020higher} for details.

Let $E$ be a real Hilbert space (also called a \emph{Euclidean space}) with even dimension $d \in \NN$. The \emph{complexified Clifford algebra} of $E$, denoted by $\Cliff(E)$, is the universal unital complex algebra containing $E$ as a real subspace and subject to the multiplicative relations $x \cdot x = \|x\|_E^2$ for all $x\in E$. It is natural to treat $\Cliff(E)$ as a graded Hilbert space (see for example \cite[Example E.2.12]{willett2020higher}), and in this case we denote it by $\H_E$.

Denote $\L _E^2$ the graded Hilbert space of square integrable functions from $E$ to $\H_E$ where the grading is inherited from $\H_E$, and $\SS_E$ the dense subspace consisting of Schwartz class functions from $E$ to $\H_E$. Fix an orthonormal basis $\{e_1,\ldots,e_d\}$ of $E$ and let $x_1,\ldots,x_d:E \to \RR$ be the corresponding coordinates. Recall that the \emph{Bott operator} $C$ and the \emph{Dirac operator} $D$ are unbounded operators on $\L _E^2$ with domain $\SS_E$ defined as:
\[
(Cu)(x) = x \cdot u(x), \quad \mbox{and} \quad (Du)(x)=\sum_{i=1}^d \hat{e_i} \cdot \frac{\partial u}{\partial x_i}(x)
\]
for $u \in \SS_E$ and $x\in E$, where $\hat{e_i}: \Cliff(E) \to \Cliff(E)$ is the operator determined by $\hat{e_i} (w)=(-1)^{\partial w} w \cdot e_i$ for any homogeneous element $w\in \Cliff(E)$.

\begin{defn}\label{defn:Bott-Dirac}
	The \emph{Bott-Dirac operator} is the unbounded operator $B:=D+C$ on $\L _E^2$ with domain $\SS_E$.
\end{defn}

Given $x\in E$, the \emph{left Clifford multiplication operator associated to $x$} is the bounded operator $c_x$ on $\L _E^2$ defined as the left Clifford multiplication by the fixed vector $x$, and the \emph{translation operator associated to $x$} is the unitary operator $V_x$ on $\L _E^2$ defined by $(V_xu)(y):=u(y-x)$. Given $s\in [1,\infty)$, the \emph{shrinking operator associated to $s$} is the unitary operator $S_s$ on $\L _E^2$ defined by $(S_s u)(y):=s^{{-d}/{2}}u(sy)$.

\begin{defn}\label{defn:Bott-Dirac modified}
	For $s\in [1,\infty)$ and $x\in E$, the \emph{Bott-Dirac operator associated to $(x,s)$} is the unbounded operator $B_{s,x}:=s^{-1}D+C-c_x$ on $\L _E^2$ with domain $\SS_E$.
\end{defn}

Note that $B_{1,0}=B$ and $B_{s,x}=s^{{-1}/{2}}~V_xS_{\sqrt{s}}BS^*_{\sqrt{s}}V_x^*$. It is also known that for each $s\in [1,\infty)$ and $x\in E$, the operator $B_{s,x}$ is unbounded, odd, essentially self-adjoint and maps $\SS_E$ to itself (see, \emph{e.g.}, \cite[Corollary 12.1.4]{willett2020higher}).

\begin{defn}\label{defn:F_{s,x}}
	Let $s\in [1,\infty)$, $x\in E$ and $B_{s,x}$ be the Bott-Dirac operator associated to $(x,s)$. Define a bounded operator on $\L_E^2$ by:
	\[
	F_{s,x}:= B_{s,x}(1+B^2_{s,x})^{-1/2}.
	\]
\end{defn}

We list several important properties of the operator $F_{s,x}$. For simplicity, denote $\chi_{x,R}:=\chi_{B(x,R)}$ for $x\in E$ and $R \geq 0$.
\begin{prop}[{\cite[Proposition 12.1.10]{willett2020higher}}]\label{prop:Psi function}
	For each $\varepsilon >0$ there exists an odd function $\Psi :\RR \rightarrow [-1,1]$ with $\Psi (t)\rightarrow 1$ as $t\rightarrow +\infty$, satisfying the following:
	\begin{enumerate}
		\item For all $s\in [1,\infty)$ and $x\in E$, we have $\|F_{s,x}-\Psi(B_{s,x})\|< \varepsilon$.
		\item There exists $R_0>0$ such that for all $s\in [1,\infty)$ and $x\in E$, we have $\ppg_E(\Psi(B_{s,x})) \le s^{-1}R_0$.
		\item For all $s\in [1,\infty)$ and $x\in E$, $\Psi(B_{s,x})^2 - 1$ is compact.
		\item For all $s\in [1,\infty)$ and $x,y\in E$, $\Psi(B_{s,x}) - \Psi(B_{s,y})$ is compact.
		\item For all $s\in [1,\infty)$ and $x,y\in E$, $\|F_{s,x}-F_{s,y}\|\le 3\|x-y\|_E$. And there exists $c>0$ such that for all $s\in [1,\infty)$ and $x,y\in E$, we have
		\[
		\|\Psi(B_{s,x}) - \Psi(B_{s,y})\| \leq c \|x-y\|_E.
		\]
		\item For all $x\in E$, the function
		$$[1,\infty)\rightarrow \B(\L_E^2),\ s \mapsto \Psi(B_{s,x})$$
		is strong-$*$ continuous.
		\item The family of functions
		\[
		[1,\infty) \to \B(\L_E^2), \ s \mapsto \Psi(B_{s,x})^2-1
		\]
		is norm equi-continuous as $x$ varies over $E$ and $s$ varies over any fixed compact subset of $[1,\infty)$.
		\item For any $r \geq 0$, the family of functions
		\[
		[1,\infty) \to \B(\L_E^2), \ s \mapsto \Psi(B_{s,x}) - \Psi(B_{s,y})
		\]
		is norm equi-continuous as $(x,y)$ varies over the elements of $E \times E$ with $|x-y| \leq r$, and $s$ varies over any fixed compact subset of $[1,\infty)$.
		\item For any $\varepsilon_1>0$, there exists $R_1 > 0$ such that for all $R\ge R_1$, $s\ge 2d$ and $x\in E$, we have
		$$\| ( \Psi(B_{s,x})^2-1 )( 1-\chi_{x,R} ) \|< \varepsilon_1.$$
		\item For any $\varepsilon_2>0, r>0$ there exists $R_2 >0$ such that for all $R\ge R_2$, $s\ge 2d$ and $x,y \in E$ with $\|x-y\|_E\le r$, we have
		$$\|(\Psi(B_{s,x})-\Psi(B_{s,y}))(1-\chi_{x,R})\|< \varepsilon_2.$$
	\end{enumerate}
	Moreover, we can require that the function $\Psi$, constants $R_0$ in (2), $c$ in (5), $R_1$ in (9) and $R_2$ in (10) are independent of the dimension $d$ of the Euclidean space $E$.
\end{prop}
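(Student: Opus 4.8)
The whole statement is a list of properties of the one–parameter family $F_{s,x}$ and of a suitably chosen auxiliary function $\Psi$, and the plan is to isolate the few structural facts about the Bott--Dirac operators that everything rests on, and then read each item off. The three basic facts are: (a) \emph{spectral picture} — the operator $B=B_{1,0}$ has compact resolvent, its spectrum is $\{0\}\cup\{\pm\sqrt{2k}:k\in\NN\}$, and its kernel is one–dimensional, spanned by a Gaussian $\zeta$; (b) \emph{conjugation} — from $B_{s,x}=s^{-1/2}V_xS_{\sqrt s}BS^*_{\sqrt s}V_x^*$ one gets $B_{s,x}=V_xB_{s,0}V_x^*$ and $\mathrm{spec}(B_{s,x})=s^{-1/2}\mathrm{spec}(B)\subseteq\{0\}\cup\big([s^{-1/2}\sqrt2,\infty)\cup(-\infty,-s^{-1/2}\sqrt2]\big)$, with kernel spanned by the Gaussian bump $\zeta_{s,x}:=V_xS_{\sqrt s}\zeta$ localised near $x$; (c) \emph{finite propagation speed} — since $B_{s,x}=s^{-1}D+(C-c_x)$ with $C-c_x$ a pointwise (Clifford multiplication) operator, the wave operators satisfy $\ppg_E(e^{i\xi B_{s,x}})\le s^{-1}|\xi|$. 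All three are dimension–independent, which is where the final clause about $d$–independence of $\Psi,R_0,c,R_1,R_2$ will come from: every estimate below uses only (a)--(c) and one–dimensional functional calculus.

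Next I would fix the choice of $\Psi$. Given $\varepsilon>0$, pick an odd $\Psi\colon\RR\to[-1,1]$ with $\Psi(t)\to1$ as $t\to+\infty$, such that its derivative $\Psi'$ is Schwartz with $\widehat{\Psi'}$ supported in some interval $[-R_0,R_0]$, and such that $\|\Psi-\chi\|_\infty<\varepsilon$ where $\chi(t)=t(1+t^2)^{-1/2}$ is the normalising function defining $F_{s,x}$; this is possible because smooth normalising functions with band–limited derivative are dense (in sup norm) among normalising functions, and $\chi$ is one. Since $\chi-\Psi\in C_0(\RR)$ and $F_{s,x}=\chi(B_{s,x})$, item (1) follows from $\|\chi(B_{s,x})-\Psi(B_{s,x})\|\le\|\chi-\Psi\|_\infty<\varepsilon$ by the spectral theorem. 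Item (2) follows from the representation $\Psi(B_{s,x})=\text{const}+\int_\RR \widehat{\Psi'}(\xi)\,g_\xi(B_{s,x})\,d\xi$ (integrating $\Psi'$ and using $\Psi(0)=0$), so that the propagation of $\Psi(B_{s,x})$ is bounded by $s^{-1}R_0$ via (c), with $R_0$ manifestly independent of $d$. For items (3) and (4): $\Psi^2-1\in C_0(\RR)$, so $\Psi(B_{s,x})^2-1=(\Psi^2-1)(B_{s,x})$ is compact by compactness of the resolvent of $B_{s,x}$; and $\Psi(B_{s,x})-\Psi(B_{s,y})$, whose difference of "values at $\pm\infty$" vanishes, is handled by writing $\Psi=\mathrm{sgn}+h$ with $h\in C_0$, treating $h(B_{s,x})-h(B_{s,y})$ as a difference of compacts, and using the spectral gap of $B_{s,x}$ at $0$ (so $\mathrm{sgn}(B_{s,x})=B_{s,x}|B_{s,x}|^{-1}(1-p_{s,x})-p_{s,x}$ with $p_{s,x}$ the rank–one kernel projection) plus a resolvent comparison to see $\mathrm{sgn}(B_{s,x})-\mathrm{sgn}(B_{s,y})$ is compact. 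For item (5): $B_{s,x}-B_{s,y}=c_y-c_x$ with $\|c_y-c_x\|=\|x-y\|_E$, so the Lipschitz bounds follow from the operator–Lipschitz estimate for $\chi$ (integral formula $\chi(T)-\chi(T')=\int(\cdots)(T-T')(\cdots)$, giving constant $\le3$) and the analogous one for $\Psi$ (giving a constant $c$ depending only on $\Psi$, hence on $\varepsilon$, not on $d$).

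Items (6)--(8) are continuity statements in $s$: using $B_{s,x}=V_xB_{s,0}V_x^*$ they reduce, for the $\Psi^2-1$ family to the single family $s\mapsto(\Psi^2-1)(B_{s,0})$ (the $x$–dependence disappears by unitary conjugation) and for the difference family to $s\mapsto\Psi(B_{s,0})-\Psi(B_{s,z})$ with $|z|_E\le r$; then norm continuity on compact $s$–sets follows from Stone--Weierstrass on $C_0$ (approximate by polynomials in $(1+t^2)^{-1}$) together with norm continuity of $s\mapsto(1+B_{s,0}^2)^{-1}$, which one gets from the resolvent identity and the fact that $D^2(1+B^2)^{-1}$, $C^2(1+B^2)^{-1}$ are bounded; strong–$*$ continuity in item (6) is the pointwise version of the same. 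The main obstacle, and the most technical part, is items (9) and (10), the localisation estimates valid for $s\ge2d$. Here the plan is: when $s$ is large the term $s^{-1}D$ in $B_{s,x}$ is a controlled perturbation of the pure multiplication operator $C-c_x$ (Clifford multiplication by $z\mapsto z-x$), whose functional calculus is literally pointwise, so $(\Psi^2-1)(C-c_x)$ and $(\Psi(C-c_x)-\Psi(C-c_y))$ are multiplication operators that are small off $B(x,R)$ once $R$ is large enough that $\Psi^2-1$ and the relevant $\Psi$–difference are small on $[R-r,\infty)$; one then transfers these estimates to $B_{s,x}$ by a resolvent/Neumann comparison whose error is controlled by $s^{-1}\|D(1+B_{s,x}^2)^{-1/2}\|$ and by the Gaussian decay of $\zeta_{s,x}$, the threshold $s\ge2d$ being exactly what makes these errors uniformly small and (after renormalising the Clifford multiplication) $d$–independent. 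I expect this comparison step, together with bookkeeping the precise dependence of $R_1,R_2$ on $\varepsilon_1,\varepsilon_2,r$ (but not on $d,s,x,y$), to be where essentially all the work lies; the remaining items are formal consequences of (a)--(c) and the choice of $\Psi$.
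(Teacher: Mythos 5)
The paper does not actually prove this proposition; it imports it verbatim from Willett and Yu's book (their Proposition 12.1.10), so there is no in-paper argument to compare your sketch against. That said, your outline is consistent with the structure of the argument in the reference: items (1)--(5) do reduce to the spectral decomposition of the Bott--Dirac operator, the choice of a normalising function $\Psi$ whose distributional Fourier transform has compact support, the finite propagation speed of $e^{i\xi B_{s,x}}$, the unitary conjugation $B_{s,x}=V_xB_{s,0}V_x^*$, and operator-Lipschitz estimates; items (6)--(8) do reduce by conjugation and resolvent manipulations; and the dimension-independence clause does come from making all constants depend only on $\Psi$ and on one-dimensional functional calculus.

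Two caveats. First, a small imprecision in item (2): you would not use the displayed integral $\Psi(B_{s,x})=\mathrm{const}+\int\widehat{\Psi'}(\xi)\,g_\xi(B_{s,x})\,d\xi$ directly; the clean statement is that if $\widehat{\Psi'}$ is supported in $[-R_0,R_0]$ then the \emph{distributional} Fourier transform $\widehat{\Psi}$ is also supported in $[-R_0,R_0]$ (division by $i\xi$ preserves support), and then $\Psi(B_{s,x})$ is a weak-$*$ integral of wave operators, giving the propagation bound. Second, and more importantly, items (9) and (10) are the genuinely hard part and your sketch for them does not yet constitute a proof. Saying that ``$s^{-1}D$ is a controlled perturbation of $C-c_x$'' is not literally true in operator norm ($D$ is unbounded), and no Neumann-series comparison will make sense at that level of generality. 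What actually drives (9)--(10) in the reference is a quantitative Gaussian decay estimate for the integral kernels of $(1+B_{s,x}^2)^{-1}$ and related operators (a Mehler-kernel-type bound), uniform over $x$ and $s$ once $s\ge 2d$; the threshold $s\ge 2d$ is the precise place where the curvature/number-operator contribution to $B^2$ is dominated by the potential term, so that the decay rate is dimension-free. You acknowledge this is ``where essentially all the work lies,'' which is accurate, but as written that step is a placeholder rather than an argument, and it is the single part of the proposition whose proof is not a formal consequence of the three structural facts you list.
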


The strong-$\ast$ topology used in (6) above is defined as follows: A net $(T_i)$ of bounded operators converges to a bounded operator $T$ in the \emph{strong-$\ast$ topology} if all $v$ in the underlying Hilbert space, $T_i v \to Tv$ and $T_i^* v \to T^* v$. We need the following elementary result:

\begin{lem}[{\cite[Lemma 12.3.5]{willett2020higher}}]\label{lem:12.3.5 from WY21}
Let $\mathcal{S}$ and $\mathcal{T}$ be norm bounded sets of operators on a Hilbert space $\H$ such that $\mathcal{T}$ consists only of compact operators. Equip $\mathcal{S}$ with the strong-$\ast$ topology and $\mathcal{T}$ with the norm topology. Then the product maps
\[
\mathcal{S} \times \mathcal{T} \to \mathfrak{K} \quad \mbox{and} \quad \mathcal{T} \times \mathcal{S} \to \mathfrak{K}
\]
are jointly continuous where $\mathfrak{K}$ denotes the compact operators on $\H$.
\end{lem}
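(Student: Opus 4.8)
The plan is to prove joint continuity of the product map $\mathcal{S}\times\mathcal{T}\to\mathfrak{K}$ directly from the definitions, the statement for $\mathcal{T}\times\mathcal{S}$ being entirely symmetric (pass to adjoints). Fix $(S_0,T_0)\in\mathcal{S}\times\mathcal{T}$ and a net $(S_i,T_i)\to(S_0,T_0)$, meaning $S_i\to S_0$ strong-$\ast$ and $\|T_i-T_0\|\to0$. Write
\[
S_iT_i - S_0T_0 = S_i(T_i-T_0) + (S_i - S_0)T_0.
\]
The first term has norm at most $\big(\sup_{S\in\mathcal{S}}\|S\|\big)\,\|T_i-T_0\|$, which tends to $0$ since $\mathcal{S}$ is norm bounded. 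So the whole content is in showing that $(S_i-S_0)T_0\to0$ in norm, i.e. that the strong-$\ast$ convergence $S_i\to S_0$ upgrades to norm convergence of $S_iT_0$ once we multiply by the fixed \emph{compact} operator $T_0$ on the right.

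For the second term, the key step is a standard compactness-plus-uniform-boundedness argument. Since $T_0$ is compact, its image can be uniformly approximated: given $\varepsilon>0$, choose a finite-rank operator $P$ (a finite sum of rank-one operators built from an orthonormal set of eigenvectors, say) with $\|T_0 - P\|<\varepsilon$. Then
\[
\|(S_i-S_0)T_0\| \leq \|(S_i-S_0)(T_0-P)\| + \|(S_i-S_0)P\| \leq \big(\sup_{S\in\mathcal S}\|S\|+\|S_0\|\big)\varepsilon + \|(S_i-S_0)P\|.
\]
For the finite-rank piece, write $P=\sum_{k=1}^{n}\langle\,\cdot\,,\xi_k\rangle\eta_k$; then $\|(S_i-S_0)P\|\le \sum_{k=1}^n \|\eta_k\|\,\|(S_i-S_0)\xi_k\| = \sum_{k=1}^n\|\eta_k\|\,\|S_i\xi_k - S_0\xi_k\|$, and each summand tends to $0$ because $S_i\to S_0$ strongly. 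Hence $\limsup_i\|(S_i-S_0)T_0\| \le \big(\sup_{S\in\mathcal S}\|S\|+\|S_0\|\big)\varepsilon$, and letting $\varepsilon\to0$ gives $\|(S_i-S_0)T_0\|\to0$. Combining with the first term shows $S_iT_i\to S_0T_0$ in norm, proving joint continuity of $\mathcal{S}\times\mathcal{T}\to\mathfrak{K}$.

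For the map $\mathcal{T}\times\mathcal{S}\to\mathfrak{K}$, note $T_iS_i - T_0S_0 = (T_i-T_0)S_i + T_0(S_i-S_0)$; the first term again vanishes by norm boundedness of $\mathcal{S}$, and for the second one applies the argument above to the adjoints, using that $S_i^\ast\to S_0^\ast$ strongly (this is exactly the extra content of strong-$\ast$ as opposed to merely strong convergence, and the reason the hypothesis is phrased with the strong-$\ast$ topology) together with $T_0^\ast$ being compact: $(S_i^\ast-S_0^\ast)T_0^\ast\to0$ in norm, hence its adjoint $T_0(S_i-S_0)\to0$ in norm. The only mild subtlety — and the one place to be slightly careful — is bookkeeping the uniform bound $M:=\sup_{S\in\mathcal S}\|S\|<\infty$ so that all the error terms are controlled uniformly in $i$; there is no genuine obstacle here, as the lemma is elementary functional analysis.
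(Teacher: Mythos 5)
The paper does not give its own proof of this lemma: it is imported verbatim from \cite[Lemma 12.3.5]{willett2020higher}, so there is nothing internal to compare against. Your argument is correct and is exactly the standard proof one would write. The decomposition $S_iT_i-S_0T_0=S_i(T_i-T_0)+(S_i-S_0)T_0$, the uniform bound on the first term from norm-boundedness of $\mathcal S$, the finite-rank approximation of the compact $T_0$ to reduce the second term to strong convergence on finitely many vectors, and the adjoint trick for $\mathcal T\times\mathcal S$ (which is where the strong-$\ast$, rather than merely strong, hypothesis is genuinely used) are all as they should be. One cosmetic slip: with $P=\sum_{k}\langle\,\cdot\,,\xi_k\rangle\eta_k$ one has $(S_i-S_0)Pv=\sum_k\langle v,\xi_k\rangle(S_i-S_0)\eta_k$, so the estimate should read $\|(S_i-S_0)P\|\le\sum_k\|\xi_k\|\,\|(S_i-S_0)\eta_k\|$, i.e.\ $(S_i-S_0)$ acts on the $\eta_k$'s, not the $\xi_k$'s. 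This swap of roles does not affect the argument, since each $\|(S_i-S_0)\eta_k\|\to 0$ by strong convergence all the same.
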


\subsection{Equivariant twisted algebras}\label{ssec:equiv. twisted algebras}

Throughout the rest of this paper, thanks to Section \ref{sec:reduction}, we always assume that $(X,d)$ is a discrete metric space of bounded geometry and having the form $X=\bigsqcup_{n=1}^\infty X_n$ with $d(X_n, X_m) \to \infty$ as $n+m \to \infty~ (n\neq m)$ such that the a-T-menable group $\Gamma$ acts on each $X_n$ properly and cocompactly by isometries. Furthermore, assume that all $\Gamma$-orbit maps are uniformly coarsely equivalent and the quotient space admits a coarse embedding $\xi: X/\Gamma \to \H$ into some Hilbert space $\H$.

Denoting by $E$ the underlying Euclidean space of $H$, it is clear that the map $\xi: X/\Gamma \to E$ is also a coarse embedding. Note that each $X_n/\Gamma$ is finite, hence the image of $\xi$ restricted to $X_n/\Gamma$ sits inside a finite-dimensional Euclidean space, denoted by $E_n$. Without loss of generality, we assume that the dimension $d_n$ of $E_n$ is even. For $n\in \NN$, denote $f_n:=\xi \circ \pi|_{X_n}: X_n \to E_n$, where $\pi: X\to X/\Gamma$ is the quotient map.

For $n\in \NN$ and $r>0$, denote the Rips complex of $X_n$ with scale $r$ by $P_{r,n}:=P_r(X_n)$. The $\Gamma$-action on $X_n$ induces a proper and cocompact $\Gamma$-action on $P_{r,n}$ by isometries as explained in Section \ref{ssec: equiv CBC}.

For $x\in X_n$, we consider the open star neighbourhood $B_{x,r,n}:=St_{n,r}(x)$ of $x$ in the barycentric subdivision of $P_{r,n}$. Note that $B_{x,r,n} \cap B_{y,r,n} = \emptyset$ for $x\neq y \in X_n$, $\gamma \cdot B_{x,r,n} = B_{\gamma x,r,n}$ for $\gamma \in \Gamma$, and $B_{x,r,n} \subseteq B_{x,s,n}$ for each $r \leq s$. Taking $Y_{r,n}:=\bigcup_{x\in X_n} B_{x,r,n}$, it is clear that $Y_{r,n}$ is $\Gamma$-invariant (since the action is by isometries) and dense in $P_{r,n}$. Moreover, let $Z_{r,n}$ be the collection of points $\sum {c_i x_i}$ in $Y_{r,n}$ such that all the coefficients $c_i$ take rational values. It is clear that $Z_{r,n}$ is a $\Gamma$-invariant countable subset which is also dense in $P_{r,n}$.

For $n\in \NN$ and $r>0$, we extend the above $f_n: X_n \to E_n$ to $f_{r,n}: Y_{r,n} \to E_n$ by sending all points in $B_{x,r,n}$ to $f_n(x)$ for $x\in X_n$. It is clear that $f_{r,n}(\gamma \cdot y) = f_{r,n}(y)$ for any $\gamma \in \Gamma$ and $y\in Y_{r,n}$.

Let us fix an infinite-dimensional separable Hilbert space $\HH$. For $r>0$ and $n\in \NN$, consider the following Hilbert spaces:
\[
\H_{r,n}:=\ell^2(Z_{r,n}) \otimes \HH \otimes \ell^2(\Gamma) \quad \mbox{and} \quad \H_{r,n,E}:=\ell^2(Z_{r,n}) \otimes \HH \otimes \ell^2(\Gamma) \otimes \L _{E_n}^2.
\]
The group $\Gamma$ acts on $\H_{r,n}$ (respectively, $\H_{r,n,E}$) as follows: for any $\gamma \in \Gamma$,
\begin{eqnarray*}
U_\gamma: & \delta_z \otimes \xi \otimes \delta_g  &\mapsto  ~~\delta_{\gamma z} \otimes \xi \otimes \delta_{\gamma g}\\
(\mbox{respectively,} ~U_\gamma: & \delta_z \otimes \xi \otimes \delta_g \otimes u & \mapsto ~~\delta_{\gamma z} \otimes \xi \otimes \delta_{\gamma g} \otimes u).
\end{eqnarray*}
It is clear that $\H_{r,n}$ is an admissible $P_{r,n}$-module under the amplified multiplication representation, and similarly $\H_{r,n,E}$ is both an admissible $P_{r,n}$-module and an ample $E_n$-module. We use these modules to the build equivariant Roe algebras $C^*(\H_{r,n})^\Gamma$ and $C^*(\H_{r,n,E})^\Gamma$ of $P_{r,n}$, and the equivariant localisation algebras $C^*_L(\H_{r,n})^\Gamma$ and $C^*_L(\H_{r,n,E})^\Gamma$ of $P_{r,n}$. Moreover for $T\in \B(\H_{r,n,E})$, we write $\ppg_{P}(T)$ and $\ppg_{E}(T)$ for the propagation of $T$ with respect to the $P_{r,n}$-module structure and the $E_n$-module structure, respectively. Also denote the Hilbert spaces
\[
\H_r:=\bigoplus_{n=1}^\infty \H_{r,n} \quad \mbox{and} \quad \H_{r,E}:=\bigoplus_{n=1}^\infty \H_{r,n,E},
\]
and use these modules to build the equivariant Roe algebras $C^*(\H_r)^\Gamma$ and $C^*(\H_{r,E})^\Gamma$ of $P_r:=\bigsqcup_{n=1}^\infty P_{r,n}$, and the equivariant localisation algebras $C^*_L(\H_r)^\Gamma$ and $C^*_L(\H_{r,E})^\Gamma$ of $P_r$.

For $r\leq s$, the canonical inclusion $Z_r \to Z_s$ induces isometric inclusions of Hilbert spaces
\[
\H_{r,n} \longrightarrow \H_{s,n} \quad \mbox{and} \quad \H_{r,n,E} \longrightarrow \H_{s,n,E},
\]
and further implies inclusions of $C^*$-algebras
\[
C^*(\H_{r,n}) \longrightarrow C^*(\H_{s,n}), \quad C^*(\H_{r,n,E}) \longrightarrow C^*(\H_{s,n,E})
\]
and
\[
C^*_L(\H_{r,n}) \longrightarrow C^*_L(\H_{s,n}), \quad C^*_L(\H_{r,n,E}) \longrightarrow C^*_L(\H_{s,n,E}).
\]

For $r>0$ and $n\in \NN$, also note that although $\{B_{x,r,n}:x\in X_n\}$ does not cover $P_{r,n}$ we still have the following decomposition of Hilbert spaces:
\[
\H_{r,n} = \bigoplus_{x\in X_n} \chi_{B_{x,r,n}}\H_{r,n} \quad \mbox{and} \quad \H_{r,n,E}=\bigoplus_{x\in X_n} \chi_{B_{x,r,n}}\H_{r,n,E}
\]
since $\{B_{x,r,n}:x\in X_n\}$ covers $Z_{r,n}$. Write
\[
\H_{x,r,n}:=\chi_{B_{x,r,n}}\H_{r,n} \quad \mbox{and} \quad \H_{x,r,n,E}=\chi_{B_{x,r,n}}\H_{r,n,E}.
\]
We can represent a bounded linear operator $T$ on $\H_{r,n}$ (respectively, $\H_{r,n,E}$) as an $X_n$-by-$X_n$ matrix $(T_{x,y})_{x,y\in X_n}$, where each $T_{x,y}$ is a bounded linear operator $\H_{y,r,n} \to \H_{x,r,n}$ (respectively, $\H_{y,r,n,E} \to \H_{x,r,n,E}$). Moreover, $T$ is $\Gamma$-invariant if and only if the following diagram commutes:
\[
\xymatrix{
			\H_{y,r,n} \ar[r]^{T_{x,y}} \ar[d]_{U_\gamma} & \H_{x,r,n}  \ar[d]^{U_\gamma} \\
			\H_{\gamma y,r,n} \ar[r]^{T_{\gamma x, \gamma y}} & \H_{\gamma x,r,n}
	}
\]
for any $\gamma \in \Gamma$ and $x,y\in X_n$, \emph{i.e.}, $T_{\gamma x, \gamma y} = U_\gamma T_{x,y} U_{\gamma}^\ast =: \gamma \cdot T_{x,y}$.

It was pointed out in \cite[Inequality (12.11)]{willett2020higher} that for any $r>0$, there exists a proper non-decreasing function $\varphi_r: [0,\infty) \to [0,\infty)$ such that for any $T \in \CC[\H_{r,n,E}]^\Gamma$ we have:
\begin{equation}\label{EQ:relations between ppgs}
\ppg_P(T) -2 \leq \sup\{d(x,y): T_{x,y} \neq 0 \mbox{~where~} x,y\in X_n \} \leq \varphi_r(\ppg_P(T)).
\end{equation}
We also record the following elementary result (see, \emph{e.g.}, \cite[Lemma 12.2.4]{willett2020higher}) for later use:

\begin{lem}\label{lem:norm control of finite ppg op}
For $s,r \geq 0$, there exists an $N \in \NN$ such that for any $n\in \NN$ and any bounded operator $T=(T_{x,y})_{x,y\in X}$ on $\H_{r,n,E}$ with $P$-propagation at most $s$, we have
\[
\|T\| \leq N \cdot \sup_{x,y\in X} \|T_{x,y}\|.
\]
\end{lem}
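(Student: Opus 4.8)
The plan is to prove this by a Schur-type norm estimate for band-limited matrices, in which the bounded geometry of $X$ is used to bound, uniformly in $n$, the number of non-zero entries in each row and each column of the matrix $(T_{x,y})_{x,y\in X_n}$.

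First I would convert the hypothesis $\ppg_P(T)\le s$ into a bound on the matrix support: there is a constant $R=R_{r,s}\ge 0$, \emph{independent of $n$}, such that $T_{x,y}=0$ whenever $d(x,y)>R$. Indeed, if $d(x,y)$ is large then the star neighbourhoods $B_{x,r,n}$ and $B_{y,r,n}$ are far apart in the spherical metric of $P_{r,n}$ (their distance is at least $d(x,y)$ up to an additive constant coming from the uniformly bounded diameter of stars), so $\chi_{B_{x,r,n}}T\chi_{B_{y,r,n}}=0$ once $\ppg_P(T)\le s$; quantitatively this is the upper estimate in \eqref{EQ:relations between ppgs} with $R:=\varphi_r(s)$, and since that estimate only reflects the combinatorics of the cover $\{B_{x,r,n}\}_{x\in X_n}$ it applies to any bounded operator of finite $P$-propagation (not merely to elements of the algebraic equivariant Roe algebra). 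Set $N:=\sup_{x\in X}\sharp B(x,R)$, which is finite by bounded geometry of $X$; since each $X_n$ carries the metric induced from $X$, we have $\sharp\{y\in X_n:d(x,y)\le R\}\le N$ and $\sharp\{x\in X_n:d(x,y)\le R\}\le N$ for all $n$ and all $x,y$.

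Next comes the estimate itself. Writing $M:=\sup_{x,y}\|T_{x,y}\|$ and using the decomposition $\H_{r,n,E}=\bigoplus_{x\in X_n}\H_{x,r,n,E}$ to write a vector as $v=(v_x)_{x\in X_n}$ with $\|v\|^2=\sum_x\|v_x\|^2$ and $(Tv)_x=\sum_y T_{x,y}v_y$, one has, after the triangle inequality, submultiplicativity, and Cauchy--Schwarz applied to the inner sum over the at most $N$ indices $y$ with $T_{x,y}\ne 0$,
\[
\|Tv\|^2=\sum_x\Big\|\sum_y T_{x,y}v_y\Big\|^2\le \sum_x N\!\!\sum_{y:\,d(x,y)\le R}\!\!\|T_{x,y}\|^2\|v_y\|^2\le NM^2\sum_x\sum_{y:\,d(x,y)\le R}\|v_y\|^2 .
\]
Interchanging the order of summation and using the column bound $\sharp\{x\in X_n:d(x,y)\le R\}\le N$ gives $\|Tv\|^2\le N^2M^2\|v\|^2$, i.e. $\|T\|\le NM=N\sup_{x,y}\|T_{x,y}\|$, which is the claim with the constant $N$ chosen above.

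I do not expect a genuine obstacle here; the one point that needs attention is \emph{uniformity in $n$}, and this is precisely what bounded geometry of the ambient space $X$ delivers — a single $N=\sup_{x\in X}\sharp B(x,\varphi_r(s))$ works for every $X_n$ simultaneously because the $X_n$ are subspaces of $X$ with the induced metric and $\varphi_r$ does not depend on $n$. A secondary, very minor, point is the remark made above that the support estimate \eqref{EQ:relations between ppgs} requires neither $\Gamma$-invariance nor local compactness of $T$, only finiteness of its $P$-propagation.
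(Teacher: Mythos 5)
Your proof is correct, and it is a clean, self-contained argument. The paper itself does not prove Lemma~\ref{lem:norm control of finite ppg op} — it simply cites Lemma~12.2.4 of Willett--Yu — so the relevant question is whether your proposal is valid, and it is. The two points you single out for attention are exactly the right ones. First, the conversion of $\ppg_P(T)\le s$ into a band-width bound $T_{x,y}=0$ for $d(x,y)>\varphi_r(s)$ is the upper estimate in Inequality~(\ref{EQ:relations between ppgs}), and you correctly observe that this upper estimate requires nothing beyond finiteness of the $P$-propagation: it uses only that the sets $\{B_{x,r,n}\}_{x\in X_n}$ have uniformly bounded diameter and cover $Z_{r,n}$, not $\Gamma$-invariance or local compactness of $T$. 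Second, $N:=\sup_{x\in X}\sharp B(x,\varphi_r(s))$ is finite by bounded geometry of $X$ and is independent of $n$ because each $X_n$ carries the metric induced from $X$ and $\varphi_r$ does not depend on $n$; this is exactly what makes the constant in the lemma uniform over the blocks.

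The Cauchy--Schwarz/Schur computation itself is carried out correctly: with at most $N$ non-zero entries per row one has $\|(Tv)_x\|^2\le N\sum_{y:\,d(x,y)\le R}\|T_{x,y}\|^2\|v_y\|^2$, and summing over $x$ and applying the column bound yields $\|Tv\|^2\le N^2M^2\|v\|^2$ with $M=\sup_{x,y}\|T_{x,y}\|$. As a side remark, a frequently seen alternative is to decompose the band $\{(x,y):d(x,y)\le\varphi_r(s)\}$ into at most $N'$ partial bijections of $X$ (again possible by bounded geometry), write $T$ as a sum of $N'$ partial-permutation blocks each of norm $\le M$, and take a triangle inequality; this gives a possibly larger constant but the same qualitative conclusion. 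Since the lemma only asserts existence of some $N$, either argument is adequate.
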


To introduce the twisted algebras, we recall the following construction from \cite[Definition 12.3.1]{willett2020higher}:

\begin{defn}\label{defn:V-construction}
	Given $r>0$, $n\in \NN$ and $T \in \B(\L_{E_n}^2)$, define a bounded linear operator $T^V$ on $\H_{r,n,E_n}=\ell^2(Z_{r,n})\otimes \HH\otimes \ell^2(\Gamma) \otimes \L^2_{E_n}$ by the formula
	\[
	T^V: \delta_z \otimes \xi\otimes \delta_g \otimes u\mapsto \delta_z\otimes \xi\otimes \delta_g \otimes V_{f_{r,n}(z)}TV_{f_{r,n}(z)}^*u
	\]
	for $z\in Z_{r,n}$, $\xi\in \HH$, $g\in \Gamma$ and $u\in \L^2_{E_n}$, where $V_{f_{r,n}(z)}$ is the translation operator defined in Section \ref{ssec:The Bott-Dirac operators on Euclidean spaces}.
\end{defn}

Writing in the matrix representation, we have
\[
T^V_{x,y}=
\begin{cases}
~\Id_{\H_{x,r,n}} \otimes V_{f_{r,n}(x)}TV_{f_{r,n}(x)}^*, & y=x; \\
~0, & \mbox{otherwise}.
\end{cases}
\]
Since $f_{r,n}(\gamma z) = f_{r,n}(z)$ for any $z\in Z_{r,n}$ and $\gamma \in \Gamma$, it is easy to see that $\gamma \cdot T_{x,y} = T_{\gamma x, \gamma y}$. In other words, $T^V$ is invariant. Hence we obtain:

\begin{lem}\label{lem: T^V is invariant}
For any $T \in \B(\L_{E_n}^2)$, we have $T^V \in \CC[\H_{r,n,E}]^\Gamma$.
\end{lem}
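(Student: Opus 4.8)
The plan is a routine verification that the operator $T^V$ of Definition~\ref{defn:V-construction} satisfies the conditions defining membership in $\CC[\H_{r,n,E}]^\Gamma$, namely that it is bounded, has finite propagation with respect to the $P_{r,n}$-module structure, and is $\Gamma$-invariant. The $\Gamma$-invariance has in effect already been observed in the paragraph preceding the statement, so the two points I would address carefully are boundedness and finite propagation, and the starting point for both is the matrix formula for $T^V$ recorded just after Definition~\ref{defn:V-construction}: $T^V$ is block-diagonal for the decomposition $\H_{r,n,E}=\bigoplus_{x\in X_n}\H_{x,r,n,E}$, with $T^V_{x,y}=0$ for $y\neq x$ and $T^V_{x,x}=\Id_{\H_{x,r,n}}\otimes V_{f_{r,n}(x)}TV_{f_{r,n}(x)}^{*}$.

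For boundedness, since each $V_{f_{r,n}(x)}$ is unitary we have $\|T^V_{x,x}\|=\|T\|$ for every $x$, and a block-diagonal operator has norm equal to the supremum of the norms of its diagonal blocks; hence $\|T^V\|=\|T\|<\infty$. For finite propagation, the key observation is that each block $T^V_{x,x}$ acts as the identity on the $\ell^2(Z_{r,n})$-coordinate and alters only the $\L^2_{E_n}$-coordinate, so $T^V$ commutes with $\phi(f)$ for every $f\in C_0(P_{r,n})$, where $\phi$ is the amplified multiplication representation. Given distinct points $p\neq q$ of $P_{r,n}$, I would choose $f,g\in C_0(P_{r,n})$ with disjoint supports and $f(p)\neq 0$, $g(q)\neq 0$; then $gT^Vf=gfT^V=0$, so $(p,q)\notin\supp(T^V)$. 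Therefore $\supp(T^V)$ is contained in the diagonal of $P_{r,n}\times P_{r,n}$ and $\ppg_P(T^V)=0$. The only subtlety worth flagging is that $Z_{r,n}$ is merely a dense subset of $P_{r,n}$, so the propagation must be computed through the $\supp$/propagation definition in terms of $C_0(P_{r,n})$ rather than from a na\"ive matrix-over-$P_{r,n}$ picture; the commutation $T^V\phi(f)=\phi(f)T^V$ is exactly what makes this work, and the same commutation is what will later give local compactness in the twisted setting once $T^V$ is combined with genuine Roe-algebra factors, as in \cite[Section~12.3]{willett2020higher}.

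For $\Gamma$-invariance I would reproduce cleanly the computation sketched before the statement: since the action is isometric and $\gamma\cdot B_{x,r,n}=B_{\gamma x,r,n}$, the unitary $U_\gamma$ carries $\H_{x,r,n,E}$ isometrically onto $\H_{\gamma x,r,n,E}$, and on these subspaces it intertwines $\Id_{\H_{x,r,n}}\otimes V_{f_{r,n}(x)}TV_{f_{r,n}(x)}^{*}$ with $\Id_{\H_{\gamma x,r,n}}\otimes V_{f_{r,n}(\gamma x)}TV_{f_{r,n}(\gamma x)}^{*}$; these two operators coincide because $f_{r,n}(\gamma x)=f_{r,n}(x)$ by the very definition of $f_{r,n}$. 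Hence $U_\gamma T^V_{x,x}U_\gamma^{*}=T^V_{\gamma x,\gamma x}$, which is precisely the commuting-square criterion for $\Gamma$-invariance recorded earlier. Combining the three items yields $T^V\in\CC[\H_{r,n,E}]^\Gamma$.

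Main obstacle: there is essentially none. The lemma is bookkeeping built entirely on the explicit matrix form of $T^V$ and the $\Gamma$-invariance of the map $f_{r,n}$. If any step needs a moment's care it is the finite-propagation claim, for the density reason noted above, but $T^V\phi(f)=\phi(f)T^V$ disposes of it at once.
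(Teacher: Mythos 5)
Your argument for $\Gamma$-invariance (via $f_{r,n}(\gamma z)=f_{r,n}(z)$), for boundedness, and for $\ppg_P(T^V)=0$ is correct, and it is a faithful elaboration of what the paper itself sketches: the text preceding the lemma only spells out the $\Gamma$-invariance and treats the rest as evident.

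There is, however, a gap in the conclusion, which you inherit from the paper and in fact half-diagnose yourself. The paper defines $\CC[\H_Z]^\Gamma$ as the $*$-algebra of \emph{locally compact} finite-propagation $\Gamma$-invariant operators. For $T\neq 0$ the operator $T^V$ is not locally compact: each diagonal block $T^V_{x,x}=\Id_{\H_{x,r,n}}\otimes V_{f_{r,n}(x)}TV_{f_{r,n}(x)}^{*}$ contains the identity on the infinite-dimensional space $\H_{x,r,n}=\chi_{B_{x,r,n}}\ell^2(Z_{r,n})\otimes\HH\otimes\ell^2(\Gamma)$, so $\chi_{B_{x,r,n}}T^V$ is not compact. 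You notice exactly this --- ``the same commutation is what will later give local compactness \dots once $T^V$ is combined with genuine Roe-algebra factors'' --- but then nonetheless assert $T^V\in\CC[\H_{r,n,E}]^\Gamma$, which by the paper's own definition would require local compactness. What your three items genuinely establish, and what Lemmas \ref{lem:F multiplier}--\ref{lem:projection} in fact use, is that $T^V$ is a bounded $\Gamma$-invariant operator with $\ppg_P(T^V)=0$, hence a multiplier of $\CC[\H_{r,n,E}]^\Gamma$ (and of the twisted algebra $\AA^r(X;E)^\Gamma$). Phrase the conclusion in that form, or note explicitly that the local-compactness clause is being dropped from the claim; as written, your final sentence is not supported by the three items you combine.
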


Now we introduce the notion of twisted algebras, which is slightly different from \cite[Section 12.6]{willett2020higher}.

\begin{defn}\label{defn:twisted Roe}
Fix an $r>0$. Denote $\prod_{n\in \NN} C_b([1,\infty), C^*(\H_{r,n,E})^\Gamma)$ the product $C^*$-algebra of all bounded continuous functions from $[1,\infty)$ to $C^*(\H_{r,n,E})^\Gamma$ with supremum norm. Write elements of this $C^*$-algebra as a collection $(T_{n,s})_{n\in \NN,s\in [1,\infty)}$ for $T_{n,s}=(T_{n,s,x,y})_{x,y\in X_n} \in C^*(\H_{r,n,E})^\Gamma$, whose norm is
\[
\|(T_{n,s})\|=\sup_{n\in \NN,s\in [1,\infty)}\|T_{n,s}\|_{\B(\H_{r,n,E})}.
\]
Let $\AA^r(X;E)^\Gamma$ denote the $*$-subalgebra of $\prod_{n\in \NN}C_b([1,\infty),C^*(\H_{r,n,E})^\Gamma)$ consisting of elements satisfying the following conditions:
\begin{enumerate}
		\item $\sup\limits_{s\in [1,\infty),n\in \NN}\ppg_{P}(T_{n,s})<\infty$;\\[0.1cm]
		\item for each $n\in \NN$, $\lim\limits_{s\rightarrow \infty}\ppg_{E}(T_{n,s})=0$;\\[0.1cm]
		\item $\lim\limits_{R\rightarrow \infty}\sup\limits_{s\in [1,\infty),n\in \NN}\|\chi_{0,R}^VT_{n,s}-T_{n,s}\|=\lim\limits_{R\rightarrow \infty}\sup\limits_{s\in [1,\infty),n\in \NN}\|T_{n,s}\chi_{0,R}^V-T_{n,s}\|=0$;\\[0.1cm]
		\item for each $n\in \NN$ and $x,y\in X_n$, the map $s \mapsto T_{n,s,x,y}$ belongs to the subalgebra $\K(\H_{y,r,n},\H_{x,r,n}) \otimes C_b([1,\infty), \K(\L^2_{E_n}))$ of $C_b([1,\infty), \K(\H_{y,r,n},\H_{x,r,n}) \otimes\K(\L^2_{E_n}))$.
\end{enumerate}
	The \emph{equivariant twisted Roe algebra} $A^r(X;E)^\Gamma$ of $X$ is defined to be the norm-closure of $\AA^r(X;E)^\Gamma$ in $\prod_{n\in \NN}C_b([1,\infty),C^*(\H_{r,n,E})^\Gamma)$.
\end{defn}

\begin{defn}\label{defn:twisted localisation}
Define $\AA^r_L(X;E)^\Gamma$ to be the collection of uniformly continuous bounded functions $T:[1,\infty)\to \AA^r(X;E)^\Gamma$ satisfying the following: writing $T=(T_t)_{t\in [1,\infty)} = (T_{t,n,s})_{t,s\in [1,\infty), n\in \NN}$ then
\begin{enumerate}
 \item $\lim\limits_{t\to \infty} \sup\limits_{s\in [1,\infty), n\in \NN} \ppg_P(T_{t,n,s}) = 0$;\\[0.1cm]
 \item $\lim\limits_{R\rightarrow \infty}\sup\limits_{t,s\in [1,\infty),n\in \NN}\|\chi_{0,R}^VT_{t,n,s}-T_{t,n,s}\|=\lim\limits_{R\rightarrow \infty}\sup\limits_{t,s\in [1,\infty),n\in \NN}\|T_{t,n,s}\chi_{0,R}^V-T_{t,n,s}\|=0$.
\end{enumerate}
The \emph{equivariant twisted localisation algebra} $A^r_L(X;E)^\Gamma$ of $X$ is defined to be the completion of $\AA^r_L(X;E)^\Gamma$ for the norm $\|(T_t)\|:=\sup_t \|T_t\|_{A^r(X;E)^\Gamma}$.
\end{defn}

\begin{rem}\label{rem:deff between twisted algebras}
Readers might already notice that the above definition is slightly different from \cite[Definition 12.6.2]{willett2020higher}. More precisely, we weaken condition (2) to level-wise convergence and restrict the living space of each matrix entry (see condition (4) above) for later use. On the other hand, the following lemma shows that the original condition (4) in \cite[Definition 12.6.2]{willett2020higher} can be recovered.
\end{rem}

\begin{lem}\label{lem:condition (4) recovery}
Given $n\in \NN$, $r>0$ and $T \in C^*(\H_{r,n,E})^\Gamma$, we have
\[
\lim_{i\in I}\|p_i^VT-T\|=\lim_{i\in I}\|Tp_i^V-T\|=0
\]
where $\{p_i\}_{i\in I}$ is the net of finite rank projections on $\L_{E_n}^2$.
\end{lem}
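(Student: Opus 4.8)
The plan is to show that the net $(p_i^V)_{i\in I}$ is a two-sided approximate unit for $C^*(\H_{r,n,E})^\Gamma$; the substance of the statement is not the pointwise limit (which is essentially the fact that $p_i\to\Id$ strongly) but rather that the resulting estimates can be made \emph{uniform} over the matrix entries of a given operator. First I would reduce to operators of finite propagation: since each $p_i$ is a projection and the $V$-construction of Definition~\ref{defn:V-construction} is block-diagonal and isometric on each block, $\|p_i^V\|\le 1$ for all $i$; combined with the density of $\CC[\H_{r,n,E}]^\Gamma$ in $C^*(\H_{r,n,E})^\Gamma$ and a standard $\varepsilon/3$-argument, it is enough to prove $\|p_i^VT-T\|\to0$ and $\|Tp_i^V-T\|\to0$ for a single locally compact, finite-propagation, $\Gamma$-invariant operator $T$.

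For such $T$ I would pass to its matrix form $T=(T_{x,y})_{x,y\in X_n}$. Local compactness with respect to the $P_{r,n}$-module structure, together with the fact that the closed star $\overline{B_{x,r,n}}$ is compact in $P_{r,n}$ (this is where bounded geometry of $X$ enters), shows each $T_{x,y}=\chi_{B_{x,r,n}}T\chi_{B_{y,r,n}}$ is a compact operator $\H_{y,r,n,E}\to\H_{x,r,n,E}$; finite propagation and \eqref{EQ:relations between ppgs} yield an $s>0$ with $T_{x,y}=0$ whenever $d(x,y)>s$; and the entries are uniformly bounded in norm. Since $p_i^V$ is block-diagonal, with $(p_i^V)_{x,x}=\Id_{\H_{x,r,n}}\otimes V_{f_{r,n}(x)}p_iV_{f_{r,n}(x)}^*$, and has $P$-propagation bounded by a constant independent of $i$ and $n$, the differences $p_i^VT-T$ and $Tp_i^V-T$ have $P$-propagation bounded uniformly in $i$, so Lemma~\ref{lem:norm control of finite ppg op} reduces the claim to showing that $\sup_{x,y\in X_n}\|(p_i^VT-T)_{x,y}\|\to0$ and $\sup_{x,y\in X_n}\|(Tp_i^V-T)_{x,y}\|\to0$. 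A direct computation identifies these entries as $\big(\Id_{\H_{x,r,n}}\otimes(V_{f_{r,n}(x)}p_iV_{f_{r,n}(x)}^*-\Id)\big)T_{x,y}$ and $T_{x,y}\big(\Id_{\H_{y,r,n}}\otimes(V_{f_{r,n}(y)}p_iV_{f_{r,n}(y)}^*-\Id)\big)$ respectively.

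The supremum over the (infinitely many) entries is then controlled using cocompactness. Since $\Gamma$ acts properly and cocompactly on the discrete space $X_n$ there are only finitely many $\Gamma$-orbits, and since $p_i^V$ and $T$ are $\Gamma$-invariant so are both differences, hence $\|(p_i^VT-T)_{\gamma x,\gamma y}\|=\|(p_i^VT-T)_{x,y}\|$ for all $\gamma\in\Gamma$, and likewise for $Tp_i^V-T$; together with bounded geometry (only finitely many $y$ lie within distance $s$ of a given $x$) this reduces each supremum to a \emph{finite} maximum over pairs $(x_k,y')$ with $x_k$ ranging over a fixed finite set of orbit representatives and $d(x_k,y')\le s$. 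For each such pair I would finish with the elementary fact that $\|S_iK\|\to0$ and $\|KS_i\|\to0$ whenever $K$ is compact and $(S_i)$ is a uniformly bounded net of self-adjoint operators converging to $0$ in the strong operator topology: this applies because the net of finite-rank projections converges strongly to $\Id_{\L^2_{E_n}}$, hence so does $V_{f_{r,n}(x)}p_iV_{f_{r,n}(x)}^*$ (conjugation by a fixed unitary) and therefore its ampliation $\Id_{\H_{x,r,n}}\otimes(V_{f_{r,n}(x)}p_iV_{f_{r,n}(x)}^*-\Id)$, which is moreover self-adjoint, while each $T_{x,y}$ is compact. I expect the main obstacle to be precisely this uniformity over the matrix entries, and the point of the argument is that cocompactness of the action is exactly what makes it available; the remaining ingredients (local compactness of Roe-algebra elements, the norm bound of Lemma~\ref{lem:norm control of finite ppg op}, and the behaviour of compact operators under strong convergence) are routine.
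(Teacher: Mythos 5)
Your proof is correct and takes essentially the same route as the paper's: reduce to a finite-propagation element of $\CC[\H_{r,n,E}]^\Gamma$, pass from the operator norm to a supremum over matrix entries via Lemma \ref{lem:norm control of finite ppg op}, invoke cocompactness together with $\Gamma$-invariance of $p_i^V T - T$ to collapse that supremum to a finite maximum, and finish by hitting the compact entries $T_{x,y}$ against the strongly convergent ampliated projections. The only cosmetic difference is that the paper handles $\|Tp_i^V-T\|\to 0$ by taking adjoints of the first limit, whereas you treat both sides symmetrically through the self-adjointness of $V_{f_{r,n}(x)}p_iV_{f_{r,n}(x)}^*-\Id$.
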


\begin{proof}
We fix an operator $T \in \CC[\H_{r,n,E}]^\Gamma$ with $P$-propagation at most $R'$, and a point $x_0 \in X_n$. For $x,y\in X_n$ with $T_{x,y} \neq 0$, we have $d(x,y) \leq \varphi_r(R')=:R$ where the function $\varphi_r$ comes from Inequality (\ref{EQ:relations between ppgs}). Let $N$ be the number from Lemma \ref{lem:norm control of finite ppg op} for the parameters $R'$ and $r$, \emph{i.e.}, for any bounded operator $\tilde{T}=(\tilde{T}_{x,y})_{x,y\in X}$ on $\H_{r,n,E}$ with $P$-propagation at most $R'$, then
\[
\|\tilde{T}\| \leq N \cdot \sup_{x,y\in X} \|\tilde{T}_{x,y}\|.
\]
Recall that the $\Gamma$-action on $X_n$ is cocompact, hence there exists an $S>0$ such that $\Gamma \cdot B(x_0,S) = X_n$.

We consider a finite set of operators:
\[
\mathfrak{F}:=\{T_{x,y}: x\in B(x_0,S) \mbox{~and~} y\in B(x,R)\}.
\]
Since $T$ is locally compact, each $T_{x,y}$ is a compact operator from $\H_{y,r,n,E}$ to $\H_{x,r,n,E}$. Note that for any projection $p \in \B(\L^2_{E_n})$ and $x,y\in X_n$, we have:
\[
(p^V T - T)_{x,y} = \big( \Id_{\H_{x,r,n}} \otimes V_{f_{r,n}(x)} p V_{f_{r,n}(x)}^* - \Id_{\H_{x,r,n,E}} \big) \cdot T_{x,y},
\]
and the net
\[
\big\{\Id_{\H_{x,r,n}} \otimes V_{f_{r,n}(x)} p V_{f_{r,n}(x)}^* - \Id_{\H_{x,r,n,E}} : p \mbox{~is~a~finite~rank~projection~in~} \B(\L^2_{E_n})\big\}
\]
converges to $0$ in the strong operator topology. Hence for any $\varepsilon>0$ there exists a finite rank projection $p_\varepsilon \in \B(\L^2_{E_n})$ such that for any finite rank projection $p\geq p_\varepsilon$ and $T_{x,y} \in \mathfrak{F}$, we have
\[
\|(p^V T -T)_{x,y}\| < \frac{\varepsilon}{N}.
\]

Now for any $x', y'\in X_n$ with $d(x',y') \leq R$, there exists $\gamma\in \Gamma$ such that $\gamma x' \in B(x_0, S)$. Letting $x=\gamma x'$ and $y=\gamma y'$, we have $d(x,y) \leq R$ and hence $T_{x,y} \in \mathfrak{F}$. Moreover, we obtain:
\[
\|(p^V T -T)_{x',y'}\| = \| (p^V T -T)_{\gamma^{-1} x, \gamma^{-1} y} \| = \|\gamma^{-1} \cdot (p^V T - T) _{x,y}\| =  \|(p^V T - T) _{x,y}\| < \frac{\varepsilon}{N}.
\]
Note that the $P$-propagation of $p^V T - T$ is again at most $R'$. Hence from Lemma \ref{lem:norm control of finite ppg op} we obtain:
\[
\|p^V T - T\| \leq N \cdot \sup_{x,y \in X_n: d(x,y) \leq R} \|(p^V T -T)_{x,y}\| < N \cdot \frac{\varepsilon}{N} = \varepsilon.
\]
Therefore, we obtain that $\lim_{i\in I}\|p_i^VT-T\|=0$. Finally taking adjoints, we obtain $\lim_{i\in I}\|Tp_i^V-T\|=0$ as well and conclude the proof.
\end{proof}

Finally, we introduce the following operators:

\begin{defn}[{\cite[Section 12.3 and 12.6]{willett2020higher}}]\label{defn:F}
Fix an $r>0$. For each $n\in \NN$, $s\in [1,\infty)$ and $x\in E_n$, Definition \ref{defn:F_{s,x}} provides a bounded linear operator $F_{s,x} \in \B(\L_{E_n}^2)$, also denoted by $F_{n,s,x}$. Applying Definition \ref{defn:V-construction}, we obtain an operator
\[
F_{n,s}:=F^V_{n,s+2d_n,0} \in \B(\H_{r,n,E})^\Gamma
\]
where $d_n$ is the dimension of $E_n$. Let $F_s:=(F_{n,s})_{n\in \NN}$ be the block diagonal operator in $\prod_n \B(\H_{r,n,E})^\Gamma \subseteq \B(\H_{r,E})^\Gamma$. Finally, we define $F$ to be an element in $\prod_n \B(L^2([1,\infty);\H_{r,n,E})) \subseteq \B(L^2 ([1,\infty);\H_{r,E}))$ defined by $(F(u))(s):=F_su(s)$.
	
Similarly given $\varepsilon>0$, let $\Psi$ be a function as in Proposition~\ref{prop:Psi function} and set $F^\Psi_{n,s,x}:=\Psi(B_{n,s,x})$. Let $F^\Psi_s$ be the bounded diagonal operator on $\H_{r,E}$ defined by:
\[
F^\Psi_s:=(F^\Psi_{n,s})_{n\in \NN} \quad \mbox{where} \quad F^\Psi_{n,s}:= (F^\Psi_{n,s+2d_n,0})^V = \Psi(B_{n,s+2d_n,0})^V \in \B(\H_{r,n,E})^\Gamma.
\]
We also define $F^\Psi$ to be the element in $\prod_n \B(L^2([1,\infty);\H_{r,n,E})) \subseteq \B(L^2 ([1,\infty);\H_{r,E}))$ by $(F^\Psi(u))(s):=F^\Psi_su(s)$.
\end{defn}

\section{The index maps}\label{sec:index map}

In this section, we construct equivariant index maps (with the same notation as in Section \ref{ssec:equiv. twisted algebras}):
\begin{equation}\label{EQ: Ind_F}
\Ind_F: K_* \big( C^*(\H_r)^\Gamma \cap \prod_{n=1}^\infty C^*(\H_{r,n})^\Gamma \big) \rightarrow K_*(A^r(X;E)^\Gamma)
\end{equation}
and
\begin{equation}\label{EQ: Ind_{F_L}}
\Ind_{F_L}: K_* \big( C^*_L(\H_r)^\Gamma \cap \prod_{n=1}^\infty C^*_L(\H_{r,n})^\Gamma \big) \rightarrow K_*(A^r_L(X;E)^\Gamma),
\end{equation}
where $F$ is the operator from Definition \ref{defn:F}. We use these maps to transfer $K$-theoretic information from equivariant Roe and localisation algebras to their twisted counterparts, which allow us to prove Theorem \ref{thm:main result equiv. CBC} via local isomorphisms. This approach is mainly based on \cite[Secition 12.3 and 12.6]{willett2020higher} in the case of coarse embedding, while several changes are needed to involve group actions.

The main result of this section is the following:

\begin{prop}\label{prop:index maps}
Fix an $r>0$. With notation as in Section \ref{ssec:equiv. twisted algebras}, for each $s\in [1,\infty)$ the composition
\[
K_* \big( C^*(\H_r)^\Gamma \cap \prod_{n=1}^\infty C^*(\H_{r,n})^\Gamma \big) \stackrel{\Ind_F}{\longrightarrow} K_*(A^r(X;E)^\Gamma) \stackrel{\iota^s_*}{\longrightarrow} K_*\big( C^*(\H_{r,E})^\Gamma \cap \prod_{n=1}^\infty C^*(\H_{r,n,E})^\Gamma \big)
\]
is an isomorphism, where $\iota^s:A^r(X;E)^\Gamma \rightarrow C^*(\H_{r,E})^\Gamma \cap \prod_{n=1}^\infty C^*(\H_{r,n,E})^\Gamma$ is the evaluation map at $s$. The analogous statement holds for the equivariant localisation algebras. Moreover, we have the following commutative diagram:
\[
\xymatrix{
	K_* \big( C^*_L(\H_r)^\Gamma \cap \prod_{n=1}^\infty C^*_L(\H_{r,n})^\Gamma \big)  \ar[r]^>>>>>{\Ind_{F_L}}  \ar[d] & K_\ast ( A^r_L(X;E)^\Gamma ) \ar[r]^>>>>{\iota^s_*}  \ar[d] & K_\ast \big( C^*_L(\H_{r,E})^\Gamma \cap \prod_{n=1}^\infty C^*_L(\H_{r,n,E})^\Gamma \big)  \ar[d]\\
	K_* \big( C^*(\H_r)^\Gamma \cap \prod_{n=1}^\infty C^*(\H_{r,n})^\Gamma \big)  \ar[r]^>>>>>{\Ind_F}  & K_\ast ( A^r(X;E)^\Gamma ) \ar[r]^>>>>{\iota^s_*}  & K_\ast \big( C^*(\H_{r,E})^\Gamma \cap \prod_{n=1}^\infty C^*(\H_{r,n,E})^\Gamma \big)
	}
\]
where all vertical lines are induced by the evaluation-at-one maps.
\end{prop}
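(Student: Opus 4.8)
The plan is to build the index map $\Ind_F$ by a Bott-type asymptotic morphism construction, following the template of \cite[Section 12.3]{willett2020higher}, but carrying the $\Gamma$-equivariance throughout, and then to verify the isomorphism claim by a homotopy/deformation argument together with a bootstrap from the case of a single cocompact piece. First I would exhibit, for each $r>0$, a $\ast$-homomorphism (or, more precisely, an asymptotic morphism which turns out to be a genuine $\ast$-homomorphism into the twisted algebra after the ``space for speed'' trick)
\[
\beta_r: C^*(\H_r)^\Gamma \cap \prod_{n=1}^\infty C^*(\H_{r,n})^\Gamma \longrightarrow A^r(X;E)^\Gamma
\]
sending an operator $T=(T_n)_n$ to the family $(T_n \widehat{\otimes}\, e^{-B_{n,s+2d_n,0}^2})^V$-type expression, i.e.\ amplifying $T_n$ by a rank-one (along the Euclidean variable) Bott projection built from the Bott-Dirac operator $B_{n,s,x}$ translated along $f_{r,n}$. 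The operator $F_{n,s}=F^V_{n,s+2d_n,0}$ from Definition \ref{defn:F} is exactly the ``phase'' used to turn this into a Kasparov-type module $(\,\cdot\,, F)$; the induced map on $K$-theory is $\Ind_F$ in \eqref{EQ: Ind_F}. One must check the four defining conditions of Definition \ref{defn:twisted Roe}: the $P$-propagation bound is inherited from $T_n$ (amplification does not change it); condition (2) uses Proposition \ref{prop:Psi function}(2) that $\ppg_E(\Psi(B_{n,s,x}))\le s^{-1}R_0\to 0$; condition (3) is the content of Proposition \ref{prop:Psi function}(9) (the ``support condition'' $(\Psi(B_{s,x})^2-1)(1-\chi_{x,R})$ small, transported by $V$-construction); condition (4) uses local compactness of $T_n$ together with the Schwartz-class smoothing of the Bott element, invoking Lemma \ref{lem:condition (4) recovery} for the finite-rank approximation along $\L^2_{E_n}$. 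Equivariance of $\beta_r$ is automatic since $f_{r,n}(\gamma z)=f_{r,n}(z)$, so the $V$-construction produces invariant operators (Lemma \ref{lem: T^V is invariant}). The localisation version $\Ind_{F_L}$ in \eqref{EQ: Ind_{F_L}} is obtained by applying $\beta_r$ pointwise in the localisation parameter $t$, and the commutativity of the square with the evaluation-at-one maps is then a formal consequence of naturality of $\beta_r$ in the localisation parameter.

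For the isomorphism statement, the key observation is that $\iota^s_\ast\circ\Ind_F$ is, up to the standard Bott periodicity isomorphism, induced by tensoring with the Bott element of $\Cliff(E_n)$, fibrewise over the points of $Z_{r,n}$. I would first reduce to $s=1$ (all the $\iota^s_\ast$ agree because $s\mapsto F_{n,s}$ is a norm-continuous path of multipliers giving a homotopy of the corresponding $\ast$-homomorphisms — this uses Proposition \ref{prop:Psi function}(6) and the norm equi-continuity in (7),(8)). Then, since both source and target split as (closures of increasing unions corresponding to) the truncations $\bigsqcup_{k\le n}X_k$ — mirroring the $\A_r$ versus $\prod$-decomposition already exploited in Section \ref{sec:reduction} — and $K$-theory commutes with such inductive limits, it suffices to prove the isomorphism for a single cocompact piece $X_n$. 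For a single cocompact $\Gamma$-space, the equivariant Roe algebra is Morita equivalent to $C^*_r(\Gamma)$ (by \cite[Lemma 5.14]{Roe96}), and the twisted algebra at level $n$ becomes (Morita equivalently) $C^*_r(\Gamma)\otimes \mathcal{S}$-suspension-type algebra; the map $\iota^1_\ast\circ\Ind_F$ then identifies with the composition of Bott periodicity $K_\ast\to K_{\ast}(\,\cdot\,\otimes\Cliff(E_n))$, which is an isomorphism by the finite-dimensional Bott periodicity theorem (this is precisely \cite[Proposition 12.3.2]{willett2020higher}, or its proof, made equivariant — and here equivariance is harmless because $\Gamma$ acts trivially on the Euclidean factor $\L^2_{E_n}$, so the Bott map is $\Gamma$-equivariant for the diagonal action). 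For the localisation algebras the same argument runs verbatim with a localisation parameter carried along, or alternatively one applies the Roe-algebra statement fibrewise.

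I expect the main obstacle to be the careful bookkeeping needed to show $\beta_r$ actually lands in the twisted algebra $A^r(X;E)^\Gamma$ — in particular verifying condition (3) of Definition \ref{defn:twisted Roe} \emph{uniformly} over $n\in\NN$ and $s\in[1,\infty)$, which is where the hypothesis that $X/\Gamma$ coarsely embeds (so that the maps $f_{r,n}$ have a \emph{common} pair of control functions, giving a uniform modulus in the support estimates of Proposition \ref{prop:Psi function}(9),(10)) is essential, as is the ``independence of the dimension $d_n$'' clause at the end of Proposition \ref{prop:Psi function}. The dimension-shift $s\mapsto s+2d_n$ in Definition \ref{defn:F} is precisely the ``space for speed'' device that absorbs the growth of $d_n$; making the uniform estimates go through with this reparametrisation is the technically delicate point. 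A secondary subtlety is that the entries $T_{n,s,x,y}$ of elements of $A^r(X;E)^\Gamma$ are required (condition (4)) to lie in the algebraic tensor product $\K\otimes C_b([1,\infty),\K)$ rather than the $C^*$-tensor product; one must check the Bott-amplified operators genuinely have this ``separated variables'' form, which they do because the $\L^2_{E_n}$-part of $\beta_r(T)_{n,s,x,y}$ is literally $V_{f_{r,n}(x)}(\text{Bott element})V_{f_{r,n}(x)}^*$, independent of the $\ell^2(Z_{r,n})\otimes\HH\otimes\ell^2(\Gamma)$-entry $T_{n,x,y}$. Everything else — the homotopies defining $\iota^s_\ast$, the reduction to cocompact pieces, and the appeal to Bott periodicity — is routine given the results already assembled in the excerpt.
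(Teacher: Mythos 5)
Your description of how $\Ind_F$ is built is somewhat loose but compatible with the paper: the paper does not define a Bott $\ast$-homomorphism $\beta_r$ into $A^r(X;E)^\Gamma$ (indeed it expressly avoids the asymptotic-morphism route used in \cite{DFW21}); instead it verifies in Lemmas \ref{lem:F multiplier}--\ref{lem:projection} that $F$ is an odd self-adjoint multiplier of $A^r(X;E)^\Gamma$ with $(pFp)^2-p \in pA^r(X;E)^\Gamma p$ for block-diagonal projections $p$, and then applies the index-class formula of Definition \ref{defn:index definition}. The properties you list from Proposition \ref{prop:Psi function} are indeed the ones used, and your remark that the $(x,s)$-dependent Bott phase separates as $(\text{matrix entry})\,\hat\otimes\,V_{f_{r,n}(x)}\Psi(B_{n,s+2d_n,0})V_{f_{r,n}(x)}^*$ is the correct reason condition (4) of Definition \ref{defn:twisted Roe} holds. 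So far this is essentially a different presentation of the same content.

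The fatal gap is in your proof that $\iota^s_*\circ\Ind_F$ is an isomorphism. You claim that $C^*(\H_r)^\Gamma\cap\prod_{n=1}^\infty C^*(\H_{r,n})^\Gamma$ (and similarly the twisted target) ``split as closures of increasing unions corresponding to the truncations $\bigsqcup_{k\le n}X_k$,'' so that $K$-theory continuity reduces everything to a single cocompact piece. This is false: the inductive limit of the truncated algebras $C^*(P_r(\bigsqcup_{k\le n}X_k))^\Gamma$ is precisely the algebra $\A_r$ from Section \ref{sec:reduction}, which is a \emph{proper} subalgebra of the block-diagonal algebra $C^*(\H_r)^\Gamma\cap\prod_{n=1}^\infty C^*(\H_{r,n})^\Gamma$ (see Lemma \ref{lem:A and Ar}). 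The inductive-limit piece was already disposed of in Proposition \ref{prop:reduction to block diagonal}; the entire purpose of Sections \ref{sec:twisted algebras}--\ref{sec:pf of main thm}, including the present proposition, is to handle the remaining ``product-modulo-sum'' part, and that part is precisely \emph{not} accessible by a reduction to single cocompact $X_n$'s. The actual argument cannot factor through one $n$ at a time: it introduces the deformation $F^{(k)}$ (push the Bott-Dirac centre from $f_{r,n}(z)$ to $0$ by $k$ iterations of $\kappa_n$), shows $\Ind_{F^{(0)}}=\Ind_{F^{(\infty)}}$ by an Eilenberg swindle over all $n$ simultaneously, and then proves $\Ind_{F^{(\infty)}}$ is an isomorphism by deforming $F_{n,s+2d_n,0}$ to the rank-one kernel projection $p_{n,0}$ — but since these homotopies are \emph{not} uniformly continuous in $n$ (the dimensions $d_n$ grow), one needs the ``stacking argument'' to chop each homotopy into uniformly many steps. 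Citing finite-dimensional Bott periodicity pointwise over $n$, as you do, loses exactly this uniformity and hence fails to identify the $K$-theory of the infinite block-diagonal algebra.
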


We follow the same notation as in Section \ref{ssec:equiv. twisted algebras}. Recall that $(X,d)$ is a discrete metric space of bounded geometry and having the form $X=\bigsqcup_{n=1}^\infty X_n$ with $d(X_n, X_m) \to \infty$ as $n+m \to \infty~ (n\neq m)$ such that $\Gamma$ acts on each $X_n$ properly and cocompactly by isometries. For each $n\in \NN$, we have a map $f_n: X_n \to E_n$ coming from the uniformly coarse embedding of $X_n/\Gamma$ into some Euclidean space $E_n$ of even dimension $d_n$.

To construct the index map $\Ind_F$, we need a series of lemmas. Let us fix an $r>0$ throughout the rest of this section.

\begin{lem}\label{lem:F multiplier}
The operator $F$ is a self-adjoint, norm one, odd operator in the multiplier algebra of $A^r(X;E)^\Gamma$.
\end{lem}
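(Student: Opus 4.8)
The plan is to verify the three asserted properties of $F$ one at a time, reducing each to facts already established about the building blocks $F_{n,s,x}$ and the $V$-construction. First I would address self-adjointness and the norm bound: by Definition \ref{defn:F_{s,x}}, each $F_{s,x}=B_{s,x}(1+B_{s,x}^2)^{-1/2}$ is self-adjoint (as $B_{s,x}$ is essentially self-adjoint) and has norm at most one by the functional calculus, so the same holds for $F_{n,s,x}=F_{s,x}$ on $\L_{E_n}^2$. The $V$-construction $T\mapsto T^V$ is a $*$-homomorphism applied entrywise by conjugation by the unitaries $V_{f_{r,n}(z)}$, hence $F_{n,s}=F^V_{n,s+2d_n,0}$ is self-adjoint of norm $\le 1$ on $\H_{r,n,E}$; assembling over $n$ and then over $s$ keeps these properties since both $F_s$ and $F$ are defined diagonally (block-diagonal in $n$, fibrewise in $s$), so $F^*=F$ and $\|F\|\le 1$. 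Oddness is immediate because $B_{s,x}$ is an odd operator for the grading on $\L_{E_n}^2$ (stated after Definition \ref{defn:Bott-Dirac modified}), the odd functional calculus $t\mapsto t(1+t^2)^{-1/2}$ preserves oddness, and conjugation by the grading-preserving unitaries $V_x$ as well as the direct-sum/fibre assembly all respect the grading.

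The substantive part is showing $F$ is a multiplier of $A^r(X;E)^\Gamma$, i.e. that $FA\in A^r(X;E)^\Gamma$ and $AF\in A^r(X;E)^\Gamma$ for every $A$ in (a dense subalgebra of) $A^r(X;E)^\Gamma$; by self-adjointness it suffices to treat $FA$. Writing $A=(T_{n,s})$ with $T_{n,s}=(T_{n,s,x,y})$, the product $FA$ has $(n,s)$-component $F_{n,s}T_{n,s}$, whose matrix entries are $(F_{n,s}T_{n,s})_{x,y}=(\Id\otimes V_{f_{r,n}(x)}F_{s+2d_n,0}V_{f_{r,n}(x)}^*)\,T_{n,s,x,y}$, since $F_{n,s}=F^V_{n,s+2d_n,0}$ is diagonal. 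I would check the four defining conditions of $\AA^r(X;E)^\Gamma$ in Definition \ref{defn:twisted Roe} for $FA$ when $A$ lies in the algebraic part $\AA^r(X;E)^\Gamma$: (1) $F_{n,s}$ has $P$-propagation zero (it is diagonal), so $\ppg_P(F_{n,s}T_{n,s})\le\ppg_P(T_{n,s})$, giving the uniform bound; (2) $\ppg_E(F_{n,s})\le\ppg_E(F_{s+2d_n,0})$, which is $O((s+2d_n)^{-1})\to 0$ as $s\to\infty$ for fixed $n$ by Proposition \ref{prop:Psi function}(1)--(2) (comparing $F_{s,x}$ with $\Psi(B_{s,x})$ up to an arbitrarily small norm error and controlling the propagation of the latter), so $\ppg_E(F_{n,s}T_{n,s})\le\ppg_E(F_{n,s})+\ppg_E(T_{n,s})\to 0$; (3) the condition $\chi_{0,R}^V(FA)-(FA)=F(\chi_{0,R}^VA-A)+(\chi_{0,R}^VF-F\chi_{0,R}^V)A$ — here I must exploit that $\chi_{0,R}^V$ and $F_{n,s}$ nearly commute because both are built by $V$-conjugation of the operators $\chi_{0,R}$ (a multiplication operator) and $F_{s+2d_n,0}$, and $\ppg_E(F_{s+2d_n,0})$ is small, so the commutator term has $E$-support concentrated near the boundary of the $R$-ball and is controlled using Proposition \ref{prop:Psi function}(9); the first term is handled by condition (3) for $A$; (4) the matrix entries $s\mapsto(F_{n,s}T_{n,s})_{x,y}$ still live in $\K(\H_{y,r,n},\H_{x,r,n})\otimes C_b([1,\infty),\K(\L^2_{E_n}))$ because left-multiplying a compact-valued function by the bounded strong-$*$-continuous family $s\mapsto V_{f_{r,n}(x)}F_{s+2d_n,0}V_{f_{r,n}(x)}^*$ stays compact-valued (using Lemma \ref{lem:12.3.5 from WY21} on joint continuity of the product of a strong-$*$ bounded family with a norm-continuous compact-valued family). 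Finally $F$ is $\Gamma$-invariant: $F_{n,s}\in\B(\H_{r,n,E})^\Gamma$ by Lemma \ref{lem: T^V is invariant} (since $f_{r,n}$ is $\Gamma$-invariant), so $FA$ inherits $\Gamma$-invariance from $A$.

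I expect the main obstacle to be condition (3) for $FA$, namely showing $\lim_{R\to\infty}\sup_{s,n}\|\chi_{0,R}^V(F_{n,s}T_{n,s})-F_{n,s}T_{n,s}\|=0$ (and the right-handed version) uniformly in both $s$ and $n$. The difficulty is the uniformity over $n$: the relevant parameters are $s+2d_n$, and one must ensure the propagation and boundary-estimate constants from Proposition \ref{prop:Psi function} do not degenerate as $d_n\to\infty$ — precisely the reason the proposition records that $R_0,c,R_1,R_2$ and $\Psi$ can be taken independent of the dimension, and the reason $F_{n,s}$ is built from $B_{n,s+2d_n,0}$ rather than $B_{n,s,0}$ (so that $s+2d_n\ge 2d_n$, putting us in the regime $s'\ge 2d$ where parts (9) and (10) of the proposition apply). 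Once the commutator $[\chi_{0,R}^V,F_{n,s}]$ is estimated via a telescoping over the near-diagonal entries (using bounded geometry of $X$ and Lemma \ref{lem:norm control of finite ppg op} to pass from entrywise bounds to operator-norm bounds) and the $E$-support of $F_{n,s}$ is pushed into the complement of a ball using Proposition \ref{prop:Psi function}(9)--(10), the remaining steps are routine; density of $\AA^r(X;E)^\Gamma$ in $A^r(X;E)^\Gamma$ then extends the multiplier property to the closure. The localisation statement follows by the same argument applied fibrewise in the extra parameter $t$, using that $F$ as defined is constant in $t$ and hence trivially uniformly continuous.
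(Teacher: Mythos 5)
The verification of self-adjointness, norm one, and oddness is fine and matches the paper's brief treatment.

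However, there is a genuine error at the heart of the multiplier argument. You claim that $\ppg_E(F_{n,s})\le\ppg_E(F_{s+2d_n,0})=O((s+2d_n)^{-1})$, and use this to verify conditions (1)--(3) of Definition \ref{defn:twisted Roe} directly for the product $FT$. But the operator $F_{s,x}=B_{s,x}(1+B_{s,x}^2)^{-1/2}$ does \emph{not} have finite $E$-propagation: the function $t\mapsto t(1+t^2)^{-1/2}$ does not have compactly supported Fourier transform, so its functional calculus applied to the (first-order differential) operator $B_{s,x}$ is a nonlocal operator. It is only the approximant $\Psi(B_{s,x})$, with $\Psi$ chosen as in Proposition \ref{prop:Psi function}, that satisfies the propagation bound $\ppg_E(\Psi(B_{s,x}))\le s^{-1}R_0$. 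As a consequence, the element $(F_{n,s}T_{n,s})$ does \emph{not} lie in the dense $*$-subalgebra $\AA^r(X;E)^\Gamma$ — condition (2) fails outright for it — and your direct verification collapses. You allude to the comparison ``$F_{s,x}$ with $\Psi(B_{s,x})$ up to an arbitrarily small norm error'' in a parenthetical, but the logical structure is inverted: you treat $\ppg_E(F_{n,s})$ as if it were finite and small, rather than first replacing $F$ by $F^\Psi$.

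The correct structure, which the paper uses, is: (i) by Proposition \ref{prop:Psi function}(1), $\|F-F^\Psi\|\le\varepsilon$ uniformly in $n$ and $s$; (ii) since the target is the \emph{closure} $A^r(X;E)^\Gamma$, it therefore suffices to show $TF^\Psi\in\AA^r(X;E)^\Gamma$ for each $\Psi$, after which $TF$ is recovered as a norm limit; (iii) for $TF^\Psi$, conditions (1)--(3) follow easily because $F^\Psi_{n,s}$ has $P$-propagation zero and $E$-propagation uniformly bounded (by $R_0$, dimension-independent) and tending to zero, and condition (4) plus the required norm continuity of $s\mapsto T_{n,s}F^\Psi_{n,s}$ are obtained from Proposition \ref{prop:Psi function}(6) and Lemma \ref{lem:12.3.5 from WY21}, combined with cocompactness of the $\Gamma$-action on each $X_n$ and Lemma \ref{lem:norm control of finite ppg op} to reduce the continuity estimate to finitely many matrix entries. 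Two further remarks: the norm-continuity check just mentioned is a defining requirement for membership in the twisted algebra that your proposal does not address; and your invocation of Proposition \ref{prop:Psi function}(9)--(10) for condition (3) is an overcomplication — with $F$ replaced by $F^\Psi$, one only needs that $(1-\chi_{0,R}^V)F^\Psi_{n,s}=(1-\chi_{0,R}^V)F^\Psi_{n,s}\chi_{0,R+\delta}^V$ for $\delta$ the uniform $E$-propagation bound, whence condition (3) for $TF^\Psi$ reduces immediately to condition (3) for $T$.
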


\begin{proof}
	The operator $F$ is self-adjoint, norm one and odd since each $F_{n,s,x}$ is. Given $\varepsilon>0$, let $\Psi:\RR \to [-1,1]$ be a function as in Proposition \ref{prop:Psi function} for this $\varepsilon$. Then Proposition \ref{prop:Psi function}(1) implies:
	\[
	\|F - F^\Psi\| \leq \sup_{n\in \NN, s\in [1,\infty)} \|F^V_{n,s,0} - \Psi(B_{n,s,0})^V\| \leq \sup_{n\in \NN, s\in [1,\infty)} \sup_{x\in X_n} \|F_{n,s,f_n(x)} - \Psi(B_{n,s,f_n(x)})\| \leq \varepsilon.
	\]
Hence it suffices to show that $(T_{n,s}) F^\Psi=(T_{n,s}F^\Psi_{n,s})$ belongs to $\AA^r(X;E)^\Gamma$ for any $(T_{n,s}) \in \AA^r(X;E)^\Gamma$.

We first claim that for each $n\in \NN$ the map $s\mapsto T_{n,s}F^\Psi_{n,s}$ is norm continuous. In fact, this can be proved using the same argument as in the proof of \cite[Lemma 12.3.6]{willett2020higher} thanks to Lemma \ref{lem:condition (4) recovery}. Here we provide a direct proof.

Fix $n\in \NN$ and $x_0 \in X_n$. Assume that each $T_{n,s}$ has $P$-propagation at most $R'$, and set $R:=\varphi_r(R')$ where the function $\varphi_r$ comes from Inequality (\ref{EQ:relations between ppgs}). Since the $\Gamma$-action on $X_n$ is cocompact, there exists an $S>0$ such that $\Gamma \cdot B(x_0,S) = X_n$.  For each $x,y\in X_n$, it follows from Proposition \ref{prop:Psi function}(6) and Lemma \ref{lem:12.3.5 from WY21} that the map $s\to T_{n,s,x,y}\cdot F^\Psi_{n,s+2d_n,f_n(y)} = (T_{n,s}F^\Psi_{n,s})_{x,y}$ is norm continuous. On the other hand, for any $x, y\in X_n$ with $d(x,y) \leq R$, there exists $\gamma\in \Gamma$ such that $\bar{x}:=\gamma x \in B(x_0, S)$ and hence $\bar{y}:=\gamma y \in B(\bar{x},R)$. Moreover, we have
\[
(T_{n,s}F^\Psi_{n,s})_{x,y} = (T_{n,s}F^\Psi_{n,s})_{\gamma^{-1} \bar{x}, \gamma^{-1} \bar{y}} = \gamma^{-1} \cdot (T_{n,s}F^\Psi_{n,s}) _{\bar{x},\bar{y}}.
\]
Note that the set $\{(\bar{x},\bar{y}): \bar{x}\in B(x_0, S) \mbox{~and~}\bar{y} \in B(\bar{x},R)\}$ is finite, hence the family $\{s \mapsto (T_{n,s}F^\Psi_{n,s})_{x,y}: x,y\in X_n\}$ is uniformly norm continuous. This concludes the claim due to Lemma \ref{lem:norm control of finite ppg op}.

For conditions (1)-(3) in Definition \ref{defn:twisted Roe}, it suffices to note that the $P$-propagation of $F^\Psi_{n,s}$ is $0$ while the $E$-propagation of $F^\Psi_{n,s}$ is uniformly bounded (in both $s$ and $n$) and (uniformly) tends to $0$ (in $n$) as $s \to \infty$ by Proposition \ref{prop:Psi function}(2). Finally for condition (4), note that for each $n\in \NN$ and $x,y\in X_n$ we have
\[
s \mapsto (T_{n,s}F^\Psi_{n,s})_{x,y}=T_{n,s,x,y} \cdot \big(\Id_{\H_{y,r,n}} \otimes F^\Psi_{n,s+2d_n,f_n(y)} \big)
\]
Since the map $s\mapsto T_{n,s,x,y}$ belongs to $\K(\H_{y,r,n},\H_{x,r,n}) \otimes C_b([1,\infty), \K(\L^2_{E_n}))$, it follows from Proposition \ref{prop:Psi function}(6) and Lemma \ref{lem:12.3.5 from WY21} again that the map $s \mapsto (T_{n,s}F^\Psi_{n,s})_{x,y}$ belongs to $\K(\H_{y,r,n},\H_{x,r,n}) \otimes C_b([1,\infty), \K(\L^2_{E_n}))$ as well. Hence we conclude the proof.
\end{proof}

\begin{lem}\label{lem:multiplier}
Considered as represented on $L^2([1,\infty)) \otimes\H_{r,E}$ via the amplification of identity, $C^*(\H_r)^\Gamma \cap \prod_{n=1}^\infty C^*(\H_{r,n})^\Gamma$ is a subalgebra of the multiplier algebra of $A^r(X;E)^\Gamma$.
\end{lem}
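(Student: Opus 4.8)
The plan is to show that each $S$ in $C^*(\H_r)^\Gamma\cap\prod_n C^*(\H_{r,n})^\Gamma$, acting on $L^2([1,\infty))\otimes\H_{r,E}$ as the amplification $1\otimes\bigoplus_n(S_n\otimes1_{\L^2_{E_n}})$ of $S=\bigoplus_n S_n$, is a two-sided multiplier of $A^r(X;E)^\Gamma$. The operators with this property form a norm-closed $*$-subalgebra of $\B(L^2([1,\infty))\otimes\H_{r,E})$, so by continuity together with density of $\AA^r(X;E)^\Gamma$ in $A^r(X;E)^\Gamma$ it suffices to check, for $S=(S_n)_n$ with each $S_n\in\CC[\H_{r,n}]^\Gamma$, $\sup_n\|S_n\|<\infty$ and $\sup_n\ppg_P(S_n)<\infty$, and for $T=(T_{n,s})\in\AA^r(X;E)^\Gamma$, that $ST:=\big((S_n\otimes1)T_{n,s}\big)_{n,s}$ again lies in $\AA^r(X;E)^\Gamma$; the case $TS$ then follows by taking adjoints, since $\AA^r(X;E)^\Gamma$ is $*$-closed and $S^*=(S_n^*)_n$ is again of the required form. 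Such $S$ are dense in $C^*(\H_r)^\Gamma\cap\prod_n C^*(\H_{r,n})^\Gamma$: compressing any $T'\in\CC[\H_r]^\Gamma$ to its block-diagonal part $\bigoplus_n\chi_{P_{r,n}}T'\chi_{P_{r,n}}$ yields an element of $\CC[\H_r]^\Gamma$ (finite propagation, locally compact, $\Gamma$-invariant since each $\chi_{P_{r,n}}$ is), and this compression is a contraction fixing every block-diagonal operator, so it carries norm-approximants of an element of the intersection to block-diagonal ones. One first records that each $(S_n\otimes1)T_{n,s}$ lies in $C^*(\H_{r,n,E})^\Gamma$ and depends boundedly and norm-continuously on $s$: it is $\Gamma$-invariant, has $P$-propagation at most $\ppg_P(S_n)+\ppg_P(T_{n,s})$, and is locally compact because for $f\in C_0(P_{r,n})$ one chooses $g\in C_0(P_{r,n})$ equal to $1$ on the $\ppg_P(S_n)$-neighbourhood of $\supp f$, giving $f(S_n\otimes1)T_{n,s}=(fS_n\otimes1)(gT_{n,s})$ with $gT_{n,s}$ compact (while $(S_n\otimes1)T_{n,s}f=(S_n\otimes1)(T_{n,s}f)$ is directly compact).

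It then remains to verify conditions (1)--(4) of Definition \ref{defn:twisted Roe}. Condition (1) is immediate from additivity of $P$-propagation and the uniform bounds. For condition (2), the $E_n$-module structure of $\H_{r,n,E}$ is carried by the $\L^2_{E_n}$-tensor factor, so $S_n\otimes1_{\L^2_{E_n}}$ commutes with $C_0(E_n)$ and hence has zero $E$-propagation; therefore $\ppg_E\big((S_n\otimes1)T_{n,s}\big)\le\ppg_E(T_{n,s})\to0$ as $s\to\infty$ for each fixed $n$. For condition (4), write $\big((S_n\otimes1)T_{n,s}\big)_{x,y}=\sum_w\big((S_n)_{x,w}\otimes1\big)(T_{n,s})_{w,y}$, a finite sum by bounded geometry and finite propagation of $S_n$; each summand is the image of $(T_{n,s})_{w,y}\in\K(\H_{y,r,n},\H_{w,r,n})\otimes C_b([1,\infty),\K(\L^2_{E_n}))$ under left multiplication by $(S_n)_{x,w}\otimes1$, which keeps it in $\K(\H_{y,r,n},\H_{x,r,n})\otimes C_b([1,\infty),\K(\L^2_{E_n}))$.

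The delicate condition is (3). Its right-hand half is immediate, since $(ST)_{n,s}\chi_{0,R}^V-(ST)_{n,s}=(S_n\otimes1)(T_{n,s}\chi_{0,R}^V-T_{n,s})$ and condition (3) for $T$ gives $\sup_{s,n}\|T_{n,s}\chi_{0,R}^V-T_{n,s}\|\to0$. For the left-hand half, write $\chi_{0,R}^V(S_n\otimes1)T_{n,s}-(S_n\otimes1)T_{n,s}=[\chi_{0,R}^V,\,S_n\otimes1]\,T_{n,s}+(S_n\otimes1)(\chi_{0,R}^VT_{n,s}-T_{n,s})$; the second term is controlled by condition (3) for $T$, and the work is the commutator term. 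Here I would use that $(S_n)_{x,w}\neq0$ forces $d_{X_n}(x,w)\le\varphi_r(\sup_n\ppg_P(S_n))$ by (the geometric estimate behind) (\ref{EQ:relations between ppgs}), hence $\|f_{r,n}(x)-f_{r,n}(w)\|_{E_n}\le\rho_+\big(\varphi_r(\sup_n\ppg_P(S_n))\big)=:C$ uniformly in $n$, where $\rho_+$ is the fixed upper control function of $\xi$. Since $\chi_{0,R}^V$ acts on the $x$-block of $\H_{r,n,E}$ as $1\otimes\chi_{f_{r,n}(x),R}$, one has $[\chi_{0,R}^V,\,S_n\otimes1]_{x,w}=(S_n)_{x,w}\otimes(\chi_{f_{r,n}(x),R}-\chi_{f_{r,n}(w),R})$, and for $R\ge R'+C$ the multiplication operator $\chi_{f_{r,n}(x),R}-\chi_{f_{r,n}(w),R}$ kills $\chi_{f_{r,n}(w),R'}$; hence $[\chi_{0,R}^V,\,S_n\otimes1]_{x,w}(T_{n,s})_{w,y}=[\chi_{0,R}^V,\,S_n\otimes1]_{x,w}(T_{n,s}-\chi_{0,R'}^VT_{n,s})_{w,y}$, of norm at most $\|S_n\|\cdot\|T_{n,s}-\chi_{0,R'}^VT_{n,s}\|$. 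Summing over the boundedly-many $w$ and applying Lemma \ref{lem:norm control of finite ppg op} (valid because $[\chi_{0,R}^V,S_n\otimes1]T_{n,s}$ has uniformly bounded $P$-propagation) bounds $\sup_{s,n}\|[\chi_{0,R}^V,S_n\otimes1]T_{n,s}\|$ by a fixed multiple of $\sup_{s,n}\|T_{n,s}-\chi_{0,R'}^VT_{n,s}\|$ for every $R\ge R'+C$, and the latter tends to $0$ as $R'\to\infty$ by condition (3) for $T$; letting $R'\to\infty$ finishes condition (3). I expect this commutator estimate to be the main obstacle, because the crude bound $\|\chi_{f_{r,n}(x),R}-\chi_{f_{r,n}(w),R}\|=1$ is useless — one genuinely needs the a priori localisation of $T_{n,s}$ near the $f_{r,n}$-diagonal within a fixed radius (encoded in condition (3) for $T$) and must play $R$ off against that radius. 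With conditions (1)--(4) established we get $ST\in\AA^r(X;E)^\Gamma\subseteq A^r(X;E)^\Gamma$, and $TS$ by adjoints, so $S$ multiplies $A^r(X;E)^\Gamma$ on both sides; this completes the argument.
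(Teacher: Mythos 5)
Your proof is correct and follows essentially the same route as the paper's: reduce to block-diagonal $S$ of uniformly finite $P$-propagation, check that $S\cdot T$ lands back in $\AA^r(X;E)^\Gamma$, and handle the nontrivial half of condition (3) by exploiting a uniform bound on how far $f_n$ can move points within a ball of radius $\ppg_P(S_n)$. The only surface-level difference is that you package the condition~(3) estimate as a commutator bound $[\chi_{0,R}^V,S_n\otimes 1]T_{n,s}=[\chi_{0,R}^V,S_n\otimes 1](T_{n,s}-\chi_{0,R'}^VT_{n,s})$ for $R\geq R'+C$, whereas the paper records the equivalent algebraic identity $\chi_{0,R}^V S_n\chi_{0,R-R_0}^V=S_n\chi_{0,R-R_0}^V$ and telescopes; both hinge on the same constant $R_0=C$ coming from the finite propagation of $S_n$ and the control function of the coarse embedding, and yield the same bound from condition~(3) applied to $T$.
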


\begin{proof}
It suffices to show that $(S_nT_{n,s}) \in \AA^r(X;E)^\Gamma$ for any $(T_{n,s}) \in \AA^r(X;E)^\Gamma$ and $(S_n)\in \CC[\H_r]^\Gamma \cap \prod_{n=1}^\infty C^*(\H_{r,n})^\Gamma$. It is clear that the map $s\mapsto S_nT_{n,s}$ is norm-continuous and bounded for each $n\in \NN$.

For conditions (1) and (2) in Definition \ref{defn:twisted Roe}, it suffices to note that $S_n$ has uniformly finite $P$-propagation (in $n$) and $E$-propagation $0$. Condition (4) follows from the fact that $S_n$ is constant in $s$.  Finally for condition (3), it is clear that
\[
\lim\limits_{R\rightarrow \infty}\sup\limits_{s\in [1,\infty),n\in \NN}\|S_nT_{n,s}\chi_{0,R}^V-S_nT_{n,s}\|=0.
\]
On the other hand, set
\[
R_0:= \sup_n\big\{|f_n(x)-f_n(y)|: x,y\in X_n \mbox{~with~} d(x,y) \leq \ppg_P(S_n)\big\}.
\]
This is clear that $R_0$ is finite. For any $n\in \NN$, $x,y\in X_n$ and $R\geq R_0$, we have:
\begin{eqnarray*}
(\chi_{0,R}^V \cdot S_n  \cdot \chi_{0,R-R_0}^V)_{x,y} &=& \chi_{f_n(x),R} \cdot S_{n,x,y} \cdot \chi_{f_n(y), R-R_0} = S_{n,x,y} \cdot \chi_{f_n(x),R} \cdot \chi_{f_n(y), R-R_0} \\
&=& S_{n,x,y} \cdot \chi_{f_n(y), R-R_0} = (S_n  \cdot \chi_{0,R-R_0}^V)_{x,y}.
\end{eqnarray*}
In other words, we obtain
\[
\chi_{0,R}^V \cdot S_n  \cdot \chi_{0,R-R_0}^V = S_n  \cdot \chi_{0,R-R_0}^V,
\]
which implies that
\[
\|\chi_{0,R}^V  S_n  T_{n,s} - S_n T_{n,s}\| \leq \|\chi_{0,R}^V\cdot S_n \cdot (\chi_{0,R-R_0}^V T_{n,s}-T_{n,s})\| + \|S_n \cdot (\chi_{0,R-R_0}^V T_{n,s}-T_{n,s})\|.
\]
This tends to $0$ as $R \to \infty$ uniformly (in $s$ and $n$) by assumption.
\end{proof}

Regarding $C^*(\H_r)^\Gamma \cap \prod_{n=1}^\infty C^*(\H_{r,n})^\Gamma$ as a subalgebra in $\B(L^2([1,\infty)) \otimes\H_{r,E})$ as in Lemma \ref{lem:multiplier}, we have the following:

\begin{lem}\label{lem:commutator}
For any $(S_n) \in C^*(\H_r)^\Gamma \cap \prod_{n=1}^\infty C^*(\H_{r,n})^\Gamma$, we have $[(S_n),F] \in A^r(X;E)^\Gamma$.
\end{lem}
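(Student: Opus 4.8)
The plan is to verify the defining conditions (1)--(4) of Definition~\ref{defn:twisted Roe} for the commutator $[(S_n),F]$, reducing as in Lemma~\ref{lem:F multiplier} to the approximation $F^\Psi$ in place of $F$ (so that all operators have \emph{finite} propagation rather than merely decaying propagation). More precisely, for $\varepsilon>0$ choose $\Psi$ as in Proposition~\ref{prop:Psi function}; then $\|[(S_n),F]-[(S_n),F^\Psi]\|\le 2\|(S_n)\|\cdot\|F-F^\Psi\|\le 2\|(S_n)\|\varepsilon$, so it suffices to show $[(S_n),F^\Psi]$ lies in the twisted Roe algebra up to an error of size $O(\varepsilon)$, and then let $\varepsilon\to 0$. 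Since $A^r(X;E)^\Gamma$ is norm-closed and $S_nF^\Psi_{n,s},\,F^\Psi_{n,s}S_n\in C^*(\H_{r,n,E})^\Gamma$ (using Lemma~\ref{lem:F multiplier} for the product $S_nF^\Psi$ and its adjoint for $F^\Psi S_n$), we only need the termwise analytic conditions. First, I would reduce to $(S_n)\in \CC[\H_r]^\Gamma\cap\prod C^*(\H_{r,n})^\Gamma$, i.e.\ each $S_n$ of finite propagation, uniformly bounded in $n$, by density.

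The key steps, in order: (i) \textbf{Continuity and boundedness in $s$.} For each $n$, the map $s\mapsto [S_n,F^\Psi_{n,s}]$ is norm continuous: $F^\Psi_{n,s}$ is block-diagonal with $(x,x)$-entry $\Id\otimes F^\Psi_{n,s+2d_n,f_n(x)}$, and strong-$*$ continuity of $s\mapsto\Psi(B_{n,s,x})$ (Proposition~\ref{prop:Psi function}(6)) combined with Lemma~\ref{lem:12.3.5 from WY21} and the cocompactness/finiteness argument already used in Lemma~\ref{lem:F multiplier} (only finitely many orbit types of matrix entries $(S_nF^\Psi_{n,s})_{x,y}$ with $d(x,y)$ bounded by the propagation of $S_n$) gives uniform norm continuity of the finitely-many relevant entries, hence of the whole operator by Lemma~\ref{lem:norm control of finite ppg op}. (ii) \textbf{Condition (1).} $\ppg_P([S_n,F^\Psi_{n,s}])\le\ppg_P(S_n)$ since $F^\Psi_{n,s}$ has $P$-propagation $0$; this is uniform in $s,n$ by the uniform bound on $\ppg_P(S_n)$. (iii) \textbf{Condition (2).} For fixed $n$, $\ppg_E([S_n,F^\Psi_{n,s}])\le\ppg_E(F^\Psi_{n,s})\le (s+2d_n)^{-1}R_0\to 0$ as $s\to\infty$ by Proposition~\ref{prop:Psi function}(2). (iv) \textbf{Condition (3).} This is the genuinely new point beyond Lemma~\ref{lem:F multiplier}: I must show $\chi_{0,R}^V[S_n,F^\Psi_{n,s}]\to[S_n,F^\Psi_{n,s}]$ uniformly in $s,n$ as $R\to\infty$, and similarly on the right. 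Writing the commutator as $S_nF^\Psi_{n,s}-F^\Psi_{n,s}S_n$, the term $\chi_{0,R}^V F^\Psi_{n,s}S_n$ is handled using that $F^\Psi_{n,s}$, being a function of the Bott--Dirac operators, ``almost commutes'' with the multiplication operators $\chi_{0,R}^V$ up to an error controlled by Proposition~\ref{prop:Psi function}(9)--(10): indeed $(\chi_{0,R}^VF^\Psi_{n,s}-F^\Psi_{n,s}\chi_{0,R-R_0}^V)$ is small on the relevant range because $\Psi(B_{n,s,x})^2-1$ and the off-diagonal pieces are concentrated near $x$, and one feeds in the ``space for speed'' slack $s\ge 2d_n$ built into the definition $F_{n,s}=F^V_{n,s+2d_n,0}$. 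Then push $\chi_{0,R}^V$ past $F^\Psi_{n,s}$ at the cost of shrinking $R$ to $R-R_0'$ for a uniform constant $R_0'$ (mirroring the computation $\chi_{0,R}^V S_n\chi_{0,R-R_0}^V=S_n\chi_{0,R-R_0}^V$ from Lemma~\ref{lem:multiplier}, but now with $F^\Psi$), and use that $(S_n)$ already satisfies a condition forcing $\chi_{0,R}^V$-truncation to converge to the identity; the other three orderings are symmetric via adjoints. (v) \textbf{Condition (4).} Each matrix entry $[S_n,F^\Psi_{n,s}]_{x,y}=S_{n,x,y}(\Id\otimes F^\Psi_{n,s+2d_n,f_n(y)})-(\Id\otimes F^\Psi_{n,s+2d_n,f_n(x)})S_{n,x,y}$; since $S_{n,x,y}\in\K(\H_{y,r,n},\H_{x,r,n})$ is constant in $s$ and $s\mapsto F^\Psi_{n,s,x}$ is strong-$*$ continuous, Lemma~\ref{lem:12.3.5 from WY21} gives that $s\mapsto[S_n,F^\Psi_{n,s}]_{x,y}$ lies in $\K(\H_{y,r,n},\H_{x,r,n})\otimes C_b([1,\infty),\K(\L^2_{E_n}))$.

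The main obstacle is step (iv), verifying condition~(3) uniformly in both $s$ and $n$: one must show that the ``Clifford cutoff at radius $R$'' and $F^\Psi_{n,s}$ commute up to an error that is uniformly small over \emph{all} levels $n$ and all speeds $s$. This is exactly where the dimension-independence clause at the end of Proposition~\ref{prop:Psi function} (the constants $R_0, c, R_1, R_2$ being independent of $d_n$) and the speed shift $s\mapsto s+2d_n$ in the definition of $F_{n,s}$ are essential --- without them the estimates from Proposition~\ref{prop:Psi function}(9)--(10) would degrade as $d_n\to\infty$. Concretely I expect to argue: given $\varepsilon_2>0$, Proposition~\ref{prop:Psi function}(10) produces $R_2$ (uniform in $n$) so that $\|(\Psi(B_{n,s,x})-\Psi(B_{n,s,y}))(1-\chi_{x,R})\|<\varepsilon_2$ for $R\ge R_2$, $s\ge 2d_n$, $\|x-y\|\le r$; combining this with the finite-propagation decomposition of $S_n$ (so only finitely many $(x,y)$ matter modulo the $\Gamma$-action) and Lemma~\ref{lem:norm control of finite ppg op} yields the required uniform estimate. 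Once all four conditions plus continuity are checked for $[S_n,F^\Psi_{n,s}]$, the $\varepsilon\to 0$ limit and norm-closedness of $A^r(X;E)^\Gamma$ finish the proof.
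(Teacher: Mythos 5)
Your overall strategy matches the paper's: reduce to finite-propagation $(S_n)$ by density, replace $F$ by the smoothed $F^\Psi$ up to an $\varepsilon$-error, then verify conditions (1)--(4) and let $\varepsilon\to 0$ using norm-closedness. Conditions (1) and (2) and the continuity argument in your step (i) are handled exactly as the paper does. However, there are two genuine gaps, and both trace to the same missing observation: since $F^\Psi_{n,s}$ is block-diagonal in the $P_{r,n}$-module structure, the commutator's matrix entries collapse to
\[
\bigl([S_n,F^\Psi_{n,s}]\bigr)_{x,y} \;=\; S_{n,x,y}\otimes\bigl(F^\Psi_{n,s+2d_n,f_n(y)}-F^\Psi_{n,s+2d_n,f_n(x)}\bigr),
\]
i.e.\ $S_{n,x,y}$ tensored with a \emph{difference} of $F^\Psi$-operators at nearby points. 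The paper works with this difference throughout; your proposal does not.

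\textbf{Step (v), condition (4).} You invoke Lemma~\ref{lem:12.3.5 from WY21} to conclude that $s\mapsto[S_n,F^\Psi_{n,s}]_{x,y}$ lies in $\K(\H_{y,r,n},\H_{x,r,n})\otimes C_b([1,\infty),\K(\L^2_{E_n}))$. This does not follow: each summand $S_{n,x,y}\otimes F^\Psi_{n,s+2d_n,f_n(y)}$ is a tensor of a compact operator with a \emph{non-compact} operator (a function of the Bott--Dirac operator approximating an involution), so it is not in $\K\otimes\K$; Lemma~\ref{lem:12.3.5 from WY21} addresses products of a compact with a strong-$*$ continuous family, not tensor factors. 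What rescues the commutator is Proposition~\ref{prop:Psi function}(4): the \emph{difference} $\Psi(B_{s,x})-\Psi(B_{s,y})$ is compact, so the whole entry is a tensor of two compacts; norm-continuity in $s$ is Proposition~\ref{prop:Psi function}(8). Your proof never cites (4) or (8), which are precisely the tools needed here.

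\textbf{Step (iv), condition (3).} You try to estimate $\chi_{0,R}^V S_nF^\Psi_{n,s}$ and $\chi_{0,R}^V F^\Psi_{n,s}S_n$ separately, and you assert that ``$(S_n)$ already satisfies a condition forcing $\chi_{0,R}^V$-truncation to converge to the identity.'' This is false: $(S_n)$ lives in $C^*(\H_r)^\Gamma\cap\prod_n C^*(\H_{r,n})^\Gamma$ and is extended by $\otimes\,\Id$ on the $\L^2_{E_n}$ factor, so $\bigl(\chi_{0,R}^V S_n-S_n\bigr)_{x,y}=S_{n,x,y}\otimes(\chi_{f_n(x),R}-\Id)$ has norm comparable to $\|S_{n,x,y}\|$ for every $R$, and $\chi_{0,R}^V$-truncation of $(S_n)$ does \emph{not} tend to zero. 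Treating the two orderings separately therefore cannot close the estimate. The commutator structure must be used: from the displayed formula one gets
\[
\bigl(\chi_{0,R}^V[S_n,F^\Psi_{n,s}]\bigr)_{x,y}=S_{n,x,y}\otimes\chi_{f_n(x),R}\bigl(F^\Psi_{n,s+2d_n,f_n(y)}-F^\Psi_{n,s+2d_n,f_n(x)}\bigr),
\]
and Proposition~\ref{prop:Psi function}(10) (together with the speed shift $s\mapsto s+2d_n$ so $s\ge 2d_n$ and the dimension-independence of $R_2$) controls $\bigl(1-\chi_{f_n(x),R}\bigr)\bigl(F^\Psi_{n,s,f_n(y)}-F^\Psi_{n,s,f_n(x)}\bigr)$ uniformly; combined with the finite $P$-propagation of $(S_n)$ and Lemma~\ref{lem:norm control of finite ppg op} this gives condition (3). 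Your intermediate claim that $\chi_{0,R}^V F^\Psi_{n,s}-F^\Psi_{n,s}\chi_{0,R-R_0}^V$ is small also needs care: that expression is controlled by the $E$-propagation bound in Proposition~\ref{prop:Psi function}(2), not by (9)--(10), and in any case it leads you back to the ill-posed truncation of $S_n$. Once you switch to the difference formula, the rest of your outline (which correctly identifies (10), the speed shift, and dimension-independence as the key tools) goes through and matches the paper.
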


\begin{proof}
Fix an $(S_n) \in \CC[\H_r]^\Gamma \cap \prod_{n=1}^\infty C^*(\H_{r,n})^\Gamma$. From Proposition~\ref{prop:Psi function}(1), it suffices to show that
\[
\left(s\mapsto [S_n,F_{n,s}^\Psi]\right)_n
\]
belongs to $\AA^r(X;E)^\Gamma$ for any $\Psi$ as in Proposition~\ref{prop:Psi function}.

First note that for any $n\in \NN$ and $x,y\in X_n$, we have
\[
([S_n,F_{n,s}^\Psi])_{x,y} = S_{n,x,y} \otimes \big( F^\Psi_{n,s+2d_n,f_n(y)} - F^\Psi_{n,s+2d_n,f_n(x)} \big).
\]
Hence for each $n\in \NN$, the function $s\mapsto [S_n,F_{n,s}^\Psi]$ is bounded and norm continuous by Proposition \ref{prop:Psi function}(8) and Lemma \ref{lem:norm control of finite ppg op}. Also condition (4) in Definition \ref{defn:twisted Roe} holds by Proposition \ref{prop:Psi function}(4) and (8). Moreover, condition (1) and (2) holds since $F_{n,s}^\Psi$ has $P$-propagation $0$, and $S_n$ has $E$-propagation $0$ together with Proposition \ref{prop:Psi function}(2).

Finally for condition (3), note that
\[
(\chi_{0,R}^V \cdot [S_n,F_{n,s}^\Psi])_{x,y} = S_{n,x,y} \otimes \chi_{f_n(x),R}\big( F^\Psi_{n,s+2d_n,f_n(y)} - F^\Psi_{n,s+2d_n,f_n(x)} \big).
\]
Hence condition (3) follows from Proposition \ref{prop:Psi function}(10), finite $P$-propagation of $(S_n)$ and Lemma \ref{lem:norm control of finite ppg op}.
\end{proof}

\begin{lem}\label{lem:projection}
For any projection $(p_n) \in C^*(\H_r)^\Gamma \cap \prod_{n=1}^\infty C^*(\H_{r,n})^\Gamma$, the element
\[
\left( s\mapsto ( p_n F_{n,s} p_n )^2 - p_n \right)_{n\in \NN}
\]
is in $(p_n) A^r(X;E)^\Gamma (p_n)$.
\end{lem}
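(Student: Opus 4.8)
The plan is to start from the elementary operator identity
\[
(p_n F_{n,s} p_n)^2 - p_n \;=\; p_n\big(F_{n,s}^2 - \Id\big)p_n \;+\; p_n F_{n,s}\,[p_n, F_{n,s}]\,p_n ,
\]
valid for every $n\in\NN$ and $s\in[1,\infty)$ (it follows from $p_n^2=p_n$ and $F_{n,s}p_nF_{n,s}=F_{n,s}^2 p_n + F_{n,s}[p_n,F_{n,s}]$). By Lemmas \ref{lem:multiplier} and \ref{lem:F multiplier}, both $(p_n)$ and $F$ are multipliers of $A^r(X;E)^\Gamma$, so it suffices to show that the two families $\big(s\mapsto p_n(F_{n,s}^2-\Id)p_n\big)_{n\in\NN}$ and $\big(s\mapsto p_n F_{n,s}[p_n,F_{n,s}]p_n\big)_{n\in\NN}$ both lie in $A^r(X;E)^\Gamma$; each is then visibly of the form $(p_n)a(p_n)$ with $a\in A^r(X;E)^\Gamma$, hence lies in the corner $(p_n)A^r(X;E)^\Gamma(p_n)$.

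The commutator term is immediate: by Lemma \ref{lem:commutator} we have $[(p_n),F]\in A^r(X;E)^\Gamma$, and since $F$ is a multiplier, $F[(p_n),F]\in A^r(X;E)^\Gamma$; multiplying by $(p_n)$ on the left and on the right keeps us inside $A^r(X;E)^\Gamma$.

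The term $(p_n)(F^2-\Id)(p_n)$ carries the real content. Since $A^r(X;E)^\Gamma$ is norm-closed and $F^2-\Id$ is a bounded multiplier, a standard density argument — using that block-diagonal operators of uniformly bounded propagation are dense in $C^*(\H_r)^\Gamma\cap\prod_n C^*(\H_{r,n})^\Gamma$ together with the boundedness and bilinearity of $(S,S')\mapsto S(F^2-\Id)S'$ — reduces us to the case that $(p_n)$ has uniformly finite $P$-propagation, and by Proposition \ref{prop:Psi function}(1) we may replace $F$ by $F^\Psi$, so that $F_{n,s}^2-\Id$ becomes $\big(\Psi(B_{n,s+2d_n,0})^2-1\big)^V$, whose Euclidean factor is compact on $\L^2_{E_n}$ by Proposition \ref{prop:Psi function}(3). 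It then remains to check that $\big(s\mapsto p_n(\Psi(B_{n,s+2d_n,0})^2-1)^V p_n\big)_{n\in\NN}$ satisfies conditions (1)--(4) of Definition \ref{defn:twisted Roe}: condition (1) holds since $(\,\cdot\,)^V$ has $P$-propagation $0$ and $(p_n)$ has finite $P$-propagation; condition (2) follows from Proposition \ref{prop:Psi function}(2), which gives $\ppg_E\big((\Psi(B_{n,s+2d_n,0})^2-1)^V\big)\le 2(s+2d_n)^{-1}R_0\to0$, together with the vanishing $E$-propagation of $(p_n)$; condition (3) follows from Proposition \ref{prop:Psi function}(9), the finite $P$-propagation of $(p_n)$ and Lemma \ref{lem:norm control of finite ppg op}; and for condition (4) one writes the matrix entry as the finite sum
\[
\Big(p_n\big(\Psi(B_{n,s+2d_n,0})^2-1\big)^V p_n\Big)_{x,y} \;=\; \sum_{z\in X_n} (p_n)_{x,z}(p_n)_{z,y}\otimes\big(\Psi(B_{n,s+2d_n,f_n(z)})^2-1\big),
\]
in which local compactness of $(p_n)$ makes each $(p_n)_{x,z}(p_n)_{z,y}$ a compact operator, so that each summand lies in $\K(\H_{y,r,n},\H_{x,r,n})\otimes\K(\L^2_{E_n})$ with norm-continuous dependence on $s$ by Proposition \ref{prop:Psi function}(7), the sum being finite by bounded geometry and finite propagation. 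Hence $(p_n)(F^2-\Id)(p_n)\in A^r(X;E)^\Gamma$, which finishes the proof.

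The main obstacle is exactly the verification of condition (4) for $(p_n)(F^2-\Id)(p_n)$: the operator $F_{n,s}^2-\Id$ has the form $\Id_{\H_{r,n}}\otimes(\text{compact})$, so its own matrix entries fail to be compact, and one must use that a projection in the equivariant Roe algebra is \emph{locally compact} in order to absorb the non-compact identity factor and land the entries in the required algebra of compact operators. This is also the conceptual reason why the compression by $(p_n)$ on both sides is indispensable and why $F^2-\Id$ alone does not define an element of $A^r(X;E)^\Gamma$.
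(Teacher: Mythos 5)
Your proof is correct and follows the paper's approach: split off the commutator contribution via Lemma \ref{lem:commutator}, replace $F$ by $F^\Psi$ and $(p_n)$ by a finite-propagation approximant, and verify conditions (1)--(4) of Definition \ref{defn:twisted Roe} using Proposition \ref{prop:Psi function} and local compactness of $(p_n)$. One small imprecision in your closing remark: the two-sided compression is not actually indispensable for condition (4) --- the paper verifies that the one-sided product $\bigl(s\mapsto p_n(F_{n,s}^2-1)\bigr)_n$ already belongs to $A^r(X;E)^\Gamma$, since multiplying on a single side by the locally compact $p_n$ suffices to make each matrix entry a compact operator tensored with a compact operator; the second copy of $p_n$ is only needed (via the multiplier property) to place the result in the corner $(p_n)A^r(X;E)^\Gamma(p_n)$.
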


\begin{proof}
From Lemma~\ref{lem:commutator}, it suffices to show that $\left( s\mapsto p_n (F_{n,s})^2 - p_n \right)_n$ is in $A^r(X;E)^\Gamma$. Moreover, we only need to show that
\[
\left( s\mapsto q_n (F_{n,s}^\Psi)^2 - q_n \right)_n
\]
is in $\AA^r(X;E)^\Gamma$ for any $\Psi$ as in Proposition~\ref{prop:Psi function}, where $(q_n)$ is a finite propagation approximation to $(p_n)$. For each $n\in \NN$, it follows from Proposition \ref{prop:Psi function}(7) and Lemma \ref{lem:norm control of finite ppg op} that the function $s \mapsto q_n(F_{n,s}^\Psi)^2 - q_n$ is bounded and continuous. It is routine to check condition (1)-(4) in Definition \ref{defn:twisted Roe} for this map using Proposition \ref{prop:Psi function}(3) and (9), the finite propagation of $(q_n)$ together with Lemma \ref{lem:norm control of finite ppg op}.
\end{proof}

Now we are in the position to construct the index map $\Ind_F$ in (\ref{EQ: Ind_F}). It follows from a standard construction in $K$-theories (see, \emph{e.g.}, \cite[Definitoin 2.8.5]{willett2020higher}):

\begin{defn}\label{defn:index definition}
	Let $\H=\H^{+}\oplus \H^{-}$ be a graded Hilbert space with grading operator $U$ (\emph{i.e.}, $U$ is a self-adjoint unitary operator in $\B(\H)$ such that $\H^{\pm}$ coincides with the $(\pm 1)$-eigenspace of $U$), and $A$ be a $C^*$-subalgebra of $\B(\H)$ such that $U$ is in the multiplier algebra of $A$. Let $F\in \B(\H)$ be an odd operator of the form
	\[ F =
	\begin{pmatrix}
	0&V \\
	W&0
	\end{pmatrix}
	\]
	for some operators $V: \H^{-} \rightarrow \H^{+}$ and $W: \H^{+} \rightarrow \H^{-} $. Suppose $F$ satisfies:
	\begin{itemize}
		\item $F$ is in the multiplier algebra of $A$;
		\item $F^2-1$ is in $A$.
	\end{itemize}
	Then we define the \emph{index class} $\Ind[F] \in K_0(A)$ of $F$ to be
	\[
	\Ind[F] :=
	\begin{bmatrix}
	(1-VW)^2&V(1-WV) \\
	W(2-VW)(1-VW)&WV(2-WV)
	\end{bmatrix}
	-
	\begin{bmatrix}
	0&0 \\
	0&1
	\end{bmatrix}.
	\]
\end{defn}

Combining Lemma \ref{lem:F multiplier}$\sim$Lemma \ref{lem:projection}, we obtain that for each projection $(p_n) \in C^*(\H_r)^\Gamma \cap \prod_{n=1}^\infty C^*(\H_{r,n})^\Gamma$ the operator $( (p_n)F(p_n) )$ is an odd self-adjoint operator on the graded Hilbert space $\bigoplus_n p_n(L^2 ( [1,\infty), \H_{r,n,E} ) )$ satisfying:
\begin{itemize}
	\item $((p_n)F(p_n)) $ is in the multiplier algebra of $(p_n) A^r(X;E)^\Gamma (p_n)$;
	\item $( (p_n)F(p_n) )^2 - (p_n) $ is in $(p_n) A^r(X;E)^\Gamma (p_n)$.
\end{itemize}
Hence Definition~\ref{defn:index definition} produces an index class in $K_0((p_n) A^r(X;E)^\Gamma (p_n))$. Composing with the $K_0$-map induced by the inclusion $(p_n) A^r(X;E)^\Gamma (p_n) \hookrightarrow A^r(X;E)^\Gamma$, we get an element in $K_0( A^r(X;E)^\Gamma )$, denoted by $\Ind_F[(p_n)]$. An argument analogous to \cite[Lemma~12.3.11]{willett2020higher} provides a well-defined homomorphism
\[
\Ind_F: K_0 \big( C^*(\H_r)^\Gamma \cap \prod_{n=1}^\infty C^*(\H_{r,n})^\Gamma \big) \longrightarrow K_0( A^r(X;E)^\Gamma ).
\]

Passing to suspension and applying the above construction pointwise, we can also define $\Ind_F$ on the level of $K_1$-groups. Similarly, we can also deal with the localisation case by applying the above construction pointwise in the parameter $t$. Consequently, we obtain the following:

\begin{prop}\label{prop:index maps defn}
The process above provides well-defined homomorphisms:
\[
\Ind_F: K_\ast \big( C^*(\H_r)^\Gamma \cap \prod_{n=1}^\infty C^*(\H_{r,n})^\Gamma \big) \rightarrow K_\ast( A^r(X;E)^\Gamma )
\]
and
\[
\Ind_{F_L}: K_\ast \big( C^*_L(\H_r)^\Gamma \cap \prod_{n=1}^\infty C^*_L(\H_{r,n})^\Gamma \big) \rightarrow K_\ast( A^r_L(X;E)^\Gamma )
\]
for $\ast=0,1$, which are called the \emph{equivariant index maps} associated to $F$.
\end{prop}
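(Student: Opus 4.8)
The plan is to carry out, in the present equivariant and block-diagonal setting, the argument of \cite[Lemma~12.3.11]{willett2020higher}. Write $B:=C^*(\H_r)^\Gamma\cap\prod_{n=1}^\infty C^*(\H_{r,n})^\Gamma$ and $B_L:=C^*_L(\H_r)^\Gamma\cap\prod_{n=1}^\infty C^*_L(\H_{r,n})^\Gamma$. By Lemmas~\ref{lem:F multiplier}--\ref{lem:projection}, for every projection $(p_n)$ in a matrix algebra over $B$ the compression $(p_n)F(p_n)$ is an odd self-adjoint operator that is a multiplier of the corner $(p_n)A^r(X;E)^\Gamma(p_n)$ and whose square differs from $(p_n)$ by an element of that corner, so Definition~\ref{defn:index definition} attaches to it an index class, which we push forward along the inclusion of the corner to obtain $\Ind_F[(p_n)]\in K_0(A^r(X;E)^\Gamma)$. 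The task is to show that $(p_n)\mapsto\Ind_F[(p_n)]$ descends to a group homomorphism on $K_0(B)$, and then to extend the construction to $K_1$ and to the localisation algebras.

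As in \cite[Lemma~12.3.11]{willett2020higher}, this comes down to three properties of the index construction of Definition~\ref{defn:index definition}. \emph{Additivity}: $\Ind_F[(p_n)\oplus(q_n)]=\Ind_F[(p_n)]+\Ind_F[(q_n)]$ (via the identification $K_0(M_2(A^r(X;E)^\Gamma))\cong K_0(A^r(X;E)^\Gamma)$), which is immediate from the block form of the formula and from the fact that $F$ acts diagonally. \emph{Vanishing on degenerate cycles}, those for which $(p_n)F(p_n)$ is a genuine self-adjoint unitary on the corner's Hilbert space, which is built into the formula. \emph{Homotopy invariance}: a norm-continuous path $u\mapsto(p_n^{(u)})$, $u\in[0,1]$, of projections in a matrix algebra over $B$ is a single projection over $B\otimes C[0,1]\subseteq M\big(A^r(X;E)^\Gamma\otimes C[0,1]\big)$, and the versions of Lemmas~\ref{lem:F multiplier}--\ref{lem:projection} with $C[0,1]$-coefficients hold verbatim since every norm estimate entering their proofs is uniform in the auxiliary parameter; hence the fibrewise compression satisfies the hypotheses of Definition~\ref{defn:index definition} over $A^r(X;E)^\Gamma\otimes C[0,1]$, and evaluating the resulting $K_0$-class at the endpoints $u=0,1$ and invoking homotopy invariance of $K$-theory gives the claim. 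Following the reduction in \cite[Lemma~12.3.11]{willett2020higher} it suffices to treat projections lying in $B$ itself, and then these three properties assemble into a well-defined homomorphism $\Ind_F\colon K_0(B)\to K_0(A^r(X;E)^\Gamma)$; the group $\Gamma$ enters only through the fact — already used repeatedly above — that all operators and estimates in sight are $\Gamma$-equivariant and uniform in $n$.

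For the $K_1$-groups one uses $K_1(B)\cong K_0\big(C_0((0,1))\otimes B\big)$ and runs the same construction fibrewise over $(0,1)$ with $F$ the constant family; a projection over the unitisation of $C_0((0,1))\otimes B$ vanishes at the two ends, hence so does the output, which therefore defines a class in $K_1(A^r(X;E)^\Gamma)$. For the localisation case, a projection in $B_L$ is itself a uniformly norm-continuous function $t\mapsto(p_{n,t})$ on $[1,\infty)$ with $\sup_{s,n}\ppg_P((p_{n,t}))\to0$ as $t\to\infty$; applying the index construction pointwise in $t$ (with $F$ independent of $t$) yields a function of $t$, and one must check that it lies in a matrix algebra over the unitisation of $A^r_L(X;E)^\Gamma$. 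Uniform norm-continuity in $t$ follows from that of $t\mapsto(p_{n,t})$ together with the Lipschitz bound of Proposition~\ref{prop:Psi function}(5); condition~(1) of Definition~\ref{defn:twisted localisation} holds because the $P$-propagation of the output at parameter $t$ is bounded by a fixed multiple of $\ppg_P((p_{n,t}))$, which tends to $0$; and condition~(2) of Definition~\ref{defn:twisted localisation} is inherited from condition~(3) of Definition~\ref{defn:twisted Roe} by the uniformity of the estimates. Running the homotopy-invariance argument with $[1,\infty)$-parametrised coefficients then produces the well-defined homomorphism $\Ind_{F_L}\colon K_*(B_L)\to K_*(A^r_L(X;E)^\Gamma)$; since the whole construction is performed fibrewise, the evaluation-at-one maps automatically intertwine $\Ind_{F_L}$ and $\Ind_F$.

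The part needing genuine attention — everything else being a transcription of \cite[Lemma~12.3.11]{willett2020higher} — is the verification just sketched that the pointwise-in-$t$ index construction really lands in $A^r_L(X;E)^\Gamma$, i.e.\ that expanding the matrix formula of Definition~\ref{defn:index definition} into sums of products of the $(p_{n,t})$'s with $F$ (with the Clifford and Bott data hidden inside $F$) preserves all four defining conditions of $\AA^r(X;E)^\Gamma$ in Definition~\ref{defn:twisted Roe} together with the two decay conditions of Definition~\ref{defn:twisted localisation}. After the expansion each of these reduces to the corresponding assertion already established in Lemmas~\ref{lem:F multiplier}--\ref{lem:projection} or to one of the uniform estimates of Proposition~\ref{prop:Psi function}, so no new estimate is required; the only real care needed is bookkeeping — ensuring every constant is chosen uniformly over $n\in\NN$, over $s\in[1,\infty)$, and over the homotopy/suspension/localisation parameter — which is exactly what the dimension-independence clause at the end of Proposition~\ref{prop:Psi function} and the uniform bound of Lemma~\ref{lem:norm control of finite ppg op} supply.
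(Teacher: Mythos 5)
Your proposal is correct and follows essentially the same route as the paper: the paper likewise assembles $\Ind_F$ from Lemmas~\ref{lem:F multiplier}--\ref{lem:projection}, Definition~\ref{defn:index definition}, and an appeal to \cite[Lemma~12.3.11]{willett2020higher} for well-definedness, then passes to suspension for $K_1$ and applies the construction pointwise in $t$ for the localisation algebras. You have merely unpacked what the paper leaves as ``an argument analogous to\dots'' (additivity, vanishing on degenerate compressions, homotopy invariance via $C[0,1]$-coefficients, and the uniformity-in-$t$ bookkeeping), and the details you supply are compatible with the paper's terse outline.
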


Finally, we prove Proposition \ref{prop:index maps}.

\begin{proof}[Proof of Proposition \ref{prop:index maps}]
The proof follows the outline of \cite[Proposition~12.3.13 and Proposition~12.6.3]{willett2020higher}. We will only focus on the case of equivariant (twisted) Roe algebras, while the localisation case follows from the same argument applied pointwise. Throughout the proof, we fix an $s \in [1,\infty)$.

For each $n \in \NN$, we define a map $\kappa_n: E_n\rightarrow E_n$ by
\[
\kappa_n (x)=
\begin{cases}
~\frac{x}{|x|}(|x|-1), & \mbox{if } |x|\ge 1;  \\
~0, & \mbox{otherwise}
\end{cases}
\]
and a sequence of maps:
\[
F_n^{(k)}: \H_{r,n,E} \rightarrow \H_{r,n,E}, \quad \delta_z\otimes \xi\otimes \delta_g \otimes u \mapsto \delta_z\otimes \xi\otimes \delta_g \otimes F_{n,s+2d_n,\kappa_n^k(f_{r,n}(z))}u
\]
for $k \in \NN \cup \{\infty\}$ and $z\in Z_{r,n}$, where $\kappa_n^{\infty}(v):=0$ for all $v\in E_n$. Denote
\[
F^{(k)}:=(F_n^{(k)})_n \in \prod_n \B(\H_{r,n,E})^\Gamma \subseteq \B(\H_{r,E})^\Gamma.
\]
We note that for the fixed $s$ at the beginning, we have that $F^{(0)}=F_s$ from Definition~\ref{defn:F} and $F^{(\infty)} = (\Id_{\H_{r,n}}\otimes F_{n,s+2d_n,0})_n$.
Quite analogous to the construction in Proposition~\ref{prop:index maps defn}, we obtain:
\[
\Ind_{F^{(k)}}: K_*\big( C^*(\H_r)^\Gamma \cap \prod_{n=1}^\infty C^*(\H_{r,n})^\Gamma \big) \rightarrow K_*\big( C^*(\H_{r,E})^\Gamma \cap \prod_{n=1}^\infty C^*(\H_{r,n,E})^\Gamma \big)
\]
for each $k\in \NN \cup \{\infty\}$. It is clear that $\Ind_{F^{(0)}}= \iota^s_* \circ \Ind_F$. In the following, we will show that $\Ind_{F^{(0)}}= \Ind_{F^{(\infty)}}$ and that $\Ind_{F^{(\infty)}}$ is an isomorphism to conclude the proof. We will only focus on the case of $K_0$, and the case of $K_1$ can be handled using a standard suspension argument.

First we show that $\Ind_{F^{(0)}}= \Ind_{F^{(\infty)}}$. Let
\[
\H_{r,n,E,\infty}:=(\H_{r,n,E})^{\oplus \infty} = \H_{r,n} \otimes (\L^2_{E_n})^{\oplus \infty} ~\mbox{for~all~}n\in \NN \quad \mbox{and} \quad \H_{r,E,\infty}:=\bigoplus_{n}\H_{r,n,E,\infty}.
\]
Denote $C^*(\H_{r,E,\infty})^\Gamma$ the corresponding equivariant Roe algebra. It is known that the ``top-left corner inclusion'':
\[
\iota: C^*(\H_{r,E})^\Gamma \cap \prod_{n=1}^\infty C^*(\H_{r,n,E})^\Gamma \rightarrow C^*(\H_{r,E,\infty})^\Gamma \cap \prod_{n=1}^\infty C^*(\H_{r,n,E,\infty})^\Gamma,  \quad T\mapsto
	\begin{pmatrix}
	T &0&\cdots \\
	0&0& \\
	\vdots &  &\ddots
	\end{pmatrix}
\]
induces an isomorphism on $K$-theory. To show that $\Ind_{F^{(0)}}= \Ind_{F^{(\infty)}}$, it suffices to show that $\iota_\ast \circ\Ind_{F^{(0)}}= \iota_\ast \circ\Ind_{F^{(\infty)}}$.

Without loss of generality, it suffices to show that $\iota_\ast \circ\Ind_{F^{(0)}}[(p_n)]= \iota_\ast \circ\Ind_{F^{(\infty)}}[(p_n)]$ for any projection $(p_n)\in C^*(\H_{r,E})^\Gamma \cap \prod_{n=1}^\infty C^*(\H_{r,n,E})^\Gamma$. For each $k\in \NN \cup \{\infty\}$, it follows from Definition~\ref{defn:index definition} that the class $\Ind_{F^{(k)}}[(p_n)]$ can be represented by a concrete difference of projections, say
\[
[(p_n^{(k)})]-[(q_n)],
\]
where $(q_n)$ is independent of $k$ and we have
\[
(p_n^{(k)})-(q_n)\in M_2\left( (p_n) \cdot \big( C^*(\H_{r,E})^\Gamma \cap \prod_{n=1}^\infty C^*(\H_{r,n,E})^\Gamma \big) \cdot (p_n) \right).
\]
Now we consider the projections
\[
	\begin{pmatrix}
	(p_n^{(0)}) &0 &0 &\cdots \\
	0& (p_n^{(1)}) &0 & \\
	0& 0& (p_n^{(2)}) & \\
	\vdots & & &\ddots
	\end{pmatrix}
	\quad \mbox{and} \quad
	\begin{pmatrix}
	(p_n^{(\infty)}) &0 &0 &\cdots \\
	0& (p_n^{(\infty)}) &0 & \\
	0& 0& (p_n^{(\infty)}) & \\
	\vdots & & &\ddots
	\end{pmatrix}
\]
in the multiplier algebra of $M_2\big( C^*(\H_{r,E,\infty})^\Gamma \cap \prod_{n=1}^\infty C^*(\H_{r,n,E,\infty})^\Gamma \big)$. For any compact subset $K \subseteq P_r$ there exists $k\in \NN$ such that $\kappa^k(f_{r,n}(K \cap P_{r,n}))=0$ for any $n\in \NN$ (in fact there are only finitely many $n\in \NN$ satisfying $K \cap P_{r,n} \neq \emptyset$). Hence it is easy to see that the difference of these projections is in $M_2\big( C^*(\H_{r,E,\infty})^\Gamma \cap \prod_{n=1}^\infty C^*(\H_{r,n,E,\infty})^\Gamma \big)$. Therefore the formal difference
\[
	\begin{bmatrix}
	\begin{pmatrix}
	(p_n^{(0)}) &0 &0 &\cdots \\
	0& (p_n^{(1)}) &0 & \\
	0& 0& (p_n^{(2)}) & \\
	\vdots & & &\ddots
	\end{pmatrix}
	\end{bmatrix}
	-
	\begin{bmatrix}
	\begin{pmatrix}
	(p_n^{(\infty)}) &0 &0 &\cdots \\
	0& (p_n^{(\infty)}) &0 & \\
	0& 0& (p_n^{(\infty)}) & \\
	\vdots & & &\ddots
	\end{pmatrix}
	\end{bmatrix}
\]
defines a class in $K_0\big( C^*(\H_{r,E,\infty})^\Gamma \cap \prod_{n=1}^\infty C^*(\H_{r,n,E,\infty})^\Gamma \big)$, denoted by $a$.

On the other hand, there is a class $b\in K_0\big( C^*(\H_{r,E,\infty})^\Gamma \cap \prod_{n=1}^\infty C^*(\H_{r,n,E,\infty})^\Gamma \big)$ defined by the formal difference
\[
	\begin{bmatrix}
	\begin{pmatrix}
	(p_n^{(0)}) &0 &0 &\cdots \\
	0& 0 &0 & \\
	0& 0& 0 & \\
	\vdots & & &\ddots
	\end{pmatrix}
	\end{bmatrix}
	-
	\begin{bmatrix}
	\begin{pmatrix}
	(p_n^{(\infty)}) &0 &0 &\cdots \\
	0& 0 &0 & \\
	0& 0& 0 & \\
	\vdots & & &\ddots
	\end{pmatrix}
	\end{bmatrix}.
\]
We claim that $a+b=a$, whence $b=0$. This implies that $\Ind_{F^{(0)}}[(p_n)]= \Ind_{F^{(\infty)}}[(p_n)]$. In fact, for each $k \in \NN$ we consider the path $(F^{(k),r})_{r\in [0,1]} = \big( (F_n^{(k),r}) \big)_{r\in [0,1]}$ in $\B(\H_{r,E})^\Gamma$ defined by
\[
F_n^{(k),r}: \delta_z\otimes \xi\otimes \delta_g \otimes u \mapsto \delta_z\otimes \xi\otimes \delta_g \otimes F_{n,s+2d_n,(1-r)\kappa_n^k(f_{r,n}(z))+r\kappa_n^{k+1}(f_{r,n}(z))}u \quad \mbox{on each } \H_{r,n,E}.
\]
It follows from Proposition \ref{prop:Psi function}(5) that $\|F^{(k),r} - F^{(k),r'}\|\le 3|r-r'|$ for each $n\in \NN$. Hence $a$ is homotopic to
\[
	\begin{bmatrix}
	\begin{pmatrix}
	(p_n^{(1)}) &0 &0 &\cdots \\
	0& (p_n^{(2)}) &0 & \\
	0& 0& (p_n^{(3)}) & \\
	\vdots & & &\ddots
	\end{pmatrix}
	\end{bmatrix}
	-
	\begin{bmatrix}
	\begin{pmatrix}
	(p_n^{(\infty)}) &0 &0 &\cdots \\
	0& (p_n^{(\infty)}) &0 & \\
	0& 0& (p_n^{(\infty)}) & \\
	\vdots & & &\ddots
	\end{pmatrix}
	\end{bmatrix}.
\]
which implies that $a+b=a$.

Finally we explain that $\Ind_{F^{(\infty)}}$ is an isomorphism. For each $n\in \NN$, note that  $F^{(\infty)}_n = \Id_{\H_{r,n}}\otimes F_{n,s+2d_n,0}$. Let $p_{n,0}$ be the projection onto the kernel of $F_{n,s+2d_n,0}$, which is one-dimensional. Recall that $F_{n,s+2d_n,0}=g((s+2d_n)^{-1}D+C)$ where $g(x)=x(1+x^2)^{-1/2}$. Now consider the path $(F_{n,s+2d_n,0}^t)_{t\in[1,\infty]}$ defined by
\[
F_{n,s+2d_n,0}^t:=g\big(t\big(\frac{D}{s+2d_n}+C\big) \big).
\]
This defines a homotopy between $F_{n,s+2d_n,0}$ and $F_{n,s+2d_n,0}^\infty$ (for each $n\in \NN$), which decomposes with respect to the grading as
\[
F_{n,s+2d_n,0}^\infty=
	 \left(
	\begin{matrix}
	0& 1 \\
	1-p_{n,0} &0
	\end{matrix}
	\right) .
\]
Hence for each $n$, this homotopy provides that
\[
\Ind_{F_n^{(\infty)}}[p_n] = [p_n\otimes p_{n,0}],
\]
for any projection $(p_n)\in C^*(\H_r)^\Gamma \cap \prod_{n=1}^\infty C^*(\H_{r,n})^\Gamma$. Since the above homotopies are \emph{not} uniformly continuous with respect to $n$, we need an extra argument (which is called a ``stacking argument'' in the proof of \cite[Proposition 12.6.3]{willett2020higher}) to conclude that
\[
\Ind_{F^{(\infty)}}[(p_n)] = [(p_n\otimes p_{n,0})].
\]


For the convenience to readers, here we provide more details. Without loss of generality, for any projection $(p_n)\in C^*(\H_r)^\Gamma \cap \prod_{n=1}^\infty C^*(\H_{r,n})^\Gamma$ we assume that
\[
\Ind_{F^{(\infty)}}[(p_n)] = [(q_n)] - [(q_n^{(0)})],
\]
where $(q_n)$ and $(q_n^{(0)})$ are projections in $C^*(\H_{r,E})^\Gamma \cap \prod_{n=1}^\infty C^*(\H_{r,n,E})^\Gamma$. For each $n\in \NN$, the path $F_{n,s+2d_n,0}^t$ above provides a homotopy of projections $\{H_{n,t}\}_{t\in [0,1]}$ such that $H_{n,0} = q_n$. Hence for each $n\in \NN$, there exists $\delta_n>0$ such that for any $t,s \in [0,1]$ with $|t-s| < \delta_n$, we have $\|H_{n,t} - H_{n,s}\| < \frac{1}{2}$. For each $n\in \NN$, take an $K_n \in \NN$ such that $\frac{1}{K_n} < \delta_n$ and an isometry
\[
V_n: \H_{r,n,E} \longrightarrow \H_{r,n,E}^{\oplus K_n}, \quad v \mapsto (v,0,\cdots,0).
\]
They give rise to an isometry
\[
V=(V_n)_n: \H_{r,E} \longrightarrow \H'_{r,E}:= \bigoplus_n \H_{r,n,E}^{\oplus K_n}.
\]
Note that $V$ equivariantly covers the identity map on $P_r$, hence the following homomorphism
\[
\Phi:=\Ad_V: C^*(\H_{r,E})^\Gamma \cap \prod_{n=1}^\infty C^*(\H_{r,n,E})^\Gamma \longrightarrow C^*(\H'_{r,E})^\Gamma\cap \prod_{n=1}^\infty \B(\H_{r,n,E}^{\oplus K_n})^\Gamma
\]
gives rise to an isomorphism on $K$-theories (see, \emph{e.g.}, \cite[Theorem 5.2.6]{willett2020higher} for more details). Therefore, it suffices to show that
\[
\Phi_\ast (\Ind_{F^{(\infty)}}[(p_n)]) = \Phi_\ast([(p_n\otimes p_{n,0})]).
\]

Note that for each $n\in \NN$, we have
\[
\begin{bmatrix}
\begin{pmatrix}
	q_n &0 &\cdots & 0\\
	0& 0 &\cdots & 0\\
	\vdots & \vdots & \ddots & \vdots \\
	0 &  0 & \cdots &0
	\end{pmatrix}
	\end{bmatrix}
	=
	\begin{bmatrix}
	\begin{pmatrix}
   H_{n,0} &0  &\cdots &0\\
   0& H_{n,\frac{1}{K_n}}  &\cdots &0 \\
   \vdots& \vdots &\ddots & \vdots \\
   0& 0 & \cdots &H_{n,1}
   \end{pmatrix}
 \end{bmatrix}
 -
 \begin{bmatrix}
	\begin{pmatrix}
   0 &0  &\cdots &0\\
   0& H_{n,\frac{1}{K_n}}  &\cdots &0 \\
   \vdots& \vdots &\ddots & \vdots \\
   0& 0 & \cdots &H_{n,1}
   \end{pmatrix}
 \end{bmatrix}.
\]
On the other hand, note that
\[
\left\|~ \begin{pmatrix}
   0 &0  &\cdots &0\\
   0& H_{n,\frac{1}{K_n}}  &\cdots &0 \\
   \vdots& \vdots &\ddots & \vdots \\
   0& 0 & \cdots &H_{n,1}
   \end{pmatrix}
   -
   \begin{pmatrix}
   0 &0  &\cdots &0\\
   0& H_{n,0}  &\cdots &0 \\
   \vdots& \vdots &\ddots & \vdots \\
   0& 0 & \cdots &H_{n,\frac{K_n-1}{K_n}}
   \end{pmatrix}
    ~ \right\| < \frac{1}{2}
\]
Hence for each $n\in \NN$ there exists a homotopy connecting these two matrices, which are \emph{uniformly} continuous with respect to $n$. Following by a rotation homotopy between
\[
\begin{pmatrix}
   0 &0  &\cdots &0\\
   0& H_{n,0}  &\cdots &0 \\
   \vdots& \vdots &\ddots & \vdots \\
   0& 0 & \cdots &H_{n,\frac{K_n-1}{K_n}}
   \end{pmatrix}
   \quad \mbox{and} \quad
\begin{pmatrix}
  H_{n,0} &0  &\cdots &0 & 0\\
  0& H_{n,\frac{1}{K_n}}  &\cdots &0 &0 \\
  \vdots& \vdots &\ddots & \vdots & \vdots \\
  0& 0 & \cdots &H_{n,\frac{K_n-1}{K_n}} &0 \\
  0& 0 & \cdots &0  &0
 \end{pmatrix},
\]
we obtain that
\[
\begin{bmatrix}
\begin{pmatrix}
	q_n &0 &\cdots & 0\\
	0& 0 &\cdots & 0\\
	\vdots & \vdots & \ddots & \vdots \\
	0 &  0 & \cdots &0
	\end{pmatrix}
	\end{bmatrix}
	=
	\begin{bmatrix}
\begin{pmatrix}
	0& \cdots & 0 &0\\
	\vdots & \ddots &\vdots &\vdots\\
	0 & \cdots  & 0 & 0 \\
	0  & \cdots &  0 &H_{n,1}
	\end{pmatrix}
	\end{bmatrix}
	=
	\begin{bmatrix}
    \begin{pmatrix}
	H_{n,1} &0 &\cdots & 0\\
	0& 0 &\cdots & 0\\
	\vdots & \vdots & \ddots & \vdots \\
	0 &  0 & \cdots &0
	\end{pmatrix}
	\end{bmatrix}.
\]
Note that these homotopies are \emph{uniformly} continuous with respect to $n$, hence from the construction of $\Phi$ we obtain that
\[
\Phi_\ast (\Ind_{F^{(\infty)}}[(p_n)]) = \Phi_\ast([(p_n\otimes p_{n,0})]),
\]
which concludes the proof.
\end{proof}

\section{Isomorphisms between $K$-theories of equivariant twisted algebras}\label{sec:local isom}

In this section, we study the $K$-theories of equivariant twisted algebras $A^r(X;E)^\Gamma$ and $A^r_L(X;E)^\Gamma$ introduced in Section~\ref{ssec:equiv. twisted algebras}. First recall that for $r>0$, there is the evaluation-at-one homomorphism
\[
\mathrm{ev}: A^r_L(X;E)^\Gamma \longrightarrow A^r(X;E)^\Gamma, \quad (T_t) \mapsto T_1.
\]
Also recall that for $r \leq s$, the inclusion $Z_r \to Z_s$ induces isometric embeddings of Hilbert spaces
\[
\H_{r,n} \longrightarrow \H_{s,n} \quad \mbox{and} \quad \H_{r,n,E} \longrightarrow \H_{s,n,E}
\]
for each $n\in \NN$. These maps give rise to the following commutative diagram:
\[
\xymatrix{
			A^r_L(X;E)^\Gamma \ar[r]^{\mathrm{ev}} \ar[d] & A^r(X;E)^\Gamma  \ar[d] \\
			A^s_L(X;E)^\Gamma \ar[r]^{\mathrm{ev}} & A^s(X;E)^\Gamma
	}
\]
for each $r \leq s$, which further induces a homomorphism between direct limits when taking $r \to +\infty$.

The main result of this section is the following:

\begin{thm}\label{thm:iso. of twisted algebras in $K$-theory}
The evaluation-at-one map induces the following isomorphism in $K$-theories:
\[
\mathrm{ev}_\ast: \lim_{r\to \infty} K_\ast (A^r_L(X;E)^\Gamma) \longrightarrow \lim_{r\to \infty} K_\ast (A^r(X;E)^\Gamma)
\]
for $\ast=0,1$.
\end{thm}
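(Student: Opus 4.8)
The plan is to follow the coarse Mayer--Vietoris machinery of \cite[Section 12.6]{willett2020higher} in the equivariant setting: chop the target Euclidean spaces $E_n$ into controlled pieces, cut the twisted algebras accordingly, and reduce to a ``bounded-in-$E$'' situation handled by the family version of the Baum--Connes conjecture with coefficients for $\Gamma$. Throughout one uses the standing assumptions from Section \ref{ssec:equiv. twisted algebras}, in particular that the orbit maps of the $X_n$ are \emph{uniformly} coarsely equivalent --- this is what makes the embeddings $f_n\colon X_n\to E_n$ controlled uniformly in $n$ --- and that $\Gamma$ acts \emph{trivially} on each $E_n$, since $f_{r,n}$ is $\Gamma$-invariant.

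First I would introduce, for $r>0$ and a family $\Delta=(\Delta_n)_{n\in\NN}$ of open subsets $\Delta_n\subseteq E_n$, the ideal $A^r(X;E)^\Gamma_\Delta$ of $A^r(X;E)^\Gamma$ and the ideal $A^r_L(X;E)^\Gamma_\Delta$ of $A^r_L(X;E)^\Gamma$ consisting of those $(T_{n,s})$ whose matrix entries $T_{n,s,x,y}$, after untwisting by the translation $V_{f_{r,n}(x)}$, act on $\L^2_{E_n}$ with support inside $\Delta_n$. Because the $\Gamma$-action on $E_n$ is trivial, these are honest $\Gamma$-invariant ideals with computable quotients, and one checks that whenever $\Delta=\Delta^{(1)}\cup\Delta^{(2)}$ with the decomposition uniformly $\omega$-excisive in $n$ (in the sense of Definition \ref{defn:omega-excisive}), one obtains six-term Mayer--Vietoris exact sequences for both families of twisted algebras, compatible with $\mathrm{ev}_\ast$. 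Taking $\Delta=(E_n)_n$ recovers the algebras in the statement, so the goal becomes to propagate the isomorphism from small $\Delta$'s to this total one.

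Next I would run an induction on a radius parameter $R$: each $E_n$ is covered by $R$-balls with a bounded colouring obtained from the standard cubulation, and repeated application of the Mayer--Vietoris sequences together with the five lemma reduces the statement to families supported, in the $E$-direction, inside uniformly bounded subsets of the $E_n$. The main obstacle is that the number of colours needed to cubulate $E_n$ grows with $\dim E_n=d_n$, so this induction is not uniform in $n$; this is exactly where the ``space for speed'' argument of \cite[Section 12.6]{willett2020higher} enters. Concretely, one exploits that the $E$-propagation of $F_{n,s}$ is $O(s^{-1})$ by Proposition \ref{prop:Psi function}(2), together with the dimension shift $s\mapsto s+2d_n$ built into Definition \ref{defn:F}, so that at any fixed cutting scale only finitely many $n$ genuinely contribute while the remaining ones are ``already local'' and can be absorbed; making this bookkeeping precise is the most delicate part of the argument.

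Finally, in the bounded-in-$E$ case one untwists and uses that each $\Gamma$-action on $P_{r,n}$ is proper and cocompact: up to a $\mathcal{K}$-stabilisation coming from $\chi_{\Delta_n}\L^2_{E_n}$, the pair $\bigl(A^r_L(X;E)^\Gamma_\Delta\,,\ A^r(X;E)^\Gamma_\Delta\bigr)$ is identified, compatibly with $\mathrm{ev}$, with $\bigl(C^*_L(P_r(X))^\Gamma\cap\prod_n C^*_L(P_{r,n})^\Gamma\,,\ C^*(P_r(X))^\Gamma\cap\prod_n C^*(P_{r,n})^\Gamma\bigr)$, and the evaluation-at-one map becomes the family equivariant index map. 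By Proposition \ref{prop:equiv-CBC for cocompact}, Corollary \ref{cor:equiv-CBC for direct sum}, and the family version of the Baum--Connes conjecture with coefficients for a-T-menable groups proved in the appendix, this induces an isomorphism on $K$-theory after passing to $\lim_{r\to\infty}$. Feeding this back through the Mayer--Vietoris ladders established above yields the theorem.
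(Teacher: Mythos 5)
Your overall strategy --- cut the twisted algebras by ideals supported over subsets of the $E_n$'s, run Mayer--Vietoris inductively to reduce to a ``bounded-in-$E$'' situation, then conclude via a family Baum--Connes argument for the cocompact actions --- matches the paper's plan in broad outline. However, the middle step as you describe it has a genuine gap, and you misattribute the mechanism that closes it.

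You propose to cover each $E_n$ by $R$-balls ``with a bounded colouring obtained from the standard cubulation,'' acknowledge that the number of colours grows with $d_n = \dim E_n$, and then claim that the ``space for speed'' argument fixes the non-uniformity. This does not work, and it is not what the paper does. The ``space for speed'' device (the shift $s \mapsto s + 2d_n$ in Definition \ref{defn:F}, controlling the $E$-propagation of $F^\Psi_{n,s}$ via Proposition \ref{prop:Psi function}(2)) lives inside the index-map construction in Section \ref{sec:index map}; it is not available to rescue a colouring of $E_n$, because the twisted algebra $A^r(X;E)^\Gamma$ contains no built-in dimension shift and condition (2) of Definition \ref{defn:twisted Roe} bounds the $E$-propagation only levelwise in $n$, not uniformly. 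The actual source of uniformity in $n$ is entirely different: by condition (3) in Definition \ref{defn:twisted Roe} (and condition (2) in Definition \ref{defn:twisted localisation}), the operators in the twisted algebras have $E$-supports concentrated, uniformly in $n$ and $s$, near the translated origin --- equivalently, near $f_n(X_n) = \xi(X_n/\Gamma)$. Thus one never covers all of $E_n$; one only covers the $s$-neighbourhood $W^s_n = \Nd_s(\xi(X_n/\Gamma))$, and because $X/\Gamma$ has bounded geometry and $\xi$ is a fixed coarse embedding, the balls $\{B(\xi(y),s)\}_{y\in X_n/\Gamma}$ admit a decomposition into $M$ mutually $3$-separated families with $M$ \emph{independent of $n$} (Lemma \ref{lem:decomposition}). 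This is the uniform colouring that powers the Mayer--Vietoris induction, and it has nothing to do with cubulating Euclidean space or trading space for speed. You would also need the explicit identity $A^r(X;E)^\Gamma = \lim_{s\to\infty} A^r_{(W^s_n)}(X;E)^\Gamma$ (and its localised analogue) to pass from the cut pieces back to the full algebras, which your proposal does not state.

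Beyond that, the final ``bounded-in-$E$'' step is considerably more delicate than ``untwist up to a $\K$-stabilisation.'' The paper first kills the $K$-theory of the ``tend-to-zero-as-$s\to\infty$'' subalgebras by an Eilenberg swindle (Lemma \ref{lem:K-zero for tend to zero algebra}, \ref{lem:K-zero for tend to zero algebra 2}), then proves a cluster-axiom-type reduction to restricted products over the separated pieces (Lemma \ref{lem:cluster axiom}), then restricts to $\Gamma$-neighbourhoods $\Nd_m(\Gamma\cdot x^{(n)}_j)$ using condition (3) again, then rewrites everything in Hilbert-module language (Definition \ref{defn:twisted Roe module language}, Lemma \ref{lem:isom btw twisted Hilbert module}) before invoking the family Baum--Connes result (Proposition \ref{prop: local isom for appendix}, Appendix \ref{sec:App A}). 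Your sketch compresses this to one sentence and in particular omits the swindle and the cluster axiom, which are not automatic consequences of cocompactness. So while the high-level outline is right, the specific mechanism you name for the key reduction is the wrong one, and the bounded case needs real work that your proposal does not contain.
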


The proof follows the outline of that in \cite[Section 12.4]{willett2020higher}, and the main ingredient is to use appropriate Mayer-Vietoris arguments for twisted algebras (Proposition~\ref{prop:M_V for twisted algebra}). This allows us to chop the space into easily-handled pieces, on which we apply the result from \cite{HK01} on the Baum-Connes conjecture with coefficients for a-T-menable groups.

Let us start with some more notions. By saying that $(F_n)_{n \in \NN}$ is a sequence of closed subsets in $(E_n)_{n \in \NN}$, we mean that each $F_n$ is a closed subset of $E_n$. Firstly, we define the following subalgebras associated to $(F_n)$:

\begin{defn}\label{defn:twisted ideal}
Fix an $r>0$. For a sequence of closed subsets $(F_n)$ in $(E_n)$, we define $\AA^r_{(F_n)}(X;E)^\Gamma$ to be the set of elements $(T_{n,s}) \in \AA^r(X;E)^\Gamma$ satisfying: for each $n$ and $\varepsilon >0$ there exists $s_{n, \varepsilon }>0$ such that for $s\ge s_{n, \varepsilon }$ we have:
\[
\supp _{E} (T_{n,s}) \subseteq \Nd _\varepsilon (F_n) \times \Nd _\varepsilon (F_n).
\]
Denote by $A^r_{(F_n)}(X;E)^\Gamma$ the $C^*$-algebra of the norm closure of $\AA^r_{(F_n)}(X;E)^\Gamma$ in $A^r(X;E)^\Gamma$.

Denote by $\AA^r_{L,(F_n)}(X;E)^\Gamma$ the set of elements $(T_t)$ in $\AA^r_L(X;E)^\Gamma$ satisfying that $T_t \in \AA^r_{(F_n)}(X;E)^\Gamma$ for each $t\in [1,\infty)$. Define $A^r_{L,(F_n)}(X;E)^\Gamma$ to be the $C^*$-algebra of the completion of $\AA^r_{L,(F_n)}(X;E)^\Gamma$ with respect to the norm $\|(T_t)\|=\sup_t \|T_t\|$. Equivalently, the algebra $A^r_{L,(F_n)}(X;E)^\Gamma$ consists of elements $(T_t)$ in $A^r_L(X;E)^\Gamma$ such that each $T_t$ belongs to $A^r_{(F_n)}(X;E)^\Gamma$.
\end{defn}

The following lemma is straightforward, hence we omit the proof.

\begin{lem}\label{lem:twisted ideals are ideals}
With the same notation as above, $A^r_{(F_n)}(X;E)^\Gamma$ is an ideal in $A^r(X;E)^\Gamma$ and $A^r_{L,(F_n)}(X;E)^\Gamma$ is an ideal in $A^r_L(X;E)^\Gamma$.
\end{lem}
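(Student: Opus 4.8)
The plan is to verify the ideal property directly at the level of the dense $*$-subalgebras $\AA^r_{(F_n)}(X;E)^\Gamma$ and $\AA^r_{L,(F_n)}(X;E)^\Gamma$, and then pass to norm closures. For the non-localised case, I would fix $(T_{n,s}) \in \AA^r_{(F_n)}(X;E)^\Gamma$ and $(S_{n,s}) \in \AA^r(X;E)^\Gamma$, and check that the product $(S_{n,s} T_{n,s})$ again lies in $\AA^r_{(F_n)}(X;E)^\Gamma$ (the case $T S$ being symmetric, or handled by taking adjoints since $\AA^r_{(F_n)}(X;E)^\Gamma$ is clearly $*$-closed). The defining conditions (1)--(4) of Definition \ref{defn:twisted Roe} for the product follow from the fact that $\AA^r(X;E)^\Gamma$ is itself a $*$-algebra (so $(S_{n,s}T_{n,s}) \in \AA^r(X;E)^\Gamma$ automatically). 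The only new thing to check is the support condition: I would use the standard support-of-a-product estimate, namely
\[
\supp_E(S_{n,s}T_{n,s}) \subseteq \{(x,z): \text{there is } y \text{ with } (x,y)\in \supp_E(S_{n,s}),\ (y,z)\in \supp_E(T_{n,s})\}.
\]
Since $\ppg_E(S_{n,s}) \to 0$ as $s\to\infty$ for each fixed $n$ (condition (2) of Definition \ref{defn:twisted Roe}), for fixed $n$ and $\varepsilon>0$ there is $s'$ beyond which $\ppg_E(S_{n,s}) < \varepsilon/2$; combining with the hypothesis that $\supp_E(T_{n,s}) \subseteq \Nd_{\varepsilon/2}(F_n)\times\Nd_{\varepsilon/2}(F_n)$ for $s$ large, the displayed inclusion gives $\supp_E(S_{n,s}T_{n,s}) \subseteq \Nd_\varepsilon(F_n)\times\Nd_\varepsilon(F_n)$ for $s$ sufficiently large (depending on $n$ and $\varepsilon$). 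This is exactly the membership condition for $\AA^r_{(F_n)}(X;E)^\Gamma$.

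Next I would observe that $\AA^r_{(F_n)}(X;E)^\Gamma$ is a two-sided ideal in $\AA^r(X;E)^\Gamma$ by the above, hence its norm closure $A^r_{(F_n)}(X;E)^\Gamma$ is a closed two-sided ideal in the norm closure $A^r(X;E)^\Gamma$: if $a \in A^r_{(F_n)}(X;E)^\Gamma$ and $b \in A^r(X;E)^\Gamma$, approximate $a$ by $a_k \in \AA^r_{(F_n)}(X;E)^\Gamma$ and $b$ by $b_k \in \AA^r(X;E)^\Gamma$ in norm, note $b_k a_k \in \AA^r_{(F_n)}(X;E)^\Gamma$, and pass to the limit using $\|b_k a_k - ba\| \le \|b_k\|\,\|a_k - a\| + \|b_k - b\|\,\|a\| \to 0$. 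The localised case is entirely analogous: an element $(T_t)$ of $\AA^r_{L,(F_n)}(X;E)^\Gamma$ is by definition one for which each slice $T_t$ lies in $\AA^r_{(F_n)}(X;E)^\Gamma$, so if $(S_t)\in\AA^r_L(X;E)^\Gamma$ then $(S_t T_t)$ has each slice in $\AA^r_{(F_n)}(X;E)^\Gamma$ by the ideal property just established, while $(S_t T_t) \in \AA^r_L(X;E)^\Gamma$ since $\AA^r_L(X;E)^\Gamma$ is a $*$-algebra. The same density/approximation argument then upgrades this to the statement that $A^r_{L,(F_n)}(X;E)^\Gamma$ is an ideal in $A^r_L(X;E)^\Gamma$, using the supremum norm $\|(T_t)\| = \sup_t \|T_t\|_{A^r(X;E)^\Gamma}$.

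I do not expect any serious obstacle here — this is the kind of routine bookkeeping the paper explicitly flags as omittable. The one point that requires a moment's care is that the support-shrinking threshold $s_{n,\varepsilon}$ is allowed to depend on $n$, which is consistent with the level-wise (rather than uniform) nature of conditions (2) in Definition \ref{defn:twisted Roe} and the defining condition of Definition \ref{defn:twisted ideal}; so no uniformity in $n$ needs to be extracted, and the product estimate goes through slice by slice. The mildest technical subtlety is justifying the support-of-product containment for operators that are merely norm limits of finite-propagation operators, but this is the standard fact that $\supp$ behaves submultiplicatively and is closed under the relevant limits, already used implicitly throughout Section \ref{sec:twisted algebras}.
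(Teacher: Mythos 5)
The paper omits the proof of this lemma, remarking only that it is straightforward; your proposal supplies exactly the routine verification being deferred, and it is correct. The one point worth noting — a cosmetic one, not a gap — is that your support-of-a-product containment has $S$ and $T$ in the opposite roles from what the convention for $\supp$ in Definition~\ref{defn: Roe algebra} literally produces, but since the estimate only uses that one factor has $E$-propagation $<\varepsilon/2$ for $s$ large while the other has $E$-support inside $\Nd_{\varepsilon/2}(F_n)\times\Nd_{\varepsilon/2}(F_n)$ for $s$ large, the conclusion $\supp_E(S_{n,s}T_{n,s})\subseteq\Nd_\varepsilon(F_n)\times\Nd_\varepsilon(F_n)$ holds under either ordering, so nothing is affected.
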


Moreover, we have the following.

\begin{lem}\label{lem:M-V preparation for twisted ideals}
Fix an $r>0$. Let $(F_n)$ and $(G_n)$ be two sequences of compact subsets in $(E_n)$. Then
\[
A^r_{(F_n)}(X;E)^\Gamma \cap A^r_{(G_n)}(X;E)^\Gamma = A^r_{(F_n \cap G_n)}(X;E)^\Gamma
\]
and
\[
A^r_{(F_n)}(X;E)^\Gamma + A^r_{(G_n)}(X;E)^\Gamma = A^r_{(F_n \cup G_n)}(X;E)^\Gamma.
\]
The same holds for the localisation case.
\end{lem}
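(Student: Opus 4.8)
The plan is to prove both identities at the level of dense $*$-subalgebras $\AA^r_{(\,\cdot\,)}(X;E)^\Gamma$ and then pass to norm closures; the same argument applied pointwise in the parameter $t$ handles the localisation case. For the intersection identity, the inclusion $\AA^r_{(F_n\cap G_n)}(X;E)^\Gamma\subseteq \AA^r_{(F_n)}(X;E)^\Gamma\cap \AA^r_{(G_n)}(X;E)^\Gamma$ is immediate since $\Nd_\varepsilon(F_n\cap G_n)\subseteq \Nd_\varepsilon(F_n)\cap\Nd_\varepsilon(G_n)$. For the reverse inclusion, take $(T_{n,s})$ in the intersection; then for each $n$ and each $\varepsilon>0$ there is $s_{n,\varepsilon}$ with $\supp_E(T_{n,s})\subseteq (\Nd_\varepsilon(F_n)\times\Nd_\varepsilon(F_n))\cap(\Nd_\varepsilon(G_n)\times\Nd_\varepsilon(G_n))$ for $s\ge s_{n,\varepsilon}$. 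The point is that for \emph{compact} sets one has the elementary metric fact $\bigcap_{\varepsilon>0}\Nd_\varepsilon(F_n)\cap\Nd_\varepsilon(G_n)=F_n\cap G_n$, and more quantitatively, for any $\delta>0$ there is $\varepsilon>0$ with $\Nd_\varepsilon(F_n)\cap\Nd_\varepsilon(G_n)\subseteq\Nd_\delta(F_n\cap G_n)$ (a standard compactness argument: if not, pick points $x_k\in\Nd_{1/k}(F_n)\cap\Nd_{1/k}(G_n)\setminus\Nd_\delta(F_n\cap G_n)$, extract a convergent subsequence using compactness of a bounded neighbourhood, and derive a contradiction). Feeding this back, given $\delta>0$ choose the corresponding $\varepsilon$ and set $s_{n,\delta}:=s_{n,\varepsilon}$ to obtain $\supp_E(T_{n,s})\subseteq\Nd_\delta(F_n\cap G_n)\times\Nd_\delta(F_n\cap G_n)$ for $s\ge s_{n,\delta}$; hence $(T_{n,s})\in\AA^r_{(F_n\cap G_n)}(X;E)^\Gamma$. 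Taking closures and using that the intersection of two closed ideals is the closure of the intersection of the dense subalgebras (which holds here because each dense subalgebra is itself an ideal by Lemma~\ref{lem:twisted ideals are ideals}) gives the first identity.

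For the sum identity, the inclusion $A^r_{(F_n)}(X;E)^\Gamma + A^r_{(G_n)}(X;E)^\Gamma\subseteq A^r_{(F_n\cup G_n)}(X;E)^\Gamma$ is clear from $\Nd_\varepsilon(F_n)\cup\Nd_\varepsilon(G_n)\subseteq\Nd_\varepsilon(F_n\cup G_n)$. The substantive direction requires a partition-of-unity / cutting argument on the Euclidean side. Given $(T_{n,s})\in\AA^r_{(F_n\cup G_n)}(X;E)^\Gamma$, for each $n$ choose a continuous function $\phi_n:E_n\to[0,1]$ equal to $1$ on $F_n$ and to $0$ outside a small neighbourhood of $F_n$ disjoint from $G_n\setminus\Nd_{1}(F_n\cap G_n)$ — more precisely, one builds $\phi_n$ so that $\mathrm{supp}(\phi_n)\subseteq\Nd_{\varepsilon}(F_n)$ and $\mathrm{supp}(1-\phi_n)\subseteq\Nd_\varepsilon(G_n)$ on the relevant compact region where $T_{n,s}$ is supported, which is possible precisely because $F_n,G_n$ are compact so $F_n\setminus\Nd_\varepsilon(G_n)$ and $G_n\setminus\Nd_\varepsilon(F_n)$ are disjoint compact sets. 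Applying the $V$-construction of Definition~\ref{defn:V-construction} to the multiplication operator by $\phi_n$ yields $\phi_n^V\in\B(\H_{r,n,E})^\Gamma$, and one writes $T_{n,s}=\phi_n^V T_{n,s}+(1-\phi_n^V)T_{n,s}$. The first summand has $E$-support eventually inside $\Nd_\varepsilon(F_n)\times\Nd_\varepsilon(F_n)$-type sets and the second eventually inside $G_n$-neighbourhoods; checking that each summand still satisfies conditions (1)--(4) of Definition~\ref{defn:twisted Roe} uses that $\phi_n^V$ has $P$-propagation $0$, is diagonal (hence constant in $s$ up to the translation), commutes with $\chi_{0,R}^V$-type cutoffs, and is a continuous function of the Euclidean coordinate so it preserves the compactness condition (4) — here one invokes Lemma~\ref{lem:12.3.5 from WY21} and the $E$-propagation-$0$ property of $\phi_n^V$. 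This decomposes $\AA^r_{(F_n\cup G_n)}(X;E)^\Gamma$ as the algebraic sum $\AA^r_{(F_n)}(X;E)^\Gamma+\AA^r_{(G_n)}(X;E)^\Gamma$; passing to closures and using that the sum of two closed ideals in a $C^*$-algebra is closed gives the second identity.

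The main obstacle I anticipate is the compactness input: the identities genuinely fail for non-compact $F_n,G_n$ (one can have $\Nd_\varepsilon(F_n)\cap\Nd_\varepsilon(G_n)$ much larger than any neighbourhood of $F_n\cap G_n$, e.g. two parallel lines in the plane), so the proof must use compactness in two places — first to get the uniform-in-$\delta$ refinement $\Nd_\varepsilon(F_n)\cap\Nd_\varepsilon(G_n)\subseteq\Nd_\delta(F_n\cap G_n)$, and second to construct the separating functions $\phi_n$ with controlled (small, in particular finite) supports so that the $V$-construction stays inside the twisted Roe algebra. A secondary technical point is the bookkeeping that $\phi_n^V T_{n,s}$ and $(1-\phi_n^V)T_{n,s}$ remain in $\AA^r(X;E)^\Gamma$ — conditions (1) and (3) are easy since $\phi_n^V$ has $P$-propagation $0$ and commutes with the relevant cutoffs, condition (2) holds because multiplying by a fixed bounded-support function does not increase $E$-propagation in the limit, and condition (4) is where Lemma~\ref{lem:12.3.5 from WY21} is used to see that the matrix entries stay in $\K\otimes C_b$. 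Since all of this is routine once the right $\phi_n$ are in hand, the lemma is stated without proof in the excerpt, and I would either give the two-line compactness lemma explicitly or simply cite the analogous non-equivariant statement in \cite[Section 12.4]{willett2020higher}.
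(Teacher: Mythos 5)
Your handling of the intersection identity is essentially the paper's: both rest on the compactness fact that for a closed cover $(C,D)$ of a compact set and $\varepsilon>0$ there is $\delta>0$ with $\Nd_\delta(C)\cap\Nd_\delta(D)\subseteq\Nd_\varepsilon(C\cap D)$, and your closure step is really the $C^*$-fact $I_1\cap I_2=I_1I_2$ that the paper states explicitly. Minor imprecision, same route.

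The sum identity has a genuine gap. You use a \emph{fixed} (independent of $s$) cut-off $\phi_n$ supported in a fixed neighbourhood $\Nd_{\varepsilon_0}(F_n)$ and split $T_{n,s}=\phi_n^V T_{n,s}+(1-\phi_n^V)T_{n,s}$. But membership in $\AA^r_{(F_n)}(X;E)^\Gamma$ requires that for \emph{every} $\varepsilon>0$ the $E$-support of the element eventually sit in $\Nd_\varepsilon(F_n)\times\Nd_\varepsilon(F_n)$. The first coordinate of $\supp_E(\phi_n^V T_{n,s})$ is trapped in $\supp(\phi_n)\cap\supp_E(T_{n,s})$; since $\supp_E(T_{n,s})$ shrinks only to $F_n\cup G_n$, this first coordinate stays inside $\supp(\phi_n)\cap\Nd_\delta(F_n\cup G_n)$, which for small $\delta$ is approximately $F_n\cup\bigl(\supp(\phi_n)\cap G_n\bigr)$. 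When $F_n$ and $G_n$ are not $\varepsilon_0$-separated (and there is no such hypothesis here --- indeed in the application $F_n$ and $G_n$ overlap), this set is not contained in $\Nd_\varepsilon(F_n)$ for $\varepsilon<\varepsilon_0$, so $\phi_n^V T_{n,s}\notin\AA^r_{(F_n)}(X;E)^\Gamma$. The paper's construction replaces your fixed $\phi_n$ by an $s$-dependent operator $W_{n,s}$ (a piecewise-linear interpolation between characteristic functions $\chi_{\Nd_{1/k}(F_n)}$) whose support shrinks to $F_n$ in tandem with the shrinking of $\supp_E(T_{n,s})$ as $s\to\infty$; this $s$-dependence is the crux of the argument, not a technicality you can drop. (The paper's further three-term decomposition $T=WT+(1-W)TW+(1-W)T(1-W)$ is a convenient bookkeeping device for controlling both coordinates of the $E$-support; the $s$-dependence is what is essential.)

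There is a secondary gap in your disposal of the localisation case as "the same argument pointwise in $t$." Once the cut-offs $W_{n,s}$ depend on $s$ through thresholds $s_{n,k}$ chosen individually for each $T_t$, the resulting $W_{t,n,s}$ is not uniformly continuous (indeed need not be continuous) in $t$, so the decomposition fails to live in $A^r_L$. The paper explicitly flags this and repairs it with an approximation: discretise $t$ on a $\delta$-net, interpolate linearly to get a $2\varepsilon$-close element $(T_t^\varepsilon)$, choose thresholds $s_{n,k,p}$ working uniformly over $t\in\{t_{p-1},t_p,t_{p+1}\}$, and then interpolate the cut-offs. Some version of this step is unavoidable.
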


\begin{proof}
We first prove the case of equivariant twisted Roe algebras. For the first equation, note that $A^r_{(F_n \cap G_n)}(X;E)^\Gamma \subseteq A^r_{(F_n)}(X;E)^\Gamma \cap A^r_{(G_n)}(X;E)^\Gamma$ holds trivial. The converse comes from a $C^*$-algebraic fact that the intersection of two ideals coincides with their product together with a basic fact for metric space: For a compact metric space $K$, a closed cover $(C,D)$ of $K$ and $\varepsilon>0$, there exists $\delta>0$ such that $\Nd_\delta(C) \cap \Nd_\delta(D) \subseteq \Nd_\varepsilon(C\cap D)$.
	
For the second equation, note that $A^r_{(F_n)}(X;E)^\Gamma + A^r_{(G_n)}(X;E)^\Gamma \subseteq A^r_{(F_n \cup G_n)}(X;E)^\Gamma$ holds trivially. As for the converse, we fix a $(T_{n,s})\in \AA^r_{(F_n \cup G_n)}(X;E)^\Gamma$. For $n\in \NN$ there is a strictly increasing sequence $(s_{n,k})_{k\in \NN}$ in $[1,\infty)$ tending to infinity such that for $s\ge s_{n,k}$, we have
\[
\supp_{E}(T_{n,s}) \subseteq \Nd_{\frac{1}{k+1}}(F_n\cup G_n) \times \Nd_{\frac{1}{k+1}}(F_n \cup G_n).
\]
For each $n\in \NN$ and $s\in [1,\infty)$, we construct an operator $W_{n,s}\in \B(\H_{r,n,E})^\Gamma$ as follows:
\[
W_{n,s}=
	\begin{cases}
	~\chi_{\Nd_{1}(F_n)}, & \mbox{if~} 1\le s \le s_{n,1};  \\[0.3cm]
	~\frac{s_{n,k+1}-s}{s_{n,k+1} - s_{n,k}}\chi_{\Nd_{\frac{1}{k}}(F_n)} + \frac{s-s_{n,k}}{s_{n,k+1} - s_{n,k}}\chi_{\Nd_{\frac{1}{k+1}}(F_n)}, & \mbox{for~} s_{n,k} \leq s \leq s_{n,k+1} \mbox{~where~} k\in \NN.
	\end{cases}
\]
Then the map $s \mapsto W_{n,s}$ is in $C_b([1,\infty), \B(\H_{r,n,E})^\Gamma)$. Moreover, it is clear that $(s \mapsto W_{n,s})_n$ is also in the multiplier algebra of $A^r(X;E)^\Gamma$. Now we consider:
\begin{align*}
(T_{n,s}) &= (W_{n,s}) (T_{n,s}) + (1-W_{n,s}) (T_{n,s}) \\
	&= (W_{n,s}) (T_{n,s}) + (1-W_{n,s}) (T_{n,s}) (W_{n,s}) +(1-W_{n,s}) (T_{n,s}) (1-W_{n,s}) .
\end{align*}
It is clear that $(W_{n,s}) (T_{n,s})$ and $(1-W_{n,s}) (T_{n,s}) (W_{n,s})$ are in $A^r_{(F_n)}(X;E)^\Gamma$. Also note that from the construction above, for each $n\in \NN$ and $s \geq s_{n,k}$ we have:
\[
\supp_{E}((1-W_{n,s}) T_{n,s} (1-W_{n,s})) \subseteq \Nd_{\frac{1}{k+1}}( G_n) \times \Nd_{\frac{1}{k+1}}( G_n).
\]
Hence we obtain that $A^r_{(F_n)}(X;E)^\Gamma + A^r_{(G_n)}(X;E)^\Gamma$ is dense in $A^r_{(F_n \cup G_n)}(X;E)^\Gamma$, which concludes the proof.

For the localisation case, we apply the above argument pointwise to obtain the first equation. Concerning the second, note that the above $(W_{n,s})$ might not vary continuously with respect to the parameter $t$. To get around the issue, we need an approximation argument.

Fix a $(T_t) = (T_{t,n,s})\in \AA^r_{L,(F_n \cup G_n)}(X;E)^\Gamma$. Note that the map $t \mapsto T_t$ is uniformly continuous, hence for any $\varepsilon>0$ there is a $\delta>0$ such that for any $t',t'' \in [1,\infty)$ with $|t' - t''| \leq \delta$, we have $\|T_{t'} - T_{t''}\| \leq \varepsilon$. For each $p\in \NN$ we set $t_p = 1+p\delta$, and also set $t_{-1} = 1$.
We construct another element $(T^\varepsilon_t) = (T^\varepsilon_{t,n,s})\in \AA^r_{L,(F_n \cup G_n)}(X;E)^\Gamma$ by setting $T^\varepsilon_{t_p} = T_{t_p}$ for each $p \in \NN$ and do linear combination between $t_p$ and $t_{p+1}$. It is clear that $\|(T_t) - (T^\varepsilon_t)\| \leq 2\varepsilon$. Since $\varepsilon$ is arbitrarily chosen, it suffices to show that each $(T^\varepsilon_t)$ is in $A^r_{L,(F_n)}(X;E)^\Gamma + A^r_{L,(G_n)}(X;E)^\Gamma$.

For each $n\in \NN$ and $p\in \NN \cup \{0\}$, there is a strictly increasing sequence $(s_{n,k,p})_{k\in \NN}$ in $[1,\infty)$ tending to infinity such that for $s\ge s_{n,k,p}$ and $t=t_{p-1}, t_p, t_{p+1}$ we have
\[
\supp_{E}(T_{t,n,s}) \subseteq \Nd_{\frac{1}{k+1}}(F_n\cup G_n) \times \Nd_{\frac{1}{k+1}}(F_n \cup G_n).
\]
For each $n\in \NN$, $p\in \NN \cup \{0\}$ and $s\in [1,\infty)$, we construct an operator $W_{t_p,n,s}\in \B(\H_{r,n,E})^\Gamma$ as follows:
\[
W_{t_p,n,s}=
	\begin{cases}
	~\chi_{\Nd_{1}(F_n)}, & \mbox{if~} 1\le s \le s_{n,1,p};  \\[0.3cm]
	~\frac{s_{n,k+1}-s}{s_{n,k+1} - s_{n,k}}\chi_{\Nd_{\frac{1}{k}}(F_n)} + \frac{s-s_{n,k}}{s_{n,k+1} - s_{n,k}}\chi_{\Nd_{\frac{1}{k+1}}(F_n)}, & \mbox{for~} s_{n,k,p} \leq s \leq s_{n,k+1,p} \mbox{~where~} k\in \NN.
	\end{cases}
\]
Then the map $s \mapsto W_{t_p,n,s}$ is in $C_b([1,\infty), \B(\H_{r,n,E})^\Gamma)$. Moreover, it is clear that for each $p\in \NN \cup \{0\}$, $(s \mapsto W_{t_p,n,s})_n$ is also in the multiplier algebra of $A^r(X;E)^\Gamma$. Now we define a path $(W_t)_{t\in [1,\infty)}$ by setting $W_{t_p} = (W_{t_p,n,s})$ for each $p\in \NN \cup \{0\}$, and do linear combination between $t_p$ and $t_{p+1}$. It is clear that $t \mapsto W_t = (W_{t,n,s})$ is uniformly continuous and bounded, and hence it is in the multiplier algebra of $A^r_L(X;E)^\Gamma$. Now consider:
\begin{align*}
(T^\varepsilon_{t,n,s}) &= (W_{t,n,s}) (T^\varepsilon_{t,n,s}) + (1-W_{t,n,s}) (T^\varepsilon_{t,n,s}) \\
	&= (W_{t,n,s}) (T^\varepsilon_{t,n,s}) + (1-W_{t,n,s}) (T^\varepsilon_{t,n,s}) (W_{t,n,s}) +(1-W_{t,n,s}) (T^\varepsilon_{t,n,s}) (1-W_{t,n,s}).
\end{align*}
It is clear that $t\mapsto W_t \cdot T^\varepsilon_t$ and $t \mapsto (1-W_t) \cdot  T^\varepsilon_t \cdot W_t$ are in $A^r_{L,(F_n)}(X;E)^\Gamma$. Note that for any $p\in \NN \cup \{0\}$ and $s \leq s_{n,k+1,p}$, we have
\[
\supp_E(1-W_{t_p,n,s}) \subseteq (\Nd_{\frac{1}{k+1}}(F_n)^c) \times (\Nd_{\frac{1}{k+1}}(F_n)^c).
\]
Setting $s'_{n,k,p}$ to be the minimal of $s_{n,k,p}$ and $s_{n,k,p+1}$, it follows that for any $t\in [t_p,t_{p+1}]$ and $s \in [s'_{n,k,p}, s'_{n,k+1,p}]$, we have
\[
\supp_E (1-W_{t,n,s}) = \supp_E\big( 1 - (\lambda W_{t_p,n,s} + (1-\lambda) W_{t_{p+1},n,s}) \big) \subseteq (\Nd_{\frac{1}{k+1}}(F_n)^c) \times (\Nd_{\frac{1}{k+1}}(F_n)^c)
\]
and
\[
\supp_E (T^\varepsilon_{t,n,s}) = \supp_E \big( \lambda T_{t_p,n,s} + (1-\lambda)T_{t_{p+1},n,s} \big) \subseteq \Nd_{\frac{1}{k+1}}(F_n\cup G_n) \times \Nd_{\frac{1}{k+1}}(F_n \cup G_n),
\]
where $\lambda$ is the combinatorial parameter determined by $t\in [t_p,t_{p+1}]$. These inequalities imply that
\[
\supp_{E}\big((1-W_{t,n,s}) T^\varepsilon_{t,n,s} (1-W_{t,n,s})\big) \subseteq \Nd_{\frac{1}{k+1}}( G_n) \times \Nd_{\frac{1}{k+1}}( G_n).
\]
Finally note that the sequence $(s'_{n,k,p})_{k\in \NN}$ tends to infinity, hence we obtain that $(T^\varepsilon_t) \in A^r_{L,(F_n)}(X;E)^\Gamma + A^r_{L,(G_n)}(X;E)^\Gamma$ as required.
\end{proof}

Consequently, we obtain the following Mayer-Vietoris sequences for twisted algebras:
\begin{prop}\label{prop:M_V for twisted algebra}
Let $(F_n)$ and $(G_n)$ be two sequences of compact subsets in $(E_n)$ and fix an $r>0$. Then we have the following six-term exact sequence:
\begin{small}
\[
	\xymatrix{
				K_0\big(A_{(F_n\cap G_n)}^r(X;E)^\Gamma\big) \ar[r]  &  K_0\big(A_{(F_n)}^r(X;E)^\Gamma\big) \oplus K_0\big(A_{(G_n)}^r(X;E)^\Gamma\big) \ar[r] & K_0\big(A_{(F_n\cup G_n)}^r(X;E)^\Gamma\big) \ar[d]  \\
				K_1\big(A_{(F_n\cup G_n)}^r(X;E)^\Gamma\big) \ar[u]  & K_1\big(A_{(F_n)}^r(X;E)^\Gamma\big) \oplus K_1\big(A_{(G_n)}^r(X;E)^\Gamma\big) \ar[l] & K_1\big(A_{(F_n\cap G_n)}^r(X;E)^\Gamma\big) \ar[l]
	}
\]
\end{small}
The same holds in the case of equivariant twisted localisation algebras. Furthermore, we have the following commutative diagram:\\
\begin{small}
		\centerline{
			\xymatrix{
				\cdots \ar[r] & K_*\big(A_{L,(F_n\cap G_n)}^r(X;E)^\Gamma\big) \ar[r]  \ar[d] &  K_\ast\big(A_{L,(F_n)}^r(X;E)^\Gamma\big) \oplus K_1\big(A_{L,(G_n)}^r(X;E)^\Gamma\big) \ar[d]\ar[r] & K_\ast\big(A_{L,(F_n\cup G_n)}^r(X;E)^\Gamma\big) \ar[d]\ar[r]  & \cdots \\
				\cdots \ar[r] & K_*\big(A_{(F_n\cap G_n)}^r(X;E)^\Gamma\big) \ar[r]  & K_\ast\big(A_{(F_n)}^r(X;E)^\Gamma\big) \oplus K_\ast\big(A_{(G_n)}^r(X;E)^\Gamma\big) \ar[r] & K_\ast\big(A_{(F_n\cup G_n)}^r(X;E)^\Gamma\big) \ar[r] & \cdots
		}}
\end{small}
where the vertical maps are induced by the evaluation-at-one homomorphisms.
\end{prop}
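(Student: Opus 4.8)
The plan is to obtain the exact sequences, and the ladder relating them, as a formal consequence of the preparatory Lemmas~\ref{lem:twisted ideals are ideals} and~\ref{lem:M-V preparation for twisted ideals}, via the standard Mayer--Vietoris principle for closed ideals of a $C^*$-algebra. Recall that principle: if $B$ is a $C^*$-algebra with closed two-sided ideals $I$ and $J$ such that $I+J=B$, then there is a six-term exact sequence
\[
\cdots\longrightarrow K_\ast(I\cap J)\longrightarrow K_\ast(I)\oplus K_\ast(J)\longrightarrow K_\ast(B)\stackrel{\partial}{\longrightarrow} K_{\ast-1}(I\cap J)\longrightarrow\cdots,
\]
produced by feeding the commuting diagram with exact rows
\[
\xymatrix{
0\ar[r] & I\cap J\ar[r]\ar[d] & I\ar[r]\ar[d] & I/(I\cap J)\ar[r]\ar[d]^{\cong} & 0\\
0\ar[r] & J\ar[r] & B\ar[r] & B/J\ar[r] & 0
}
\]
into the algebraic Mayer--Vietoris lemma (the long exact sequence attached to a morphism of short exact sequences that is an isomorphism on the quotient terms); here the rightmost vertical arrow is the second-isomorphism-theorem isomorphism $I/(I\cap J)\cong(I+J)/J=B/J$, and I use that the algebraic sum $I+J$ of two closed ideals is automatically closed because $*$-homomorphisms have closed range. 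This construction is functorial: a $*$-homomorphism $\phi\colon B\to B'$ with $\phi(I)\subseteq I'$ and $\phi(J)\subseteq J'$ induces a ladder between the two resulting six-term sequences.

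First I would apply this with $B:=A^r_{(F_n\cup G_n)}(X;E)^\Gamma$, $I:=A^r_{(F_n)}(X;E)^\Gamma$ and $J:=A^r_{(G_n)}(X;E)^\Gamma$. By Lemma~\ref{lem:twisted ideals are ideals}, $I$ and $J$ are closed ideals of $A^r(X;E)^\Gamma$, hence, since $F_n,G_n\subseteq F_n\cup G_n$ forces $I,J\subseteq B$, they are closed ideals of $B$; by Lemma~\ref{lem:M-V preparation for twisted ideals}, $I\cap J=A^r_{(F_n\cap G_n)}(X;E)^\Gamma$ and $I+J=A^r_{(F_n\cup G_n)}(X;E)^\Gamma=B$. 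The principle above then yields precisely the asserted six-term exact sequence for the equivariant twisted Roe algebras. The equivariant twisted localisation case is the identical argument with $B:=A^r_{L,(F_n\cup G_n)}(X;E)^\Gamma$, $I:=A^r_{L,(F_n)}(X;E)^\Gamma$ and $J:=A^r_{L,(G_n)}(X;E)^\Gamma$, using the localisation halves of the two lemmas.

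For the commuting diagram I would invoke the functoriality above applied to the evaluation-at-one homomorphism $\mathrm{ev}$. By the final (``equivalently'') clause of Definition~\ref{defn:twisted ideal}, any element $(T_t)$ of $A^r_{L,(F_n)}(X;E)^\Gamma$ has $T_1\in A^r_{(F_n)}(X;E)^\Gamma$, so $\mathrm{ev}$ maps $A^r_{L,(F_n)}(X;E)^\Gamma$ into $A^r_{(F_n)}(X;E)^\Gamma$, and likewise for $(G_n)$, $(F_n\cap G_n)$ and $(F_n\cup G_n)$. Thus $\mathrm{ev}$ restricts to a morphism of the two ``input'' diagrams of the Mayer--Vietoris construction (the localisation diagram mapping to the Roe diagram), compatible with the inclusions and the second-isomorphism-theorem identifications, and functoriality then produces exactly the claimed ladder, whose vertical maps are the various $\mathrm{ev}_\ast$.

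Since the whole argument is bootstrapped from Lemmas~\ref{lem:twisted ideals are ideals} and~\ref{lem:M-V preparation for twisted ideals}, I do not anticipate a genuine obstacle at this stage; the one point deserving care is the routine bookkeeping needed to confirm that the quotient identifications $I/(I\cap J)\cong B/J$ are compatible with $\mathrm{ev}$, so that the naturality of the connecting map $\partial$ applies verbatim. All the analytic substance (in particular the cutoff operators $W_{n,s}$ and their $t$-continuous approximations used in the localisation setting) has already been absorbed into the proof of Lemma~\ref{lem:M-V preparation for twisted ideals}.
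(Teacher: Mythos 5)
Your proposal is correct and coincides with the paper's (implicit) argument: the paper prefaces the proposition with ``Consequently'' and gives no further proof, intending exactly this formal derivation from Lemma~\ref{lem:twisted ideals are ideals} and Lemma~\ref{lem:M-V preparation for twisted ideals} via the standard Mayer--Vietoris principle for a pair of closed ideals with $I+J=B$, together with functoriality under the evaluation-at-one homomorphism for the ladder.
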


Proposition \ref{prop:M_V for twisted algebra} allows us to chop the space into small pieces, on which we have the following ``local isomorphism'' result. Recall that a family $\{Z_i\}_{i\in I}$ of subspaces in a metric space $Z$ is \emph{mutually $R$-separated} for some $R>0$ if $d(Z_i,Z_j) >R$ for $i \neq j$.

\begin{prop}\label{prop:local isom. in K-theory}
Let $(F_n)$ be a sequence of closed subsets in $(E_n)$ such that $F_n = \bigsqcup_{j=1}^{\infty}F_j^{(n)}$ for a mutually $3$-separated family $\{F_j^{(n)}\}_{j}$, and there exist $R>0$ and $y^{(n)}_j\in X_n/\Gamma$ such that $F^{(n)}_j \subseteq B(\xi(y^{(n)}_j);R)$. Then the homomorphism induced by the evaluation-at-one map
\[
\mathrm{ev}_\ast: \lim_{r\to \infty} K_\ast\big( A_{L, (F_n)}^r(X;E)^\Gamma \big) \longrightarrow \lim_{r\to \infty} K_\ast\big( A_{(F_n)}^r(X;E)^\Gamma \big)
\]
is an isomorphism for $\ast=0,1$.		
\end{prop}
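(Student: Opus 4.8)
The plan is to follow the blueprint of \cite[Section~12.4]{willett2020higher}, adapted to the equivariant setting: use the Mayer--Vietoris sequences of Proposition~\ref{prop:M_V for twisted algebra} to cut the problem down to a ``model'' situation in which the comparison of the two twisted algebras is governed precisely by the Baum--Connes conjecture (with coefficients) for the a-T-menable group $\Gamma$.

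\emph{Step 1: reduction to a uniform family of bounded pieces.} The $3$-separation of the $F_j^{(n)}$, combined with the fact that every element of $\AA^r_{(F_n)}(X;E)^\Gamma$ (or of its localisation analogue) has, for each fixed $n$, its $E$-support collapsing onto $F_n=\bigsqcup_j F_j^{(n)}$ and its $E$-propagation tending to $0$ as $s\to\infty$ (Definition~\ref{defn:twisted ideal} together with Proposition~\ref{prop:Psi function}(2)), makes such elements asymptotically block-diagonal with respect to the partition indexed by $j$. Combining this with the Mayer--Vietoris sequences of Proposition~\ref{prop:M_V for twisted algebra} (splitting the index $j$ into even and odd parts, the intersection term consisting of sections that vanish at $s=\infty$) and with a standard continuity argument in $r$, one reduces the statement to the case in which, for every $n$, $F_n=B(\xi(y^{(n)}),R)$ is a \emph{single} ball centred at the image of some $y^{(n)}\in X_n/\Gamma$ --- everything to be carried out uniformly in $n$ (and, for the reduction itself, in $j$).

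\emph{Step 2: the model case and Baum--Connes.} Now fix a sequence of single balls $F_n=B(\xi(y^{(n)}),R)$. Pulling back along $f_{r,n}$, only the part $W_{r,n}$ of $P_{r,n}$ lying over a neighbourhood of $F_n$ is relevant for $A^r_{(F_n)}(X;E)^\Gamma$, and $W_{r,n}$ is $\Gamma$-cocompact because $X_n/\Gamma$ is finite and $\xi$ is a coarse embedding. Using the contracting maps $\kappa_n$ and the stacking (``space for speed'') technique from the proof of Proposition~\ref{prop:index maps}, one contracts the Bott direction and identifies $A^r_{(F_n)}(X;E)^\Gamma$, up to the canonical Morita equivalence afforded by the admissible modules, with an equivariant Roe algebra of the cocompact $\Gamma$-space $W_{r,n}$ with coefficients in a $\Gamma$-$C^*$-algebra assembled from the Bott--Dirac operators on $E_n$; likewise $A^r_{L,(F_n)}(X;E)^\Gamma$ is identified with the corresponding localisation algebra. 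Under these identifications the evaluation-at-one map becomes, after passing to the limit $r\to\infty$, the index comparison map for a \emph{family} of cocompact $\Gamma$-actions \emph{with coefficients}; since $\Gamma$ is a-T-menable, this map is an isomorphism by the family version of the Baum--Connes conjecture with coefficients established in the appendix (essentially \cite{HK01}), which upgrades Proposition~\ref{prop:equiv-CBC for cocompact}. This proves the proposition.

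\emph{Expected main obstacle.} The crux is uniformity in the level $n$ (and in $j$): the Bott-periodicity isomorphisms, the contracting homotopies and the Morita equivalences used to reach the model case are not norm-continuous in $n$, so a level-by-level argument does not suffice --- this is precisely the difficulty already encountered at the end of the proof of Proposition~\ref{prop:index maps}. It will be handled by a stacking argument trading the lack of uniform continuity for an infinite amplification of the Hilbert space, and the delicate part is making this compatible both with the coefficient algebras and with the direct limit over $r$. The remaining nontrivial point is to formulate and prove the family version of the Baum--Connes conjecture with coefficients for a-T-menable groups in exactly the generality needed here, which is carried out in the appendix.
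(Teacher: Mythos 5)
Your overall architecture is right — reduce to a family of cocompact local pieces and feed the result into the family version of Baum–Connes with coefficients from the appendix — but the mechanism you propose for Step~1 does not work and is replaced in the paper by a genuinely different argument.

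\textbf{Step 1 is the gap.} You want to reduce $F_n=\bigsqcup_j F_j^{(n)}$ to the single-ball case by splitting $j$ into even and odd parts and invoking the Mayer--Vietoris sequence of Proposition~\ref{prop:M_V for twisted algebra}. But $F_n^{\mathrm{even}}$ and $F_n^{\mathrm{odd}}$ are still disjoint unions of possibly many $3$-separated balls, and the number of non-empty $F_j^{(n)}$ for a given $n$ (bounded by $|X_n/\Gamma|$) has no bound uniform in $n$; consequently no finite iteration of the two-piece Mayer--Vietoris can reach the single-ball case uniformly in $n$. The paper's tool for passing from the disjoint union to the piecewise picture is not Mayer--Vietoris at all but the cluster axiom, Lemma~\ref{lem:cluster axiom}: the inclusion of the restricted product $\prod_j^{res}A^r_{(F_j^{(n)})}(X;(G_j^{(n)}))^\Gamma$ into $A^r_{(F_n)}(X;E)^\Gamma$ is shown to be a $K$-theory isomorphism by passing to quotients by the ideal $A^r_0$ of operators tending to $0$ as $s\to\infty$ (whose $K$-theory is killed by an Eilenberg swindle, Lemmas~\ref{lem:K-zero for tend to zero algebra}--\ref{lem:K-zero for tend to zero algebra 2}); in the quotient the inclusion has an explicit inverse given by cutting down to the pieces. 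Your intuition about asymptotic block-diagonality and the role of sections vanishing at $s=\infty$ is exactly the content of this argument, but Mayer--Vietoris is not the correct vehicle for it. Note also that the outcome of the reduction is a family indexed by $\lambda=(n,j)\in\Lambda$ (one ball per $\lambda$), not a single ball per $n$, and the subsequent commutation of the limits in $r$ and in the neighbourhood parameter $m$ is spelled out before Proposition~\ref{prop:local isomorphism reduction}.

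\textbf{Step 2 is right in spirit but wrong in mechanism.} The paper does not contract the Bott direction via $\kappa_n$ nor use the stacking trick here; those belong to the proof of Proposition~\ref{prop:index maps}. Instead it makes a straightforward Hilbert module reformulation (Lemmas~\ref{lem:isom btw twisted algebras} and \ref{lem:isom btw twisted Hilbert module}): the continuous field of Bott--Dirac data is packaged into a $\Gamma$-$C^*$-algebra $B_\lambda$, and $C^{*,r}((X_\lambda);(F_\lambda))^\Gamma$ is identified with $\A^r((X_\lambda);(B_\lambda))^\Gamma$, the equivariant Roe algebra of the uniformly cocompact family $(X_\lambda)$ with coefficients in $(B_\lambda)$. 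This uses the controlled distortion hypothesis (uniformly coarsely equivalent orbit maps, giving the uniformly coarsely equivalent maps $\phi_\lambda$) rather than a stacking argument to obtain the required uniformity. The endgame — reducing to the family version of the Baum--Connes conjecture with coefficients for the a-T-menable group $\Gamma$, proved in the appendix — matches the paper.

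In short, replace the even/odd Mayer--Vietoris in Step~1 by the cluster axiom argument, and replace the $\kappa_n$/stacking identification in Step~2 by the coefficient $C^*$-algebra reformulation; then the outline becomes the paper's proof.
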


The proof of Proposition \ref{prop:local isom. in K-theory} is technical and divided into several steps. Before we present the detailed proof, let us first use it to conclude the proof of Theorem \ref{thm:iso. of twisted algebras in $K$-theory}. To achieve that, we need an extra lemma from \cite[Lemma 12.4.5]{willett2020higher}:

\begin{lem}\label{lem:decomposition}
For any $s>0$, there exist $M \in \NN$ and decompositions
\[
X_n/\Gamma=Y_{n,1}\sqcup Y_{n,2}\sqcup \cdots \sqcup Y_{n,M} \quad \mbox{for all } n\in \NN,
\]
such that the family $\big\{\overline{B(\xi(y);s)}\big\}_{y\in Y_{n,i}}$ is mutually $3$-separated for each $n\in\NN$ and $i=1,2,\ldots,M$.
\end{lem}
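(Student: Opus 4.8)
The plan is to produce a single colouring number $M$ that works simultaneously for all $n$, by combining the lower control function of the coarse embedding $\xi : X/\Gamma\to E$ with the (uniform) bounded geometry of the quotient spaces. Since $\xi$ is a coarse embedding there is a proper non-decreasing function $\rho_-$ with $\rho_-(\bar d(y,y'))\le d_E(\xi(y),\xi(y'))$ for all $y,y'\in X/\Gamma$; as $\rho_-(t)\to\infty$ we may fix $L>0$ (for instance $L:=\rho_-^{-1}(2s+3)+1$) so that $\bar d(y,y')>L$ forces $d_E(\xi(y),\xi(y'))>2s+3$, and therefore
\[
d_E\big(\,\overline{B(\xi(y);s)},\ \overline{B(\xi(y');s)}\,\big)\ \geq\ d_E(\xi(y),\xi(y'))-2s\ >\ 3 .
\]
Hence it suffices to partition each $X_n/\Gamma$ into $M$ pieces $Y_{n,1},\dots,Y_{n,M}$ such that any two distinct points lying in the same piece are at $\bar d$-distance strictly greater than $L$.

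To do this I would first record that the quotient spaces have bounded geometry with a bound that is uniform in $n$. Given $R>0$, a point $w\in X/\Gamma$ and a lift $x\in\pi^{-1}(w)$, the argument in the proof of Lemma~\ref{lem:pre-image} --- properness of the isometric action together with bounded geometry of $X$, which forces the infimum defining $\bar d$ to be attained --- shows that every $w'$ with $\bar d(w,w')\le R$ equals $\pi(x')$ for some $x'\in B(x,R)$; hence $\sharp\{w'\in X/\Gamma:\bar d(w,w')\le R\}\le\sharp B(x,R)\le\sup_{x\in X}\sharp B(x,R)=:N_R<\infty$, a bound independent of $w$ and of $n$. Now put $N:=N_L$ and $M:=N$, and for each $n$ let $\mathcal{G}_n$ be the graph with vertex set $X_n/\Gamma$ in which $y\ne y'$ are joined exactly when $\bar d(y,y')\le L$. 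By the previous estimate every vertex of $\mathcal{G}_n$ has degree at most $N-1$, so $\mathcal{G}_n$ admits a proper $N$-colouring (greedily, using that each $X_n/\Gamma$ is finite by cocompactness of the action). Taking $X_n/\Gamma=Y_{n,1}\sqcup\cdots\sqcup Y_{n,M}$ to be the colour classes, each $Y_{n,i}$ is an independent set of $\mathcal{G}_n$, so distinct points of $Y_{n,i}$ are at $\bar d$-distance $>L$; by the first paragraph the family $\{\overline{B(\xi(y);s)}\}_{y\in Y_{n,i}}$ is then mutually $3$-separated for every $n$ and $i$, as required.

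The only delicate point is the second step: one must make sure that the quotient (pseudo)metric $\bar d$ really inherits bounded geometry from $X$ with a constant \emph{uniform in $n$}, which hinges on the infimum defining $\bar d$ being realised --- a consequence of properness of the $\Gamma$-action and bounded geometry of $X$, exactly as exploited in Lemma~\ref{lem:pre-image}. Granting that, the remainder is a routine bounded-degree colouring estimate.
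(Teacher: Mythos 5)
Your proof is correct. The paper does not prove this lemma itself but only cites \cite[Lemma 12.4.5]{willett2020higher}; your argument --- choosing $L$ via $\rho_-$ so that $\bar d$-separation by more than $L$ forces the closed $s$-balls around the images under $\xi$ to be more than $3$ apart, noting that $X/\Gamma$ inherits bounded geometry from $X$ uniformly in $n$ (so that each $\bar B(w,L)$ has at most $N_L:=\sup_x\sharp B(x,L)$ points), and then greedily colouring the graph on $X_n/\Gamma$ whose edges join points at $\bar d$-distance at most $L$ with $M:=N_L$ colours --- is precisely the standard proof of that cited lemma, so there is nothing to compare beyond noting that your reconstruction agrees with the reference.
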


\begin{proof}[Proof of Theorem \ref{thm:iso. of twisted algebras in $K$-theory}]
Given $s>0$, let $M\in \NN$ and $\{Y_{n,i}\}_{n\in\NN,1\le i\le M}$ be provided by Lemma~\ref{lem:decomposition}. Setting $W^s_n:=\Nd_s(\xi(Y_n)) = \Nd_s(f_n(X_n))$ and $W_{n,i}^s:=\bigsqcup_{y\in Y_{n,i}}\overline{B(\xi(y);s)}$, we have $W^s_n=\bigcup_{i=1}^M W_{n,i}^s$. For each $i$ applying Proposition~\ref{prop:local isom. in K-theory} to the sequence of subsets $(W_{n,i}^s)_n$, we obtain that the homomorphism induced by the evaluation-at-one map
\[
\mathrm{ev}_\ast: \lim_{r\to \infty} K_\ast\big( A_{L, (W_{n,i}^s)}^r(X;E)^\Gamma \big) \longrightarrow \lim_{r\to \infty} K_\ast\big( A_{(W_{n,i}^s)}^r(X;E)^\Gamma \big)
\]
is an isomorphism for each $i$. Applying the Mayer-Vietoris sequence from Proposition~\ref{prop:M_V for twisted algebra} $(M-1)$-times (and Proposition~\ref{prop:local isom. in K-theory} again to deal with the intersection) together with the Five Lemma, we obtain that the homomorphism induced by the evaluation-at-one map
\[
\mathrm{ev}_\ast: \lim_{r\to \infty} K_\ast\big( A_{L, (W^s_n)}^r(X;E)^\Gamma \big) \longrightarrow \lim_{r\to \infty} K_\ast\big( A_{(W^s_n)}^r(X;E)^\Gamma \big)
\]
is an isomorphism. Finally note that condition $(3)$ in Definition~\ref{defn:twisted Roe} and condition $(2)$ in Definition \ref{defn:twisted localisation} imply that for each $r>0$, we have
\[
A^r(X;E)^\Gamma=\lim_{s\rightarrow \infty}A^r_{(W^s_n)}(X;E)^\Gamma \quad \mbox{and} \quad A^r_L(X;E)^\Gamma=\lim_{s\rightarrow \infty}A^r_{L,(W^s_n)}(X;E)^\Gamma.
\]
Hence we conclude the proof.
\end{proof}

The rest of this section is devote to the proof of Proposition \ref{prop:local isom. in K-theory}. First let us introduce some more notation. Fix an $r>0$. Let $(F_n)$ and $(G_n)$ be sequences of closed subsets in $(E_n)$. Define:
\[
A^r(X;(G_n))^\Gamma:=(\Id_{L^2([1,\infty))} \otimes \Id_{\H_{r,n}} \otimes \chi_{G_n})_n \cdot  A^r(X;E)^\Gamma \cdot (\Id_{L^2([1,\infty))} \otimes \Id_{\H_{r,n}} \otimes \chi_{G_n})_n
\]
and
\[
A^r_{(F_n)}(X;(G_n))^\Gamma:= A^r(X;(G_n))^\Gamma \cap A^r_{(F_n)}(X;E)^\Gamma.
\]
Moreover, given a sequence of $\Gamma$-invariant subspaces $O_n \subseteq X_n~ (n\in \NN)$ we define:
\[
A^r( (O_n) ; (G_n) )^\Gamma:=\big(\Id_{L^2([1,\infty))} \otimes \chi_{P_r(O_n)} \otimes \Id_{\L^2_{E_n}}\big)_n \cdot A^r( X;(G_n) )^\Gamma \cdot \big(\Id_{L^2([1,\infty))} \otimes \chi_{P_r(O_n)} \otimes \Id_{\L^2_{E_n}}\big)_n
\]
and
\[
A^r_{(F_n) }( (O_n) ; (G_n) )^\Gamma:=\big(\Id_{L^2([1,\infty))} \otimes \chi_{P_r(O_n)} \otimes \Id_{\L^2_{E_n}}\big)_n \cdot  A^r_{(F_n)}(X;(G_n))^\Gamma \cdot \big(\Id_{L^2([1,\infty))} \otimes \chi_{P_r(O_n)} \otimes \Id_{\L^2_{E_n}}\big)_n.
\]
Define the $C^*$-algebras $A^r_L(X;(G_n))^\Gamma$ and $A^r_L( (O_n) ; (G_n) )^\Gamma$ to be those consisting of elements $(T_t) \in A^r_L(X;E)^\Gamma$ such that each $T_t$ belongs to $A^r(X;(G_n))^\Gamma$ and $A^r( (O_n) ; (G_n) )^\Gamma$, respectively. Also define the $C^*$-algebras $A^r_{L,(F_n)}(X;(G_n))^\Gamma$ and $A^r_{L,(F_n) }( (O_n) ; (G_n) )^\Gamma$ to be those consisting of elements $(T_t) \in A^r_{L,(F_n)}(X;E)^\Gamma$ such that each $T_t$ belongs to $A^r_{(F_n)}(X;(G_n))^\Gamma$ and $A^r_{(F_n) }( (O_n) ; (G_n) )^\Gamma$, respectively.

On the other hand, we define
\[
A^r_0(X;E)^\Gamma:=\big\{(T_{n,s})\in A^r(X;E)^\Gamma: \lim\limits_{s\rightarrow \infty}T_{n,s} = 0 \mbox{~for~each~}n\in \NN\big\}
\]
and
\[
A^r_0(X;(G_n))^\Gamma:= A^r_0(X;E)^\Gamma \cap A^r(X;(G_n))^\Gamma
\]
It is clear that $A^r_0(X;(G_n))^\Gamma$ is an ideal in $A^r(X;(G_n))^\Gamma$, and also an ideal in $A^r_{(F_n)}(X;(G_n))^\Gamma$ for any sequence of closed subsets $(F_n)$. Define the algebras $A^r_{L,0}(X;E)^\Gamma$ and $A^r_{L,0}(X;(G_n))^\Gamma$ to be those consisting of elements $(T_t) \in A^r_L(X;E)^\Gamma$ such that each $T_t$ belongs to $A^r_0(X;E)^\Gamma$ and $A^r_0(X;(G_n))^\Gamma$, respectively.

\begin{lem}\label{lem:K-zero for tend to zero algebra}
Fix an $r>0$. Then we have:
\[
	K_\ast (A^r_0(X;E)^\Gamma) = 0 \quad \mbox{and} \quad K_\ast(A^r_0(X;(G_n))^\Gamma) = 0.
\]
And we also have:
\[
	K_\ast (A^r_{L,0}(X;E)^\Gamma) = 0 \quad \mbox{and} \quad K_\ast(A^r_{L,0}(X;(G_n))^\Gamma) = 0.
\]
\end{lem}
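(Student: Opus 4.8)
The plan is to identify each of the four algebras with a $C^*$-algebra built by a cone-type construction in the auxiliary parameter $s$, and then invoke the standard fact that mapping cones and cones are $K$-contractible. First I would treat $A^r_0(X;E)^\Gamma$. By Definition \ref{defn:twisted Roe} together with the decay condition $\lim_{s\to\infty}T_{n,s}=0$ imposed levelwise in $n$, an element of $A^r_0(X;E)^\Gamma$ is precisely a collection $(T_{n,s})_{n}$ where each slice $s\mapsto T_{n,s}$ is a norm-continuous function $[1,\infty)\to C^*(\H_{r,n,E})^\Gamma$ vanishing at $\infty$, subject to the uniform propagation/cutoff bookkeeping of conditions (1)--(4). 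The key observation is that these uniform conditions are \emph{parametrised homogeneously in $s$}: if $(T_{n,s})$ satisfies them, so does $(T_{n,\lambda(s)})$ for any orientation-preserving reparametrisation $\lambda$ of $[1,\infty)$ fixing a neighbourhood of $\infty$, and so does the rescaled family obtained by pushing the $s$-variable. This lets one write $A^r_0(X;E)^\Gamma$ as (a $C^*$-algebra isomorphic to) the cone $C_0((1,\infty])\otimes_{\mathrm{fibre}} (\cdots)$ over an appropriate base algebra — more precisely, as an algebra admitting a contraction of the identity via the homotopy $h_u(T)_{n,s} := T_{n,\, s + u(\text{something})}$ which at $u=1$ is the zero map (using the vanishing at $\infty$) and at $u=0$ is the identity, and which preserves all four defining conditions for each fixed $u$. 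Hence $K_\ast(A^r_0(X;E)^\Gamma)=0$.

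Next I would deal with $A^r_0(X;(G_n))^\Gamma$: this is the hereditary subalgebra of $A^r_0(X;E)^\Gamma$ cut down by the (multiplier) projection $(\Id\otimes\Id\otimes\chi_{G_n})_n$, and the contraction $h_u$ above clearly commutes with this cutdown (it acts only in the $s$-variable, while the cutdown acts only in the $E_n$-variable), so the same homotopy shows $K_\ast(A^r_0(X;(G_n))^\Gamma)=0$. For the two localisation algebras $A^r_{L,0}(X;E)^\Gamma$ and $A^r_{L,0}(X;(G_n))^\Gamma$, one applies the contraction pointwise in the localisation parameter $t$: by Definition \ref{defn:twisted localisation}, an element is a uniformly norm-continuous bounded family $(T_t)_{t\in[1,\infty)}$ with each $T_t$ in the corresponding non-localised zero-algebra, and the homotopy $(t\mapsto h_u(T_t))$ is again uniformly continuous in $t$ for each fixed $u$ (since $h_u$ is a contractive $\ast$-homomorphism applied uniformly), so it defines a homotopy of $\ast$-homomorphisms of the localisation algebra from the identity to $0$. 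Therefore all four $K$-groups vanish.

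The main obstacle I anticipate is making the contraction $h_u$ genuinely respect conditions (1)--(4) of Definition \ref{defn:twisted Roe} \emph{and} be continuous in $u$ in the right operator topology — in particular, reparametrising $s$ can only be done by homeomorphisms of $[1,\infty)$ that do not destroy uniform norm-continuity in $s$ or the uniform cutoff estimate (3), and one must be careful that as $u\to 1$ the family really limits to $0$ in the supremum norm over all $n,s$ simultaneously, which is where the levelwise (rather than uniform) vanishing at $\infty$ must be leveraged carefully — one typically cannot contract to $0$ uniformly in $n$, so instead one contracts along $s\mapsto \infty$ in a way adapted to each $n$, e.g. $h_u(T)_{n,s} = T_{n,\, \phi_{n,u}(s)}$ with $\phi_{n,u}$ chosen so that $h_1(T)_{n,\cdot}$ is genuinely zero. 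An alternative, cleaner route that sidesteps the topological fuss is to argue via an Eilenberg swindle: the vanishing-at-$\infty$ condition in the $s$-variable means the algebra is ``flabby'' enough to absorb countable direct sums of shifted copies of itself along the $s$-axis, giving $K_\ast = K_\ast \oplus K_\ast$ and hence $K_\ast = 0$; I would present whichever of these two arguments turns out to require the least bookkeeping once the precise compatibility with conditions (1)--(4) is checked.
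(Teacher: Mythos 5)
Your ``plan A'' (the cone/contraction approach) cannot be made to work, and you should abandon it rather than present it as a possibility. The obstruction you flag --- that the vanishing condition $\lim_{s\to\infty}T_{n,s}=0$ is only level-wise in $n$ and not uniform --- is fatal, not merely a bookkeeping issue. Any contraction must be implemented by a family of reparametrisations $\phi_{n,u}$ of $[1,\infty)$ that is \emph{universal}, i.e. independent of the element $T$ being contracted (otherwise $h_u$ is not a $*$-homomorphism of the algebra). But given any such family with $\phi_{n,u}(s)\to\infty$ as $u\to 1$ for each fixed $n,s$, one can always build an element $T$ of $A^r_0(X;E)^\Gamma$ defeating it: for each $n$ choose a bump $T_{n,\cdot}$ of norm $1$ centred at $s=\phi_{n,1-1/n}(1)$, which is a finite point. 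Each $T_{n,\cdot}$ vanishes at $\infty$, so $(T_{n,s})\in A^r_0(X;E)^\Gamma$, yet $\|h_{1-1/n}(T)\|\geq 1$ for all $n$, so $h_u(T)\not\to 0$ in the supremum norm as $u\to 1$. Your proposed patch of letting $\phi_{n,u}$ depend on $n$ does not escape this --- the counterexample only uses the values $\phi_{n,1-1/n}(1)$, whatever they happen to be.

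Your ``plan B'' (the Eilenberg swindle) is the correct approach and is what the paper does, but your one-line description is both too vague and slightly off: in general one does not derive $K_\ast=0$ from an abstract isomorphism $K_\ast\cong K_\ast\oplus K_\ast$ (this can fail for non--finitely-generated abelian groups). The actual organisation is the following. One stabilises by replacing $\H_{r,n,E}$ with $\H_{r,n,E}^{\oplus\infty}$ and forms the corresponding stabilised algebra $A^r_{0,\infty}(X;E)^\Gamma$; the top-left corner inclusion $\Ad_{(V_n)}\colon A^r_0(X;E)^\Gamma\to A^r_{0,\infty}(X;E)^\Gamma$ induces an isomorphism on $K$-theory. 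Then one defines two $*$-homomorphisms $\alpha,\beta\colon A^r_0(X;E)^\Gamma\to A^r_{0,\infty}(X;E)^\Gamma$ by right-shifting in the $s$-variable by integers, $\alpha(T)_{n,s}=0\oplus\bigoplus_{k\geq 1}T_{n,s+k}$ and $\beta(T)_{n,s}=\bigoplus_{k\geq 0}T_{n,s+k+1}$. The level-wise vanishing at $s=\infty$ (together with the uniform bound on norms) is exactly what makes $\alpha(T)$ and $\beta(T)$ lie in $A^r_{0,\infty}(X;E)^\Gamma$ --- this is where the hypothesis earns its keep. Conjugating by the shift isometry gives $\alpha_\ast=\beta_\ast$. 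On the other hand $\Ad_{(V_n)}$ and $\alpha$ have orthogonal ranges and $\Ad_{(V_n)}+\alpha=\beta$, hence $(\Ad_{(V_n)})_\ast+\alpha_\ast=\beta_\ast=\alpha_\ast$, so $(\Ad_{(V_n)})_\ast=0$. Since $(\Ad_{(V_n)})_\ast$ is also an isomorphism, $K_\ast(A^r_0(X;E)^\Gamma)=0$. The other three algebras are handled by the same swindle, applied verbatim for $A^r_0(X;(G_n))^\Gamma$ and pointwise in $t$ for the two localisation algebras. You should write out this swindle and drop the contraction entirely.
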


\begin{proof}
Here we only prove $K_\ast (A^r_0(X;E)^\Gamma) = 0$ since the other is similar, and the case of localisation algebras holds by using the same argument pointwise. Let
\[
\H_{r,n,E,\infty} := (\H_{r,n,E})^{\oplus \infty} \quad \mbox{and} \quad  \H_{r,E,\infty} := \bigoplus_n \H_{r,n,E,\infty}.
\]
Using these admissible modules we construct the equivariant twisted Roe algebra, denoted by $A^r_\infty(X;E)^\Gamma$, and similarly construct $A^{r}_{0,\infty}(X;E)^\Gamma$. For each $n\in \NN$, define an isometry
\[
V_n : \H_{r,n,E} \rightarrow \H_{r,n,E,\infty},\quad v\mapsto (v,0,0,\cdots).
\]
Then $(V_n)$ induces the ``top-left corner inclusion''
\[
	\Ad_{(V_n)} : A^{r}_{0}(X;E)^\Gamma \rightarrow A^{r}_{0,\infty}(X;E)^\Gamma, \quad (T_{n,s}) \mapsto \big( T_{n,s} \oplus \bigoplus_{k=1}^{\infty}0 \big),
\]
which induces the identity map on the $K$-groups.
	
It is also easy to check that the formula
\[
(\alpha_n) : (T_{n,s})_{n,s} \rightarrow \big( 0 \oplus \bigoplus_{k=1}^{\infty}T_{n,s+k} \big)_{n,s} \mathrm{\ where \ }
	\alpha_n : (T_{n,s})_s \rightarrow \big( 0 \oplus \bigoplus_{k=1}^{\infty}T_{n,s+k} \big)_s
\]
and
\[
(\beta_n) : (T_{n,s})_{n,s} \rightarrow \big( \bigoplus_{k=0}^{\infty}T_{n,s+k+1} \big)_{n,s} \mathrm{\ where \ }
	\beta_n : (T_{n,s})_s \rightarrow \big( \bigoplus_{k=0}^{\infty}T_{n,s+k+1} \big)_s
\]
give two well-defined homomorphisms from $ A^r_{0}(X;E)^\Gamma$ to $A^r_{0,\infty}(X;E)^\Gamma$. Let
\[
S_n : \H_{r,n,E,\infty} \rightarrow \H_{r,n,E,\infty}, \quad  (v_0,v_1,v_2,\dots) \mapsto (0,v_0,v_1,\dots)
\]
be the shift operator. It is clear that $(S_n)_n$ is in the multiplier algebra of $A^r_{0,\infty}(X;E)^\Gamma$, which builds a conjugation between $(\alpha_n)$ and $(\beta_n)$. More precisely, for each $n\in \NN$ we have
\[
\alpha_n\big( (T_{n,s})_s \big)  = S_n \cdot \beta_n\big( (T_{n,s})_s \big) \cdot S_n^*.
\]
Applying \cite[Proposition 2.7.5]{willett2020higher}, we have
\[
(\alpha_n)_* = (\beta_n)_* .
\]
	
On the other hand, notice that
\[
\big(\Ad_{(V_n)} + (\alpha_n) \big)\big( (T_{n,s})_{n,s} \big) = \big( \bigoplus_{k=0}^{\infty}T_{n,s+k} \big)_{n,s}.
\]
Since $\Ad_{(V_n)}$ and $(\alpha_n)$ have orthogonal images and $\big(\Ad_{(V_n)} + (\alpha_n) \big)\big( (T_{n,s})_{n,s}\big)$ is homotopic to $(\beta_n)\big((T_{n,s})_{n,s}\big)$, we have
\[
(\beta_n)_* = (\Ad_{(V_n)} + (\alpha_n) )_* = (\Ad_{(V_n)})_* + (\alpha_n)_*.
\]
Consequently we obtain that $(\Ad_{(V_n)})_* = 0$, which concludes the proof.
\end{proof}

Now we move back to Proposition \ref{prop:local isom. in K-theory}. Fix an $r>0$. Recall that $F_n = \bigsqcup_{j=1}^{\infty}F_j^{(n)}$ where $\{F_j^{(n)}\}_{j}$ is a mutually $3$-separated family, and there exist $R>0$ and $y^{(n)}_j\in X_n/\Gamma$ such that $F^{(n)}_j \subseteq B(\xi(y^{(n)}_j);R)$. Also recall that the $\Gamma$-action on $\bigsqcup_n X_n$ has controlled distortion with certain fundamental domain $\D$. For each $n$ and $j$, denote $x^{(n)}_j$ the unique point in $X_n \cap \D$ such that $\pi(x^{(n)}_j) = y^{(n)}_j$. Since each $X_n/\Gamma$ is finite, then for each $n\in \NN$ there are only finitely many $j$'s such that $F_j^{(n)}$ is non-empty. Denote the set of such $j$ by $J_n$. Taking $G_j^{(n)}=\Nd_{1}(F_j^{(n)})$ for each $n$ and $j$,
we define the ``restricted product'':
\begin{align*}
\prod_{j}^{res}A^r_{(F_j^{(n)})}( X;(G_j^{(n)}) )^\Gamma :&= \left(\prod_{j}A^r_{(F_j^{(n)}) }( X;(G_j^{(n)}) )^\Gamma \right) \cap A^r_{(F_n)}(X;E)^\Gamma\\
& = \left\{ (T_{n,s}) \in A^r_{(F_n)}(X;E)^\Gamma: \supp_E(T_{n,s}) \subseteq \bigsqcup_{j\in J_n} G_j^{(n)} \times G_j^{(n)} \right\}.
\end{align*}
Also define the algebra $\prod_{j}^{res}A^r_{L,(F_j^{(n)})}( X;(G_j^{(n)}) )^\Gamma$ to be those consisting of elements $(T_t) \in A^r_L(X;E)^\Gamma$ such that each $T_t$ belongs to $\prod_{j}^{res}A^r_{(F_j^{(n)})}( X;(G_j^{(n)}) )^\Gamma$.
We have the following lemma, whose proof is again from a standard Eilenberg Swindle argument (as in Lemma \ref{lem:K-zero for tend to zero algebra}), hence omitted.

\begin{lem}\label{lem:K-zero for tend to zero algebra 2}
With the same notations as above, the algebras
\[
\prod_{j}^{res}A^r_0( X;(G_j^{(n)}) )^\Gamma:=\big(\prod_{j}^{res}A^r_{(F_j^{(n)})}( X;(G_j^{(n)}) )^\Gamma \big) \cap A^r_{0}(X;E)^\Gamma
\]
and
\[
\prod_{j}^{res}A^r_{L,0}( X;(G_j^{(n)}) )^\Gamma:=\big(\prod_{j}^{res}A^r_{L,(F_j^{(n)})}( X;(G_j^{(n)}) )^\Gamma \big) \cap A^r_{L,0}(X;E)^\Gamma
\]
have trivial $K$-theories.
\end{lem}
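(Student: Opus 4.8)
The plan is to run, essentially verbatim, the Eilenberg swindle that establishes Lemma~\ref{lem:K-zero for tend to zero algebra}, the only new work being to check that every map appearing in that argument respects the two extra constraints built into the restricted product: the block-diagonal $E$-support condition $\supp_E(T_{n,s})\subseteq\bigsqcup_{j\in J_n}G_j^{(n)}\times G_j^{(n)}$ and the level-wise vanishing $\lim_{s\to\infty}T_{n,s}=0$. I would treat $\prod_{j}^{res}A^r_0(X;(G_j^{(n)}))^\Gamma$ first and deduce the localisation statement by running the identical argument pointwise in the parameter $t$, exactly as in the proof of Lemma~\ref{lem:K-zero for tend to zero algebra}.

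Concretely, I would first amplify the modules, setting $\H_{r,n,E,\infty}:=(\H_{r,n,E})^{\oplus\infty}$ and $\H_{r,E,\infty}:=\bigoplus_n\H_{r,n,E,\infty}$, and form the corresponding restricted-product algebra $\prod_{j}^{res}A^r_{0,\infty}(X;(G_j^{(n)}))^\Gamma$ over these modules. Then I would reintroduce the three homomorphisms used in Lemma~\ref{lem:K-zero for tend to zero algebra}: the top-left corner inclusion $\Ad_{(V_n)}$ coming from the isometries $V_n\colon v\mapsto(v,0,0,\dots)$, and the two shift-and-stack maps
\[
(\alpha_n)\colon (T_{n,s})_{n,s}\mapsto\Big(0\oplus\bigoplus_{k=1}^{\infty}T_{n,s+k}\Big)_{n,s},\qquad
(\beta_n)\colon (T_{n,s})_{n,s}\mapsto\Big(\bigoplus_{k=0}^{\infty}T_{n,s+k+1}\Big)_{n,s}.
\]
The key observation — and the only genuinely new point compared with Lemma~\ref{lem:K-zero for tend to zero algebra} — is that none of $V_n$, the shift $S_n\colon(v_0,v_1,\dots)\mapsto(0,v_0,v_1,\dots)$, $\alpha_n$ or $\beta_n$ touches the $E_n$-coordinate of an operator; they act only on the $\oplus\infty$ direction, leaving the $P_{r,n}$- and $E_n$-module structures untouched. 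Hence the $E$-support containment is inherited verbatim by every operator these maps produce, and since $s+k\to\infty$ whenever $s\to\infty$, the level-wise vanishing condition is also preserved. Consequently all three maps are well-defined $\ast$-homomorphisms into $\prod_{j}^{res}A^r_{0,\infty}(X;(G_j^{(n)}))^\Gamma$, with $\Ad_{(V_n)}$ inducing the identity on $K$-theory (being a top-left corner inclusion equivariantly covering the identity on $P_r$).

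With this in place the swindle closes exactly as before: $(S_n)_n$ lies in the multiplier algebra of the amplified restricted product and conjugates $(\alpha_n)$ to $(\beta_n)$, so \cite[Proposition~2.7.5]{willett2020higher} gives $(\alpha_n)_\ast=(\beta_n)_\ast$; the images of $\Ad_{(V_n)}$ and $(\alpha_n)$ are orthogonal and $\Ad_{(V_n)}+(\alpha_n)$ is homotopic to $(\beta_n)$ by reindexing, so $(\beta_n)_\ast=(\Ad_{(V_n)})_\ast+(\alpha_n)_\ast=(\Ad_{(V_n)})_\ast+(\beta_n)_\ast$, forcing $(\Ad_{(V_n)})_\ast=0$ and hence $K_\ast\big(\prod_{j}^{res}A^r_0(X;(G_j^{(n)}))^\Gamma\big)=0$; similarly for the localisation algebra. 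I expect the main (and rather mild) obstacle to be purely bookkeeping: being careful that the restricted-product condition is stated level-wise in $n$ and pointwise in $t$, so that the infinite direct sums and $s$-shifts genuinely stay inside the algebra. Since these operations are orthogonal to the $E_n$- and $P_{r,n}$-structure, no estimate beyond those already appearing in the proof of Lemma~\ref{lem:K-zero for tend to zero algebra} is needed.
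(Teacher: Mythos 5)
Your proposal is correct and is precisely the argument the paper has in mind: the paper omits the proof with the remark that it follows ``again from a standard Eilenberg Swindle argument (as in Lemma~\ref{lem:K-zero for tend to zero algebra}),'' and you reproduce exactly that swindle while correctly identifying the only point needing verification, namely that the corner inclusion, the shift $S_n$, and the stacking maps $\alpha_n,\beta_n$ act only on the $\oplus\infty$ factor and so preserve both the block-diagonal $E$-support constraint and the level-wise vanishing as $s\to\infty$.
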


The following lemma is a key step in the proof of Proposition \ref{prop:local isom. in K-theory}:

\begin{lem}\label{lem:cluster axiom}
With the same notation as above, the following inclusions
\[
\iota: \prod_{j}^{res}A^r_{(F_j^{(n)})}( X;(G_j^{(n)}) )^\Gamma \hookrightarrow A^r_{(F_n)}(X;E)^\Gamma
\]
and
\[
\iota_L: \prod_{j}^{res}A^r_{L,(F_j^{(n)})}( X;(G_j^{(n)}) )^\Gamma \hookrightarrow A^r_{L,(F_n)}(X;E)^\Gamma
\]
induce isomorphisms in $K$-theory.
\end{lem}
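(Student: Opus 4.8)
The plan is to reduce the claim to the $K$-theory vanishing results already established, via a sequence of simple manipulations and one substantial geometric/representation-theoretic step. Observe first that, because the family $\{F_j^{(n)}\}_j$ is mutually $3$-separated and each $F_j^{(n)}$ sits inside $B(\xi(y_j^{(n)});R)$, the enlarged neighbourhoods $\{G_j^{(n)} = \Nd_1(F_j^{(n)})\}_j$ remain mutually $1$-separated. Hence for an operator in $\AA^r(X;E)^\Gamma$ whose $E$-support is eventually (in $s$) contained in $\bigsqcup_{j\in J_n} G_j^{(n)}\times G_j^{(n)}$, no ``cross terms'' between different $j$'s can occur once $s$ is large; only the finite-propagation tails at small $s$ could spill across. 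This is exactly why one must pass to the ideal of operators tending to $0$ as $s\to\infty$ to compare the restricted product with the full algebra.

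The key steps, in order, are as follows. First I would record the short exact sequences
\[
0 \to A^r_0(X;(F_n))^\Gamma \to A^r_{(F_n)}(X;E)^\Gamma \to Q^r_{(F_n)} \to 0
\]
and
\[
0 \to \textstyle\prod_j^{res}A^r_0(X;(G_j^{(n)}))^\Gamma \to \prod_j^{res}A^r_{(F_j^{(n)})}(X;(G_j^{(n)}))^\Gamma \to \prod_j^{res}Q^r \to 0,
\]
where the quotients $Q$ are the respective ``asymptotic'' algebras recording only the behaviour as $s\to\infty$. By Lemma \ref{lem:K-zero for tend to zero algebra} and Lemma \ref{lem:K-zero for tend to zero algebra 2}, the two ideals have trivial $K$-theory, so in each row the quotient map is a $K$-theory isomorphism. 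Second, I would argue that the inclusion $\iota$ induces an \emph{isomorphism} on the quotient algebras: an element of $\AA^r(X;E)^\Gamma$ supported (in $E$) near $F_n = \bigsqcup_j F_j^{(n)}$ has, for $s$ large, its $E$-support inside an $\varepsilon$-neighbourhood of $F_n$ with $\varepsilon < 1$, hence inside $\bigsqcup_{j\in J_n} G_j^{(n)}\times G_j^{(n)}$; modulo the ideal $A^r_0$ this means every class in $K_*(A^r_{(F_n)}(X;E)^\Gamma)$ is already represented in the restricted product. Combining the two vanishing results with this identification of quotients gives that $\iota_*$ is an isomorphism. Third, the localisation case $\iota_L$ is handled by running the identical argument pointwise in the parameter $t$ (using the localisation versions of Lemmas \ref{lem:K-zero for tend to zero algebra} and \ref{lem:K-zero for tend to zero algebra 2}), exactly as in the proofs of those lemmas and of Lemma \ref{lem:M-V preparation for twisted ideals}; one only needs the mild uniform-continuity-in-$t$ approximation already used there.

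The main obstacle I anticipate is the second step — showing the quotient map $\prod_j^{res}Q^r \to Q^r_{(F_n)}$ is genuinely surjective at the level of the algebras (not just $K$-theory), i.e.\ that one can \emph{cut down} an arbitrary element of $\AA^r_{(F_n)}(X;E)^\Gamma$ to the restricted product by multiplying by the characteristic functions $(\chi_{G_j^{(n)}})$ without destroying membership in $\AA^r(X;E)^\Gamma$. This requires checking that conjugation by $(\Id\otimes \chi^V_{\bigsqcup_j G_j^{(n)}})$ preserves conditions (1)--(4) of Definition \ref{defn:twisted Roe}; the delicate point is condition (1), finite $P$-propagation, together with the compatibility of the $E$-cutoff with the $P$-propagation structure — here one uses that $G_j^{(n)}$ is the image under $f_{r,n}$ of a uniformly bounded set in $X_n$ (by the controlled distortion hypothesis and the $R$-bound on $F_j^{(n)}$), so the two cutoffs interact boundedly, mirroring the computation in the proof of Lemma \ref{lem:multiplier}. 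Once this cut-down is justified, the rest is a routine diagram chase with the two six-term exact sequences and the Five Lemma.
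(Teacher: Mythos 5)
Your proposal follows the paper's proof exactly: pass to the quotients by the tend-to-zero ideals, invoke Lemmas \ref{lem:K-zero for tend to zero algebra} and \ref{lem:K-zero for tend to zero algebra 2}, and show the two quotients are isomorphic via the cut-down map $\gamma:(T_{n,s}) \mapsto \prod_{j}(\chi_{G_j^{(n)}}T_{n,s}\chi_{G_j^{(n)}})$ (whose inverse on quotients is induced by $\iota$), with the localisation case handled pointwise. One slip worth correcting: the cut-down uses the plain projection $\Id_{L^2}\otimes\Id_{\H_{r,n}}\otimes\chi_{G_j^{(n)}}$ acting only on the $\L^2_{E_n}$ factor, \emph{not} the $V$-twisted $\chi^V$ — and precisely because this operator has $P$-propagation $0$ and lives on a tensor factor independent of the $P$-module structure, the preservation of condition (1) you flag as the ``delicate point'' is automatic, while condition (3) follows since $\chi_{G_j^{(n)}}$ and $\chi^V_{0,R}$ are commuting diagonal multipliers on $\L^2_{E_n}$.
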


\begin{proof}
We only prove the first, and the second can be proved using the same argument pointwise. The proof follows the outline of \cite[Theorem 6.4.20]{willett2020higher}.
	
Consider the following quotient algebras:
\[
A^r_{(F_n),Q}(X;E)^\Gamma := \frac{A^r_{(F_n)}(X;E)^\Gamma}{A^r_0(X;E)^\Gamma} \quad \mbox{and} \quad \prod_{j}^{res,Q}A^r_{(F_j^{(n)})}( X;(G_j^{(n)}) )^\Gamma:= \frac{\prod\limits_{j}^{res}A^r_{(F_j^{(n)})}( X;(G_j^{(n)}) )^\Gamma}{\prod\limits_{j}^{res}A^r_0( X;(G_j^{(n)}) )^\Gamma}.
\]
It follows from Lemma \ref{lem:K-zero for tend to zero algebra} and \ref{lem:K-zero for tend to zero algebra 2} that the quotient maps
\[
A^r_{(F_n)}(X;E)^\Gamma \rightarrow A^r_{(F_n),Q}(X;E)^\Gamma \quad \mbox{and} \quad \prod_{j}^{res}A^r_{(F_j^{(n)})}( X;(G_j^{(n)}) )^\Gamma  \rightarrow \prod_{j}^{res,Q}A^r_{(F_j^{(n)})}( X;(G_j^{(n)}) )^\Gamma
\]
induce isomophisms in $K$-theories.

It is clear that the inclusion $\iota$ induces a $*$-homomorphism:
\[
\iota_Q: \prod_{j}^{res,Q}A^r_{(F_j^{(n)})}( X;(G_j^{(n)}) )^\Gamma \longrightarrow A^r_{(F_n),Q}(X;E)^\Gamma.
\]
We also define a map
\[
\gamma :A^r_{(F_n)}(X;E)^\Gamma \longrightarrow \prod_{j}^{res}A^r_{(F_j^{(n)})}( X;(G_j^{(n)}) )^\Gamma \quad \mbox{by} \quad (T_{n,s})\mapsto \prod_{j}(\chi_{G_j^{(n)}}T_{n,s}\chi_{G_j^{(n)}}),
\]
which induces a $*$-homomorphism
\[
\gamma_Q: A^r_{(F_n),Q}(X;E)^\Gamma \longrightarrow \prod_{j}^{res,Q}A^r_{(F_j^{(n)})}( X;(G_j^{(n)}) )^\Gamma.
\]
We claim that the composition $\iota_Q\circ \gamma_Q$ equals to the identity map on the quotient algebra, \emph{i.e.}, the map $\iota \circ \gamma$ equals to the identity map modulo $A^r_0(X;E)^\Gamma$. Given $(T_{n,s})\in A^r_{(F_n)}(X;E)^\Gamma$, then for each $n \in \NN$ there exists $s_n \in [1,\infty)$ such that for any $s>s_n$ we have $\supp_E(T_{n,s}) \subseteq \bigsqcup_j \big( G_j^{(n)} \times G_j^{(n)}\big)$. Hence for $s>s_n$, we obtain:
\[
\sum_{j} \chi_{G_j^{(n)}}T_{n,s}\chi_{G_j^{(n)}} - T_{n,s}  =  0,
\]
which implies that
\[
( T_{n,s} ) - \sum_{j} ( \chi_{G_j^{(n)}}T_{n,s}\chi_{G_j^{(n)}} ) \in A^r_0(X;E)^\Gamma.
\]
On the other hand, it is clear that $\gamma\circ \iota$ is the identity map and descends to identity map $\gamma_Q\circ \iota_Q$. Hence $\iota_Q$ induces an isomorphism in $K$-theory, which implies that the inclusion $i$ induces an isomorphism in $K$-theory.
\end{proof}


Using the same notation as above, we consider the following commutative diagram:
\[
\xymatrix{
			A^r_{L,(F_n)}(X;E)^\Gamma \ar[r]^{\mathrm{ev}} & A^r_{(F_n)}(X;E)^\Gamma  \\
			\prod\limits_j^{res}A^r_{L,(F_j^{(n)})}( X;(G_j^{(n)}) )^\Gamma \ar[r]^{\mathrm{ev}} \ar@{^(->}[u] & \prod\limits_j^{res}A^r_{(F_j^{(n)})}( X;(G_j^{(n)}) )^\Gamma. \ar@{^(->}[u]
	}
\]
It follows from Lemma \ref{lem:cluster axiom} that vertical maps induce isomorphisms in $K$-theory.
Also note that condition (3) in Definition~\ref{defn:twisted Roe} implies that
\begin{eqnarray*}
\prod_{j}^{res}A^r_{(F_j^{(n)})}( X;(G_j^{(n)}) )^\Gamma &=&\lim_{m\rightarrow \infty}\prod_{j}^{res}A^r_{ (F_j^{(n)}) } \big( (\pi^{-1}(B(y_j^{(n)},m)));(G_j^{(n)}) \big)^\Gamma \\
&=& \lim_{m\rightarrow \infty}\prod_{j}^{res}A^r_{ (F_j^{(n)}) } \big( (\Nd_m(\Gamma \cdot x_j^{(n)}));(G_j^{(n)}) \big)^\Gamma
\end{eqnarray*}
and condition (2) in Definition \ref{defn:twisted localisation} implies that
\begin{eqnarray*}
\prod_{j}^{res}A^r_{L,(F_j^{(n)})}( X;(G_j^{(n)}) )^\Gamma &=&\lim_{m\rightarrow \infty}\prod_{j}^{res}A^r_{L, (F_j^{(n)}) } \big( (\pi^{-1}(B(y_j^{(n)},m)));(G_j^{(n)}) \big)^\Gamma \\
&=& \lim_{m\rightarrow \infty}\prod_{j}^{res}A^r_{L, (F_j^{(n)}) } \big( (\Nd_m(\Gamma \cdot x_j^{(n)}));(G_j^{(n)}) \big)^\Gamma.
\end{eqnarray*}
On the other hand, note that
\[
\lim_{r\to \infty} \lim_{m\rightarrow \infty}\prod_{j}^{res}A^r_{ (F_j^{(n)}) } \big( (\Nd_m(\Gamma \cdot x_j^{(n)}));(G_j^{(n)}) \big)^\Gamma = \lim_{m\rightarrow \infty} \lim_{r\to \infty}\prod_{j}^{res}A^r_{ (F_j^{(n)}) } \big( (\Nd_m(\Gamma \cdot x_j^{(n)}));(G_j^{(n)}) \big)^\Gamma
\]
and
\[
\lim_{r\to \infty} \lim_{m\rightarrow \infty}\prod_{j}^{res}A^r_{L, (F_j^{(n)}) } \big( (\Nd_m(\Gamma \cdot x_j^{(n)}));(G_j^{(n)}) \big)^\Gamma = \lim_{m\rightarrow \infty} \lim_{r\to \infty}\prod_{j}^{res}A^r_{L, (F_j^{(n)}) } \big( (\Nd_m(\Gamma \cdot x_j^{(n)}));(G_j^{(n)}) \big)^\Gamma
\]
since these limits are just direct union of increasing subalgebras.

Consequently, in order to conclude Proposition \ref{prop:local isom. in K-theory}, it suffices to prove the following:

\begin{prop}\label{prop:local isomorphism reduction}
For each fixed $m$, the evaluation-at-one map
\[
\mathrm{ev}: \lim_{r\to \infty}\prod_{j}^{res}A^r_{L, (F_j^{(n)}) } \big( (\Nd_m(\Gamma \cdot x_j^{(n)}));(G_j^{(n)}) \big)^\Gamma \longrightarrow \lim_{r\to \infty}\prod_{j}^{res}A^r_{ (F_j^{(n)}) } \big( (\Nd_m(\Gamma \cdot x_j^{(n)}));(G_j^{(n)}) \big)^\Gamma
\]
induces an isomorphism in $K$-theory.
\end{prop}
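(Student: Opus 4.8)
The plan is to reduce Proposition~\ref{prop:local isomorphism reduction} to the Baum--Connes conjecture with coefficients for the a-T-menable group $\Gamma$, in the family form recorded in the appendix. Fix $m\in\NN$; for each $n$ and $j\in J_n$ write $O^{(n)}_j:=\Nd_m(\Gamma\cdot x^{(n)}_j)$, a $\Gamma$-invariant subset of $X_n$ on which $\Gamma$ acts properly and cocompactly, and recall that $G^{(n)}_j=\Nd_1(F^{(n)}_j)$ lies in a Euclidean ball of radius $R+1$ about $\xi(y^{(n)}_j)$. The key geometric observation is that, since $X$ has bounded geometry and the action has controlled distortion, the orbit maps $\O_{x^{(n)}_j}\colon\Gamma\to\Gamma x^{(n)}_j$ are uniformly coarsely equivalent (the points $x^{(n)}_j$ lie in the fundamental domain $\D$), so the family $\{O^{(n)}_j\}_{n,j}$ is equivariantly uniformly coarsely equivalent to a fixed cocompact proper $\Gamma$-space, with uniform control functions and uniformly bounded quotients $O^{(n)}_j/\Gamma$.

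The first step is to kill the $s$-tail. By Lemmas~\ref{lem:K-zero for tend to zero algebra} and \ref{lem:K-zero for tend to zero algebra 2}, the ideals $A^r_0(X;E)^\Gamma$, $A^r_{L,0}(X;E)^\Gamma$ and their restricted-product analogues have vanishing $K$-theory, so the quotient maps modulo these ideals induce isomorphisms on $K$-theory, compatibly with $\mathrm{ev}$. In the quotient, an element of $\prod_j^{res}A^r_{(F^{(n)}_j)}\big((O^{(n)}_j);(G^{(n)}_j)\big)^\Gamma$ is the germ at $s=\infty$ of an operator-valued function whose $E$-propagation tends to $0$ and whose $E$-support shrinks into $F^{(n)}_j$; using condition~(4) of Definition~\ref{defn:twisted Roe} together with the fact that the Euclidean coordinates $f_{r,n}(z)$ of points $z$ over $O^{(n)}_j$ all stay in the bounded set $G^{(n)}_j$, one identifies this quotient with a stabilisation of the equivariant Roe algebra $C^*(P_r(O^{(n)}_j))^\Gamma$ with coefficients in the $\Gamma$-equivariant $C_0\big(P_r(O^{(n)}_j)\big)$-algebra $\mathcal{B}^{(n)}_j$ whose fibre over a point with Euclidean coordinate $v$ is $C_0$ of a bounded neighbourhood of $v$, conjugated by the translation operators $V_v$ of Section~\ref{ssec:The Bott-Dirac operators on Euclidean spaces}; the twisted localisation algebra is identified in the same way with the corresponding localisation algebra with coefficients in $\mathcal{B}^{(n)}_j$. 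Since $O^{(n)}_j/\Gamma$ is finite, $\mathcal{B}^{(n)}_j$ is an induced, hence well-understood, $\Gamma$-$C^*$-algebra, and we are in the ``Baum--Connes with coefficients'' setting.

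Combining this with the uniformity of the family, the restricted-product algebras (and their localisation counterparts) become, after the first step, a $c_0$-type product over $(n,j)$ of equivariant Roe (resp. localisation) algebras over a fixed cocompact $\Gamma$-space, with coefficient algebras $\mathcal{B}^{(n)}_j$ ranging over a uniformly bounded-geometry family of $\Gamma$-$C_0(Z)$-algebras --- precisely the input to the family version of the Baum--Connes conjecture with coefficients proved in the appendix. Since $\Gamma$ is a-T-menable, that result applies (following Higson--Kasparov \cite{HK01}) and shows that evaluation-at-one induces an isomorphism from the $K$-theory of the family localisation algebra to that of the family Roe algebra; taking $\lim_{r\to\infty}$ --- to pass from the Rips complexes $P_r$ to the classifying space for proper actions, via Proposition~\ref{prop: equiv localisation alg = K-homology} and the Morita equivalence \cite[Lemma~5.14]{Roe96} --- and unwinding the identifications above yields exactly the assertion of the proposition.

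I expect the first step to be the main obstacle: one must check carefully that, once the Euclidean coordinate is confined to a bounded ball and the metric space to a cocompact piece, the equivariant twisted (localisation) algebra is genuinely a stabilisation of an equivariant Roe (localisation) algebra with $\Gamma$-$C_0(Z)$-algebra coefficients --- tracking the $s$-parameter, the Bott--Dirac translation twist $T\mapsto V_v T V_v^{*}$, and conditions~(3)--(4) of Definitions~\ref{defn:twisted Roe} and \ref{defn:twisted localisation} --- and, crucially, that this identification is uniform over the index set $\{(n,j)\}$ and natural with respect to $\mathrm{ev}$. Once the local model and its uniformity are pinned down, the remaining steps reduce to bookkeeping together with a citation of the family Baum--Connes theorem from the appendix.
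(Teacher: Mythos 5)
Your high-level plan is right: reduce to a family version of Baum--Connes with coefficients for the a-T-menable group $\Gamma$, using that each $\Nd_m(\Gamma\cdot x^{(n)}_j)$ is cocompact and that the orbit maps are uniformly coarsely equivalent. But the route you propose through the middle does not match the paper's and, as stated, I do not think it closes.

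The key deviation is your first step: quotienting by the $s$-tail ideals $A^r_0(\cdot)^\Gamma$. The paper does not do this for Proposition~\ref{prop:local isomorphism reduction} --- Lemmas~\ref{lem:K-zero for tend to zero algebra} and~\ref{lem:K-zero for tend to zero algebra 2} are used only \emph{before} this proposition, to prove the cluster axiom Lemma~\ref{lem:cluster axiom} that reduces Proposition~\ref{prop:local isom. in K-theory} to Proposition~\ref{prop:local isomorphism reduction} in the first place. Inside Proposition~\ref{prop:local isomorphism reduction} the paper keeps the $s$-parameter alive: it introduces (Definition~\ref{defn:twisted Roe version 2}) a reformulation $C^{*,r}((X_\lambda);(F_\lambda))^\Gamma$ that is an honest $C^*$-isomorph of $\prod_j^{res}A^r_{(F^{(n)}_j)}(\cdots)^\Gamma$ (Lemma~\ref{lem:isom btw twisted algebras}), then absorbs the $s$-dependence into a coefficient $C^*$-algebra $B_\lambda\subset C_b([1,\infty),\K(\L^2_\lambda))$ carrying the $E$-propagation-decay and support-shrinking conditions, and identifies the restricted product with a $\Gamma$-equivariant Hilbert-module Roe algebra $\A^r((X_\lambda);(B_\lambda))^\Gamma$ (Definition~\ref{defn:twisted Roe module language}, Lemma~\ref{lem:isom btw twisted Hilbert module}). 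These are explicit $*$-isomorphisms, not quotient identifications, and they are what the appendix's family Baum--Connes statement (Proposition~\ref{prop: local isom for appendix}) is formulated to receive.

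Your "germ at $s=\infty$" algebra is genuinely harder to control than the paper's $B_\lambda$: after quotienting by $A^r_0$, the $E$-propagation only vanishes asymptotically, so the germ algebra is not obviously a stabilisation of a Roe algebra over a $\Gamma$-$C_0(Z)$-algebra, and your proposed coefficient $\mathcal{B}^{(n)}_j$ ("fibre over a point with Euclidean coordinate $v$ is $C_0$ of a bounded neighbourhood of $v$") does not match $B_\lambda$, which crucially is \emph{not} a $\Gamma$-algebra --- in the paper's picture the $\Gamma$-action lives entirely on the module $\H_{r,\lambda}\otimes B_\lambda$, and the twist $V_{f_{r,n}(z)}\cdot V_{f_{r,n}(z)}^*$ is eliminated by the $\Theta$-isomorphism of Lemma~\ref{lem:isom btw twisted Hilbert module} rather than carried along as a fibrewise conjugation. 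Finally, the appendix proves the family Baum--Connes result by running Bott and Dirac asymptotic morphisms directly on these Hilbert-module twisted algebras, not by passing to classifying spaces and invoking Morita equivalence as your last paragraph suggests. You correctly flagged your first step as the main obstacle, and I agree: that is where I think your argument would get stuck. Following the paper's $B_\lambda$-absorption instead of the $A^r_0$-quotient avoids the germ algebra altogether.
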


Roughly speaking, Proposition \ref{prop:local isomorphism reduction} follows from a family version of the result that the Baum-Connes conjecture with coefficients holds for a-T-menable groups due to Higson and Kasparov \cite{HK01}.  For convenience to readers, we provide more details here.



From now on, let us fix an $m \geq 0$. To simplify the notation, denote an index set $\Lambda:=\{(n,j): n\in \NN, j\in J_n\}$. For each $\lambda=(n,j) \in \Lambda$, denote $x_\lambda = x_j^{(n)}$, $X_\lambda:=\Nd_m(\Gamma \cdot x_\lambda)$, $F_\lambda:=F_j^{(n)}$, $G_\lambda:=G_j^{(n)}$ and $\L^2_\lambda:=L^2(G_\lambda, \Cliff(E_n))$. Also denote the Hilbert spaces
\[
\H_{r,\lambda}:=\ell^2(Z_{r,n} \cap P_r(X_\lambda)) \otimes \HH \otimes \ell^2(\Gamma)
\]
and
\[
\H_{r,\lambda,E}:=\ell^2(Z_{r,n} \cap P_r(X_\lambda)) \otimes \HH \otimes \ell^2(\Gamma) \otimes \L^2_\lambda
\]
for $\lambda=(n,j) \in \Lambda$. Note that both $\H_{r,\lambda}$ and $\H_{r,\lambda,E}$ are $\Gamma$-invariant.

It is clear that $\H_{r,\lambda}$ is an admissible $P_r(X_\lambda)$-module, and $\H_{r,\lambda,E}$ is both an admissible $P_r(X_\lambda)$-module and an ample $G_\lambda$-module. We use them to build the equivariant Roe algebras $C^*(\H_{r,\lambda})^\Gamma$ and $C^*(\H_{r,\lambda,E})^\Gamma$ of $P_r(X_\lambda)$.

For $x\in X_\lambda$, write $B_{x,r,\lambda} = B_{x,r,n} \cap P_r(X_\lambda)$ and
\[
\H_{x,r,\lambda}:=\chi_{B_{x,r,\lambda}}\H_{r,\lambda}, \quad \H_{x,r,\lambda,E}=\chi_{B_{x,r,\lambda}}\H_{r,\lambda,E}.
\]
Again we represent a bounded operator $T$ on $\H_{r,\lambda}$ (respectively, $\H_{r,\lambda,E}$) as an $X_\lambda$-by-$X_\lambda$ matrix $(T_{x,y})_{x,y\in X_\lambda}$ where each $T_{x,y}$ is a bounded operator $\H_{y,r,\lambda} \to \H_{x,r,\lambda}$ (respectively, $\H_{y,r,\lambda,E} \to \H_{x,r,\lambda,E}$).

\begin{defn}\label{defn:twisted Roe version 2}
Fix an $r>0$. Let $\prod_{\lambda \in \Lambda} C_b([1,\infty), C^*(\H_{r,\lambda,E})^\Gamma)$ denote the product $C^*$-algebra of all bounded continuous functions from $[1,\infty)$ to $C^*(\H_{r,\lambda,E})^\Gamma$ with supremum norm. Write elements of this $C^*$-algebra as a collection $(T_{\lambda,s})_{\lambda \in \Lambda,s\in [1,\infty)}$ for $T_{\lambda,s}=(T_{\lambda,s,x,y})_{x,y \in X_{\lambda}} \in C^*(\H_{r,\lambda,E})^\Gamma$, whose norm is
\[
\|(T_{\lambda,s})\|=\sup_{\lambda \in \Lambda,s\in [1,\infty)}\|T_{\lambda,s}\|.
\]
Let $\CC^r[(X_\lambda); (F_\lambda)]^\Gamma$ denote the $*$-subalgebra of $\prod_{\lambda \in \Lambda} C_b([1,\infty), C^*(\H_{r,\lambda,E})^\Gamma)$ consisting of elements satisfying the following conditions:
\begin{enumerate}
		\item $\sup\limits_{s\in [1,\infty),\lambda \in \Lambda}\ppg_{P}(T_{\lambda,s})<\infty$;\\[0.1cm]
		\item for $\lambda \in \Lambda$, $\lim\limits_{s\rightarrow \infty}\ppg_{E}(T_{\lambda,s})=0$;\\[0.1cm]
		\item for $\lambda \in \Lambda$ and $x,y\in X_{\lambda}$, the map $s \mapsto T_{\lambda,s,x,y}$ belongs to the subalgebra $\K(\H_{y,r,\lambda},\H_{x,r,\lambda}) \otimes C_b\left([1,\infty), \K(\L^2_\lambda)\right)$;\\[0.1cm]
		\item for $\varepsilon>0$ and $\lambda \in \Lambda$, there exists an $s_{\lambda,\varepsilon} \in [1,\infty)$ such that for any $s>s_{\lambda,\varepsilon}$ we have:
		\[
		\supp_{E} (T_{\lambda,s}) \subseteq \Nd_\varepsilon(F_\lambda) \times \Nd_\varepsilon(F_\lambda).
		\]
\end{enumerate}
Denote $C^{*,r}( (X_\lambda); (F_\lambda))^\Gamma$ the norm-closure of $\CC^r[(X_\lambda); (F_\lambda)]^\Gamma$ in $\prod\limits_{\lambda \in \Lambda} C_b([1,\infty), C^*(\H_{r,\lambda,E})^\Gamma)$.

Also define $\CC^r_L[(X_\lambda); (F_\lambda)]^\Gamma$ to be the collection of uniformly continuous bounded functions $(T_t)$ from $[1,\infty)$ to $\CC^r[(X_\lambda); (F_\lambda)]^\Gamma$ such that the $P$-propagation of $(T_t)=(T_{t,\lambda,s})$ tends to zero as $t\to +\infty$. Denote $C^{*,r}_L( (X_\lambda); (F_\lambda))^\Gamma$ the completion of $\CC^r_L[(X_\lambda); (F_\lambda)]^\Gamma$ with respect to the norm $\|(T_t)\|:=\sup_t \|T_t\|$.
\end{defn}

For later use, let us record the following lemma. The proof follows directly from the bounded geometry of $X_\lambda$, hence omitted.
\begin{lem}\label{lem:condition (3) in twisted Roe 2}
Condition (3) in Definition \ref{defn:twisted Roe version 2} is equivalent to the following: for $\lambda \in \Lambda$ and bounded Borel subsets $K_1,K_2 \subseteq P_r(X_{\lambda})$, the map $s \mapsto \chi_{K_1}T_{\lambda,s}\chi_{K_2}$ belongs to the algebra $\K(\H_{r,\lambda}) \otimes C_b([1,\infty), \K(\L^2_\lambda))$.
\end{lem}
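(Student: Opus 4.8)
The plan is to deduce the equivalence purely from the bounded geometry of $X_\lambda$, with no recourse to conditions (1), (2), (4) or to $\Gamma$-invariance. Fix $\lambda=(n,j)\in\Lambda$. Since $X_\lambda=\Nd_m(\Gamma\cdot x_\lambda)$ is a subspace of the bounded geometry space $X_n$, it has bounded geometry, so $P_r(X_\lambda)$ is a locally finite simplicial complex and, in the spherical metric, every bounded Borel subset of $P_r(X_\lambda)$ lies in a finite subcomplex. First I would record the two consequences this gives: (i) each open star $B_{x,r,\lambda}$ is a bounded Borel set, being contained in a finite union of simplices, each of bounded spherical diameter; and (ii) any bounded Borel $K\subseteq P_r(X_\lambda)$ meets only finitely many of the stars $\{B_{x,r,\lambda}\}_{x\in X_\lambda}$, say those indexed by the finite set $S(K):=\{x\in X_\lambda:B_{x,r,\lambda}\cap K\neq\emptyset\}$. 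Since $\{B_{x,r,\lambda}\}_{x\in X_\lambda}$ covers $Z_{r,\lambda}:=Z_{r,n}\cap P_r(X_\lambda)$, the amplified multiplication operator $\chi_K$ on $\H_{r,\lambda,E}=\H_{r,\lambda}\otimes\L^2_\lambda$ is then supported on $\bigoplus_{x\in S(K)}\H_{x,r,\lambda,E}$.

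Granting this, the implication from condition (3) to the new condition is a finite-sum argument. Given bounded Borel sets $K_1,K_2$ and $S_i:=S(K_i)$, one has
\[
\chi_{K_1}T_{\lambda,s}\chi_{K_2}=\sum_{x\in S_1}\sum_{y\in S_2}\chi_{K_1}\,T_{\lambda,s,x,y}\,\chi_{K_2},
\]
where $T_{\lambda,s,x,y}=\chi_{B_{x,r,\lambda}}T_{\lambda,s}\chi_{B_{y,r,\lambda}}$ is the $(x,y)$-matrix entry regarded inside $\B(\H_{r,\lambda,E})$. Left multiplication by $\chi_{K_1}$ and right multiplication by $\chi_{K_2}$ act only on the $\H_{r,\lambda}$-leg of the decomposition $\H_{r,\lambda,E}=\H_{r,\lambda}\otimes\L^2_\lambda$, hence carry $\K(\H_{y,r,\lambda},\H_{x,r,\lambda})\otimes C_b([1,\infty),\K(\L^2_\lambda))$ into $\K(\H_{r,\lambda})\otimes C_b([1,\infty),\K(\L^2_\lambda))$; by condition (3) each summand lies in the first of these spaces, and the finite sum stays in the second. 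For the converse I would simply feed the new condition the particular bounded Borel sets $K_1=B_{x,r,\lambda}$ and $K_2=B_{y,r,\lambda}$, which are bounded by (i): then $\chi_{B_{x,r,\lambda}}T_{\lambda,s}\chi_{B_{y,r,\lambda}}$ is exactly $T_{\lambda,s,x,y}$ viewed in $\B(\H_{r,\lambda,E})$, so the new condition puts $s\mapsto T_{\lambda,s,x,y}$ in $\K(\H_{r,\lambda})\otimes C_b([1,\infty),\K(\L^2_\lambda))$, and compressing the range to $\H_{x,r,\lambda}$ and the domain to $\H_{y,r,\lambda}$ recovers condition (3) verbatim.

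I do not expect a genuine obstacle here; the only point needing a moment's care is to verify that a compression to the summands $\H_{x,r,\lambda,E}$ (equivalently, to a finite subset of vertices of $Z_{r,\lambda}$) really preserves membership in the tensor-product space $\K(-)\otimes C_b([1,\infty),\K(\L^2_\lambda))$, and not merely in the a priori larger $C_b([1,\infty),\K(-)\otimes\K(\L^2_\lambda))$. This is settled by observing that such a compression is implemented by projections $P,P'\in\B(\H_{r,\lambda})$ acting on the first tensor leg only, i.e. is of the form $T\mapsto (P\otimes\Id_{\L^2_\lambda})\,T\,(P'\otimes\Id_{\L^2_\lambda})$ on $\H_{r,\lambda}\otimes\L^2_\lambda$, so it respects the tensor decomposition by construction. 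Everything else is the bounded-geometry/local-finiteness observation of the first paragraph, which is why the paper states the lemma without a detailed proof.
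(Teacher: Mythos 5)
Your proof is correct and is exactly the argument the paper has in mind: the paper omits the proof with the remark that it "follows directly from the bounded geometry of $X_\lambda$," and your two observations (boundedness of each open star, and finiteness of $S(K)$ for bounded Borel $K$) are precisely the consequences of bounded geometry that make the finite-sum decomposition of $\chi_{K_1}T_{\lambda,s}\chi_{K_2}$ and the converse specialization $K_i=B_{x,r,\lambda}$ work. The concluding remark about compressions acting only on the $\H_{r,\lambda}$-leg, hence respecting the tensor factorization $\K(\H_{r,\lambda})\otimes C_b([1,\infty),\K(\L^2_\lambda))$, is exactly the small point worth making explicit.
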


The following result is our motivation to introduce the above algebras:

\begin{lem}\label{lem:isom btw twisted algebras}
We have the following natural isomorphisms between $C^*$-algebras:
\begin{enumerate}[(1)]
 \item $\prod_{j}^{res}A^r_{ (F_j^{(n)}) } \big( (\Nd_m(\Gamma \cdot x_j^{(n)}));(G_j^{(n)}) \big)^\Gamma \cong C^{*,r}\big( (X_\lambda); (F_\lambda)\big)^\Gamma$;
 \item $\prod_{j}^{res}A^r_{L, (F_j^{(n)}) } \big( (\Nd_m(\Gamma \cdot x_j^{(n)}));(G_j^{(n)}) \big)^\Gamma \cong C^{*,r}_L\big( (X_\lambda); (F_\lambda)\big)^\Gamma$.
\end{enumerate}
\end{lem}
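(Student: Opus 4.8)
The plan is to realize both isomorphisms by \emph{reading off the block components} indexed by $\lambda=(n,j)\in\Lambda$, so that the statement reduces to a unitary identification of Hilbert spaces together with a bookkeeping check that the defining conditions correspond. First I would unwind the definitions: an element of $\prod_{j}^{res}A^r_{(F_j^{(n)})}\big((\Nd_m(\Gamma\cdot x_j^{(n)}));(G_j^{(n)})\big)^\Gamma$ is an operator $(T_{n,s})$ in $A^r_{(F_n)}(X;E)^\Gamma$ which, for each $n$, decomposes as $\bigoplus_{j\in J_n}T_{(n,j),s}$ with $T_{(n,j),s}$ an operator on the compression $\chi_{P_r(\Nd_m(\Gamma\cdot x_j^{(n)}))}\chi_{G_j^{(n)}}\H_{r,n,E}$ (the summands being mutually orthogonal since $G_j^{(n)}=\Nd_1(F_j^{(n)})$ and the $F_j^{(n)}$ are mutually $3$-separated). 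For $\lambda=(n,j)$ there is a canonical $\Gamma$-equivariant unitary
\[
\chi_{P_r(\Nd_m(\Gamma\cdot x_j^{(n)}))}\chi_{G_j^{(n)}}\H_{r,n,E}\;\cong\;\ell^2(Z_{r,n}\cap P_r(X_\lambda))\otimes\HH\otimes\ell^2(\Gamma)\otimes L^2(G_\lambda,\Cliff(E_n))\;=\;\H_{r,\lambda,E},
\]
using $\chi_{G_\lambda}\L^2_{E_n}=\L^2_\lambda$ and $X_\lambda=\Nd_m(\Gamma\cdot x_\lambda)$, and noting that $\Gamma$ acts trivially on the $\L^2_{E_n}$-factor and preserves the $\Gamma$-invariant set $X_\lambda$; this carries the admissible $P_r(X_\lambda)$-module structure to the one used in Definition~\ref{defn:twisted Roe version 2}.

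Using this, I would define the map $(T_{n,s})\mapsto(T_{\lambda,s})_{\lambda\in\Lambda}$, where $T_{\lambda,s}$ is the $j$-th block of $T_{n,s}$ regarded on $\H_{r,\lambda,E}$, together with its inverse $(T_{\lambda,s})_{\lambda}\mapsto\big(\bigoplus_{j\in J_n}T_{(n,j),s}\big)_n$ (a finite direct sum for each $n$ since $J_n$ is finite). One checks directly that these are mutually inverse $*$-homomorphisms between the dense $*$-subalgebras of finite-propagation elements with compact matrix entries on the two sides, and that they are isometric because the supremum norm over $(n,j,s)$ on the restricted product coincides with the supremum norm over $(\lambda,s)$ on $C^{*,r}\big((X_\lambda);(F_\lambda)\big)^\Gamma$; density of these cores (on the restricted-product side, one approximates by finite-propagation operators and then re-compresses and re-block-diagonalizes, which is a contraction) then yields a $*$-isomorphism of the completions, proving~(1). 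For~(2) I would apply~(1) pointwise in $t\in[1,\infty)$, observing that uniform continuity in $t$ and the vanishing of $P$-propagation as $t\to\infty$ are preserved by the block-reading map.

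It remains to match the defining conditions. Conditions~(1) and~(2) of Definition~\ref{defn:twisted Roe} (uniform $P$-propagation, level-wise vanishing $E$-propagation) pass verbatim to conditions~(1) and~(2) of Definition~\ref{defn:twisted Roe version 2}; condition~(4) of Definition~\ref{defn:twisted Roe} corresponds to condition~(3) of Definition~\ref{defn:twisted Roe version 2} via Lemma~\ref{lem:condition (3) in twisted Roe 2} and $\chi_{G_\lambda}\K(\L^2_{E_n})\chi_{G_\lambda}=\K(\L^2_\lambda)$; and the ideal condition of Definition~\ref{defn:twisted ideal} ($\supp_E(T_{n,s})\subseteq\Nd_\varepsilon(F_n)$ for $s$ large) together with block-diagonality gives condition~(4) of Definition~\ref{defn:twisted Roe version 2}. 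The only condition of Definition~\ref{defn:twisted Roe} without a counterpart in Definition~\ref{defn:twisted Roe version 2} is condition~(3) (the $\chi_{0,R}^V$-approximation), and I would show it is \emph{automatic and uniform} on the restricted product: by hypothesis $F_j^{(n)}\subseteq B(\xi(y_j^{(n)});R)$ with $R$ independent of $(n,j)$, so $G_j^{(n)}\subseteq B(\xi(y_j^{(n)});R+1)$, while $f_{r,n}(z)=\xi(\pi(z))$ lies within $\rho_+(m)$ of $\xi(y_j^{(n)})$ for every vertex $z$ over $\Nd_m(\Gamma\cdot x_j^{(n)})$, where $\rho_+$ is an upper control function of $\xi$; hence $\chi_{0,R'}^VT_{n,s}=T_{n,s}=T_{n,s}\chi_{0,R'}^V$ once $R'\ge R+1+\rho_+(m)$. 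The main obstacle is precisely this bookkeeping: checking that the two compressions together with block-diagonality carve out $\H_{r,\lambda,E}$ exactly and that condition~(3) of Definition~\ref{defn:twisted Roe} becomes vacuous, which is where the uniform bound on the diameters of the $F_j^{(n)}$ is used.
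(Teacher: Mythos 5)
Your proposal is correct and takes essentially the same approach as the paper: read off block components, check that the defining conditions match, and observe that condition~(3) of Definition~\ref{defn:twisted Roe} becomes automatic on the restricted product because $\supp_E(T_{\lambda,s})\subseteq G_\lambda\subseteq B(\xi(y_j^{(n)}),R+1)$ while $f_{r,n}$ takes values within $\rho_+(m)$ of $\xi(y_j^{(n)})$ on vertices of $X_\lambda$. The paper's proof is terser, stating only the norm-continuity remark and the automatic verification of condition~(3), but the substance is identical; your additional checks are the same bookkeeping the paper leaves implicit.
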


\begin{proof}
We only prove the first isomorphism, while the second follows by the same argument pointwise. Since $J_n$ is finite, $s\mapsto T_{n,s}$ is norm-continuous if and only if $s \mapsto T_{\lambda,s}$ is norm-continuous for $\lambda=(n,j) \in \Lambda$ where $T_{\lambda,s} = \chi_{G_\lambda}T_{n,s}\chi_{G_\lambda}$.

For $(T_{n,s}) \in \prod_{j}^{res}\AA^r_{ (F_j^{(n)}) } \big( (\Nd_m(\Gamma \cdot x_j^{(n)}));(G_j^{(n)}) \big)^\Gamma$, condition (3) in Definition \ref{defn:twisted Roe} says that
\begin{equation}\label{EQ:condition 3 in twisted Roe}
\lim\limits_{R'\rightarrow \infty}\sup_{s\in [1,\infty),\lambda \in \Lambda}\|\chi_{0,R'}^VT_{\lambda,s}-T_{\lambda,s}\|=\lim\limits_{R'\rightarrow \infty}\sup_{s\in [1,\infty),\lambda \in \Lambda}\|T_{\lambda,s}\chi_{0,R'}^V-T_{\lambda,s}\|=0.
\end{equation}
Since $f_n(\Nd_m(\Gamma \cdot x_j^{(n)})) \subseteq B(\xi(y_j^{(n)}), \rho_+(m))$ where $\rho_+$ is from the coarse embedding $\xi$, (\ref{EQ:condition 3 in twisted Roe}) is equivalent to the following:
\[
\lim_{R'\to \infty} \sup_{s\in [1,\infty),\lambda \in \Lambda} \|\chi_{B(\xi(y_j^{(n)}), R')} T_{\lambda,s} - T_{\lambda,s}\| = \lim_{R'\to \infty} \sup_{s\in [1,\infty),\lambda \in \Lambda} \|T_{\lambda,s}\chi_{B(\xi(y_j^{(n)}), R')} - T_{\lambda,s}\| = 0.
\]
On the other hand, note that for any $\lambda \in \Lambda$ and $s\in [1,\infty)$ we have:
\[
\supp_{E} (T_{\lambda,s}) \subseteq G_\lambda \times G_\lambda \subseteq B(\xi(y_j^{(n)}), R+1) \times B(\xi(y_j^{(n)}), R+1)
\]
where $R$ is the constant given in the assumption of Proposition \ref{prop:local isom. in K-theory}.
Hence (\ref{EQ:condition 3 in twisted Roe}) holds for $(T_{n,s})$, which concludes the proof.
\end{proof}

Consequently, to prove Proposition \ref{prop:local isomorphism reduction}, it suffices to show that the evaluation-at-one map:
\[
\mathrm{ev}: \lim_{r\to \infty} C^{*,r}_L\big( (X_\lambda); (F_\lambda)\big)^\Gamma \longrightarrow \lim_{r\to \infty}C^{*,r}\big( (X_\lambda); (F_\lambda)\big)^\Gamma
\]
induces an isomorphism in $K$-theory. To achieve, we need an extra version of the twisted algebras built on Hilbert modules.

For $\lambda \in \Lambda$, denote the $C^*$-algebra $B_\lambda$ as the norm closure in $C_b([1,\infty), \K(\L^2_\lambda))$ consisting of operators $T'=(T'_s)$ satisfying the following:
\begin{enumerate}
		\item $\lim\limits_{s\rightarrow \infty}\ppg_{E}(T'_s)=0$; \\[0.05cm]
		\item for $\varepsilon>0$ there exists an $s_{\lambda,\varepsilon} \in [1,\infty)$ such that for any $s>s_{\lambda,\varepsilon}$ we have:
		\[
		\supp_{E} (T'_s) \subseteq \Nd_\varepsilon(F_\lambda) \times \Nd_\varepsilon(F_\lambda).
		\]
\end{enumerate}
Consider the Hilbert $B_\lambda$-module:
\[
\H_{r,\lambda} \otimes B_\lambda = \ell^2(Z_{r,n} \cap P_r(X_\lambda)) \otimes \HH \otimes \ell^2(\Gamma) \otimes B_\lambda,
\]
and denote the $C^*$-algebra of adjointable morphisms on $\H_{r,\lambda} \otimes B_\lambda$ by $\L(\H_{r,\lambda} \otimes B_\lambda)$. For $T \in \L(\H_{r,\lambda} \otimes B_\lambda)$, we define its $P$-propagation as in Definition \ref{defn: Roe algebra} and also denote by $\ppg_P(T)$. Denote $\K(\H_{r,\lambda} \otimes B_\lambda)$ the $C^*$-algebra of compact morphisms on $\H_{r,\lambda} \otimes B_\lambda$. The $\Gamma$-action on $\H_{r,\lambda}$ extends to a $\Gamma$-action on $\H_{r,\lambda} \otimes B_\lambda$ by adjointable morphisms. Denote the set of $\Gamma$-invariant morphisms by $\L(\H_{r,\lambda} \otimes B_\lambda)^\Gamma$.

We consider the following:

\begin{defn}\label{defn:twisted Roe module language}
With the same notation as above, define $\A^r[ (X_\lambda); (B_\lambda) ]^\Gamma$ to be the $\ast$-subalgebra in $\prod_{\lambda \in \Lambda} \L(\H_{r,\lambda} \otimes B_\lambda)^\Gamma$ consisting of elements $T=(T_\lambda)_\lambda$ satisfying the following conditions:
\begin{enumerate}
  \item $\sup_{\lambda \in \Lambda}\ppg_{P}(T_{\lambda})<\infty$;\\[0.05cm]
  \item for $\lambda \in \Lambda$, $T_\lambda$ is \emph{locally compact} in the sense that for any bounded Borel subset $K \subseteq P_r(X_{\lambda})$, both $\chi_K T_\lambda$ and $T_\lambda \chi_K$ belong to $\K(\H_{r,\lambda} \otimes B_\lambda) \cong \K(\H_{r,\lambda}) \otimes B_\lambda$.
\end{enumerate}
Denote $\A^r( (X_\lambda); (B_\lambda) )^\Gamma$ the norm closure of $\A^r[ (X_\lambda); (B_\lambda) ]^\Gamma$ in $\prod_{\lambda \in \Lambda} \L(\HH_{r,\lambda})^\Gamma$.

Also define $\A^r_L( (X_\lambda); (B_\lambda) )^\Gamma$ to be the closure of the collection of uniformly continuous bounded functions $(T_t)$ from $[1,\infty)$ to $\A^r[(X_\lambda); (B_\lambda)]^\Gamma$ such that the $P$-propagation of $(T_t)$ tends to zero as $t\to +\infty$.
\end{defn}

We provide the following matrix version of elements in $\A^r[ (X_\lambda); (B_\lambda) ]^\Gamma$ for later use. For $\lambda \in \Lambda$, we have the following decomposition of Hilbert $B_\lambda$-modules:
\[
\H_{r,\lambda} \otimes B_\lambda = \bigoplus_{x\in X_\lambda} \H_{x,r,\lambda} \otimes B_\lambda.
\]
Given an $X_\lambda$-by-$X_\lambda$ matrix $S=(S_{x,y})_{x,y\in X_\lambda}$ with $S_{x,y} \in \K(\H_{y,r,\lambda},\H_{x,r,\lambda}) \otimes B_\lambda$ and finite propagation, we consider the map (using the same notation)
\[
S: \H_{r,\lambda} \otimes B_\lambda \longrightarrow \H_{r,\lambda} \otimes B_\lambda
\]
by matrix multiplication. It is easy to check that $S$ is an adjointable morphism on $\H_{r,\lambda} \otimes B_\lambda$ which is locally compact. Also it is obvious that any locally compact $S \in \L(\H_{r,\lambda} \otimes B_\lambda)$ with finite propagation comes from such an $X_\lambda$-by-$X_\lambda$ matrix.

In conclusion, elements in $\A^r[ (X_\lambda); (B_\lambda) ]^\Gamma$ can be written in the form of $S=(S_\lambda)$ where $S_\lambda=(S_{\lambda,x,y})_{x,y\in X_\lambda}$ is an $X_\lambda$-by-$X_\lambda$ matrix with matrix entry $S_{\lambda,x,y} \in \K(\H_{y,r,\lambda},\H_{x,r,\lambda}) \otimes B_\lambda$ such that $(S_\lambda)_\lambda$ has uniformly finite propagation. The converse holds as well.

The following lemma allows us to turn Proposition \ref{prop:local isomorphism reduction} into the setting of Hilbert modules.

\begin{lem}\label{lem:isom btw twisted Hilbert module}
For each $r>0$, we have natural $C^*$-isomorphisms:
\[
C^{*,r}\big( (X_\lambda); (F_\lambda)\big)^\Gamma \cong \A^r\big( (X_\lambda); (B_\lambda) \big)^\Gamma \quad \mbox{and} \quad C^{*,r}_L\big( (X_\lambda); (F_\lambda)\big)^\Gamma \cong \A^r_L\big( (X_\lambda); (B_\lambda) \big)^\Gamma.
\]
\end{lem}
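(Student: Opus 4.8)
The plan is to construct the isomorphism entry-wise on the dense subalgebras and then extend by continuity. The key observation is that for each $\lambda = (n,j) \in \Lambda$, the Hilbert $B_\lambda$-module $\H_{r,\lambda} \otimes B_\lambda$ and the bundle-of-Hilbert-spaces picture $C_b([1,\infty), \K(\H_{r,\lambda,E})^\Gamma)$ carry the same information: an element $T'=(T'_s) \in B_\lambda$ acts on $\L^2_\lambda$ for each $s$, subject to conditions (1)–(2) in the definition of $B_\lambda$, and these are precisely the "fibrewise" conditions hidden in $\CC^r[(X_\lambda);(F_\lambda)]^\Gamma$. Concretely, I would define, for each $\lambda$, a map sending a matrix $(T_{\lambda,s,x,y})_{x,y}$ with $s$-dependence lying in $\K(\H_{y,r,\lambda},\H_{x,r,\lambda}) \otimes C_b([1,\infty),\K(\L^2_\lambda))$ to the matrix $(S_{\lambda,x,y})_{x,y}$ with $S_{\lambda,x,y} \in \K(\H_{y,r,\lambda},\H_{x,r,\lambda}) \otimes B_\lambda$ obtained by viewing the assignment $s \mapsto T_{\lambda,s,x,y}$ as an element of the Hilbert-module tensor factor $B_\lambda$ --- this makes sense precisely because conditions (2) and (4) in Definition~\ref{defn:twisted Roe version 2} match conditions (1) and (2) defining $B_\lambda$.

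Then I would check the following in order. First, that this assignment is a well-defined $\ast$-homomorphism on the algebraic level $\CC^r[(X_\lambda);(F_\lambda)]^\Gamma \to \A^r[(X_\lambda);(B_\lambda)]^\Gamma$: $\Gamma$-invariance is preserved since the $\Gamma$-action does not touch the $\L^2_\lambda$-factor, finite $P$-propagation translates directly (condition (1) on both sides is identical), and local compactness in the Hilbert-module sense (condition (2) of Definition~\ref{defn:twisted Roe module language}) follows from condition (3) of Definition~\ref{defn:twisted Roe version 2} via Lemma~\ref{lem:condition (3) in twisted Roe 2}, using bounded geometry of $X_\lambda$ to pass between "matrix entries compact" and "$\chi_K T \chi_K$ compact." Second, that the map is isometric: the norm of $(T_{\lambda,s})$ on the left is $\sup_{\lambda,s}\|T_{\lambda,s}\|_{\B(\H_{r,\lambda,E})}$, while the norm of the image in $\prod_\lambda \L(\H_{r,\lambda}\otimes B_\lambda)^\Gamma$ is $\sup_\lambda \|S_\lambda\|$; one shows $\|S_\lambda\| = \sup_s \|T_{\lambda,s}\|$ by the standard fact that for a $C^*$-algebra $D$ (here $D = B_\lambda \subseteq C_b([1,\infty),\K(\L^2_\lambda))$) the operator norm on $\H_{r,\lambda}\otimes D$ of an adjointable map given by a matrix over $\K(\cdot)\otimes D$ agrees with its operator norm after any faithful representation of $D$ — and the tautological fibrewise representation of $B_\lambda$ on $\bigoplus_s \L^2_\lambda$ recovers exactly the norm $\sup_s\|\cdot\|$. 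Third, surjectivity onto a dense subalgebra: any finite-propagation locally compact $S_\lambda = (S_{\lambda,x,y})$ with entries in $\K(\cdot)\otimes B_\lambda$ unwinds to a family $T_{\lambda,s}$ satisfying (1)–(4), so the image is exactly $\CC^r[(X_\lambda);(F_\lambda)]^\Gamma$ mapped onto $\A^r[(X_\lambda);(B_\lambda)]^\Gamma$ and the isomorphism extends to the completions. Finally, the localisation case is obtained by running the same identification pointwise in $t$, noting that the $P$-propagation-to-zero condition as $t\to\infty$ is phrased identically on both sides; uniform continuity in $t$ of $(T_t)$ corresponds to uniform continuity of the Hilbert-module-valued function, so $C^{*,r}_L((X_\lambda);(F_\lambda))^\Gamma \cong \A^r_L((X_\lambda);(B_\lambda))^\Gamma$ follows.

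**The main obstacle** I anticipate is the isometry statement --- specifically, verifying carefully that the operator norm on the Hilbert $B_\lambda$-module $\H_{r,\lambda}\otimes B_\lambda$ of the matrix operator $S_\lambda$ coincides with $\sup_s\|T_{\lambda,s}\|_{\B(\H_{r,\lambda,E})}$. This requires knowing that $B_\lambda$, as a subalgebra of $C_b([1,\infty),\K(\L^2_\lambda))$, is represented faithfully and non-degenerately enough that its "fibrewise" structure at each $s$ is detected by the module norm; one has to be slightly careful because $B_\lambda$ is not all of $C_b([1,\infty),\K(\L^2_\lambda))$, only the subalgebra cut out by the support/propagation conditions, but since norms are computed in the ambient $C^*$-algebra this causes no trouble, and the faithful fibre representations $\mathrm{ev}_s : B_\lambda \to \K(\L^2_\lambda)$ separate points and combine to give $\bigoplus_s \mathrm{ev}_s$, which is faithful. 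Once this normalisation point is nailed down, the rest is a bookkeeping exercise matching the four conditions of Definition~\ref{defn:twisted Roe version 2} with the two conditions of Definition~\ref{defn:twisted Roe module language} plus the two defining conditions of $B_\lambda$, all of which were designed to correspond; hence the lemma follows.
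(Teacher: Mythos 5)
Your proposal takes essentially the same route as the paper: curry the $s$-parameter into the coefficient algebra $B_\lambda$ entrywise, match conditions, and get the isometry from a faithful representation of $B_\lambda$ (the paper uses $L^2([1,\infty),\L^2_\lambda)$; your $\bigoplus_s\L^2_\lambda$ is a harmless variant). However, there is one genuine gap in the surjectivity step, and you misattribute the needed ingredient. Conditions (2) and (4) of Definition~\ref{defn:twisted Roe version 2} are statements about the \emph{whole} operator $T_{\lambda,s}$, not about individual matrix entries: for instance $\ppg_E(T_{\lambda,s})$ is a supremum of $\ppg_E(T_{\lambda,s,x,y})$ over \emph{all} $x,y\in X_\lambda$, which is an infinite set when $\Gamma$ is infinite. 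Entrywise membership in $B_\lambda$ only yields $\ppg_E(T_{\lambda,s,x,y})\to 0$ as $s\to\infty$ for each fixed $(x,y)$, with no a priori uniformity over $(x,y)$, so the global condition (2) does not follow directly. Bounded geometry, which you invoke via Lemma~\ref{lem:condition (3) in twisted Roe 2}, does handle local compactness (for a fixed bounded Borel $K$ only finitely many entries contribute to $\chi_{K}T_\lambda\chi_{K'}$), but it does \emph{not} supply a uniform rate over all $x\in X_\lambda$. The missing ingredient is $\Gamma$-equivariance of $T_\lambda$ combined with cocompactness of the $\Gamma$-action on $P_r(X_\lambda)$: since the $\Gamma$-action on $\H_{r,\lambda,E}$ is trivial on the $\L^2_\lambda$-factor, one has $\ppg_E(T_{\lambda,s,\gamma x,\gamma y})=\ppg_E(T_{\lambda,s,x,y})$, and cocompactness plus finite $P$-propagation put the relevant entries into finitely many $\Gamma$-orbits, so the supremum is a maximum. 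The paper flags exactly this point (``the operator $T_\lambda$ can be determined by finitely many $\chi_K T\chi_{K'}\ldots$ since $T_\lambda$ is $\Gamma$-equivariant and the action on $P_r(X_\lambda)$ is cocompact,'' via the argument of Lemma~\ref{lem:condition (4) recovery}); you would need to insert this observation before the unwinding step of your surjectivity argument is complete, and the same point recurs for condition (4).
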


\begin{proof}
First we define a map
\begin{eqnarray*}
\Theta: \CC^r[(X_\lambda); (F_\lambda)]^\Gamma &\longrightarrow & \prod_{\lambda \in \Lambda}\B\big( L^2([1,\infty)) \otimes \H_{r,\lambda} \otimes \L^2_\lambda\big)^\Gamma
\end{eqnarray*}
by
\[
\big(\Theta((T_{\lambda,s}))_\lambda(f \otimes \xi )\big)(s) = T_{\lambda,s}(f(s)\xi),
\]
where $f\in L^2([1,\infty))$ and $\xi \in \H_{r,\lambda} \otimes \L^2_\lambda$. It is clear that $\Theta$ is an injective $\ast$-homomorphism. Combining with the following isomorphism
\[
\B\big( L^2([1,\infty)) \otimes \H_{r,\lambda} \otimes  \L^2_\lambda \big)^\Gamma  \cong \B\big( \H_{r,\lambda} \otimes L^2([1,\infty), \L^2_\lambda)\big)^\Gamma
\]
for $\lambda \in \Lambda$, the image of $\Theta$ consists of elements $(T_\lambda)_\lambda \in \prod\limits_{\lambda \in \Lambda}\B\big( \H_{r,\lambda} \otimes L^2([1,\infty), \L^2_\lambda) \big)^\Gamma$ satisfying the following:
\begin{enumerate}
 \item $\sup_{\lambda \in \Lambda}\ppg_{P}(T_{\lambda})<\infty$;\\[0.05cm]
 \item for $\lambda \in \Lambda$ and $x,y\in X_\lambda$, the matrix entry $T_{\lambda,x,y}$ belongs to $\K(\H_{y,r,\lambda},\H_{x,r,\lambda}) \otimes B_\lambda$.
\end{enumerate}
Note that here we use the fact that for each $\lambda \in \Lambda$, the operator $T_\lambda$ can be determined by finitely many $\chi_K T \chi_{K'}$ where $K$ and $K'$ are bounded Borel subsets in $P_r(X_\lambda)$ since $T_\lambda$ is $\Gamma$-equivariant and the action on $P_r(X_\lambda)$ is cocompact (using an argument similar to that in the proof of Lemma \ref{lem:condition (4) recovery}).

Noting that the image of $\Theta$ coincides with elements in $\A^r[ (X_\lambda); (B_\lambda) ]^\Gamma$ using the matrix form introduced above, we obtain a $\ast$-isomorphism
\[
\hat{\Theta}: \CC^r[(X_\lambda); (F_\lambda)]^\Gamma \longrightarrow \A^r[ (X_\lambda); (B_\lambda) ]^\Gamma.
\]
Also note that $B_\lambda$ can be faithfully represented on $L^2([1,\infty), \L^2_\lambda)$. Hence it follows from the Hilbert module theory that $\hat{\Theta}$ is also isometric, which can be extended to a $C^*$-isomorphism (using the same notation)
\[
\hat{\Theta}: C^{*,r}\big( (X_\lambda); (F_\lambda)\big)^\Gamma \cong \A^r\big( (X_\lambda); (B_\lambda) \big)^\Gamma.
\]
Applying $\hat{\Theta}$ pointwise, we obtain a required isomorphism between twisted localisation algebras. Therefore, we conclude the proof.
\end{proof}

Therefore, to prove Proposition \ref{prop:local isomorphism reduction}, it suffices to prove the following:

\begin{prop}\label{prop: local isom for appendix}
The homomorphism
\begin{equation}\label{EQ:app_ev}
\mathrm{ev}_\ast: \lim_{r\to \infty} K_\ast\big(\A^r_L\big( (X_\lambda); (B_\lambda) \big)^\Gamma \big) \longrightarrow \lim_{r\to \infty} K_\ast \big( \A^r\big( (X_\lambda); (B_\lambda) \big)^\Gamma \big)
\end{equation}
induced by the evaluation-at-one map is an isomorphism for $\ast =0,1$.
\end{prop}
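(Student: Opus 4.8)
The plan is to recognize the twisted algebras $\A^r\big( (X_\lambda); (B_\lambda) \big)^\Gamma$ and $\A^r_L\big( (X_\lambda); (B_\lambda) \big)^\Gamma$ as, respectively, a ($\Gamma$-equivariant) Roe algebra and localisation algebra \emph{with coefficients} in the $C^*$-algebras $B_\lambda$, and then invoke the family version of the Baum--Connes conjecture with coefficients for a-T-menable groups (the result stated in the appendix, essentially from \cite{HK01}). More precisely, for each $\lambda=(n,j)\in\Lambda$ the space $X_\lambda=\Nd_m(\Gamma\cdot x_\lambda)$ carries a proper, \emph{cocompact} $\Gamma$-action by isometries, and $\H_{r,\lambda}\otimes B_\lambda$ is an admissible $P_r(X_\lambda)$-module with coefficients in $B_\lambda$. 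The matrix description of $\A^r[(X_\lambda);(B_\lambda)]^\Gamma$ given just before the statement identifies it with the ($\ell^\infty$-type) product over $\lambda$ of the equivariant Roe algebras $C^*(P_r(X_\lambda); B_\lambda)^\Gamma$ of locally compact, finite-propagation, $\Gamma$-invariant operators, subject to a \emph{uniform} propagation bound over $\lambda$; likewise $\A^r_L$ corresponds to the product of the equivariant localisation algebras with the $P$-propagation tending to $0$ uniformly in $\lambda$ as $t\to\infty$.

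The key steps, in order, are as follows. First I would make the identification above precise: for each $\lambda$ the Rips complexes $\{P_r(X_\lambda)\}_{r>0}$ form a directed system of proper cocompact $\Gamma$-spaces, and $\lim_{r\to\infty}K_*(C^*(P_r(X_\lambda);B_\lambda)^\Gamma)$ is the target of the Baum--Connes assembly map with coefficients in $B_\lambda$, while $\lim_{r\to\infty}K_*(C^*_L(P_r(X_\lambda);B_\lambda)^\Gamma)$ is (by the coefficient version of Proposition \ref{prop: equiv localisation alg = K-homology}) the equivariant $K$-homology $\lim_r KK^\Gamma_*(C_0(P_r(X_\lambda)),B_\lambda)$, i.e. the topological side $K^{\mathrm{top}}_*(\Gamma;B_\lambda)$. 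Since $\Gamma$ is a-T-menable, the Higson--Kasparov theorem gives that $\mathrm{ev}_*\colon \lim_r K_*(C^*_L(P_r(X_\lambda);B_\lambda)^\Gamma)\to \lim_r K_*(C^*(P_r(X_\lambda);B_\lambda)^\Gamma)$ is an isomorphism for each individual $\lambda$. Second — and this is where the ``family'' aspect enters — I would upgrade this to the product over $\lambda\in\Lambda$: the point is that the geometry of the $X_\lambda$ is uniformly controlled (each $X_\lambda\subseteq\Nd_m(\Gamma\cdot x_\lambda)$ with the same $m$, and the action has controlled distortion, so all orbit maps $\Gamma\to\Gamma x_\lambda$ are uniformly coarsely equivalent; moreover the coefficient algebras $B_\lambda$ are supported in uniformly bounded regions $\Nd_1(F_\lambda)\subseteq B(\xi(y_\lambda),R+1)$). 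This uniformity means one can run the Higson--Kasparov Dirac--dual-Dirac argument \emph{with uniform estimates} across $\lambda$ — exactly the content of the appendix's family statement — so that the product of the evaluation maps is again an isomorphism. Third, I would pass to the direct limit over $r\to\infty$, using that $K$-theory commutes with direct limits and that the identifications above are compatible with the inclusions induced by $Z_r\hookrightarrow Z_s$ for $r\le s$; this yields the desired isomorphism in \eqref{EQ:app_ev}.

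The main obstacle is Step two: transferring the single-group-action statement of \cite{HK01} to a genuinely uniform family statement over the (infinite) index set $\Lambda$. One cannot simply take a product of isomorphisms proved separately, because the isomorphism must be implemented by a single element (a multiplier / asymptotic morphism data coming from the Bott--Dirac construction on the Hilbert spaces into which the $X_\lambda/\Gamma$ coarsely embed) with norm bounds and propagation control \emph{independent} of $\lambda$; the ``space for speed'' mechanism and the uniformity of the coarse embedding $\xi$ and of the controlled distortion are precisely what make this possible. This is the reason the paper isolates the family Baum--Connes-with-coefficients statement in the appendix, and I would simply cite it here: given that statement (applied with the family of cocompact $\Gamma$-spaces $(X_\lambda)$ and coefficients $(B_\lambda)$, all uniformly controlled as above), together with the coefficient version of Proposition \ref{prop: equiv localisation alg = K-homology} identifying the localisation side with equivariant $K$-homology, the proposition follows. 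A secondary, routine point to check is that the products appearing in $\A^r$ and $\A^r_L$ are the correct ``block-diagonal with uniform propagation'' products so that the identification with the family Roe/localisation algebras is an honest $*$-isomorphism — this is handled by the matrix description already established before Lemma \ref{lem:isom btw twisted Hilbert module}.
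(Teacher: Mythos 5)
Your proposal correctly identifies what Proposition~\ref{prop: local isom for appendix} \emph{is} (a family version of the Baum--Connes conjecture with coefficients for the a-T-menable group $\Gamma$) and correctly flags the central difficulty: $K$-theory does not commute with infinite products, so one cannot simply take the product of the single-$\lambda$ Higson--Kasparov isomorphisms, and uniform control over $\lambda$ is essential. However, at precisely that point your argument becomes circular. You say you would ``upgrade to the product'' by running the Dirac--dual-Dirac argument ``with uniform estimates across $\lambda$ --- exactly the content of the appendix's family statement --- so \ldots\ I would simply cite it here.'' But the appendix's family statement \emph{is} Proposition~\ref{prop: local isom for appendix}; it is the thing to be proved, not something available to cite. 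Indeed, the sentence in the body of the paper immediately preceding the proposition explains that it is well-known to experts but has no explicit proof in the literature, and that the appendix exists precisely to supply one.

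The work you would actually need to do, and which the paper carries out, is the uniform Dirac--dual-Dirac construction itself: (i) build the $\Gamma$-$C^*$-algebra $\Ac(V)$ associated to the proper affine action on the Hilbert space $H$, together with the $\Ac(V)$-twisted family Roe and localisation algebras $\A^r((X_\lambda);(B_\lambda\hat\otimes\Ac(V)))^\Gamma$ and $\A^r_L(\cdot)$ (Definition~\ref{defn:double twisted Roe}), in which conditions (3)--(5) encode exactly the uniform support, finite-dimensional-affine-subspace, and Lipschitz control needed across $\lambda$; (ii) prove that evaluation-at-one is a $K$-theory isomorphism for these $\Ac(V)$-twisted algebras (Proposition~\ref{prop:local isomorphism with A(V)}), which is where properness of $\Ac(V)$ over $\RR_+\times H$ is exploited via a slice lemma (Lemma~\ref{lem:local isomorphism with A(V) slice case}) and a Mayer--Vietoris argument that reduces to a finite-group, single-point computation; (iii) construct the Bott asymptotic morphisms $\beta_t,\beta_{L,t}$ (Definition~\ref{defn:Bott map}) and Dirac asymptotic morphisms $\alpha_t,\alpha_{L,t}$ (Definition~\ref{defn:Dirac}), taking care that the averaging over stabilisers and the propagation and support bounds are uniform in $\lambda$ (this is where controlled distortion and the uniform coarse equivalence of the orbit maps $\phi_\lambda$ are actually used); and (iv) prove $\alpha_*\circ\beta_*=\mathrm{id}$ (Proposition~\ref{prop:identity-bott-dirac}). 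The proposition then follows from the commutative three-row diagram by a diagram chase. None of (i)--(iv) is in your proposal; they are the genuinely new content of the appendix and cannot be replaced by a citation to themselves.

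A secondary, smaller issue: your Step~1 identification of $\lim_r K_*(C^*_L(P_r(X_\lambda);B_\lambda)^\Gamma)$ with $K_*^{\mathrm{top}}(\Gamma;B_\lambda)$ and of the target with the Baum--Connes right-hand side is correct for a single $\lambda$, but you then need to know that the resulting single-$\lambda$ isomorphisms are compatible with the block-diagonal product structure and with the direct limit over $r$; in the paper this compatibility is what the explicit $\beta$ and $\alpha$ maps give you for free, whereas in your proposal it is asserted rather than verified.
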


Readers might already notice that Proposition \ref{prop: local isom for appendix} is just a reformulation of a family version of the Baum-Connes conjecture with coefficients for the a-T-menable group $\Gamma$ (see, \emph{e.g.}, \cite[Section 3]{KY12}), which holds thanks to a ``uniform version'' of the proof by Higson and Kasparov \cite{HK01}. This is well-known to experts, however, we cannot find an explicit proof in literature. For convenience to readers and also for completeness, we provide a detailed proof in Appendix \ref{sec:App A} using an approach slightly different from the original one for a single space \cite{HK01}.

Consequently, we finish the proof of Proposition \ref{prop:local isomorphism reduction}, and hence conclude Theorem \ref{thm:iso. of twisted algebras in $K$-theory}.

\section{Proof of Theorem \ref{thm:main result equiv. CBC}}\label{sec:pf of main thm}

In this final section, we finish the proof of the main result.

\begin{proof}[Proof of Theorem~\ref{thm:main result equiv. CBC}]
Consider the following commutative diagram
\[
\xymatrix{
 \lim\limits_{r\rightarrow \infty}K_* \big( C^*_L(\H_r)^\Gamma \cap \prod_{n=1}^\infty C^*_L(\H_{r,n})^\Gamma \big) \ar[r] \ar[d]^>>>>>{\Ind_{F_L}} &  \lim\limits_{r\rightarrow \infty}K_* \big( C^*(\H_r)^\Gamma \cap \prod_{n=1}^\infty C^*(\H_{r,n})^\Gamma \big) \ar[d]^>>>>>{\Ind_F}\\
 \lim\limits_{r\rightarrow \infty}K_\ast ( A^r_L(X;E)^\Gamma ) \ar[r] \ar[d]^>>>>{\iota^s_*} &  \lim\limits_{r\rightarrow \infty}K_\ast ( A^r(X;E)^\Gamma ) \ar[d]^>>>>{\iota^s_*}\\
 \lim\limits_{r\rightarrow \infty}K_\ast \big( C^*_L(\H_{r,E})^\Gamma \cap \prod_{n=1}^\infty C^*_L(\H_{r,n,E})^\Gamma \big)\ar[r] &  \lim\limits_{r\rightarrow \infty}K_\ast \big( C^*(\H_{r,E})^\Gamma \cap \prod_{n=1}^\infty C^*(\H_{r,n,E})^\Gamma \big),
}
\]
where the vertical maps come from Proposition~\ref{prop:index maps} and all horizon maps are induced by evaluation-at-one maps. From Proposition~\ref{prop:index maps} again, the compositions of vertical maps are isomorphisms. The middle horizon map is an isomorphism by Theorem \ref{thm:iso. of twisted algebras in $K$-theory}, and it is clear that the upper horizon map identifies with the bottom horizon one. Therefore we obtain that both of the upper and bottom horizon maps are isomorphisms using diagram chasing. Finally combining with Corollary~\ref{cor:final reduction}, we conclude the proof.
\end{proof}

\appendix

\section{Proof of Proposition \ref{prop: local isom for appendix}}\label{sec:App A}

In this appendix, we prove a family version of the Baum-Connes conjecture with coefficients for a-T-menable groups, and equivalently concludes the proof of Proposition \ref{prop: local isom for appendix}. Here we use a slightly different approach from the original proof for a single space due to Higson and Kasparov \cite{HK01}, which also involves the localisation technique introduced in \cite{Yu97} and \cite{Yu00} (see also \cite{KY12} and \cite{FW16}).

Throughout this appendix, let us fix another separable infinite-dimensional Hilbert space $H$ (which is a different notation from the fixed Hilbert space $\HH$ in Section \ref{ssec:equiv. twisted algebras}). Also fix a left-invariant proper metric $d_\Gamma$ on the group $\Gamma$.




Let us start with some notation. First recall that by definition, $\Gamma$ being a-T-menable means that $\Gamma$ admits a metrically proper action on $H$ by isometries. Using the Mazur-Ulam Theorem \cite{MU32}, there exists a unitary representation $\pi: \Gamma \to \U(H)$ and a 1-cocycle $b: \Gamma \to H$ such that $\gamma \cdot v = \pi(\gamma) v +b(\gamma)$ and $\lim_{\gamma \to \infty} \|b(\gamma)\| = +\infty$. Here $b$ is a \emph{1-cocycle} means that $b(\gamma_1\gamma_2) = \pi(\gamma_1)b(\gamma_2) + b(\gamma_1)$ for any $\gamma_1,\gamma_2 \in \Gamma$. It is easy to see that the map $b: \Gamma \to H$ is a coarse embedding.

For each $\gamma \in \Gamma$ and $k\in \NN$, we define a finite-dimensional Euclidean affine subspace $W_k(\gamma)$ in $H$ as follows:
\begin{equation}\label{EQ:W_k(gamma)}
W_k(\gamma):= b(\gamma) + \mathrm{span}_{\CC}\left\{b(\gamma') - b(\gamma): d_\Gamma(\gamma', \gamma) \leq k^2\right\}.
\end{equation}
It is straightforward to check that $\gamma' \cdot W_k(\gamma) = W_k(\gamma'\gamma)$. Moreover, we define:
\[
W(\gamma):=\bigcup_{k\in \NN} W_k(\gamma).
\]
Since $b:\Gamma \to H$ is a coarse embedding, the space $V:=W(\gamma)$ is independent of $\gamma \in \Gamma$. Note that $b(1_\Gamma)=0$, hence $V$ is a $\Gamma$-invariant countably infinite-dimensional linear subspace in $H$. Without loss of generality, we assume that $V$ is dense in $H$.

We recall an algebra associated to $V$ introduced by Hison, Kasparov and Trout in \cite{HKT98}. Let $V_a$ be a finite-dimensional affine subspaces of $V$. Denote by $V_a^0$ the finite-dimensional linear subspace of $V$ consisting of differences of elements in $V_a$. Let $\Cliff(V_a^0)$ be the complexified Clifford algebra of $V_a^0$, and $\C(V_a):=C_0(V_a, \Cliff(V^0_a))$ be the graded $C^*$-algebra of continuous functions from $V_a$ to $\Cliff(V_a^0)$ which vanish at infinity. Let $\S:=C_0(\RR)$, graded according to odd and even functions. Define the graded tensor product
\[
\Ac(V_a):=\S ~ \hat{\otimes} ~ \C(V_a).
\]

For two finite-dimensional affine subspaces $V_a \subseteq V_b$ in $V$, we have a decomposition $V_b=V_{ba}^0+V_a$, where $V_{ba}^0$ is the orthogonal complement of $V_a^0$ in $V_b^0$. For each $v_b \in V_b$, we have a corresponding decomposition $v_b=v_{ba}+v_a$, where $v_{ba}\in V_{ba}^0$ and $v_a\in V_a$. Every function $h$ on $V_a$ can be extended to a function $\tilde{h}$ on $V_b$ by the formula $\tilde{h}(v_{ba}+v_a)=h(v_a)$.

\begin{defn}\label{defn:direct system of affine space}
For $V_a\subseteq V_b$, denote by $C_{V_b,V_a}:V_b\rightarrow \Cliff(V_b^0)$ the function $v_b\mapsto v_{ba}\in  \Cliff(V_b^0)$, where $v_{ba}$ is regarded as an element in $\Cliff(V_b^0)$ via the inclusion $V_{ba}^0\subset \Cliff(V_b^0)$. Let $X$ be the unbounded multiplier of $\S$ with degree one given by the function $t\mapsto t$.
Define a $*$-homomorphism $\beta_{V_b,V_a}:\Ac(V_a)\rightarrow \Ac(V_b)$ by the formula
\[
\beta_{V_b,V_a}(g\hat{\otimes}h)=g(X\hat{\otimes}1+1\hat{\otimes}C_{V_b,V_a})(1\hat{\otimes}\tilde{h})
\]
for $g\in \S$ and $h\in \C(V_a)$, where $g(X\hat{\otimes}1+1\hat{\otimes}C_{V_b,V_a})$ is defined by functional calculus.
\end{defn}

The maps $\beta_{V_b,V_a}$ in Definition \ref{defn:direct system of affine space} make the collection $\{\Ac(V_a)\}$ into a directed system as $V_a$ ranges over the set of all finite-dimensional affine subspaces of $V$. We define the $C^*$-algebra $\Ac(V)$ as the associated direct limit:
\[
\Ac(V):=\lim_{\longrightarrow}\Ac(V_a).
\]
We denote by $\beta_{V,V_a}: \Ac(V_a) \to \Ac(V)$ the associated homomorphism.

Endow the set $\RR_+ \times H$ with the weakest topology for which the projection $\RR_+ \times H \to H$ is weakly continuous and the function $(t,h) \mapsto t^2 + \|h\|^2$ on $\RR_+ \times H$ is continuous. It is clear that $\RR_+ \times H$ is a locally compact Hausdorff space with this topology. Also note that for $v\in H$ and $r>0$, the ball
\[
B(v,r):=\{(t,h) \in \RR_+ \times H: t^2 + \|h-v\|^2 < r^2\}
\]
is open. For each finite-dimensional affine subspace $V_a \subset V$, the center of $\Ac(V_a)$ contains $C_0(\RR_+ \times V_a)$ (by even extension). For $V_a \subset V_b$, the map $\beta_{ba}$ takes $C_0(\RR_+ \times V_a)$ into $C_0(\RR_+ \times V_b)$. It is clear that the $C^*$-algebra $\lim\limits_{\longrightarrow}C_0(\RR_+ \times V_a)$ is $\ast$-isomorphic to $C_0(\RR_+ \times H)$, where the direct limit is over the directed set of all finite-dimensional affine subspaces $V_a \subset V$. Hence the center of $\Ac(V)$ contains $C_0(\RR_+ \times H)$.

The $(\RR_+ \times H)$-support of an element $a \in \Ac(V)$, denoted by $\supp_{\RR_+ \times H}(a)$, is defined to be the complement of all $(t,h) \in \RR_+ \times H$ for which there exists $g\in C_0(\RR_+ \times H)$ such that $ag=0$ and $g(t,h) \neq 0$.

Now we consider actions on these algebras induced by the $\Gamma$-action on $H$. Let $V_a$ be a finite-dimensional affine subspace in $V$. For any $\gamma\in \Gamma$, the unitary $\pi(\gamma)$ naturally induces a homomorphism $\Cliff(V_a^0) \to \Cliff(\pi(\gamma)V_a^0)$, also denoted by $\pi(\gamma)$. This induces a homomorphism
\[
\gamma: \C(V_a) \longrightarrow \C(\gamma V_a)
\]
by
\[
\gamma(h)(v):=\pi(\gamma)h(\gamma^{-1}v)
\]
for $h\in \C(V_a)$ and $v\in \gamma V_a$, which further induces a homomorphism
\[
\gamma: \Ac(V_a) \longrightarrow \Ac(\gamma V_a) \quad \mbox{by} \quad g\hat{\otimes} h \mapsto g \hat{\otimes}  \gamma(h).
\]
Recall from \cite[Lemma 4.6]{FW16} that the following diagram commutes:
\begin{equation}\label{EQ:commutative diagram}
\xymatrix@C=4em{
			\Ac(V_a) \ar[r]^{\beta_{V_b,V_a}} \ar[d]_{\gamma} & \Ac(V_b)  \ar[d]^{\gamma} \\
			\Ac(\gamma V_a) \ar[r]^{\beta_{\gamma V_b, \gamma V_a}} & \Ac(\gamma V_b).
	}
\end{equation}
Consequently, we obtain a $\Gamma$-action on $\Ac(V)$, which makes $\Ac(V)$ into a $\Gamma$-$C^*$-algebra. Moreover, the $\Gamma$-$C^*$-algebra $\Ac(V)$ is proper in the following sense.

\begin{defn}[\cite{GHT00}]\label{defn:proper algebra}
A $\Gamma$-$C^*$-algebra $\Ac$ is called \emph{proper} if there exists a second countable, locally compact, proper $\Gamma$-space $Z$, and an equivariant $\ast$-homomorphism from $C_0(Z)$ into the center of the multiplier algebra of $\Ac$ such that $C_0(Z)\Ac$ is dense in $\Ac$. In this case, we also say that $\Ac$ is \emph{proper over $Z$}.
\end{defn}

\begin{lem}[{\cite[Proposition 4.9]{HK01}}]\label{lem:A(V) is proper}
The $\Gamma$-$C^*$-algebra $\Ac(V)$ is proper over $\RR_+ \times H$.
\end{lem}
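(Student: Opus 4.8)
The plan is to verify the three requirements of Definition~\ref{defn:proper algebra} with $Z:=\RR_+\times H$. By the discussion above, $Z$ is locally compact Hausdorff and the coordinate algebra $C_0(Z)=C_0(\RR_+\times H)$ already sits inside the center of $\Ac(V)$; moreover $\Ac(V)$ carries a $\Gamma$-action assembled from the compatible actions on the $\Ac(V_a)$'s. Thus it remains to check: (i) that $Z$ is second countable and that the affine $\Gamma$-action on $H$ induces a \emph{proper} action of $\Gamma$ on $Z$; (ii) that the inclusion $C_0(Z)\hookrightarrow\Ac(V)$ is $\Gamma$-equivariant; and (iii) that $C_0(Z)\cdot\Ac(V)$ is dense in $\Ac(V)$.

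For (i), second countability of $Z$ follows from separability of $H$: the weak topology on each norm ball is metrizable, and $Z$ is the increasing union of the compact metrizable sets $\{(t,h):t^2+\|h\|^2\le k^2\}$, $k\in\NN$. The action $\gamma\cdot(t,h)=(t,\pi(\gamma)h+b(\gamma))$ is continuous for the topology on $Z$. Given a compact $K\subseteq Z$, continuity of $(t,h)\mapsto t^2+\|h\|^2$ yields $R>0$ with $K\subseteq\{(t,h):t^2+\|h\|^2\le R^2\}$. If $(t,h)\in K$ and $\gamma\cdot(t,h)\in K$, then $\|h\|\le R$ and $\|\pi(\gamma)h+b(\gamma)\|\le R$, whence $\|b(\gamma)\|\le\|\pi(\gamma)h\|+R=\|h\|+R\le 2R$. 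Since $\|b(\gamma)\|\to+\infty$ as $\gamma\to\infty$ in the proper metric $d_\Gamma$, the set $\{\gamma\in\Gamma:\|b(\gamma)\|\le 2R\}$ is finite, so $\{\gamma:\gamma K\cap K\ne\emptyset\}$ is finite and the action is proper.

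For (ii), the $\Gamma$-action on $\Ac(V_a)$ (given by $g\,\hat{\otimes}\,h\mapsto g\,\hat{\otimes}\,\gamma(h)$ with $\gamma(h)(v)=\pi(\gamma)h(\gamma^{-1}v)$) restricts on the central subalgebra $C_0(\RR_+\times V_a)$ to the action induced by $\gamma\cdot(t,v)=(t,\gamma v)$, and these restricted actions are intertwined by the maps $\beta_{V_b,V_a}$ thanks to the commutative diagram~\eqref{EQ:commutative diagram}; passing to the direct limit shows that the inclusion $C_0(Z)\hookrightarrow\Ac(V)$ intertwines the $\Gamma$-action on $C_0(Z)$ coming from the action on $Z$ of (i) with the $\Gamma$-action on $\Ac(V)$, i.e. is equivariant. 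For (iii), it suffices to produce an approximate unit for $\Ac(V_a)$ lying in $C_0(\RR_+\times V_a)$ for each $V_a$ and then pass to the limit: choose even functions $e_n\in C_0(\RR_+)\subseteq\S$ with $0\le e_n\le 1$ and $e_n\equiv 1$ on $[-n,n]$, together with scalar-valued functions $f_m\in C_0(V_a)\subseteq\C(V_a)$ with $0\le f_m\le 1$ and $f_m\equiv 1$ on the ball of radius $m$; then $(e_n\,\hat{\otimes}\,f_m)(g\,\hat{\otimes}\,h)=(e_ng)\,\hat{\otimes}\,(f_mh)\to g\,\hat{\otimes}\,h$ for all $g\in\S$ and $h\in\C(V_a)$, so $C_0(\RR_+\times V_a)\cdot\Ac(V_a)$ is dense in $\Ac(V_a)$, and hence $C_0(Z)\cdot\Ac(V)$ is dense in $\Ac(V)$.

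The one point demanding genuine care is the interplay between the topology on $Z$ and properness of the action; but once one observes that compact subsets of $Z$ are bounded for $(t,h)\mapsto t^2+\|h\|^2$, properness reduces immediately to the metric properness of the cocycle $b$, which is exactly what a-T-menability of $\Gamma$ supplies. Everything else is routine bookkeeping with the directed system $\{\Ac(V_a)\}$, and the argument is that of \cite[Proposition~4.9]{HK01}.
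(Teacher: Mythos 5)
Your argument is correct and is essentially the standard verification of Definition~\ref{defn:proper algebra} that underlies \cite[Proposition 4.9]{HK01}, which the paper cites without reproducing: you check second countability and local compactness of $\RR_+\times H$ via the exhaustion by weakly compact, metrizable balls, properness of the affine action via the metric properness of the cocycle $b$, equivariance of the central embedding $C_0(\RR_+\times H)\hookrightarrow \Ac(V)$ via compatibility with the connecting maps $\beta_{V_b,V_a}$, and nondegeneracy via an explicit approximate unit. The paper offers no independent proof, so there is nothing to contrast; your blind reconstruction matches the intended argument.
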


Before we introduce the twisted algebras with $\Ac(V)$-coefficients, let us simplify some notation. Denote
\[
\beta_{k',k}(\gamma):=\beta_{W_{k'}(\gamma), W_k(\gamma)}: \Ac(W_k(\gamma)) \longrightarrow \Ac(W_{k'}(\gamma))
\]
for $k<k'$ and $\gamma \in \Gamma$, and
\[
\beta_{k}(\gamma):=\beta_{V, W_k(\gamma)}: \Ac(W_k(\gamma)) \longrightarrow \Ac(V).
\]
We also write
\[
\beta(\gamma):=\beta_0(\gamma):  \S ~\hat{\otimes}~ \CC \cong \Ac(W_0(\gamma)) \longrightarrow \Ac(V).
\]
It follows from \cite[Lemma 4.6]{FW16} that Diagram (\ref{EQ:commutative diagram}) commutes, which implies the following:

\begin{lem}\label{lem:commut beta}
For $k \in \NN$ and $\gamma' ,\gamma \in \Gamma$, we have $\gamma' \beta_{k,0}(\gamma) = \beta_{k,0}(\gamma' \gamma)$. Hence we obtain that $\gamma' \beta(\gamma) = \beta(\gamma' \gamma)$.
\end{lem}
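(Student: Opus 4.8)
The plan is to obtain this as a purely formal consequence of the commutativity of Diagram~(\ref{EQ:commutative diagram}) (that is, \cite[Lemma 4.6]{FW16}) together with the equivariance of the affine subspaces $W_k(\gamma)$; there is no analytic content here, and the only point that requires care is the canonical identification of $\Ac(W_0(\gamma))$ with $\S\,\hat{\otimes}\,\CC$.

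First I would record the translation identity $\gamma'\cdot W_k(\gamma)=W_k(\gamma'\gamma)$ for all $k\in\NN$ and $\gamma,\gamma'\in\Gamma$, which is the remark made immediately after~(\ref{EQ:W_k(gamma)}). Writing the $\Gamma$-action on $H$ as $\gamma'\cdot v=\pi(\gamma')v+b(\gamma')$, the cocycle relation $b(\gamma'\gamma)=\pi(\gamma')b(\gamma)+b(\gamma')$ gives $\gamma'\cdot b(\gamma)=b(\gamma'\gamma)$, while $\pi(\gamma')$ carries $\mathrm{span}_{\CC}\{b(\eta)-b(\gamma):d_\Gamma(\eta,\gamma)\le k^2\}$ onto $\mathrm{span}_{\CC}\{b(\mu)-b(\gamma'\gamma):d_\Gamma(\mu,\gamma'\gamma)\le k^2\}$ after the substitution $\mu=\gamma'\eta$ and using left-invariance of $d_\Gamma$. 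In particular $\gamma'\cdot W_0(\gamma)=W_0(\gamma'\gamma)$.

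Next I would apply Diagram~(\ref{EQ:commutative diagram}) with $V_a=W_0(\gamma)$, $V_b=W_k(\gamma)$ and the element $\gamma'$, which yields
\[
\gamma'\circ\beta_{W_k(\gamma),W_0(\gamma)}=\beta_{\gamma'W_k(\gamma),\,\gamma'W_0(\gamma)}\circ\gamma'=\beta_{W_k(\gamma'\gamma),W_0(\gamma'\gamma)}\circ\gamma',
\]
i.e.\ $\gamma'\circ\beta_{k,0}(\gamma)=\beta_{k,0}(\gamma'\gamma)\circ\gamma'$, where the rightmost $\gamma'$ denotes the map $\Ac(W_0(\gamma))\to\Ac(W_0(\gamma'\gamma))$. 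Since $W_0(\gamma)=\{b(\gamma)\}$ is a single point and $W_0(\gamma)^0=\{0\}$, one has the canonical identification $\Ac(W_0(\gamma))=\S\,\hat{\otimes}\,C_0(\{b(\gamma)\},\Cliff(\{0\}))\cong\S\,\hat{\otimes}\,\CC$, and under it the formula $\gamma'(h)(v)=\pi(\gamma')h({\gamma'}^{-1}v)$ for the action on $\C(W_0(\gamma))$ reduces to the identity on scalars (the restriction of $\pi(\gamma')$ to $\Cliff(\{0\})=\CC$ is trivial). Hence, under these identifications, $\gamma'\colon\Ac(W_0(\gamma))\to\Ac(W_0(\gamma'\gamma))$ is the identity of $\S\,\hat{\otimes}\,\CC$, so that $\gamma'\circ\beta_{k,0}(\gamma)=\beta_{k,0}(\gamma'\gamma)$ as maps $\S\,\hat{\otimes}\,\CC\to\Ac(W_k(\gamma'\gamma))$; this is the first assertion.

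For the remaining claim $\gamma'\beta(\gamma)=\beta(\gamma'\gamma)$ I would factor $\beta(\gamma)=\beta_k(\gamma)\circ\beta_{k,0}(\gamma)$ (compatibility of the structure maps in the directed system) and use that, by construction of the $\Gamma$-action on the direct limit $\Ac(V)$, one has $\gamma'\circ\beta_k(\gamma)=\beta_k(\gamma'\gamma)\circ\gamma'$; combining this with the first assertion gives
\[
\gamma'\beta(\gamma)=\gamma'\circ\beta_k(\gamma)\circ\beta_{k,0}(\gamma)=\beta_k(\gamma'\gamma)\circ\gamma'\circ\beta_{k,0}(\gamma)=\beta_k(\gamma'\gamma)\circ\beta_{k,0}(\gamma'\gamma)=\beta(\gamma'\gamma).
\]
The only mild obstacle is the bookkeeping of these canonical identifications around $W_0(\gamma)$; everything else is immediate once Diagram~(\ref{EQ:commutative diagram}) and the equivariance $\gamma'\cdot W_k(\gamma)=W_k(\gamma'\gamma)$ are in hand.
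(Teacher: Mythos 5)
Your proof is correct and takes essentially the same approach as the paper's (one-line) proof, which simply asserts that the lemma follows from the commutativity of Diagram~(\ref{EQ:commutative diagram}). You helpfully make explicit the one non-trivial step that the paper leaves implicit: that the map $\gamma'\colon \Ac(W_0(\gamma))\to\Ac(W_0(\gamma'\gamma))$ reduces to the identity on $\S\,\hat{\otimes}\,\CC$ under the canonical identifications, which is why the equivariance $\gamma'\circ\beta_{k,0}(\gamma)=\beta_{k,0}(\gamma'\gamma)\circ\gamma'$ coming from the diagram simplifies to the stated formula.
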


Now let us introduce twisted algebras for the $C^*$-algebras $\A^r( (X_\lambda); (B_\lambda) )^\Gamma$ and $\A^r_L( (X_\lambda); (B_\lambda) )^\Gamma$ introduced in Definition \ref{defn:twisted Roe module language}. We follow the same notation from Section \ref{sec:local isom}. For each $r>0$, recall that we have the Hilbert $B_\lambda$-module:
\[
\H_{r,\lambda} \otimes B_\lambda = \ell^2(Z_{r,n} \cap P_r(X_\lambda)) \otimes \HH \otimes \ell^2(\Gamma) \otimes B_\lambda.
\]
We also consider the following Hilbert $B_\lambda ~\hat{\otimes}~ \Ac(V)$-module:
\[
H_{r,\lambda}:=(\H_{r,\lambda} \otimes B_\lambda) ~\hat{\otimes} ~\Ac(V) = \H_{r,\lambda} \otimes (B_\lambda ~\hat{\otimes} ~\Ac(V) ),
\]
with the extended $\Gamma$-action. For $T \in \L(H_{r,\lambda})$, its $P$-propagation can be defined as before and also denoted by $\ppg_P(T)$.

For $x\in X_\lambda$, recall that $B_{x,r,\lambda} = B_{x,r,n} \cap P_r(X_\lambda)$ and we write
\[
H_{x,r,\lambda}:=\chi_{B_{x,r,\lambda}}H_{r,\lambda} = \H_{x,r,\lambda} \otimes (B_\lambda ~\hat{\otimes} ~\Ac(V) ).
\]
Under this decomposition, any operator $S \in \L(H_{r,\lambda})$ can be written in the matrix form $S=(S_{x,y})_{x,y\in X_\lambda}$ where $S_{x,y} \in \K(\H_{y,r,\lambda},\H_{x,r,\lambda}) \otimes (B_\lambda ~\hat{\otimes} ~\Ac(V))$. We can also define the $(\RR_+ \times H)$-support for each $S_{x,y}$ as above and denote by $\supp_{\RR_+ \times H}(S_{x,y})$.

Recall that for each $\lambda$, we already fixed a point $x_\lambda \in \D \cap X_\lambda$ where $\D$ is the fundamental domain from the assumption of controlled distortion. Consider the amplified orbit map at $x_\lambda$ (which is the composition of the orbit map with an inclusion)
\[
\psi_\lambda: \Gamma \to \Gamma x_\lambda \hookrightarrow X_\lambda, \quad \gamma \mapsto \gamma \cdot x_\lambda.
\]
By the assumption of controlled distortion, the family $\{\psi_\lambda\}_{\lambda \in \Lambda}$ is uniformly coarsely equivalent. For each $\lambda \in \Lambda$, we choose a coarse inverse $\phi_\lambda: X_\lambda \to \Gamma$ to $\psi_\lambda$ such that the family $\{\phi_\lambda\}_\lambda$ is uniformly coarsely equivalent as well.


\begin{defn}\label{defn:double twisted Roe}
Define $\A^r[(X_\lambda); (B_\lambda \hat{\otimes} \Ac(V))]^\Gamma$ to be the $\ast$-subalgebra in $\prod_{\lambda \in \Lambda} \L(H_{r,\lambda})^\Gamma$ consisting of elements $T=(T_\lambda)_\lambda$ (writing $T_\lambda = (T_{\lambda,x,y})_{x,y\in X_\lambda}$) satisfying the following conditions:
\begin{enumerate}
  \item $\sup_{\lambda \in \Lambda}\ppg_{P}(T_{\lambda})<\infty$;\\[0.05cm]
  \item for $\lambda \in \Lambda$ and bounded Borel subset $B \subseteq P_r(X_{\lambda})$, both $\chi_B T_\lambda$ and $T_\lambda \chi_B$ belong to $\K(H_{r,\lambda})$;\\[0.05cm]
  \item there exists a compact subset $K$ of $\RR_+\times H$ such that $\supp_{\RR_+ \times H} (T_{\lambda,x,y})$ is contained in $\phi_\lambda(x) \cdot K$ for any $\lambda\in \Lambda$ and $x,y\in X_\lambda$;\\[0.05cm]
  \item there exists an integer $N\in \NN$ such that
  \[
  T_{\lambda,x,y} \in \big(\Id \hat{\otimes}\beta_N(\phi_\lambda(x))\big)\left((\K(\H_{y,r,\lambda},\H_{x,r,\lambda}) \otimes B_\lambda)~\hat{\otimes}~\Ac(W_N(\phi_\lambda(x)))\right)
  \]
  for any $\lambda\in \Lambda$ and $x,y\in X_\lambda$;\\[0.05cm]
  \item there exists $c>0$ such that if $T_{\lambda,x,y}=\big(\Id \hat{\otimes}\beta_N(\phi_\lambda(x))\big)(T'_{\lambda,x,y})$ then for any $Y=(s,h)\in \RR\times W_N(\phi_\lambda(x))$ with norm $1$, the derivative $D_Y(T'_{\lambda,x,y})$ in the direction of $Y$ of the function
\[
T'_{\lambda,x,y}:\RR\times W_N(\gamma)\rightarrow \left(\K(\H_{y,r,\lambda},\H_{x,r,\lambda}) \otimes B_\lambda\right)~\hat{\otimes}~ \Cliff(W_N^0(\phi_\lambda(x)))
\]
exists and $\|D_Y(T'_{\lambda,x,y})\|\leq c$ for all $x,y\in X_\lambda$.
\end{enumerate}
Define $\A^r((X_\lambda); (B_\lambda \hat{\otimes} \Ac(V)))^\Gamma$ to be the norm closure of $\A^r[(X_\lambda); (B_\lambda \hat{\otimes} \Ac(V))]^\Gamma$ in $\prod_{\lambda \in \Lambda} \L(H_{r,\lambda})$.

Also define $\A^r_L[(X_\lambda); (B_\lambda \hat{\otimes} \Ac(V))]^\Gamma$ to be the collection of uniformly continuous bounded functions $(T_t)$ from $[1,\infty)$ to $\A^r[(X_\lambda); (B_\lambda \hat{\otimes} \Ac(V))]^\Gamma$ such that the $P$-propagation of $(T_t)$ tends to zero as $t\to +\infty$ and the parameters in condition (3), (4) and (5) above can be chosen to be independent of $t$. Define $\A^r_L((X_\lambda); (B_\lambda \hat{\otimes} \Ac(V)))^\Gamma$ to be the norm closure of $\A^r_L[(X_\lambda); (B_\lambda \hat{\otimes} \Ac(V))]^\Gamma$.
\end{defn}


We aim to prove the following result:

\begin{prop}\label{prop:local isomorphism with A(V)}
The homomorphism
\[
\ev_\ast: \lim_{r\to \infty} K_\ast \big(\A^r_L\big((X_\lambda); (B_\lambda \hat{\otimes} \Ac(V))\big)^\Gamma\big) \longrightarrow \lim_{r\to \infty} K_\ast \big(\A^r\big((X_\lambda); (B_\lambda \hat{\otimes} \Ac(V))\big)^\Gamma \big)
\]
induced by the evaluation-at-one map is an isomorphism for $\ast=0,1$.
\end{prop}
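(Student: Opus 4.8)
The strategy is to apply the Mayer--Vietoris and cutting machinery of Section \ref{sec:local isom} once more, this time in the direction of the proper coefficient algebra $\Ac(V)$. The crucial input is Lemma \ref{lem:A(V) is proper}, that $\Ac(V)$ is proper over $\RR_+ \times H$; note that $\RR_+ \times H$ is a locally compact proper $\Gamma$-space, properness of the affine action being a consequence of $\|b(\gamma)\|\to\infty$. Morally, properness lets us chop $\RR_+\times H$ into bounded, mutually separated pieces over which the $\Ac(V)$-twisted algebras degenerate to ordinary equivariant Roe and localisation algebras of cocompact proper $\Gamma$-spaces with proper coefficients, where the evaluation-at-one map is a $K$-theory isomorphism by the Baum--Connes conjecture with proper coefficients. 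This is precisely the half of the argument that does not require a-T-menability; the a-T-menability of $\Gamma$ enters only later, through the Bott--Dirac homotopy relating the $\Ac(V)$-twisted algebras to the untwisted ones.

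First I would filter $\Ac^r\big((X_\lambda);(B_\lambda\hat{\otimes}\Ac(V))\big)^\Gamma$ by the integer $N$ of Definition \ref{defn:double twisted Roe}(4), letting $\Ac^r_{\le N}$ be the closure of the subalgebra of elements for which $N$, together with the constants appearing in conditions (3) and (5), may be chosen bounded by $N$. Since $\bigcup_N\Ac^r_{\le N}$ is dense and $K$-theory is continuous, $K_*\big(\Ac^r((X_\lambda);(B_\lambda\hat{\otimes}\Ac(V)))^\Gamma\big)=\lim_N K_*(\Ac^r_{\le N})$, and similarly for the localisation algebra, so it suffices to work at a fixed level $N$. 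At level $N$ the coefficient of each matrix entry lives on the affine subspace $\RR\times W_N(\phi_\lambda(x))$, and left-invariance of $d_\Gamma$ gives $\{\gamma' : d_\Gamma(\gamma',\gamma)\le N^2\}=\gamma\cdot B(1_\Gamma,N^2)$, so $\dim W_N(\gamma)\le |B(1_\Gamma,N^2)|-1$ uniformly in $\gamma$. Together with the Lipschitz bound in condition (5), this places us in exactly the finite-dimensional Euclidean twisted-algebra setting underlying \cite[Section 12.4]{willett2020higher} and Section \ref{sec:local isom}.

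Next, for sequences of closed subsets $(C_\lambda)$ of $\RR_+\times H$ sitting inside a common $\Gamma$-translate of a fixed set, I would introduce the corresponding ideals $\Ac^r_{(C_\lambda)}$ and $\Ac^r_{L,(C_\lambda)}$ together with their ``$s\to\infty$ vanishing'' subideals, verify that the latter have trivial $K$-theory by an Eilenberg swindle as in Lemma \ref{lem:K-zero for tend to zero algebra}, and establish the six-term Mayer--Vietoris sequences (intersection $\leftrightarrow$ product of ideals, union $\leftrightarrow$ sum of ideals), compatibly with evaluation-at-one, exactly as in Lemma \ref{lem:M-V preparation for twisted ideals} and Proposition \ref{prop:M_V for twisted algebra}. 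Using a decomposition of the relevant region of $\RR_+\times H$ into finitely many families $\{B(\gamma\cdot y_0;R_0)\}_{\gamma}$ of mutually $3$-separated balls --- the analogue of Lemma \ref{lem:decomposition}, available since the $\Gamma$-action on $\RR_+\times H$ is proper --- finitely many applications of Mayer--Vietoris and the Five Lemma reduce the statement to the \emph{local case}, in which the $(\RR_+\times H)$-supports of all matrix entries are contained in $\phi_\lambda(x)\cdot B(y_0;R_0)$ for a single $y_0$ and a fixed radius $R_0$. There the relevant piece is $\Gamma$-cocompact, and the restricted algebra is, up to the usual stabilisation, the equivariant Roe algebra $C^*(Z)^\Gamma$ (resp.\ localisation algebra $C^*_L(Z)^\Gamma$) of a cocompact proper $\Gamma$-space $Z$ with coefficients in the still-proper $\Gamma$-$C^*$-algebra $B_\lambda\hat{\otimes}\Ac(V)$. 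For such cocompact $Z$ the evaluation-at-one map agrees, after identifying the $K$-theory of the localisation algebra with equivariant $K$-homology (the coefficient analogue of Proposition \ref{prop: equiv localisation alg = K-homology}) and the equivariant Roe algebra with a reduced crossed product up to Morita equivalence, with a family version of the Baum--Connes assembly map with proper coefficients, hence is an isomorphism for arbitrary $\Gamma$; uniformity of this isomorphism over $\lambda\in\Lambda$ then lets one reassemble the pieces and pass to the direct limits, concluding the proof.

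The main obstacle I anticipate is bookkeeping rather than a single conceptual difficulty: one must check carefully that at each fixed level $N$ the five conditions of Definition \ref{defn:double twisted Roe} really do reduce to the finite-dimensional twisted-algebra axioms, so that the Mayer--Vietoris lemmas of Section \ref{sec:local isom} carry over verbatim --- in particular that the cutting functions $\chi_{\Nd_\delta(\cdot)}$ preserve the derivative bound in condition (5) and can be chosen $\Gamma$-equivariantly and uniformly over $\lambda\in\Lambda$, and that the support constraint (3) is respected throughout. The second delicate point is the final identification of the local pieces with equivariant Roe algebras of cocompact $\Gamma$-spaces and the invocation of a \emph{family} (uniform-in-$\lambda$) version of Baum--Connes with proper coefficients, arranged naturally enough to survive $\lim_{r\to\infty}$ and the assembly over the infinite index set $\Lambda$.
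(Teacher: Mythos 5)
Your high-level strategy is the same as the paper's: use properness of $\Ac(V)$ over $\RR_+\times H$ to run a Mayer--Vietoris argument in the coefficient direction, pass to ideals supported on $\Gamma$-translates of bounded regions, and reduce to a local (cocompact-in-$X$, bounded-support-in-$\RR_+\times H$) case where the evaluation-at-one map can be shown to be a $K$-isomorphism directly. The ideal calculus you sketch (ideals from $\Gamma$-invariant open subsets of $\RR_+\times H$, $K$-compatible Mayer--Vietoris) is essentially what the paper uses, and your observation that this half of the argument does not depend on a-T-menability is correct. However, two of the implementation choices in your proposal diverge from the paper in ways that matter, and one of them is a genuine gap.

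First, the decomposition. You propose chopping $\RR_+\times H$ into finitely many families of mutually $3$-separated balls $\{B(\gamma\cdot y_0;R_0)\}_\gamma$, by analogy with Lemma \ref{lem:decomposition}. But on $\RR_+\times H$ this is the wrong tool: points can have nontrivial finite stabilisers, so the family $\{\gamma\cdot U\}_{\gamma\in\Gamma}$ around a fixed $y_0$ is not a disjoint family indexed by $\Gamma$, and the equivariant partition-of-unity/cutting argument does not go through as stated. The paper instead uses \emph{slices} $O=\Gamma\cdot U=\Gamma\times_F U=\bigsqcup_i\gamma_i U$, which are exactly the $\Gamma$-orbits of $F$-invariant precompact open sets; the intersection of two slices is again a slice, so finitely many slices cover $\Gamma\cdot B_n$ and Mayer--Vietoris reduces everything to a single slice. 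The slice structure is what drives Lemma \ref{lem:local isomorphism with A(V) slice case}: the explicit $\ast$-isomorphism $\Psi$ between the $\Gamma$-invariant twisted algebra with supports in $O$ and the $F$-invariant twisted algebra over a ball $F\cdot B(x_\lambda,R)$ with supports in $U$ is built precisely out of the slice decomposition and relies essentially on the elements being determined by their restriction to a single translate $\gamma_{i_0}U$. Your ball decomposition would have to be repaired to account for stabilisers, and once repaired it becomes the slice decomposition.

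Second, and more seriously, the endpoint. You reduce the local case to ``$C^*(Z)^\Gamma$ of a cocompact proper $\Gamma$-space $Z$ with proper coefficients'' and then invoke ``a family version of the Baum--Connes assembly map with proper coefficients, hence an isomorphism for arbitrary $\Gamma$.'' This is exactly the assertion that still needs to be proved --- uniformly over $\lambda\in\Lambda$ --- and you cannot cite it as a black box here without circularity (in this paper, Proposition \ref{prop:local isomorphism with A(V)} and its surrounding machinery \emph{is} the family version of Green--Julg-type properness). The paper instead does something more concrete: after passing to the $F$-invariant picture via $\Psi$, it uses $F$-equivariant Lipschitz homotopy to collapse $P_r(F\cdot B(x_\lambda,R))$ to a single simplex $\Delta_\lambda$ and then to its barycentre $\{x_\lambda^0\}$, and at a point the desired local isomorphism follows from \cite[Lemma 12.4.3]{willett2020higher}. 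That last citation is the concrete, already-available input; a generic appeal to Baum--Connes with proper coefficients for arbitrary groups would need a separate uniform-in-$\lambda$ formulation and proof. Finally, a minor point: the $s\to\infty$ vanishing subideals and the Eilenberg swindle are not needed in this part of the argument --- the Mayer--Vietoris here is for ideals of the coefficient algebra coming from open subsets of $\RR_+\times H$, which sum and intersect correctly without the swindle.
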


The proof, has its root in \cite[Section 6]{Yu00}, relies on a Mayer-Vietoris argument on the proper $C^*$-algebra $\Ac(V)$ to chop the twisted algebras into smaller pieces. First we need to introduce some notation.

Given a $\Gamma$-invariant open subset $O \subseteq \RR_+ \times H$, we denote
\[
\Ac(V)_O:=\overline{C_0(O) \Ac(V)},
\]
which is a two-sided $\ast$-ideal in $\Ac(V)$. Define $\A^r[(X_\lambda); (B_\lambda \hat{\otimes} \Ac(V)_O)]^\Gamma$ to be the subalgebra of $\A^r[(X_\lambda); (B_\lambda \hat{\otimes} \Ac(V))]^\Gamma$ consisting of $T=(T_{\lambda,x,y})$ such that $\supp_{\RR_+ \times H} (T_{\lambda,x,y}) \subseteq O$, \emph{i.e.}, $T_{\lambda,x,y} \in \K(\H_{y,r,\lambda},\H_{x,r,\lambda}) \otimes B_\lambda ~\hat{\otimes}~ \Ac(V)_O$ for any $\lambda \in \Lambda$ and $x,y\in X_\lambda$. Define $\A^r((X_\lambda); (B_\lambda \hat{\otimes} \Ac(V)_O))^\Gamma$ to be the norm closure of $\A^r[(X_\lambda); (B_\lambda \hat{\otimes} \Ac(V)_O)]^\Gamma$.
It is clear that $\A^r((X_\lambda); (B_\lambda \hat{\otimes} \Ac(V)_O))^\Gamma$ is a two-sided $\ast$-ideal in $\A^r((X_\lambda); (B_\lambda \hat{\otimes} \Ac(V)))^\Gamma$.

Similarly, define $\A_L^r((X_\lambda); (B_\lambda \hat{\otimes} \Ac(V)_O))^\Gamma$ to be the norm closure of the subalgebra in $\A^r_L[(X_\lambda); (B_\lambda \hat{\otimes} \Ac(V))]^\Gamma$ consisting of elements $T=(T_{t,\lambda,x,y})$ such that $\supp_{\RR_+ \times H} (T_{t,\lambda,x,y}) \subseteq O$ for any $\lambda \in \Lambda$, $t\in [1,\infty)$ and $x,y\in X_\lambda$. It is also clear that $\A_L^r((X_\lambda); (B_\lambda \hat{\otimes} \Ac(V)_O))^\Gamma$ is a two-sided $\ast$-ideal in $\A_L^r((X_\lambda); (B_\lambda \hat{\otimes} \Ac(V)))^\Gamma$.



Concerning the proper $\Gamma$-action on $\RR_+ \times H$, it is known (see, \emph{e.g.}, \cite[Appendix A.2]{willett2020higher}) that  for each $(t,v) \in \RR_+ \times H$, there exists an open precompact neighbourhood $U$ of $(t,v)$ such that
\[
\Gamma \cdot U = \Gamma \times_F U := \bigsqcup_{i \in I} \gamma_i \cdot U,
\]
where $F$ is the stabiliser of $(t,v)$ (which is a finite subgroup since the action is proper) and $\gamma_i$ runs over a set of representatives of the left cosets $\{\gamma F: \gamma \in \Gamma\}$. In this case, we say that $\Gamma \cdot U$ is a \emph{slice} of the action.

The following result is a key step to attack Proposition \ref{prop:local isomorphism with A(V)}.

\begin{lem}\label{lem:local isomorphism with A(V) slice case}
For a slice $O=\Gamma \cdot U$, we have that
\[
\ev_*: \lim_{r\to \infty} K_*\left(\A_L^r\big((X_\lambda); (B_\lambda \hat{\otimes} \Ac(V)_O)\big)^\Gamma\right) \longrightarrow \lim_{r\to \infty} K_*\left(\A^r\big((X_\lambda); (B_\lambda \hat{\otimes} \Ac(V)_O)\big)^\Gamma\right)
\]
is an isomorphism for $\ast = 0,1$.
\end{lem}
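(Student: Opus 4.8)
The plan is to reduce the slice case to a computation on a single finite subgroup, where both sides become ordinary equivariant Roe and localisation algebras of a proper cocompact space, on which the evaluation map is known to be an isomorphism by Proposition \ref{prop: equiv localisation alg = K-homology} together with the standard fact that for a proper cocompact $\Gamma$-space the evaluation-at-one map $\ev_\ast\colon K_\ast(C^*_L(Z)^\Gamma)\to K_\ast(C^*(Z)^\Gamma)$ computes the assembly map, which is an isomorphism for cocompact actions. More precisely, let $O=\Gamma\cdot U=\Gamma\times_F U$ be the given slice, with $F$ the (finite) stabiliser of the base point $(t,v)\in\RR_+\times H$. The key structural observation is that an element $T=(T_{\lambda,x,y})$ with all $(\RR_+\times H)$-supports contained in $O$ is forced, by $\Gamma$-invariance and the ``$\phi_\lambda(x)\cdot K$'' support condition (3) in Definition \ref{defn:double twisted Roe}, to be determined by its restriction to the single coset piece $U$; that is, the matrix entry $T_{\lambda,x,y}$ lives (after translating by $\phi_\lambda(x)^{-1}$) in $\K(\cdots)\otimes B_\lambda\,\hat{\otimes}\,\Ac(V)_U$, and $\Ac(V)_U$ is an $F$-algebra. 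First I would make this precise: show that restriction to $U$ gives a $\ast$-isomorphism
\[
\A^r\big((X_\lambda);(B_\lambda\hat{\otimes}\Ac(V)_O)\big)^\Gamma \;\cong\; \A^r\big((X_\lambda);(B_\lambda\hat{\otimes}\Ac(V)_U)\big)^F_{\mathrm{res}},
\]
where on the right one only remembers the $F$-action on $U$ and on $X_\lambda$ restricted appropriately, and similarly for the localisation algebras; the inverse is induced-up along $\Gamma\times_F(-)$. Because $F$ is finite and $\Ac(V)_U$ is proper over the precompact $F$-space $U$, the right-hand side is (after a stabilisation/Morita move absorbing the $\HH$ and $\ell^2(\Gamma)$ factors) a genuine equivariant Roe algebra $C^*(Z_\lambda)^F$ of a proper cocompact $F$-space $Z_\lambda$ with coefficients in a proper $F$-$C^*$-algebra, and likewise $C^*_L(Z_\lambda)^F$ on the localisation side.

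The second step is to handle the direct limit over $r$ and the ``product over $\Lambda$'' structure. For each fixed $r$ and each $\lambda$ the evaluation map $\ev_\ast\colon K_\ast(C^*_L(Z_\lambda)^F)\to K_\ast(C^*(Z_\lambda)^F)$ is an isomorphism by Proposition \ref{prop: equiv localisation alg = K-homology} and the Morita equivalence of $C^*(Z_\lambda)^F$ with $C_r^*(F)$-type algebras (here the coefficient $\Ac(V)_U$ only contributes a KK-equivalence since it is proper over the contractible-along-Dirac-homotopy space $U$, or more simply since $U$ is precompact and $F$ finite so $\Ac(V)_U\rtimes F$ is type I); the point is that \emph{uniformity in $\lambda$} of the relevant homotopies and control functions must be tracked, which is why the controlled-distortion hypothesis entered in the choice of the uniformly coarsely equivalent family $\{\phi_\lambda\}$. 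Once one has the isomorphism at the level of each $\lambda$ with estimates uniform in $\lambda$, passing to the restricted product and then to $\lim_{r\to\infty}$ is formal: $K$-theory commutes with the inductive limit over $r$, and the ``space for speed'' mechanism from \cite[Chapter 12]{willett2020higher} allows one to absorb the non-uniformity that survives into an increase of the Rips parameter $r$, exactly as in the stacking argument already used in the proof of Proposition \ref{prop:index maps}.

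Concretely the steps in order are: (i) identify $\A^r((X_\lambda);(B_\lambda\hat{\otimes}\Ac(V)_O))^\Gamma$ with an induced-up algebra from the finite group $F$, and the same for the localisation version, compatibly with $\ev$; (ii) observe that over $F$ the coefficient algebra $\Ac(V)_U$ is KK-trivial (proper over a precompact space, $F$ finite), so the relevant algebras are KK-equivalent to the uncoefficiented equivariant Roe/localisation algebras of the proper cocompact $F$-spaces $Z_\lambda=P_r(X_\lambda)$; (iii) apply Proposition \ref{prop: equiv localisation alg = K-homology} (equivariant localisation $\cong$ $K$-homology) and the Morita picture to get $\ev_\ast$ an isomorphism for each $\lambda$ and each $r$; (iv) control uniformity in $\lambda$ using the uniformly coarse family $\{\phi_\lambda\}$ coming from controlled distortion, and assemble over the restricted product; (v) take $\lim_{r\to\infty}$, using continuity of $K$-theory and the space-for-speed trick to swallow the remaining non-uniform homotopies. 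The main obstacle I expect is step (iv): making the $K$-homology/assembly isomorphism \emph{uniform} over the infinite family $\{\lambda\}$, since the spaces $X_\lambda$ have uniformly bounded but genuinely varying geometry and the homotopies witnessing $\ev_\ast^{-1}$ are a priori $\lambda$-dependent; the resolution is precisely the stacking/space-for-speed argument, enlarging $r$ to convert $\lambda$-dependent speed of convergence into uniform statements, so that no single step of the limit requires a uniform bound that fails.
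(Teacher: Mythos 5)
Your high-level strategy matches the paper's: use the slice structure $O = \Gamma \times_F U$ to reduce the $\Gamma$-equivariant twisted algebras to $F$-equivariant ones over the finite stabiliser $F$, then exploit cocompactness and uniformity over $\lambda$. That first move (your step (i)) is exactly what the paper does, via an explicit induction map $\Psi_R$ from $F$-invariant operators supported on $U$ over $F\cdot B(x_\lambda,R)$ to $\Gamma$-invariant operators supported on $O$ over $X_\lambda$, shown to be an isometric $\ast$-isomorphism after taking $\lim_{R\to\infty}$. You correctly flag the well-definedness of this map and uniformity in $\lambda$ (controlled via the uniformly coarse family $\{\phi_\lambda\}$) as the substantive issues — but your outline treats the $\Psi_R$ verification as a one-liner, whereas checking $\Gamma$-invariance, the support condition (3), and isometry is where most of the actual proof lives.

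The genuine gap is in your step (ii). The claim that $\Ac(V)_U$ is ``KK-trivial'' (or, as you later seem to mean, $KK^F$-equivalent to $\CC$) because it is proper over the precompact $F$-space $U$ with $F$ finite is unjustified and not true in the generality you need: properness over a compact space for a finite group action does not yield KK-triviality or a KK-equivalence with scalars. (Were the coefficient genuinely KK-trivial, the lemma would be vacuous; were it KK-equivalent to $\CC$, that would already be a Bott-periodicity-type statement, which is precisely what the Higson--Kasparov--Trout machinery exists to establish and is not free.) The paper does not simplify the coefficient at all. Instead, after the slice reduction it sends $r\to\infty$ so that $P_r(F\cdot B(x_\lambda,R))$ becomes a single $F$-invariant simplex $\Delta_\lambda$, shows $\{\Delta_\lambda\}$ is $F$-equivariantly uniformly strongly Lipschitz homotopy equivalent to the barycentric points $\{x_\lambda^0\}$ (following the argument of Proposition 3.7 in \cite{Yu97}), and then cites \cite[Lemma 12.4.3]{willett2020higher}, which asserts the evaluation map is an isomorphism for $F$-equivariant Roe versus localisation algebras of families of \emph{points} with \emph{arbitrary} coefficients — the $B_\lambda\hat{\otimes}\Ac(V)_U$ are carried through untouched. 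So the geometric collapse to points, not a KK-simplification of $\Ac(V)_U$, is what does the work; you should replace step (ii) accordingly and drop the appeal to Morita equivalence with $C^*_r(F)$ and to type-I-ness of $\Ac(V)_U\rtimes F$, neither of which is needed or established.
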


\begin{proof}
By assumption, we write
\[
O=\Gamma \cdot U = \bigsqcup_{i \in I} \gamma_i \cdot U
\]
where $U$ is an open precompact subset in $\RR_+ \times H$, $F$ is a finite subgroup such that $F \cdot U = U$, and $\gamma_i$ runs over a set of representatives of the left cosets $\{\gamma F: \gamma \in \Gamma\}$. Fix $i_0 \in I$ such that $\gamma_{i_0} U=U$ (\emph{i.e.}, $\gamma_{i_0} \in F$).

Given $T=(T_{\lambda,x,y}) \in \A^r[(X_\lambda); (B_\lambda \hat{\otimes} \Ac(V)_O)]^\Gamma$, we write $T_{\lambda,x,y} = (T_{\lambda,x,y,i})_{i\in I}$ where $T_{\lambda,x,y,i} = T_{\lambda,x,y} \cdot \chi_{\gamma_i U}$. For $\gamma \in \Gamma$, we have $\gamma \cdot T_{\lambda,x,y} = T_{\lambda,\gamma x,\gamma y}$. Moreover, each $\gamma$ induces a permutation $\sigma_\gamma: I \to I$ such that $\gamma \gamma_i U = \gamma_{\sigma_\gamma(i)} U$. Hence we obtain:
\begin{equation}\label{EQ:reduction}
T_{\lambda,x,y,\sigma_\gamma^{-1}(i)} = \gamma^{-1} \cdot T_{\lambda,\gamma x, \gamma y, i} \quad  \mbox{for} \quad  i\in I.
\end{equation}
Therefore, $T_{\lambda,x,y}$ can be determined by the set $\{T_{\lambda,\gamma x, \gamma y, i_0}: \gamma\in \Gamma \}$.

Recall that there exists a compact $K \subset \RR_+ \times H$ such that $\supp_{\RR_+ \times H} (T_{\lambda,x,y})$ is contained in $\phi_\lambda(x) \cdot K$. Since $K$ is bounded and the action is proper, there exists a finite subset $I_0 \subset I$ such that $K\cap (\gamma_i U) = \emptyset$ only for $i \notin I_0$. Hence
\[
\supp_{\RR_+ \times H} (T_{\lambda,x,y}) ~\subseteq~ \bigsqcup_{i\in I_0} ~\phi_\lambda(x)\gamma_i \cdot  U \quad \mbox{~for~any~}  x,y\in X_\lambda.
\]
Assuming that $T_{\lambda,\gamma x, \gamma y, i_0} \neq 0$, then there exists $i\in I_0$ such that $\gamma_{i_0} \in \phi_\lambda(\gamma x) \gamma_i F$. Recall that $\phi_\lambda(\gamma x) x_\lambda$ is uniformly close to $\gamma x$ and $T_\lambda$ has uniformly finite propagation, hence there exists $R>0$ such that $\gamma x, \gamma y \in B(x_\lambda,R)$. In conclusion, we obtain that $T_{\lambda,x,y}$ can be determined by the set $\{T_{\lambda,x', y', i_0}: x',y' \in B(x_\lambda,R)\}$.

For $R>0$, define $\A^r((F \cdot B(x_\lambda,R)); (B_\lambda \hat{\otimes} \Ac(V)_U))^F$ and $\A^r_L((F \cdot B(x_\lambda,R)); (B_\lambda \hat{\otimes} \Ac(V)_U))^F$ similar to those in Definition \ref{defn:double twisted Roe} except that here we only require operators are $F$-invariant (instead of $\Gamma$-invariant) and their $(\RR_+ \times H)$-supports are in $U$. Also define a map
\[
\Psi_R: \A^r[(F \cdot B(x_\lambda,R)); (B_\lambda \hat{\otimes} \Ac(V)_U)]^F \longrightarrow \A^r[(X_\lambda); (B_\lambda \hat{\otimes} \Ac(V)_O)]^\Gamma
\]
by setting the image of $T=(T_{\lambda, x',y'})$ to be
\[
\Psi_R(T)_{\lambda,x,y,i}:=
\begin{cases}
	~\gamma^{-1} \cdot T_{\lambda,\gamma x, \gamma y}, & \mbox{if~} \exists~ \gamma \in \Gamma: \gamma x, \gamma y \in F \cdot B(x_\lambda,R) \mbox{~and~} \sigma_\gamma(i) = i_0;  \\[0.3cm]
	~0, & \mbox{otherwise},
	\end{cases}
\]
inspired by Equation (\ref{EQ:reduction}).
We claim that $\Psi_R(T)$ is well-defined. In fact, if there exists another $\gamma' \in \Gamma$ such that $\sigma_{\gamma'}(i) = i_0$ and $\gamma' x, \gamma' y \in F \cdot B(x_\lambda,R)$, then it follows from the definition that
\[
\gamma \gamma_i \cdot U = U = \gamma' \gamma_i \cdot U,
\]
which implies that $\gamma' = g \gamma$ for some $g\in F$. Note that in this case $\gamma x \in F \cdot B(x_\lambda,R)$ implies $\gamma' x \in F \cdot B(x_\lambda,R)$ as well. Moreover, the $F$-invariance of $T$ implies that
\[
(\gamma')^{-1} \cdot T_{\lambda, \gamma' x, \gamma' y} = \gamma^{-1} g^{-1} \cdot T_{\lambda, g \cdot \gamma x, g \cdot \gamma y} = \gamma^{-1} \cdot T_{\lambda, \gamma x, \gamma y}.
\]
Hence we conclude the claim.

Now we check that condition (1)-(5) in Definition \ref{defn:double twisted Roe} holds for $\Psi_R(T)$. First note that $\Psi_R(T)_{\lambda,x,y} \neq 0$ implies that there exists $\gamma \in \Gamma$ such that $\gamma x, \gamma y \in F\cdot B(x_\lambda,R)$. This implies that $\Psi_R(T)$ has uniformly bounded propagation. For the $\Gamma$-invariance property, we need to check that
\[
\hat{\gamma} \cdot \Psi(T)_{\lambda,x,y,\sigma_{\hat{\gamma} }^{-1}(i)} = \Psi(T)_{\lambda,\hat{\gamma}  x, \hat{\gamma}  y,i} \mbox{~for~any~} \hat{\gamma}  \in \Gamma, x,y\in X_\lambda \mbox{~and~} i \in I.
\]
By definition, taking $\gamma \in \Gamma$ such that $\gamma x, \gamma y \in F \cdot B(x_\lambda,R)$ and $\sigma_\gamma(\sigma_{\hat{\gamma} }^{-1}(i)) = i_0$ then
\[
\Psi(T)_{\lambda,x,y,\sigma_{\hat{\gamma} }^{-1}(i)} = \gamma^{-1} \cdot T_{\lambda, \gamma x, \gamma y}.
\]
Note that $\sigma_\gamma(\sigma_{\hat{\gamma} }^{-1}(i)) = i_0$ implies that $\gamma \hat{\gamma}^{-1} \gamma_i \in F$. Since $T$ is $F$-invariant, we obtain
\begin{equation}\label{EQ: key lemma 1}
\gamma_i^{-1} \hat{\gamma} \cdot \Psi(T)_{\lambda,x,y,\sigma_{\hat{\gamma} }^{-1}(i)} = \gamma_i^{-1}\hat{\gamma} \gamma^{-1} \cdot T_{\lambda, \gamma x, \gamma y} = T_{\lambda, \gamma_i^{-1}\hat{\gamma} x, \gamma_i^{-1}\hat{\gamma} y}.
\end{equation}
On the other hand, $\gamma_i^{-1} \hat{\gamma} x= (\gamma_i^{-1} \hat{\gamma} \gamma^{-1}) \gamma x \in F \cdot B(x_\lambda, R)$ and similarly $\gamma_i^{-1} \hat{\gamma} y \in F \cdot B(x_\lambda, R)$. Also note that $\sigma_{\gamma_i}(i_0) = i$, hence we have:
\begin{equation}\label{EQ: key lemma 2}
\Psi(T)_{\lambda,\hat{\gamma}  x, \hat{\gamma}  y,i} = \gamma_i \cdot T_{\lambda, \gamma_i^{-1}\hat{\gamma} x, \gamma_i^{-1}\hat{\gamma} y}.
\end{equation}
Combining Equation (\ref{EQ: key lemma 1}) and (\ref{EQ: key lemma 2}), we conclude that each $\Psi(T)_\lambda$ is $\Gamma$-invariant.

Condition (2) follows from the fact that for each $\lambda,x,y$, there exist only finitely many $\gamma \in \Gamma$ such that $\gamma x, \gamma y \in F\cdot B(x_\lambda, R)$. This implies that there exists only finitely many $i\in I$ such that $\Psi_R(T)_{\lambda,x,y,i} \neq 0$, hence provides condition (2). Condition (4) and (5) hold trivially due to the construction, and finally we check condition (3).

Note that $\Phi_R(T)_{\lambda,x,y,i} \neq 0$ implies that there exists $\gamma \in \Gamma$ such that $\gamma x \in F \cdot B(x_\lambda, R)$ and $\sigma_\gamma(i) = i_0$. Hence we have $\gamma \gamma_i \in F$. Also note that $\gamma \phi_\lambda(x) x_\lambda$ is uniformly close to $\gamma x$, which is uniformly close to $x_\lambda$. Thanks to the uniformly coarse embedding of orbit maps, we obtain that $\gamma \phi_\lambda(x)$ is uniformly close to the identity, which further implies that $\gamma_i$ is uniformly close to $\phi_\lambda(x)$. Hence we obtain:
\[
\supp_{\RR_+ \times H}(\Psi_R(T)_{\lambda,x,y}) \subseteq B(\phi_\lambda(x), R') \cdot U
\]
for some uniform constant $R'>0$. Since $U$ is precompact, there exists a compact subset $K \subset \RR_+ \times H$ such that $B(\phi_\lambda(x), R') \cdot U \subseteq \phi_\lambda(x) \cdot K$ for any $x\in X_\lambda$, which concludes condition (3).

In conclusion, we obtain that $\Psi_R$ is a well-defined map. It is also straightforward to check that $\Psi_R$ is a $\ast$-homomorphism (details are omitted here). Furthermore, we claim that $\Psi_R$ is isometric. Indeed, given $T \in \A^r[(F \cdot B(x_\lambda,R)); (B_\lambda \hat{\otimes} \Ac(V)_U)]^F$ we have
\[
\|\Psi_R(T)\| = \sup_{\lambda\in \Lambda, i\in I} \|(\Psi_R(T)_{\lambda,x,y,i})_{x,y}\|
\]
Direct calculation shows that
\[
\Psi_R(T)_{\lambda,x,y,i} =
\begin{cases}
	~\gamma_i \cdot T_{\lambda, \gamma_i^{-1} x, \gamma_i^{-1} y}, & \mbox{if~} x,y\in \gamma_i F \cdot B(x_\lambda,R);  \\[0.3cm]
	~0, & \mbox{otherwise}
	\end{cases}
\]
It follows that $\|(\Psi_R(T)_{\lambda,x,y,i})\| = \|T_\lambda\|$ for each $i\in I$, which implies that $\|\Psi_R\|$ is isometric. Therefore, the map $\Psi_R$ can be extended to an isometric $\ast$-homomorphism (still denoted by $\Psi_R$)
\[
\Psi_R: \A^r\big((F \cdot B(x_\lambda,R)); (B_\lambda \hat{\otimes} \Ac(V)_U)\big)^F \longrightarrow \A^r\big((X_\lambda); (B_\lambda \hat{\otimes} \Ac(V)_O)\big)^\Gamma.
\]

Taking direct limits, we obtain a homomorphism
\[
\Psi: \lim_{R\to \infty}\A^r\big((F \cdot B(x_\lambda,R)); (B_\lambda \hat{\otimes} \Ac(V)_U)\big)^F \longrightarrow \A^r\big((X_\lambda); (B_\lambda \hat{\otimes} \Ac(V)_O)\big)^\Gamma.
\]
The analysis at the beginning shows that $\Psi$ is surjective, hence a $\ast$-isomorphism. Similarly on the localisation level, we also have a $\ast$-isomorphism
\[
\Psi_L: \lim_{R\to \infty}\A^r_L\big((F \cdot B(x_\lambda,R)); (B_\lambda \hat{\otimes} \Ac(V)_U)\big)^F \longrightarrow \A^r_L\big((X_\lambda); (B_\lambda \hat{\otimes} \Ac(V)_O)\big)^\Gamma.
\]
Note that
\[
\lim_{r\to \infty}\lim_{R\to \infty}\A^r\big((F \cdot B(x_\lambda,R)); (B_\lambda \hat{\otimes} \Ac(V)_U)\big)^F = \lim_{R\to \infty}\lim_{r\to \infty}\A^r\big((F \cdot B(x_\lambda,R)); (B_\lambda \hat{\otimes} \Ac(V)_U)\big)^F
\]
and
\[
\lim_{r\to \infty}\lim_{R\to \infty}\A^r_L\big((F \cdot B(x_\lambda,R)); (B_\lambda \hat{\otimes} \Ac(V)_U)\big)^F = \lim_{R\to \infty}\lim_{r\to \infty}\A^r_L\big((F \cdot B(x_\lambda,R)); (B_\lambda \hat{\otimes} \Ac(V)_U)\big)^F,
\]
since all of these limits are closures of unions of increasing algebras. Hence it suffices to prove that for any $R>0$, the following is an isomorphism for $\ast=0,1$:
\begin{small}
\[
\ev_\ast: \lim_{r\to \infty} K_\ast \left( \A^r_L\big((F \cdot B(x_\lambda,R)); (B_\lambda \hat{\otimes} \Ac(V)_U)\big)^F \right)  \longrightarrow \lim_{r\to \infty} K_\ast \left( \A^r\big((F \cdot B(x_\lambda,R)); (B_\lambda \hat{\otimes} \Ac(V)_U)\big)^F \right).
\]
\end{small}
Also note that for sufficiently large $r$, the Rips complex $P_r(F \cdot B(x_\lambda,R))$ is just a simplex which is $F$-invariant, denoted by $\Delta_\lambda$. Hence it suffices to prove
\[
\ev_\ast: \lim_{r\to \infty} K_\ast \left( \A^r_L\big((\Delta_\lambda); (B_\lambda \hat{\otimes} \Ac(V)_U)\big)^F \right)  \longrightarrow \lim_{r\to \infty} K_\ast \left( \A^r\big((\Delta_\lambda); (B_\lambda \hat{\otimes} \Ac(V)_U)\big)^F \right)
\]
is an isomorphism for $\ast=0,1$. Since $\{\Delta_\lambda\}_{\lambda}$ is $F$-equivariantly uniformly coarsely equivalent to $\{x_\lambda^0\}_\lambda$ where $x_\lambda^0$ is the barycenter of $\Delta_\lambda$, we obtain
\[
K_\ast \left( \A^r\big((\Delta_\lambda); (B_\lambda \hat{\otimes} \Ac(V)_U)\big)^F \right) \cong K_\ast \left( \A^r\big((\{x_\lambda^0\}); (B_\lambda \hat{\otimes} \Ac(V)_U)\big)^F \right).
\]
Also $\{\Delta_\lambda\}_{\lambda}$ is $F$-equivariantly uniformly strongly Lipschitz homotopic equivalent to $\{x_\lambda^0\}_\lambda$, hence a similar argument as for \cite[Proposition 3.7]{Yu97} shows that
\[
K_\ast \left( \A^r_L\big((\Delta_\lambda); (B_\lambda \hat{\otimes} \Ac(V)_U)\big)^F \right) \cong K_\ast \left( \A^r_L\big((\{x_\lambda^0\}); (B_\lambda \hat{\otimes} \Ac(V)_U)\big)^F \right).
\]
Therefore, it remains to check that
\[
\ev_\ast: \lim_{r\to \infty} K_\ast \left( \A^r_L\big((\{x_\lambda^0\}); (B_\lambda \hat{\otimes} \Ac(V)_U)\big)^F \right)  \longrightarrow \lim_{r\to \infty} K_\ast \left( \A^r\big((\{x_\lambda^0\}); (B_\lambda \hat{\otimes} \Ac(V)_U)\big)^F \right)
\]
is an isomorphism. This follows directly from \cite[Lemma 12.4.3]{willett2020higher}, hence we conclude the proof.
\end{proof}

\begin{proof}[Proof of Proposition \ref{prop:local isomorphism with A(V)}]
First we claim that for any bounded open subset $B \subset \RR_+ \times H$, the map
\[
\ev_*: \lim_{r\to \infty} K_*\left(\A_L^r\big((X_\lambda); (B_\lambda \hat{\otimes} \Ac(V)_{\Gamma \cdot B})\big)^\Gamma\right) \longrightarrow \lim_{r\to \infty} K_*\left(\A^r\big((X_\lambda); (B_\lambda \hat{\otimes} \Ac(V)_{\Gamma \cdot B})\big)^\Gamma\right)
\]
is an isomorphism for $\ast=0,1$. In fact, we can use finitely many slices to cover $\Gamma \cdot B$ since $\bar{B}$ is compact. For two slices $W_1=\Gamma \times_{F_1} U_1$ and $W_2=\Gamma \times_{F_2} U_2$, note that
\[
W_1 \cap W_2 = \left( \bigsqcup_{i\in I} \gamma_i \cdot U_1 \right) \cap W_2 =\bigsqcup_{i\in I} \left( \gamma_i U_1  \cap W_2 \right) = \bigsqcup_{i\in I} \gamma_i \cdot \left(  U_1  \cap W_2 \right).
\]
On the other hand for any $\gamma \in F_1$, we have
\[
\gamma \cdot (U_1 \cap W_2) = \gamma U_1 \cap \gamma W_2 = U_1 \cap W_2.
\]
Hence we obtain that
\[
W_1 \cap W_2 = \Gamma \times_{F_1} (U_1 \cap W_2),
\]
which is also a slice. Therefore, Lemma \ref{lem:local isomorphism with A(V) slice case} together with a Mayer-Vietoris argument (similar to \cite[Lemma 6.3]{Yu00}) concludes the claim.

Finally, for each $n\in \NN$ we set $B_n$ to be the open ball in $\RR_+ \times H$ with center $(0,0)$ and radius $n$. From condition (3) in Definition \ref{defn:double twisted Roe}, we obtain:
\[
\A^r\big((X_\lambda); (B_\lambda \hat{\otimes} \Ac(V))\big)^\Gamma = \lim_{n\to \infty} \A^r\big((X_\lambda); (B_\lambda \hat{\otimes} \Ac(V)_{\Gamma \cdot B_n})\big)^\Gamma
\]
and
\[
\A_L^r\big((X_\lambda); (B_\lambda \hat{\otimes} \Ac(V))\big)^\Gamma = \lim_{n\to \infty} \A_L^r\big((X_\lambda); (B_\lambda \hat{\otimes} \Ac(V)_{\Gamma \cdot B_n})\big)^\Gamma.
\]
Hence we finish the proof thanks to the claim above.
\end{proof}

Our next aim is to construct the Bott and Dirac maps. First let us introduce the Bott maps. For each $\lambda \in \Lambda$, we choose a fundamental domain $\D_\lambda \subseteq X_\lambda$ for the $\Gamma$-action on $X_\lambda$ such that $x_\lambda \in \D_\lambda$ and
\[
D=\sup_{\lambda\in \Lambda} \diam(\D_\lambda) < +\infty.
\]
Note that $\D_\lambda$ might not coincide with $\D \cap X_\lambda$ where $\D$ is the fixed fundamental domain from the condition of controlled distortion\footnote{We would like to point out that $\D$ can be modified to satisfy that $\sup_{\lambda \in \Lambda} \diam(\D \cap X_\lambda)$ is finite, whence we can choose $D_\lambda$ to be $\D \cap X_\lambda$. However, this is not necessary for us to continue.}.

\begin{defn}\label{defn:Bott map}
For $t \geq 1$, we define a map
\[
\beta_t: \S ~\hat{\otimes}~ \A^r[ (X_\lambda); (B_\lambda) ]^\Gamma \longrightarrow  \A^r((X_\lambda); (B_\lambda \hat{\otimes} \Ac(V)))^\Gamma
\]
by setting
\[
\beta_t\left( g ~\hat{\otimes}~(T_{\lambda,x,y}) \right)_{\lambda,\gamma x, y}:= \frac{1}{|\Gamma_x|}\sum_{\gamma'\in \Gamma_x}  T_{\lambda,\gamma x, y}~\hat{\otimes}~\beta(\gamma \gamma')(g_t)
\]
for $x\in \D_\lambda, y\in X_\lambda$ and $\gamma\in \Gamma$, where $\Gamma_x$ is the stabiliser at $x$.
\end{defn}

\begin{lem}\label{lem:Bott well-defined}
For each $t \geq 1$, the map $\beta_t$ is well-defined.
\end{lem}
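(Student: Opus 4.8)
The plan is to verify the two things that ``well-defined'' encodes: first, that the right-hand side of the formula in Definition~\ref{defn:Bott map} is independent of the way a point of $X_\lambda$ is written as $\gamma x$ with $x\in\D_\lambda$; and second, that the operator it produces actually lies in $\A^r((X_\lambda);(B_\lambda\hat{\otimes}\Ac(V)))^\Gamma$, i.e.\ is $\Gamma$-invariant and satisfies conditions (1)--(5) of Definition~\ref{defn:double twisted Roe}. For the first point I would argue as follows: if $\gamma_1 x_1=\gamma_2 x_2$ with $x_1,x_2\in\D_\lambda$, then $x_1=x_2=:x$ because a fundamental domain meets each $\Gamma$-orbit exactly once, and $\gamma_1=\gamma_2\eta$ for some $\eta\in\Gamma_x$; reindexing by the bijection $\gamma'\mapsto\eta\gamma'$ of $\Gamma_x$ then turns $\frac1{|\Gamma_x|}\sum_{\gamma'\in\Gamma_x}T_{\lambda,z,y}\hat{\otimes}\beta(\gamma_1\gamma')(g_t)$ into the identical sum written with $\gamma_2$, so the averaging over $\Gamma_x$ is precisely what makes the prescription consistent. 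Since each $\beta(\delta)$ is a $\ast$-homomorphism, every matrix entry of $\beta_t(g\hat{\otimes}T)$ has norm at most $\|g\|\,\|T_{\lambda,z,y}\|$, so the uniform norm estimate for finite-propagation operators on the Hilbert modules $H_{r,\lambda}$ shows $\beta_t$ is a bounded map on all of $\S\hat{\otimes}\A^r[(X_\lambda);(B_\lambda)]^\Gamma$.

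The key step, and where the hypothesis of controlled distortion is used, is to extract constants that are uniform over $\Lambda$. Using that the orbit maps $\{\psi_\lambda\}$ and their chosen coarse inverses $\{\phi_\lambda\}$ form a \emph{uniformly} coarse equivalence, together with $\sup_\lambda\diam(\D_\lambda)=D<\infty$, I would produce $C_0,C_1\ge 0$ such that $d_\Gamma(\phi_\lambda(z),\gamma)\le C_0$ whenever $z=\gamma x$ with $x\in\D_\lambda$ (because $d(z,\gamma x_\lambda)=d(x,x_\lambda)\le D$), and $\Gamma_x\subseteq\{\delta\in\Gamma:d_\Gamma(\delta,1_\Gamma)\le C_1\}$ for all $x\in\D_\lambda$ and $\lambda\in\Lambda$ (because $\gamma'\in\Gamma_x$ forces $d(\gamma'x_\lambda,x_\lambda)\le 2D$); properness of $d_\Gamma$ then also gives $\sup_{\lambda,x\in\D_\lambda}|\Gamma_x|<\infty$. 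Consequently every index $\gamma\gamma'$ in the sum satisfies $d_\Gamma(\gamma\gamma',\phi_\lambda(z))\le C_0+C_1$, and hence $\|b(\gamma\gamma')-b(\phi_\lambda(z))\|$ is bounded by the upper control function of the coarse embedding $b$ evaluated at $C_0+C_1$.

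With these constants in hand, the conditions of Definition~\ref{defn:double twisted Roe} fall out. Conditions (1) and (2) are immediate, since each entry $\beta_t(g\hat{\otimes}T)_{\lambda,z,y}$ is a convex combination of $T_{\lambda,z,y}$ with coefficients in $\Ac(V)$, hence has $P$-propagation no larger than $\ppg_P(T_\lambda)$ and is locally compact because $T_\lambda$ is. For (3) I would recall that $W_0(\delta)=\{b(\delta)\}$ and that, because $X$ anticommutes with the Clifford variable, $\big(X\hat\otimes 1+1\hat\otimes C_{V,\{b(\delta)\}}\big)^2\ge\|\,\cdot-b(\delta)\,\|^2$, so the $(\RR_+\times H)$-support of $\beta(\delta)(g_t)$ lies in the closed ball about $b(\delta)$ of radius equal to the support radius of $g_t$; since the $\Gamma$-action on $\RR_+\times H$ is by affine isometries with $\delta\cdot(0,0)=(0,b(\delta))$, this ball is contained in $\phi_\lambda(z)\cdot K$ where $K$ is a fixed compact ball about $(0,0)$, so (3) holds. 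For (4), choosing $N$ with $N^2\ge C_0+C_1$ makes $b(\gamma\gamma')$ one of the defining vertices of $W_N(\phi_\lambda(z))$, whence $\beta(\gamma\gamma')=\beta_N(\phi_\lambda(z))\circ\beta_{W_N(\phi_\lambda(z)),\{b(\gamma\gamma')\}}$ by transitivity of the structure maps, with $N$ uniform. For (5), the factored entry, viewed as a function on $\RR\times W_N(\phi_\lambda(z))$, is constant in the $X_\lambda$-variables and its derivative in a unit direction is bounded by $\|g_t'\|_\infty\,\|T\|$ by the usual Clifford functional-calculus estimate, again uniformly. Finally, $\Gamma$-invariance would follow by noting that if $z=\gamma x$ with $x\in\D_\lambda$ then $\delta z=(\delta\gamma)x$ with the \emph{same} $x\in\D_\lambda$, so applying $\delta$ entrywise and using the $\Gamma$-invariance of $T$ together with the identity $\delta\,\beta(\mu)=\beta(\delta\mu)$ from Lemma~\ref{lem:commut beta} gives $\delta\cdot\beta_t(g\hat{\otimes}T)_{\lambda,z,y}=\beta_t(g\hat{\otimes}T)_{\lambda,\delta z,\delta y}$. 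For a general $g\in\S$, one runs this on the dense subalgebra of compactly supported differentiable functions, where $\beta_t(g\hat{\otimes}T)$ lands in $\A^r[(X_\lambda);(B_\lambda\hat{\otimes}\Ac(V))]^\Gamma$, and then passes to the norm closure.

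The hard part, as always in this family setting, will be the uniform bookkeeping: one must simultaneously control the stabilisers $\Gamma_x$ (both their size and their distance to $1_\Gamma$) and the discrepancy between $\gamma$ and $\phi_\lambda(\gamma x)$ for \emph{all} $\lambda$ at once. This is exactly the point at which one cannot get away with each orbit map being a coarse equivalence individually and must use that they form a uniformly coarse equivalence --- in other words, it is exactly where controlled distortion is essential. Everything else is an unwinding of Definitions~\ref{defn:direct system of affine space}, \ref{defn:double twisted Roe} and \ref{defn:Bott map}.
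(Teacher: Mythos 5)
Your proposal is correct and follows essentially the same route as the paper: independence of the representative $\gamma x$ via averaging over the stabiliser $\Gamma_x$ and reindexing, uniform control of the discrepancy between $\gamma\gamma'$ and $\phi_\lambda(\gamma x)$ (and of stabiliser sizes) coming from controlled distortion, verification of conditions (1)--(5) of Definition~\ref{defn:double twisted Roe} with these uniform constants, $\Gamma$-invariance via Lemma~\ref{lem:commut beta}, and an approximation of a general $g\in\S$ by compactly supported differentiable functions to pass to the norm closure. The paper organises the approximation slightly differently (choosing $g\approx g_n h_n$ with $g_n$ smooth bounded and $h_n$ even compactly supported at the outset and checking each condition for $\beta_t^{(n)}$), but this is only a cosmetic reordering of the same argument.
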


\begin{proof}
If $\gamma_1 x = \gamma_2 x$, then there exists $\hat{\gamma} \in \Gamma_x$ such that $\gamma_2 = \gamma_1 \cdot \hat{\gamma}$. Hence:
\begin{align*}
\beta_t\left( g \hat{\otimes}(T_{\lambda,x,y}) \right)_{\lambda,\gamma_2 x, y} &= \frac{1}{|\Gamma_x|}\sum_{\gamma'\in \Gamma_x}  T_{\lambda,\gamma_2 x, y}\hat{\otimes}\beta(\gamma_2 \gamma')(g_t) = \frac{1}{|\Gamma_x|}\sum_{\gamma'\in \Gamma_x}  T_{\lambda,\gamma_1 x, y}\hat{\otimes}\beta(\gamma_1 \hat{\gamma} \gamma')(g_t)\\
&= \frac{1}{|\Gamma_x|}\sum_{\gamma''\in \Gamma_x}  T_{\lambda,\gamma_1 x, y}\hat{\otimes}\beta(\gamma_1 \gamma'')(g_t)= \beta_t\left( g \hat{\otimes}(T_{\lambda,x,y}) \right)_{\lambda,\gamma_1 x, y}.
\end{align*}
Hence $\beta_t$ is a well-defined map.

We need to check that the range of $\beta_t$ is contained in $\A^r((X_\lambda); (B_\lambda \hat{\otimes} \Ac(V)))^\Gamma$. Given $g ~\hat{\otimes}~(T_{\lambda,x,y}) \in \S ~\hat{\otimes}~ \A^r[ (X_\lambda); (B_\lambda) ]^\Gamma$, we take a sequence of bounded smooth functions $g_n$ on $\RR$ with bounded derivatives and a sequence of even smooth functions $h_n$ on $\RR$ with range in $[0,1]$, support in $[-n,n]$ and $\|h_n'\| \leq 1$ such that $\|g_n h_n - g\|_\infty \to 0$ as $n\to \infty$. We define $\beta_t^{(n)}\left( g ~\hat{\otimes}~(T_{\lambda,x,y}) \right)$ by setting
\[
\beta_t^{(n)}\left( g ~\hat{\otimes}~(T_{\lambda,x,y}) \right)_{\lambda,\gamma x, y}:= \frac{1}{|\Gamma_x|}\sum_{\gamma'\in \Gamma_x}  T_{\lambda,\gamma x, y}~\hat{\otimes}~\beta(\gamma \gamma')((g_n h_n)_t)
\]
for $x\in \D_\lambda, y\in X_\lambda$ and $\gamma\in \Gamma$. Thanks to the finite propagation of $(T_{\lambda,x,y})$, we have
\[
\left\|\beta_t^{(n)}\left( g ~\hat{\otimes}~(T_{\lambda,x,y}) \right) - \beta_t\left( g ~\hat{\otimes}~(T_{\lambda,x,y}) \right)\right\| \to 0 \quad \mbox{as} \quad n \to \infty.
\]
Hence it suffices to show that $\beta_t^{(n)}\left( g ~\hat{\otimes}~(T_{\lambda,x,y}) \right)$ belongs to $\A^r[(X_\lambda); (B_\lambda \hat{\otimes} \Ac(V))]^\Gamma$.

Let us fix an $n\in \NN$. It is clear that the $P$-propagation of $\beta_t^{(n)}\left( g ~\hat{\otimes}~(T_{\lambda,x,y}) \right)$ is the same as that of $(T_{\lambda,x,y})$, which deduces condition (1). For $\hat{\gamma}, \gamma \in \Gamma$, we have:
\begin{align*}
\beta_t^{(n)}\left( g ~\hat{\otimes}~ (T_{\lambda,x,y}) \right)_{\lambda,\hat{\gamma} \gamma x, \hat{\gamma} y} &= \frac{1}{|\Gamma_x|}\sum_{\gamma'\in \Gamma_x}  T_{\lambda,\hat{\gamma} \gamma x, \hat{\gamma} y} ~\hat{\otimes}~\beta(\hat{\gamma} \gamma \gamma')((g_n h_n)_t)\\
& = \frac{1}{|\Gamma_x|}\sum_{\gamma'\in \Gamma_x} \hat{\gamma} \cdot T_{\lambda,\gamma x, y} ~\hat{\otimes}~ \hat{\gamma}\cdot\left(\beta(\gamma \gamma')((g_n h_n)_t)\right)\\
&= \hat{\gamma} \cdot \left(\beta_t^{(n)}\left( g ~\hat{\otimes}~ (T_{\lambda,x,y}) \right)_{\lambda,\gamma x, y}\right)
\end{align*}
where we use Lemma \ref{lem:commut beta} in the second equation. Hence each $\beta_t^{(n)}\left( g \hat{\otimes}(T_{\lambda,x,y}) \right)_{\lambda}$ is $\Gamma$-invariant. It is also clear that condition (2) holds.

Concerning condition (3): note that $\beta(\gamma \gamma')((g_n h_n)_t) = \beta(\gamma \gamma')((g_n)_t) \cdot \beta(\gamma \gamma')((h_n)_t)$.
Hence the $(\RR_+ \times H)$-support of $\beta_t\left( g ~\hat{\otimes}~(T_{\lambda,x,y}) \right)_{\lambda,\gamma x, y}$ is contained in that of $\beta(\gamma \gamma')(h_n)$. Since $h_n$ is even, it follows from direct calculation that the $(\RR_+ \times H)$-support of $\beta(\gamma \gamma')((h_n)_t)$ is contained in:
\[
[-nt,nt] \times \left\{h\in H: \|h-b(\gamma \gamma')\| \leq nt\right\}.
\]
On the other hand, it is straightforward to check that $\gamma \gamma'$ and $\phi_\lambda(\gamma x)$ are uniformly close. Hence there exists a constant $K_{n,t}>0$ such that
\[
[-nt,nt] \times \left\{h\in H: \|h-b(\gamma \gamma')\| \leq nt\right\} \subseteq \phi_\lambda(\gamma x) \cdot B(0,K_{n,t}),
\]
where $B(0,K_{n,t})=\left\{(s,h) \in \RR_+ \times H: s^2 + \|h\|^2 < K_{n,t}^2\right\}$ with compact closure. Hence we conclude condition (3).

For condition (4), it follows from the above paragraph that $\gamma\gamma'$ is uniformly close to $\phi_\lambda(\gamma x)$. Hence there exists a constant $N>0$ such that $b(\gamma \gamma')$ belongs to $W_N(\phi_\lambda(\gamma x))$. It follows that
\[
\beta(\gamma \gamma')((g_n h_n)_t) = \beta_N(\phi_\lambda(\gamma x)) \circ \beta_{W_N(\phi_\lambda(\gamma x)), W_0(b(\gamma \gamma'))}((g_n h_n)_t),
\]
which concludes condition (4). Finally condition (5) follows from the choice of $g_n$ and $h_n$, hence we finish the proof.
\end{proof}

Applying $\beta_t$ pointwise, we obtain a map on the localisation level:
\[
\beta_{L,t}: \S ~\hat{\otimes}~ \A_L^r[ (X_\lambda); (B_\lambda) ]^\Gamma \longrightarrow  \A_L^r((X_\lambda); (B_\lambda \hat{\otimes} \Ac(V)))^\Gamma.
\]

The following lemma was originally from \cite[Lemma 7.6]{Yu00}, and an equivariant version was proved in \cite[Lemma 5.8]{FW16}. Hence we omit the proof here.

\begin{lem}
The maps $\beta_t, \beta_{L,t}$ can be extended to asymptotic morphisms
\[
\beta: \S ~\hat{\otimes}~ \A^r\big( (X_\lambda); (B_\lambda) \big)^\Gamma \longrightarrow  \A^r\big((X_\lambda); (B_\lambda \hat{\otimes} \Ac(V))\big)^\Gamma
\]
and
\[
\beta_L: \S ~\hat{\otimes}~ \A_L^r\big( (X_\lambda); (B_\lambda) \big)^\Gamma \longrightarrow  \A_L^r\big((X_\lambda); (B_\lambda \hat{\otimes} \Ac(V))\big)^\Gamma
\]
\end{lem}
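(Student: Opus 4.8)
The plan is to verify directly that the family $(\beta_t)_{t\geq 1}$ satisfies the defining properties of an asymptotic morphism --- asymptotic linearity, asymptotic multiplicativity, asymptotic $\ast$-compatibility, and continuity in $t$ --- following the template of \cite[Lemma 7.6]{Yu00} and its equivariant refinement \cite[Lemma 5.8]{FW16}, but with the additional bookkeeping needed to make every estimate uniform over the index set $\Lambda$. The first step is a routine reduction: since each $\beta_t$ is an average over the finite stabilisers $\Gamma_x$ (whose cardinalities are uniformly bounded, by bounded geometry and properness of the action) of compositions of the norm-decreasing $\ast$-homomorphisms $\beta(\gamma)$, one has $\|\beta_t\|\leq 1$ for all $t$. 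Hence it suffices to establish the asymptotic relations on the algebraic $\ast$-subalgebra $\S\hat{\otimes}\A^r[(X_\lambda);(B_\lambda)]^\Gamma$ on which $\beta_t$ is originally defined (and, as in the proof of Lemma~\ref{lem:Bott well-defined}, after first replacing $g\in\S$ by smooth functions with bounded derivatives), after which the maps extend to the $C^*$-completion $\S\hat{\otimes}\A^r((X_\lambda);(B_\lambda))^\Gamma$ and the asymptotic relations pass to norm limits.

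Asymptotic linearity in the $\A^r$-variable is immediate from the formula in Definition~\ref{defn:Bott map}. Asymptotic $\ast$-compatibility follows from the fact that each $\beta(\gamma)$ is a $\ast$-homomorphism; the only subtlety is the asymmetry between the row index (forced into the fundamental domain $\D_\lambda$) and the column index, which for $(T_{\lambda,x,y})^\ast = (T^\ast_{\lambda,y,x})$ produces a Bott factor built from $\gamma'\gamma$ in place of $\gamma\gamma'$. Since the coarse inverses $\phi_\lambda$ are uniformly coarsely equivalent, these two group elements lie within a uniformly bounded $d_\Gamma$-distance, so $\|b(\gamma\gamma')-b(\gamma'\gamma)\|$ is uniformly bounded, and a standard estimate on $\S=C_0(\RR)$ gives $\|\beta(\gamma\gamma')(g_t)-\beta(\gamma'\gamma)(g_t)\|\to 0$ as $t\to\infty$, uniformly in $\lambda$.

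The heart of the argument is asymptotic multiplicativity, i.e.\ for $g\hat{\otimes}S$ and $g'\hat{\otimes}S'$ in the core one must show
\[
\bigl\| \beta_t(g\hat{\otimes}S)\,\beta_t(g'\hat{\otimes}S') - \beta_t\bigl((g\hat{\otimes}S)(g'\hat{\otimes}S')\bigr)\bigr\| \longrightarrow 0 \qquad (t\to\infty),
\]
uniformly in $\lambda\in\Lambda$. Expanding the matrix products, the crucial local computation compares, for a composable pair of entries $S_{\lambda,x,z}$ and $S'_{\lambda,z,y}$ with row index in $\D_\lambda$, a product $\beta(\delta_1)(g_t)\,\beta(\delta_2)(g'_t)$ of the two images against the factor $\beta(\delta_3)\bigl((gg')_t\bigr)$ of the image of the product, where $\delta_1,\delta_2,\delta_3\in\Gamma$. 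By the uniformly finite $P$-propagation of $S$ and $S'$ together with controlled distortion, the three elements $\delta_1,\delta_2,\delta_3$ all lie within a uniformly bounded $d_\Gamma$-distance; hence their cocycle values $b(\delta_i)$ all lie in one finite-dimensional affine subspace $W_N(\cdot)$ for a uniform $N$, and on $W_N(\cdot)$ one invokes the classical Bott estimate --- the spectral analysis of the Clifford multiplier $C_{V_b,V_a}$ from \cite{HKT98, Yu00} --- that $\beta(\delta_1)(g_t)\beta(\delta_2)(g'_t)-\beta(\delta_1)\bigl((gg')_t\bigr)\to 0$ at a rate depending only on $\|b(\delta_1)-b(\delta_2)\|$ and on $g,g'$. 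The averaging over $\Gamma_x$ is harmless, and continuity of $t\mapsto\beta_t(a)$ is clear from the construction.

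The main obstacle is precisely this uniformity over $\lambda\in\Lambda$: one needs a single integer $N$, a single compact $K\subseteq\RR_+\times H$, and a single function of $t$ tending to $0$ that work simultaneously for all $\lambda$. This is where the hypothesis of controlled distortion is used decisively --- the coarse inverses $\{\phi_\lambda\}_{\lambda\in\Lambda}$ being \emph{uniformly} coarsely equivalent is exactly what makes the identification ``$\gamma\gamma'$ is uniformly close to $\phi_\lambda(\gamma x)$'' hold with constants independent of $\lambda$, which in turn both forces membership in $\A^r[(X_\lambda);(B_\lambda\hat{\otimes}\Ac(V))]^\Gamma$ (conditions (3)--(5) of Definition~\ref{defn:double twisted Roe}) and makes the asymptotic-morphism error terms decay uniformly. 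For $\beta_L$ one argues pointwise in the localisation parameter, the uniform-continuity requirement being preserved because all of the estimates above are uniform; the bookkeeping around the derivative bound (condition (5)) under products is handled, as in \cite{Yu00}, by the Leibniz rule together with the Lipschitz bounds already built into the definitions.
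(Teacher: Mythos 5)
Your proposal follows the same route as the paper's cited references (\cite[Lemma 7.6]{Yu00} and \cite[Lemma 5.8]{FW16}), which the paper itself invokes in place of a proof, and you correctly identify the genuinely new ingredient here: uniformity of all estimates over the index set $\Lambda$, furnished by controlled distortion through the uniformly coarsely equivalent orbit maps $\{\phi_\lambda\}_{\lambda\in\Lambda}$. The overall structure is sound.

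Two slips should be corrected. First, the opening claim that each $\beta_t$ is norm-decreasing because it is an average of compositions of norm-decreasing $\ast$-homomorphisms does not literally apply: $\beta_t$ is not a $\ast$-homomorphism on $\S\,\hat{\otimes}\,\A^r[(X_\lambda);(B_\lambda)]^\Gamma$. On an elementary tensor the bound $\|\beta_t(g\,\hat{\otimes}\,T)\|\leq\|g\|\,\|T\|$ does hold (since $\beta_t(g\,\hat{\otimes}\,T)$ factors as a diagonal multiplier of norm at most $\|g\|$, built from the Bott elements $\beta(\gamma\gamma')(g_t)$, times $T\,\hat{\otimes}\,\Id$), but for a general finite sum of elementary tensors the inequality $\|\beta_t(a)\|\leq\|a\|$ is not immediate; it is obtained only asymptotically, as a consequence of the asymptotic relations themselves combined with nuclearity of $\S$, in the standard Connes--Higson manner. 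That is how \cite{Yu00,FW16} extend from the dense subalgebra. Second, in the $\ast$-compatibility check, the comparison you write down --- a Bott factor built from $\gamma'\gamma$ against one built from $\gamma\gamma'$ --- is not the right one: $b(\gamma\gamma')$ and $b(\gamma'\gamma)$ need not be uniformly close, since conjugation in $\Gamma$ is not a bounded operation. What actually must be compared is the Bott factor attached to the row index of $\beta_t(a^\ast)$ with the Bott factor attached to the column index of $\beta_t(a)$ (which becomes the row index of $\beta_t(a)^\ast$): writing the row as $\gamma x$ with $x\in\D_\lambda$ and the column as $\hat\gamma z$ with $z\in\D_\lambda$, the relevant group elements are $\gamma\gamma'$ and $\hat\gamma\gamma''$ for $\gamma'\in\Gamma_x$, $\gamma''\in\Gamma_z$. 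These are uniformly close exactly because, whenever the corresponding matrix entry of $T$ is nonzero, finite $P$-propagation forces $\gamma x$ and $\hat\gamma z$ to be uniformly close, and the uniform coarse equivalence of the orbit maps then transfers this to $\Gamma$. Your invocation of controlled distortion is aimed at precisely this point, so the conceptual picture is right; only the identification of the two group elements being compared needs to be fixed. With these two clarifications the argument coincides with the one the paper outsources to \cite{Yu00,FW16}.
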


Now we move on to the Dirac map. Let $H$ and $V$ be as above. For a finite-dimensional affine subspace $V_a$ in $V$, denote $\L^2(V_a)=L^2(V_a, \Cliff(V_a^0))$ the graded infinite-dimensional Hilbert space of square-integrable maps from $V_a$ to the complex Clifford algebra of $V_a^0$.

For finite-dimensional affine subspaces $V_a\subseteq V_b$ in $V$, recall that we have the orthogonal decomposition
$$V_{ba}^0=V_b^0\ominus V_a^0.$$
Define a unit vector $\xi_0\in \mathcal {L}^2(V_{ba}^0)$ by
$$\xi_0(w)=\pi^{-\frac{\text{dim}V_{ba}}{4}}\exp (-\frac{1}{2}\|w\|^2).$$
Then we regard $\mathcal {L}^2(V_a)$ as a subspace of $\mathcal {L}^2(V_b)$ via the isometric inclusion:
\begin{equation}
\label{sec:5:eq:identity}
\mathcal {L}^2(V_{a})\rightarrow \mathcal {L}^2(V_{ba}^0)~\hat{\otimes}~\mathcal {L}^2(V_{a})\cong \mathcal {L}^2(V_{b}),\quad\xi\mapsto \xi_0~\hat{\otimes}~\xi.
\end{equation}
It is easy to check that the collection
\[
\left\{\mathcal {L}^2(V_a)~\big|~V_a\subseteq V\ \text{is a finite dimensional affine subspace}\right\}
\]
forms a directed system. We define
\[
\mathcal {L}^2(V)=\lim\limits_{\longrightarrow}\mathcal {L}^2(V_a),
\]
where the limit is taken over the above directed system.

Let $\SS(V_a)\subseteq \mathcal {L}^2(V)$ be the subspace of Schwartz functions from $V_a$ to $\Cliff(V_a^0)$. Choosing an orthonormal basis $\{e_1,e_2,\cdots, e_n\}$ for $V_a^0$, let $\{x_1, x_2, \cdots, x_n\}$ be its dual coordinates. The \emph{Dirac operator} $D_{V_a}$ is an unbounded operator on $\mathcal {L}^2(V_a)$ with domain $\mathscr{S}(V_a)$ defined by:
\[
D_{V_a}\xi=\sum_{i=1}^n (-1)^{\text{deg} \xi}\frac{\partial \xi}{\partial x_i}e_i,
\]
where $e_i$ acts by Clifford multiplication. Given a vector $v\in V_a$, the \emph{Clifford operator} $C_{V_a,v}$ is an unbounded operator on $\mathcal {L}^2(V_a)$ with domain $\mathscr{S}(V_a)$ defined by:
\[
(C_{V_a,v}\xi)(w)=(w-v)\cdot \xi(w).
\]

For each $\gamma \in \Gamma$, we define a Schwartz subspace of $\mathcal {L}^2(V)$ by
$$\mathscr{S}(\gamma)=\lim_{\longrightarrow}\mathscr{S}(W_k(\gamma)),$$
where $W_k(\gamma)$ is defined in (\ref{EQ:W_k(gamma)}). Set $V_0(\gamma)=W_1(\gamma)$ and $V_k(\gamma)=W_{k+1}(\gamma)\ominus W_k(\gamma)$ if $k\geq 1$. Then we have an algebraic decomposition:
\[
V=V_0(\gamma)\oplus V_1(\gamma)\oplus\cdots\oplus V_n(\gamma)\oplus\cdots.
\]
For each $n\in \mathbb{N}$ and $t\geq1$, we define an unbounded operator $B_{n,t}(\gamma)$ on $\mathcal {L}^2(V)$ (associated to the above decomposition) given by
\[
B_{n,t}(\gamma)=\sum_{k=0}^{n-1}(1+kt^{-1})D_k+\sum_{k=n}^{\infty}(1+kt^{-1})(D_k+C_k)
\]
where $D_k=D_{V_k(\gamma)}$, $C_0=C_{V_0(\gamma),b(\gamma)}$ and $C_k=C_{V_k(\gamma),0}$.
The operator $B_{n,t}(\gamma)$ is well-defined on the Schwartz space $\mathscr {S}(\gamma)$, which is taken to be its domain.

The $\Gamma$-action on $H$ induce an action on $\mathcal {L}^2(V)$ by unitaries. It is easy to compute that
$$B_{n,t}(\gamma \gamma')=\gamma\cdot B_{n,t}(\gamma')\cdot \gamma^{-1},$$
where $\gamma\in \Gamma$ maps the domain $\mathscr{S}(\gamma')$ of $B_{n,t}(\gamma')$ to the domain $\mathscr{S}(\gamma \gamma')$ of $B_{n,t}(\gamma \gamma')$. This implies the following:

\begin{lem}[{\cite[Lemma 5.1]{FW16}}]
For any $g\in C_0(\RR)$ and $\gamma \in \Gamma$, we have $g(B_{n,t}(\gamma \gamma'))=\gamma g(B_{n,t}(\gamma')) \gamma^{-1}$ for each $\gamma'\in \Gamma$.
\end{lem}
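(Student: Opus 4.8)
The plan is to deduce the statement directly from the conjugation relation $B_{n,t}(\gamma\gamma')=U_\gamma\,B_{n,t}(\gamma')\,U_\gamma^{-1}$ recorded just above the lemma, together with the standard fact that bounded Borel functional calculus of a self-adjoint operator intertwines with unitary conjugation. Here $U_\gamma$ denotes the unitary on $\mathcal{L}^2(V)$ induced by the $\Gamma$-action on $H$; by construction of the action on the directed system $\{\mathcal{L}^2(V_a)\}$, the unitary $U_\gamma$ restricts to a linear bijection $\mathscr{S}(\gamma')\to\mathscr{S}(\gamma\gamma')$ for each $\gamma'\in\Gamma$, since $\gamma\cdot W_k(\gamma')=W_k(\gamma\gamma')$.

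First I would check that $B_{n,t}(\gamma')$ is essentially self-adjoint on its domain $\mathscr{S}(\gamma')$. This follows from the Bott--Dirac machinery recalled in Section~\ref{ssec:The Bott-Dirac operators on Euclidean spaces}: on each finite-dimensional block the relevant summand is a rescaling of a harmonic-oscillator-type operator $D+C$ (respectively $D$ alone, on the first $n$ blocks), which is essentially self-adjoint on Schwartz functions with discrete spectrum, and $B_{n,t}(\gamma')$ is the corresponding infinite orthogonal sum with respect to the algebraic decomposition $V=\bigoplus_k V_k(\gamma')$. Standard results on sums of commuting essentially self-adjoint operators over such tensor-product/orthogonal-sum decompositions then give essential self-adjointness of $B_{n,t}(\gamma')$ on $\mathscr{S}(\gamma')$; write $\overline{B_{n,t}(\gamma')}$ for its closure, and likewise $\overline{B_{n,t}(\gamma\gamma')}$. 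Since $U_\gamma$ is unitary and carries the core $\mathscr{S}(\gamma')$ onto the core $\mathscr{S}(\gamma\gamma')$, the algebraic identity $B_{n,t}(\gamma\gamma')\,U_\gamma\xi=U_\gamma\,B_{n,t}(\gamma')\,\xi$ for $\xi\in\mathscr{S}(\gamma')$ passes to the closures, yielding $\overline{B_{n,t}(\gamma\gamma')}=U_\gamma\,\overline{B_{n,t}(\gamma')}\,U_\gamma^{-1}$ as self-adjoint operators.

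Then I would invoke the spectral-theoretic fact that if $A$ is self-adjoint with spectral measure $E$ and $U$ is unitary, then $UAU^{-1}$ is self-adjoint with spectral measure $U\,E(\cdot)\,U^{-1}$, whence $g(UAU^{-1})=U\,g(A)\,U^{-1}$ for every bounded Borel function $g$, in particular for $g\in C_0(\RR)$. Applying this with $A=\overline{B_{n,t}(\gamma')}$ and $U=U_\gamma$ gives $g(B_{n,t}(\gamma\gamma'))=U_\gamma\,g(B_{n,t}(\gamma'))\,U_\gamma^{-1}$, which in the notation $\gamma\cdot(-):=U_\gamma(-)U_\gamma^{-1}$ of the excerpt is exactly the asserted identity. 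No serious obstacle is expected here: the only point requiring a little care is the unbounded-operator bookkeeping --- namely verifying essential self-adjointness on the Schwartz core and checking that the ``easy to compute'' conjugation identity on $\mathscr{S}(\gamma')$ survives passage to the self-adjoint closures --- after which the conclusion is a formal consequence of the spectral theorem. Alternatively one may simply cite \cite[Lemma 5.1]{FW16}, since the argument is identical to the non-family case with $\gamma'$ fixed and $\gamma$ varying.
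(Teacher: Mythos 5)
Your argument is correct and follows exactly the paper's (tacit) route: the paper records the conjugation identity $B_{n,t}(\gamma\gamma')=\gamma\cdot B_{n,t}(\gamma')\cdot\gamma^{-1}$ on the Schwartz core and then invokes compatibility of functional calculus with unitary conjugation, deferring the details to \cite[Lemma 5.1]{FW16}. Your version merely spells out the essential self-adjointness and core-preservation bookkeeping that the paper leaves implicit, so no gap and no genuinely different approach.
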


\begin{defn}\label{defn:twisted Roe module language compact plus}
We define $\A^r[ (X_\lambda); (B_\lambda \hat{\otimes} \K(\L^2(V))) ]^\Gamma$ to be the $\ast$-subalgebra in $\prod_{\lambda \in \Lambda} \L((\H_{r,\lambda} \otimes B_\lambda)~\hat{\otimes}~ \K(\L^2(V)))^\Gamma$ consisting of elements $T=(T_\lambda)_\lambda$ satisfying the following conditions:
\begin{enumerate}
  \item $\sup_{\lambda \in \Lambda}\ppg_{P}(T_{\lambda})<\infty$;\\[0.05cm]
  \item for $\lambda \in \Lambda$ and bounded Borel subset $K \subseteq P_r(X_{\lambda})$, both $\chi_K T_\lambda$ and $T_\lambda \chi_K$ belong to $(\K(\H_{r,\lambda} ) \otimes B_\lambda) ~\hat{\otimes}~ \K(\L^2(V))$.
\end{enumerate}
Denote $\A^r( (X_\lambda); (B_\lambda \hat{\otimes} \K(\L^2(V)))) ^\Gamma$ the norm closure of $\A^r[ (X_\lambda); (B_\lambda \hat{\otimes} \K(\L^2(V))) ]^\Gamma$.

Define $\A_L^r ( (X_\lambda); (B_\lambda \hat{\otimes} \K(\L^2(V))) )^\Gamma$ to be closure of the collection of uniformly continuous bounded functions $(T_t)$ from $[1,\infty)$ to $\A^r( (X_\lambda); (B_\lambda \hat{\otimes} \K(\L^2(V))) )^\Gamma$ such that the $P$-propagation of $(T_t)$ tends to zero as $t\to +\infty$.
\end{defn}

For every non-negative integer $n$ and $\gamma\in \Gamma$, we define
\[
\theta_t^n(\gamma):\Ac(W_n(\gamma)) \longrightarrow \K(\mathcal {L}^2(V))
\]
by the formula
\[
\theta_t^n(\gamma)(g~\hat{\otimes}~h)=g_t(B_{n,t}(\gamma))M_{h_t}
\]
for $g\in \S$ and $h\in C_0(W_n(\gamma), \Cliff(W^0_n(\gamma))$, where $h_t(v)=h(b(\gamma)+t^{-1}(v-b(\gamma)))$ for $v\in W_n(\gamma)$ and $M_{h_t}$ is the pointwise multiplication operator on $\mathcal {L}^2(V)$ (in the sense that if $W \supseteq W_n(\gamma)$ is a finite-dimensional affine subspace of $V$, then we regard:
\[
(M_{h_t}\xi)(v+w)=h_t(v)\xi(v+w)
\]
for all $\xi\in \L^2(W)$, $v\in W_n(\gamma)$ and $w\in W\ominus W_n(\gamma)$).
It follows from \cite[Lemma 5.8]{HKT98} that the image of $\theta_t^n(\gamma)$ indeed sits inside $\K(\mathcal {L}^2(V))$. Moreover, it follows from the proof of \cite[Lemma 5.3]{FW16} that
\begin{equation}\label{EQ:theta equivariant}
\theta_t^n(\gamma \gamma') = \gamma \cdot \theta_t^n(\gamma') \cdot \gamma^{-1}
\end{equation}
for any $\gamma, \gamma' \in \Gamma$.

Given $T=(T_{\lambda,x,y})\in \A^r[(X_\lambda); (B_\lambda \hat{\otimes} \Ac(V))]^\Gamma$, let $N' \in \NN$ be such that for every $x,y\in X_\lambda$ there exists $T'_{\lambda,x,y}\in \K(\HH_{y,r,\lambda},\HH_{x,r,\lambda})~\hat{\otimes}~ \Ac(W_{N'}(\phi_\lambda(x)))$ satisfying
\[
T_{\lambda,x,y} = (\Id ~\hat{\otimes}~\beta_{N'}(\phi_\lambda(x)))(T'_{\lambda,x,y}).
\]
Taking $N \in \NN$ such that for any $x\in \D_\lambda$, $\gamma' \in \Gamma_x$ and $\gamma \in \Gamma$ we have $W_{N'}(\phi_\lambda(\gamma x)) \subseteq W_N(\gamma \gamma')$. Hence for each $x\in \D_\lambda$, $\gamma' \in \Gamma_x$ and $\gamma \in \Gamma$, we have:
\begin{align*}
T_{\lambda,\gamma x,y} &= (\Id ~\hat{\otimes}~\beta_{N'}(\phi_\lambda(\gamma x)))(T'_{\lambda,\gamma x,y}) = (\Id ~\hat{\otimes}~\beta_{N}(\gamma \gamma')) \circ (\Id ~\hat{\otimes}~\beta_{W_N(\gamma \gamma'), W_{N'}(\phi_\lambda(\gamma x))})(T'_{\lambda,\gamma x,y})\\
&= (\Id ~\hat{\otimes}~\beta_{N}(\gamma \gamma')) (T_{\lambda,\gamma x,y}^{\gamma'}),
\end{align*}
where $T_{\lambda,\gamma x,y}^{\gamma'} = (\Id ~\hat{\otimes}~ \beta_{W_N(\gamma \gamma'), W_{N'}(\phi_\lambda(\gamma x))})(T'_{\lambda,\gamma x,y})$.

\begin{defn}\label{defn:Dirac}
For $t \geq 1$, we define a map
\[
\alpha_t: \A^r[(X_\lambda); (B_\lambda ~\hat{\otimes}~ \Ac(V))]^\Gamma \longrightarrow  \A^r\big( (X_\lambda); (B_\lambda ~\hat{\otimes}~ \K(\L^2(V))) \big)^\Gamma
\]
by setting
\[
\alpha_t(T_{\lambda,x,y})_{\lambda, \gamma x,y}=\frac{1}{|\Gamma_x|}\sum_{\gamma' \in \Gamma_x} (\Id ~\hat{\otimes}~ \theta_t^N(\gamma \gamma'))(T_{\lambda,\gamma x,y}^{\gamma'})
\]
for $x\in \D_\lambda, y\in X_\lambda$ and $\gamma \in \Gamma$, where $T_{\lambda,\gamma x,y} = (\Id ~\hat{\otimes}~\beta_{N}(\gamma \gamma')) (T_{\lambda,\gamma x,y}^{\gamma'})$.
\end{defn}

Similar to (but much easier than) the proof of Lemma \ref{lem:Bott well-defined}, we obtain:

\begin{lem}\label{lem:Dirac well-defined}
For each $t \geq 1$, the map $\alpha_t$ is well-defined.
\end{lem}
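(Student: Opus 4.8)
The plan is to transcribe the argument of the proof of Lemma~\ref{lem:Bott well-defined}, noting that the verification here is considerably shorter: the target algebra $\A^r\big((X_\lambda); (B_\lambda \hat{\otimes} \K(\L^2(V)))\big)^\Gamma$ is governed only by the two conditions of Definition~\ref{defn:twisted Roe module language compact plus} (uniformly finite $P$-propagation and local compactness), so there are no $(\RR_+\times H)$-supports or derivative bounds to keep track of. First I would check that the right-hand side of the formula in Definition~\ref{defn:Dirac} is independent of the choice of coset representative: fix $x\in\D_\lambda$ and suppose $\gamma_1 x=\gamma_2 x$, so $\gamma_2=\gamma_1\hat\gamma$ for some $\hat\gamma\in\Gamma_x$; substituting $\gamma_2=\gamma_1\hat\gamma$ into $\sum_{\gamma'\in\Gamma_x}(\Id\hat{\otimes}\theta_t^N(\gamma_2\gamma'))(T^{\gamma'}_{\lambda,\gamma_2 x,y})$, using the equivariance identity $(\ref{EQ:theta equivariant})$, the matching behaviour of the reduced entries $T^{\gamma'}_{\lambda,\gamma x,y}$ under the $\Gamma$-action, and re-indexing $\gamma'\mapsto\hat\gamma\gamma'$ over $\Gamma_x$, recovers the expression attached to $\gamma_1$. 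This is the exact analogue of the opening computation of the proof of Lemma~\ref{lem:Bott well-defined}.

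Next I would record that the value of $\alpha_t(T)_{\lambda,\gamma x,y}$ does not change if the auxiliary integers $N'$ (coming from Definition~\ref{defn:double twisted Roe}(4)) and $N$ are enlarged: replacing $N$ by a larger $N''$ precomposes each $\theta_t^N(\gamma\gamma')$ with a Bott connecting map $\beta_{W_{N''}(\gamma\gamma'),W_N(\gamma\gamma')}$, and the coherence relation $\theta_t^{N''}(\gamma)\circ\beta_{W_{N''}(\gamma),W_N(\gamma)}=\theta_t^{N}(\gamma)$ — built into the construction of \cite{HKT98} and used in \cite[Section~5]{FW16} — absorbs the precomposition. In particular $\theta_t$ descends to a map out of the direct limit $\Ac(V)$, so the formula is unambiguous.

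Then I would verify that $\alpha_t(T)$ lands in $\A^r\big((X_\lambda); (B_\lambda \hat{\otimes} \K(\L^2(V)))\big)^\Gamma$. The map $\alpha_t$ acts entrywise on the $X_\lambda$-by-$X_\lambda$ matrix $(T_{\lambda,x,y})$, replacing the $(\gamma x,y)$-entry by an operator attached to the same pair of indices, so $\ppg_P(\alpha_t(T)_\lambda)=\ppg_P(T_\lambda)$ and the uniform bound of Definition~\ref{defn:twisted Roe module language compact plus}(1) is inherited from Definition~\ref{defn:double twisted Roe}(1). For condition (2), by \cite[Lemma~5.8]{HKT98} the image of each $\theta_t^N(\gamma\gamma')$ lies in $\K(\L^2(V))$, while the reduced entry $T^{\gamma'}_{\lambda,\gamma x,y}$ lies in $\K(\HH_{y,r,\lambda},\HH_{x,r,\lambda})\otimes B_\lambda\,\hat{\otimes}\,\Ac(W_N(\gamma\gamma'))$; hence $(\Id\hat{\otimes}\theta_t^N(\gamma\gamma'))(T^{\gamma'}_{\lambda,\gamma x,y})\in(\K(\HH_{y,r,\lambda},\HH_{x,r,\lambda})\otimes B_\lambda)\,\hat{\otimes}\,\K(\L^2(V))$, so local compactness of $T_\lambda$ (Definition~\ref{defn:double twisted Roe}(2)) passes to $\alpha_t(T)_\lambda$. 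Finally, $\Gamma$-invariance of each $\alpha_t(T)_\lambda$ follows from the same re-indexing together with $(\ref{EQ:theta equivariant})$ and the relation $B_{N,t}(\gamma\gamma')=\gamma B_{N,t}(\gamma')\gamma^{-1}$, exactly as in the last display of the proof of Lemma~\ref{lem:Bott well-defined}.

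The only step that needs genuine care — and the expected main obstacle — is matching normalisations: one must ensure that the Gaussian-shift inclusions $\L^2(V_a)\hookrightarrow\L^2(V_b)$ defining $\L^2(V)$ are compatible with the Bott connecting maps $\beta_{V_b,V_a}$, so that $\theta_t$ is honestly a map on the direct-limit algebra $\Ac(V)$ and the choice of $N$ is irrelevant. This compatibility is precisely what is established in \cite[Section~5]{HKT98} and \cite[Section~5]{FW16}, so in the present setting it reduces to invoking those results; everything else is a routine adaptation of the proof of Lemma~\ref{lem:Bott well-defined} with the support and derivative bookkeeping deleted.
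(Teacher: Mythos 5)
Your proposal is correct and follows exactly the route the paper indicates: mimic the proof of Lemma~\ref{lem:Bott well-defined}, observing that the target algebra from Definition~\ref{defn:twisted Roe module language compact plus} imposes only the two conditions (uniformly finite $P$-propagation and local compactness), so the approximation by $g_n h_n$ and the bookkeeping for $(\RR_+\times H)$-supports and derivative bounds all drop out. You correctly identify the two genuine well-definedness issues — independence of the coset representative $\gamma$ with $\gamma x$ fixed (handled by re-indexing over $\Gamma_x$ and the equivariance~(\ref{EQ:theta equivariant})) and independence of the auxiliary integers $N', N$ (handled by the coherence $\theta_t^{N''}(\gamma)\circ\beta_{W_{N''}(\gamma),W_N(\gamma)}=\theta_t^N(\gamma)$ from \cite{HKT98}) — and the remaining verifications (preservation of $P$-propagation, local compactness via $\theta_t^N(\gamma\gamma')$ mapping into $\K(\L^2(V))$ by \cite[Lemma~5.8]{HKT98}, and $\Gamma$-invariance via the same re-indexing plus~(\ref{EQ:theta equivariant})) are exactly what the paper's one-line "much easier than" remark is pointing to.
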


Applying $\alpha_t$ pointwise, we obtain a map on the localisation level:
\[
\alpha_{L,t}: \A_L^r[(X_\lambda); (B_\lambda ~\hat{\otimes}~\Ac(V))]^\Gamma \longrightarrow  \A_L^r\big( (X_\lambda); (B_\lambda ~\hat{\otimes}~ \K(\L^2(V))) \big)^\Gamma.
\]

The following lemma was originally from \cite[Lemma 7.2]{Yu00}, and an equivariant version was proved in \cite[Lemma 5.5]{FW16}. Hence we omit the proof here.

\begin{lem}
The maps $\alpha_t, \alpha_{L,t}$ can be extended to asymptotic morphisms
\[
\alpha: \A^r\big((X_\lambda); (B_\lambda ~\hat{\otimes}~ \Ac(V))\big)^\Gamma \longrightarrow  \A^r\big( (X_\lambda); (B_\lambda ~\hat{\otimes}~ \K(\L^2(V))) \big)^\Gamma
\]
and
\[
\alpha_L: \A_L^r\big((X_\lambda); (B_\lambda ~\hat{\otimes}~ \Ac(V))\big)^\Gamma \longrightarrow  \A_L^r\big( (X_\lambda); (B_\lambda ~\hat{\otimes}~ \K(\L^2(V))) \big)^\Gamma.
\]
\end{lem}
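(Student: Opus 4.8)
The plan is to check the axioms of an asymptotic morphism on the dense $\ast$-subalgebra $\A^r[(X_\lambda);(B_\lambda\,\hat{\otimes}\,\Ac(V))]^\Gamma$ and then pass to the completion by the standard extension principle: a uniformly bounded family $(\alpha_t)_{t\ge 1}$ of linear maps on a dense $\ast$-subalgebra such that $t\mapsto\alpha_t(a)$ is norm-continuous and the asymptotic $\ast$-homomorphism relations hold on the dense subalgebra extends to an asymptotic morphism on the whole $C^\ast$-algebra. On the dense subalgebra, linearity and $\ast$-compatibility of $\alpha_t$ are exact, so the real content is (i) $t\mapsto\alpha_t(a)$ is bounded and norm-continuous, (ii) $\sup_t\|\alpha_t\|<\infty$, and (iii) $\|\alpha_t(ab)-\alpha_t(a)\alpha_t(b)\|\to 0$ as $t\to\infty$. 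The localisation statement for $\alpha_L$ then follows by applying everything pointwise in the localisation parameter, using that $\alpha_t$ does not increase $P$-propagation, is uniformly bounded, and that the uniformity constants in Definition~\ref{defn:double twisted Roe} are parameter-independent.

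The analytic input is the Dirac-operator asymptotic morphism of Higson--Kasparov--Trout: for each $\gamma\in\Gamma$ and $n\in\NN$ the family $\theta^n_t(\gamma)\colon\Ac(W_n(\gamma))\to\K(\L^2(V))$ is, as $t\to\infty$, asymptotically a $\ast$-homomorphism and is asymptotically compatible with the structure maps $\beta_{n',n}(\gamma)$; this is \cite{HKT98}, as used in \cite{HK01} and, equivariantly, in \cite{FW16}. The observation that makes the passage to our algebras possible is uniformity in $\gamma$: since $\theta^n_t(\gamma\gamma')=\gamma\cdot\theta^n_t(\gamma')\cdot\gamma^{-1}$ by (\ref{EQ:theta equivariant}) and $\Gamma$ acts by unitaries on $\L^2(V)$, every norm estimate for $\theta^n_t(\gamma)$ reduces to the case $\gamma=1_\Gamma$; hence the rate of convergence in all the asymptotic bounds is independent of $\gamma$, and --- once $n$ is bounded --- independent of $n$ as well.

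Next I would assemble the fibrewise statement into the statement on $\A^r((X_\lambda);(B_\lambda\,\hat{\otimes}\,\Ac(V)))^\Gamma$. Given $a=(T_{\lambda,x,y})$ in the dense subalgebra, condition (1) of Definition~\ref{defn:double twisted Roe} bounds $\ppg_P(a)$ uniformly in $\lambda$, so by bounded geometry of $\bigsqcup_n X_n$ together with Lemma~\ref{lem:norm control of finite ppg op} and Inequality (\ref{EQ:relations between ppgs}), the norm of any element of $\prod_\lambda\L(H_{r,\lambda})^\Gamma$ with that propagation is controlled by a uniform constant (depending only on $r$ and the propagation bound) times the supremum of its matrix entries, while conversely each entry has norm at most $\|a\|$. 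This simultaneously yields $\sup_t\|\alpha_t\|<\infty$ and reduces the estimate for $\|\alpha_t(ab)-\alpha_t(a)\alpha_t(b)\|$, as well as the continuity of $t\mapsto\alpha_t(a)$, to the corresponding statements for matrix entries. By $\Gamma$-invariance and cocompactness of the actions on the $P_r(X_\lambda)$, these reduce further to a finite set of representative entries, each of which --- by the defining formula in Definition~\ref{defn:Dirac} --- is, modulo the finite average over $\Gamma_x$, of the form $(\Id\,\hat{\otimes}\,\theta^N_t(\gamma\gamma'))(T^{\gamma'}_{\lambda,\gamma x,y})$. Here the integer $N$ is uniform by condition (4) of Definition~\ref{defn:double twisted Roe}, the $(\RR_+\times H)$-support is uniform by condition (3), and the directional derivatives are uniformly bounded by condition (5); feeding these uniform data into the uniform-in-$\gamma$ asymptotic estimates for $\theta^N_t$ gives entry-wise bounds with a rate depending only on $r$. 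This establishes (i)--(iii), hence the extension to $\alpha$, and pointwise application gives $\alpha_L$.

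The step I expect to be the main obstacle is not conceptual but one of uniform bookkeeping: one must verify that the parameters produced along the way --- the integer $N$, the compact sets $K\subseteq\RR_+\times H$, and the derivative bound $c$ in Definition~\ref{defn:double twisted Roe} --- can be chosen simultaneously for all $\lambda\in\Lambda$ and all matrix indices, so that the Higson--Kasparov--Trout estimates assemble into a single asymptotic morphism on the whole algebra rather than only fibrewise. A related point requiring care is the compatibility of $\theta^N_t(\gamma\gamma')$ with the structure maps $\beta_{W_N(\gamma\gamma'),W_{N'}(\phi_\lambda(\gamma x))}$ of the directed system defining $\Ac(V)$, used in Definition~\ref{defn:Dirac} to write $T^{\gamma'}_{\lambda,\gamma x,y}$: enlarging the affine subspace on which the Clifford and Dirac data live should alter $\theta_t$ only by a quantity vanishing as $t\to\infty$, and this again is in \cite{HKT98,FW16} but must be invoked with estimates uniform in $\gamma$ and $\lambda$. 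Since all of this amounts to reproducing the arguments of \cite{Yu00} and \cite{FW16} while carrying the extra index $\lambda$ and the uniformity quantifiers along, citing those sources is legitimate and a full proof would largely be a verification that nothing in those arguments is lost under the uniform quantification.
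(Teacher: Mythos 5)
The paper omits the proof entirely, stating only that the lemma ``was originally from \cite[Lemma 7.2]{Yu00}, and an equivariant version was proved in \cite[Lemma 5.5]{FW16}.'' Your proposal reconstructs exactly what that omitted argument would need to be: invoke the Higson--Kasparov--Trout asymptotic estimates for $\theta^n_t$, use the covariance relation $\theta^n_t(\gamma\gamma')=\gamma\cdot\theta^n_t(\gamma')\cdot\gamma^{-1}$ to make them uniform in $\gamma$, then use conditions (1)--(5) of Definition~\ref{defn:double twisted Roe} together with Lemma~\ref{lem:norm control of finite ppg op} and cocompactness to reduce to finitely many representative matrix entries with uniform parameters in $\lambda$. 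You correctly identify that the real work is the uniform bookkeeping across $\lambda$ and the asymptotic compatibility of $\theta^N_t$ with the structure maps $\beta$, which are precisely the points where one must check that the Yu/Fu--Wang arguments carry the extra index $\lambda$ without loss.

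One small caution: you assert that ``linearity and $\ast$-compatibility of $\alpha_t$ are exact,'' but for the Dirac construction the maps $\theta^n_t(\gamma)(g\,\hat{\otimes}\,h)=g_t(B_{n,t}(\gamma))M_{h_t}$ are \emph{not} exact $\ast$-homomorphisms (indeed $M_{h_t}$ need not commute with $g_t(B_{n,t}(\gamma))$), so $\ast$-compatibility is itself only asymptotic and must be folded into item~(iii) rather than claimed outright. This does not damage the argument --- the needed estimates are part of the HKT package you are citing --- but as stated it misattributes which relations hold on the nose. With that caveat, the outline is sound and faithful to the sources the paper itself relies on.
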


Following the proof of \cite[Proposition 7.7]{Yu00} (see also \cite[Proposition 5.9]{FW16}), we have:

\begin{prop}\label{prop:identity-bott-dirac}
The compositions
\[
\alpha_*\circ\beta_*: K_*\left(\A^r\big( (X_\lambda); (B_\lambda) \big)^\Gamma\right) \longrightarrow  K_*\left(\A^r\big( (X_\lambda); (B_\lambda ~\hat{\otimes}~ \K(\L^2(V))) \big)^\Gamma\right)
\]
and
\[
(\alpha_L)_*\circ(\beta_L)_*: K_*\left(\A_L^r\big( (X_\lambda); (B_\lambda) \big)^\Gamma\right) \longrightarrow  K_*\left(\A_L^r\big( (X_\lambda); (B_\lambda ~\hat{\otimes}~ \K(\L^2(V))) \big)^\Gamma\right)
\]
equals the identity homomorphisms, respectively.
\end{prop}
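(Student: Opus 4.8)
The plan is to follow the template of \cite[Proposition 7.7]{Yu00}, together with its equivariant refinement \cite[Proposition 5.9]{FW16}, adapting the argument so that every estimate is uniform over the index set $\Lambda$ (and, for the second assertion, over the localisation parameter). Since $\alpha$ and $\beta$ (and $\alpha_L$, $\beta_L$) have already been promoted to asymptotic morphisms, the first task is to form the composition $\alpha \circ \beta$ as an asymptotic morphism via a reparametrization $\alpha_t \circ \beta_{r(t)}$ with $r(t) \to \infty$ slowly enough, and to check that the resulting $K$-theory map is independent of the reparametrization up to homotopy, as usual. The computation is then carried out entry-wise: using the $\Gamma$-invariance of the operators together with the fundamental domains $\D_\lambda$, an element of $\A^r[(X_\lambda);(B_\lambda)]^\Gamma$ is determined by the blocks $T_{\lambda,\gamma x,y}$ with $x \in \D_\lambda$, and on such a block $\alpha_t \circ \beta_s$ is governed by the composite $\theta_t^N(\gamma\gamma') \circ \beta_{W_N(\gamma\gamma'),W_0(\gamma\gamma')}$ applied to $g_s ~\hat{\otimes}~ h$ together with the Clifford multiplication $M_{h_s}$.

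The heart of the matter is the finite-dimensional Bott--Dirac identity of \cite{HKT98}: after the rescaling built into $B_{n,t}(\gamma)$ and into $\theta_t^n(\gamma)$, the Dirac homomorphism composed with the Bott homomorphism is, asymptotically as $t \to \infty$, the $\ast$-homomorphism $\S \to \K(\L^2(V))$ sending $g$ to $g$ of a rescaled harmonic-oscillator-type operator, whose relevant $K$-theory is carried by the rank-one projection $q_\gamma$ onto the Gaussian ground state $\xi_0$ translated by $b(\gamma)$ --- this is where Mehler's formula enters. Introducing the two-parameter family $\alpha_t \circ \beta_s$ and using the standard linear homotopy of \cite[Section 7]{Yu00} (shrinking the Gaussian towards a delta function), one contracts $\alpha_t \circ \beta_s$ to the stabilization map that tensors each block $T_{\lambda,\gamma x,y}$ with the rank-one projection $q_\gamma$. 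All of the constants appearing here --- the uniform bound on $P$-propagation, the integer $N$ controlling the affine subspaces $W_N(\phi_\lambda(x))$, the compact subset of $\RR_+ \times H$ controlling $(\RR_+ \times H)$-supports, and the error terms in the asymptotic-morphism estimates --- must be chosen independent of $\lambda$ and of $\gamma$; this is possible precisely because the amplified orbit maps $\psi_\lambda$, hence their coarse inverses $\phi_\lambda$, are uniformly coarsely equivalent (by the controlled distortion hypothesis) and $b \colon \Gamma \to H$ is a coarse embedding with fixed control functions.

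It then remains to observe that stabilization by the field $\{q_\gamma\}$ of rank-one projections induces the identity on $K$-theory. Since $q_\gamma$ is obtained from the fixed projection $q_{1_\Gamma}$ by the translation unitary $V_{b(\gamma)}$, and these translations assemble into a $\Gamma$-equivariant unitary on the relevant Hilbert module, tensoring with $\{q_\gamma\}$ is unitarily conjugate to tensoring with the fixed rank-one projection $q_{1_\Gamma}$; this last map is a corner embedding of $\A^r((X_\lambda);(B_\lambda))^\Gamma$ into $\A^r((X_\lambda);(B_\lambda ~\hat{\otimes}~ \K(\L^2(V))))^\Gamma$ (and likewise on the localisation level) and hence induces the identity on $K$-theory. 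The localisation statement is obtained by running the same argument with the localisation parameter $t \in [1,\infty)$ frozen and checking that all reparametrizations and homotopies are uniformly continuous in that parameter, which follows from the corresponding properties recorded in \cite[Section 5]{FW16}.

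I expect the main obstacle to be exactly this bookkeeping of uniformity: in \cite{Yu00} and \cite{FW16} the arguments are set up for a single space, respectively a single group action, whereas here everything is indexed by $\Lambda$, and one must verify that the integer $N$ governing the subspaces $W_N(\phi_\lambda(x))$, the compact set in $\RR_+ \times H$ bounding supports, the propagation bounds, and the error terms in the asymptotic-morphism estimates can all be taken independent of $\lambda$. Once these uniform bounds are secured --- which, as noted, rests on the uniform coarse equivalence of the orbit maps --- the rest of the proof is a faithful transcription of the single-space computations of \cite{Yu00} and \cite{FW16}.
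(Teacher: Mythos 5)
Your proposal is correct and takes essentially the same route as the paper, whose own proof of this proposition consists solely of the citation of \cite[Proposition 7.7]{Yu00} and \cite[Proposition 5.9]{FW16}. You supply considerably more detail than the paper does, and you correctly isolate the one genuinely new point in passing from the single-space setting to the family setting: the uniformity over $\Lambda$ of the integer $N$, the compact support set in $\RR_+\times H$, the propagation bounds, and the asymptotic-morphism error terms, which rests on the uniform coarse equivalence of the orbit maps $\phi_\lambda$ coming from the controlled-distortion hypothesis together with the fixed control functions of the cocycle $b$.
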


Finally, we are in the position to prove Proposition \ref{prop: local isom for appendix}.

\begin{proof}[Proof of Proposition \ref{prop: local isom for appendix}]
Consider the following commutative diagram:
\[
\xymatrix{
			\lim\limits_{r\to \infty} K_*\left(\A_L^r\big( (X_\lambda); (B_\lambda) \big)^\Gamma\right) \ar[r]^{\ev_\ast}  \ar[d]_{(\beta_L)_\ast} & \lim\limits_{r\to \infty} K_*\left(\A^r\big( (X_\lambda); (B_\lambda) \big)^\Gamma\right)  \ar[d]^{\beta_\ast} \\
			\lim\limits_{r\to \infty} K_*\left(\A_L^r\big((X_\lambda); (B_\lambda ~\hat{\otimes}~ \Ac(V))\big)^\Gamma\right) \ar[r]^{\ev_\ast} \ar[d]_{(\alpha_L)_\ast}  & \lim\limits_{r\to \infty} K_*\left(\A^r\big((X_\lambda); (B_\lambda ~\hat{\otimes}~ \Ac(V))\big)^\Gamma\right) \ar[d]^{\alpha_\ast}  \\
			\lim\limits_{r\to \infty} K_*\left(\A_L^r\big( (X_\lambda); (B_\lambda ~\hat{\otimes}~ \K(\L^2(V))) \big)^\Gamma\right) \ar[r]^{\ev_\ast}  & \lim\limits_{r\to \infty} K_*\left(\A^r\big( (X_\lambda); (B_\lambda ~\hat{\otimes}~ \K(\L^2(V))) \big)^\Gamma\right). \\
	}
\]
The middle horizontal line is an isomorphism due to Proposition \ref{prop:local isomorphism with A(V)}, while the compositions of both of the vertical lines are isomorphisms due to Proposition \ref{prop:identity-bott-dirac}. Consequently, the result follows from a diagram chasing.
\end{proof}

\bibliographystyle{plain}
\bibliography{bib_equivCE}

\end{document}